\renewcommand{\raggedright}{\leftskip=0pt \rightskip=0pt plus 0cm}
\titleformat{\section}{\sc\centering\Large}{\thesection}{1em}{}
\numberwithin{equation}{section}
\newtheorem{thm}{Theorem}[section]
\newtheorem{lem}{Lemma}[section]
\newtheorem{prop}{Proposition}[section]
\newtheorem{defn}{Definition}[section]
\newtheorem{cor}{Corollary}[section]
\newtheorem{rem}{Remark}[section]
\newtheorem{claim}{Claim}[]
\newtheorem{exa}{Example}[]
\renewcommand{\d}{\mathrm{d}}
\renewcommand{\H}{\mathbb{H}}
\newcommand{\R}{\mathbb{R}}
\newcommand{\Cn}{\mathbb{C}^n}
\newcommand{\Rn}{\mathbb{R}^n}
\newcommand{\Sn}{\mathbb{S}^n}
\renewcommand{\S}{\mathbb{S}}
\newcommand{\lam}{\lambda}
\newcommand{\pa}{\partial}
\newcommand{\var}{\varepsilon}
 \newcommand{\be}{\begin{equation}}
\newcommand{\ee}{\end{equation}}
\newcommand{\Lam}{\Lambda}
\newcommand{\si}{\sigma}
\newcommand{\al}{\alpha}
\newcommand{\Om}{\Omega}
\newcommand{\Hn}{\mathbb{H}^n}
\numberwithin{equation}{section}
\titleformat{\section}{\sc\centering\Large}{\thesection}{1em}{}
\begin{document}

\title{\bf On the CR Nirenberg problem:  density and multiplicity of solutions}
\author{\medskip  {\sc Zhongwei Tang}\footnote{Z. Tang is supported by National Science Foundation of China (12071036, 12126306).},  \ \
	{\sc Heming Wang},  \ {\sc Bingwei Zhang}     }

\date{}

\maketitle

\begin{abstract}
We prove some results on the density  and multiplicity of positive solutions to the prescribed Webster scalar curvature problem on the  $(2n+1)$-dimensional standard unit  CR sphere $(\S ^{2n+1},\theta_0)$.
Specifically, we construct arbitrarily many multi-bump solutions via the variational gluing method. In particular, we show  the Webster scalar curvature  functions of contact forms  conformal to $\theta_0$ are $C^{0}$-dense among bounded functions which are positive somewhere.
Existence results of  infinitely many positive solutions to the related equation $-\Delta_{\H} u=R(\xi) u^{(n+2) /n}$ on the Heisenberg group $\Hn $ with $R(\xi)$ being asymptotically periodic with respect to left translation are also obtained. Our proofs make use of  a refined analysis of bubbling behavior, gradient flow,  Pohozaev identity, as well as    blow up arguments.
\end{abstract}
	{\noindent \small \bf Key words:} CR Nirenberg problem,   Subelliptic equations, Multi-bump solution, Blow up analysis.

{\noindent \small \bf Mathematics Subject Classification (2020)}\quad 53C15 · 53C21 ·  35R01
\tableofcontents
\section{Introduction}\label{section1}
\subsection{The studied problem and its history}
The simplicity and subtlety of curvature and its global nature draw many attention. In particular, one celebrated problem, raised by Nirenberg in  the 1960’s, asks on the $n$-dimensional standard sphere $(\Sn,g_0)$ $(n\geq 2)$, if one
can find a conformally invariant metric $g$ such that the scalar curvature (Gauss curvature for $n=2$) of $g$ is equal to the given   function $K$. This is widely known as the Nirenberg problem and is also called the prescribed scalar curvature problem on $\Sn$. If we denote $g=e^{2v}g_0$ in the  case $n=2$
and $g=v^{4/(n-2)}g_0$ in the $n\geq 3$ dimensional case, this problem amounts to finding a positive solution $v$ of the equations
$$
-\Delta_{g_{0}}v+1=Ke^{2v} \quad\mbox{ on }\, \S ^2,
$$
and
\be\label{Nirenberg}
-\Delta_{g_{0}}v+c(n)R_0v=c(n)Kv^{\frac{n+2}{n-2}} \quad\mbox{ on }\, \Sn \quad  \mbox{ for } \,n\geq 3,
\ee
where  $\Delta_{g_{0}}$ is the Laplace-Beltrami operator on $(\Sn, g_{0})$, $c(n)=(n-2)/(4(n-1))$  and  $R_0=n(n-1)$ is the scalar curvature associated to $g_0$. The Nirenberg  problem has been studied extensively and it would be impossible to mention here all works in this area. Two significant aspects most  related to this paper are the fine analysis of blow up (approximate) solutions and the gluing methods in construction of solutions, see, e.g., \cite{Li1993,Li1995,Li1996,CGY1993,CY1991,WY2010,LWX2018,BC1988,BC1991,Li93d,SZ1996,MM2020} and references therein.

In the past half century, several studies have been performed for classical elliptic equations  which are similar to Nirenberg’s equations but with the  conformal sub-Laplacians on CR manifolds.  The geometry of CR manifolds, namely the abstract model of real hypersurfaces in complex manifolds, has attracted, since the late 1970’s, a lot of attention of prominent mathematicians as for instance, Chern-Moser \cite{CM1974}, Fefferman \cite{F1976}, Jacobowitz \cite{J1990}, Jerison-Lee \cite{JL1984,JL1987,JL1988,JL1989}, Tanaka \cite{T1975}, Webster \cite{W1978}, among many others. This geometry is very rich when the CR manifold admits a strictly pseudo-convex structure in which case we encounter a great analogy with the conformal geometry of Riemannian manifolds. Notably,  the study of the prescribing Webster scalar curvature problem on CR manifolds, which dates
back to Jerison-Lee \cite{JL1987,JL1989,JL1988}, has received a lot of attention, see, e.g.,  \cite{G2001,GY2001} and references therein.  For more recent and further studies, see \cite{DRY2022,CAY2010,CAY2013,R2013,R2012,GAG2015,SG2011} and related references.

As a natural analogue of the Nirenberg probelm for the CR geometry, one can consider the prescribed Webster (pseudo-hermitian) scalar curvature problem on the standard   CR sphere  which can be formulated as follows. Let $(\S^{2n+1},\theta_{0})$ be the unit   CR sphere in $\mathbb{C}^{n+1}$ with  $\theta_{0}$ being the standard contact form and $n\geq 1$. Given any  function $\bar{R}$ on $\S ^{2n+1}$, it is natural to ask: Does there exist a contact form $\theta$ conformally related to $\theta_0$ in the sense that $\theta=v^{2 / n} \theta_0$ for some  function $v>0$ such that $\bar{R}$ is the Webster scalar curvature of the Webster metric $g_{\theta}$ associated with the contact form $\theta$? Following the same way as in the Riemannian case, the Webster metric $g_{\theta}$ associated with $\theta$ obeys its scalar curvature which is given by $$
\operatorname{Scal}_{\theta}=u^{-\frac{n+2}{n}}\Big(-\frac{2(n+1)}{n} \Delta_{\theta_0} u+\operatorname{Scal}_{\theta_0} u\Big),
$$
where $\Delta_{\theta_0}$ is the sub-Laplacian with respect to the contact form $\theta_0$ and $\operatorname{Scal}_{\theta_0}=n(n+1)/2$ is the Webster scalar curvature of the Webster metric $g_{\theta_0}$ associated with the contact form $\theta_0$. Clearly, the problem of solving $\operatorname{Scal}_{\theta}=\bar{R}$ is equivalent to finding positive solutions $v$ to the following PDE
\be\label{maineq}
L_{\theta_{0}}v:=-\Delta_{\theta_0}v +\frac{n^2}{4} v=\bar{c}(n) \bar{R} v^{1+\frac{2}{n}},\quad v>0 \quad \text { on }\, \S^{2n+1}.
\ee
Here  $L_{\theta_{0}}$
is  called the  \emph{conformal sub-Laplacian} and transforms
according to the law  $L_{\theta}(\phi)=v^{-\frac{n+2}{n}} L_{\theta_0}(v \phi)$ for any  $\phi\in C^{\infty}(\S^{2n+1})$,
 the  sub-Laplacian operator $\Delta_{\theta_0} $   can be expressed explicitly in coordinates $\zeta=(\zeta_1, \ldots, \zeta_{n+1}) \in \S^{2 n+1}$ by
$$
\sum_{j=1}^{n+1} \frac{\pa^2}{\pa \zeta_j \pa \bar{\zeta}_j}+\sum_{j, k=1}^{n+1} \zeta_j \bar{\zeta}_k \frac{\pa^2}{\pa \zeta_j \pa \bar{\zeta}_k}+ \frac{n}{2}\sum_{k=1}^{n+1}\Big(\zeta_k \frac{\pa}{\pa \zeta_k}+\bar{\zeta}_k \frac{\pa}{\pa \bar{\zeta}_k}\Big),
$$
and $\bar{c}(n)=\frac{n}{2(n+1)}$, see, e.g., \cite{BFM2013,JL1987}. Geller  \cite{G1981} showed that,   for regular function $f: \S^{2 n+1} \to \mathbb{C}$, the function
\be\label{Green}
G(\zeta)=L_{\theta_{0}}^{-1}(f)(\zeta)=c_n\int_{\S^{2 n+1}} \operatorname{dist}(\zeta, \cdot)^{-2n} f(\cdot)
\ee
satisfies $L_{\theta_{0}} G=f$, where $c_n=\frac{2^{n-1} \Gamma(\frac{n}{2})^2}{\pi^{n+1}}$  and  $\operatorname{dist}(\zeta, \eta):=\sqrt{2|1-\zeta \cdot \bar{\eta}|}$  is the distance function on $\S^{2 n+1}$. We also refer to \eqref{Green}  as the \emph{Green's representation}  formula.

 Prescribing  such Webster scalar curvature  on the standard CR sphere can be interpreted as a generalization of the Nirenberg problem, called in this context: the \emph{CR Nirenberg problem}.  We will present more geometric and analytical backgrounds to understand our problem in Section \ref{sec:2.1}. Throughout the paper, we assume $n\geq 1$  without otherwise stated.

Equation \eqref{maineq} proves to be remarkably flexible and difficult to solve.  As we multiply $v$ to \eqref{maineq} and apply integration by parts, it is easy to see a simple necessary condition   is  $\max_{\S^{2n+1}} \bar{R} > 0$, but there are also some obstructions, which are said of \emph{topological type}. For example, a necessary condition is the  following Kazdan-Warner type condition  (see \cite[Theorem B]{C1991}): for any CR vector field $X$ on $\S^{2n+1}$, there holds
\be\label{KW1}
\int_{\S^{2 n+1}}X(\bar{R})  v^{2+\frac{2}{n}} \, \d vol_{\theta_0}=0
\ee
for any positive solution $v$ of  \eqref{maineq},  where  $\d vol_{\theta_0}$ denotes the volume of $\S^{2 n+1}$ with respect to $\theta_0$.
Note that the real or imaginary part of gradient (with respect to $\langle\cdot,\cdot\rangle_{\theta}$ under the Levi form $\theta$) of a bigraded spherical harmonic function $f$ of type $(1, 0)$ or $(0,1)$ is a CR vector field. This implies that \eqref{maineq} is not solvable if $\bar{R} =f+\text{constant}$. Another  difficulty in studying \eqref{maineq} is the lack of compactness due to the presence of the Sobolev critical exponent. A typical phenomenon encountered here is \emph{bubbling blow up}. Bubbles are solutions of \eqref{maineq} with $\bar{R}=1$, these arise as profiles of general diverging solutions and were classified
in \cite{JL1988} under the hypothesis that $v\in L^{2+\frac{2}{n}}$, which is equivalent to having finite volume. From the variational point of view, bubbles generate diverging Palais-Smale sequences for the Euler-Lagrange functional of \eqref{maineq}. Two main approaches have been used to understand the blow up phenomenon:  subcritical approximations (see, e.g., \cite{PR2003}), or the construction of pseudo-gradient flows (see, e.g., \cite{GH2017,GHM2020,SG2011}).	
Both of these two methods will be explored and put into application in  this paper.

The conformal sub-Laplacian operator $L_{\theta_{0}}$ can be seen more concretely when we deform the CR structure to the Heisenberg group, which is a flat CR manifold.   Let us   recall some  basic notions on the Heisenberg group first.

The Heisenberg group $\Hn $ is the Lie group whose underlying manifold is $\Cn   \times \R $  with coordinates $(z, t)$ and whose the group law $\circ$ defined by
$(z, t) \circ (\hat{z},  \hat{t}):=(z+\hat{z},  t+\hat{t}+2\operatorname{Im}(z\hat{z}))$.
 We will always use the notation $\xi=(z, t)=(x,y,t)$ with $z=x+i y$, $x=(x_1,\ldots,x_n)\in\Rn$ and $y=(y_1,\ldots,y_n)\in\Rn$ to denote an element in $\Hn$ and  $(\xi)^k:=\xi \circ \cdots \circ \xi$ to denote $k$-fold composition for simplicity.  Let $Q := 2n+2$ denote the homogeneous dimension of $\Hn$, see also \cite{FS1982}.  We consider
the norm on $\Hn$ defined  by $|\xi|:=(|z|^4 +t^2)^{1/4}$. The corresponding distance on $\Hn $ is defined accordingly by  $d(\xi, \xi_0):=|\xi_0^{-1} \circ \xi|$ for any $\xi,\xi_0\in \Hn$, where $\xi_0^{-1}$ is the inverse of $\xi_0$ with respect to $\circ$, i.e., $\xi_0^{-1}=-\xi_0$. In addition, we will denote by $B_r(\xi_0):=\{\xi\in \Hn:d(\xi_0,\xi)<r\}$ the ball with respect to the distance $d$, of center $\xi$ and radius $r$.

For any fixed $\xi_0 \in \Hn$ we will denote by $\tau_{\xi_0}: \Hn \to \Hn$ the left translation on $\Hn$ by $\xi_0$, defined by
$\tau_{\xi_0}(\xi)=\xi_0 \circ \xi$, while for any $\lam>0$ we will denote by $\delta_\lam: \Hn \to  \Hn$ the dilation defined by $\delta_\lam(\xi):=(\lam z, \lam^2 t)$.

Define the following left invariant vector fields in the  coordinate $(x,y,t)$:
\be\label{vectorfields}
X_j=\frac{\pa}{\pa x_j}+2 y_j \frac{\pa}{\pa t},\quad  Y_j=\frac{\pa}{\pa y_j}-2 x_j \frac{\pa}{\pa t},\quad T=\frac{\pa}{\pa t}. \ee
The Heisenberg gradient, or horizontal gradient, of a regular function
$u$ is then defined by $\nabla_{\H} u:=(X_1 u,\ldots,X_n u,Y_1 u,\ldots,Y_n u)$,
while its Heisenberg Hessian matrix is
\begin{align*}
	\nabla_\H^2 u: & =\left(\begin{array}{cccccc}
		X_1 X_1 u & \cdots & X_n X_1 u & Y_1 X_1 u & \cdots & Y_n X_1 u \\
		\vdots & \ddots & \vdots & \vdots & \ddots & \vdots \\
		X_1 X_n u & \cdots & X_n X_n u & Y_1 X_n u & \cdots & Y_n X_n u \\
		 X_1 Y_1 u & \cdots & X_n Y_1 u & Y_1 Y_1 u & \cdots & Y_n Y_1 u \\
		\vdots & \ddots & \vdots & \vdots & \ddots & \vdots \\
		X_1 Y_n u & \cdots & X_n Y_n u & Y_1 Y_n u & \cdots & Y_n Y_n u
	\end{array}\right).
\end{align*}
The Heisenberg Laplacian  is the trace of the above Heisenberg Hessian matrix, that is
$$L_0=-\Delta_{\H}:=- \sum_{j=1}^n(X_j^2+Y_j^2)=\sum_{j=1}^n\Big( \frac{\pa^2 u}{\pa x_j^2}+\frac{\pa^2 }{\pa y_j^2}+4 y_j \frac{\pa^2 }{\pa x_j \pa t}-4 x_j \frac{\pa^2 }{\pa y_j \pa t}+4(x_j^2+y_j^2) \frac{\pa^2 }{\pa t^2}\Big).$$

The Heisenberg group $\Hn$ is CR equivalent to the sphere $\S^{2n+1}\subset \mathbb{C}^{n+1}$ minus a point via the Cayley transform. The  Cayley transform from $\S^{2n+1} \backslash \{(0,\ldots,0,-1)\}$ to $\Hn$ is the inverse of
\be\label{inverse}
\mathcal{C}:\Hn \to \S^{2n+1} \backslash \{(0,\ldots,0,-1)\},\quad  (z,t) \mapsto\Big(\frac{2 z}{1+|z|^2+i t}, \frac{1-|z|^2-i t}{1+|z|^2+i t}\Big).
\ee
 Then using \eqref{Green} and  \eqref{inverse}, we have
$$
4(L_{\theta_{0}} \phi) \circ \mathcal{C}=(2|J_{\mathcal{C}}|)^{-(Q+2)/(2Q)} L_0((2|J_{\mathcal{C}}|)^{(Q-2)/(2Q)}(\phi \circ \mathcal{C}))\quad \text{ for }\,\phi \in C^{\infty}(\S^{2n+1}),
$$
where  $|J_{\mathcal{C}}|=\frac{2^{2 n+1}}{((1+|z|^2)^2+t^2)^{n+1}}$ is the deteminant of the Jacobian of $\mathcal{C}$.  Here and from now on, we also use the notation $\circ$  to denote the composition mapping of some functions.
Therefore, if we denote $u=(2|J_{\mathcal{C}}|)^{\frac{Q-2}{2 Q}}(v \circ \mathcal{C})$ and $R = \bar{R}\circ \mathcal{C}$, then problem \eqref{maineq} is equivalent to solving
\be\label{maineq1}
-\Delta_{\H} u=R(\xi) u^{(Q+2)/(Q-2)}, \quad u>0 \quad  \text{ in }\,\Hn,
\ee
up to  a harmless positive constant in front of $R(\xi)$.
Similar to \eqref{KW1},  Garofalo-Lanconelli \cite{GL1992} showed  that    a positive solution  $u$ to \eqref{maineq1} in the Sobolev space $E$ (with the notation in \eqref{inner}) satisfies the following identity:
\be\label{KW2}
\int_{\Hn}\langle(z, 2 t), \nabla R(z, t)\rangle u(z, t)^{2Q/(Q-2)} \,\d  z \,\d  t=0,
\ee
provided the integral is convergent and $R$ is bounded and suitably regular. This implies that there are no such solutions if $\langle(z, 2 t), \nabla R(z, t)\rangle$ does not change sign in $\Hn $ and $R$ is not constant.

Prescribing   Webster curvature  on $\S ^{2n+1}$ is a focus of reserach in the past decades and it continues to inspire new thoughts. Recent existence results mainly use
\begin{itemize}
	\item the 	Webster scalar curvature flow method (e.g., \cite{Ho20151,Ho20152,Ho2016,Ho2020}),	
	\item the reduced finite dimension variational method  (e.g., \cite{CPY2013,MU2002,FU2004}), or
\item 	the critical points at infinity method (e.g., \cite{GH2017,GHM2020,SG2011,RG2012,RG2012-2,R2013,R2012}).
\end{itemize}
The majority of these require the solution set to be uniformly bounded.
The main objective of this paper is  to include a larger class of functions $\bar{R}$ such that problem \eqref{maineq} is solvable.   Moreover,    the number of multi-bump solutions to \eqref{maineq} will be investigated, subject to some local hypotheses regarding the prescribed function $\bar{R}$.
Basically speaking, we demonstrate the Webster scalar curvature  functions of contact forms  conformal to $\theta_0$ are $C^{0}$-dense among bounded functions which are positive somewhere by constructing  multi-bump solutions to the  perturbed equations.  As a variation of this idea, the related problem \eqref{maineq1}  with $R(\xi)$ being periodic with respect to left translation are also studied and infinitely many multi-bump solutions (modulo left translations by its periods) are obtained under certain  flatness conditions.
\subsection{Main results}
We now list the main results of this paper and some remarks on them.
The first one deals with the existence of multi-bump solutions to the perturbed \emph{CR Nirenberg problem}.
\begin{thm}\label{thm:1}
  Let  $\bar{R}\in L^\infty(\S ^{2n+1})$ be a given function.   Suppose that there exists a point  $q_0\in \S ^{2n+1}$  such that
 $\bar{R}(q_0)>0$ and  $\bar{R}$ is continuous in a geodesic ball $B(q_0,\widetilde{\var })$ for some $\widetilde{\var }>0$.
  Then, for any $\var  \in (0, \widetilde{\var })$, any integers $k \geq 1$ and  $m\geq 2$, there exists a function    $\bar{R}_{\var , k, m}\in L^{\infty}(\S ^{2n+1})$ satisfying  $\bar{R}_{\var , k, m}-\bar{R}\in C^0(\S ^{2n+1})$,  $\| \bar{R}_{\var , k, m}-\bar{R} \|_{C^0(\S ^{2n+1})}<\var $, and $\bar{R}_{\var , k, m}\equiv \bar{R}$ in $\S ^{2n+1}\backslash B(q_0,\var)$.
Furthermore, for any integer $2\leq s \leq m$, the perturbed equation
  \be \label{perturbed}
  -\Delta_{\theta_0}v +\frac{n^2}{4} v=\bar{c}(n) \bar{R}_{\var , k, m} v^{1+\frac{2}{n}},\quad v>0 \quad \text { on }\, \S^{2n+1}
  \ee
  has at least $k$  positive solutions with $s$ bumps. Here we   denote by $B(q,\var)$   the geodesic ball in $\S ^{2n+1}$ with radius $\var$ and center $q$.
\end{thm}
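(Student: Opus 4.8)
The plan is to realise the $s$-bump solutions as critical points of the Euler--Lagrange functional of \eqref{perturbed} lying in a carefully designed neighbourhood of sums of $s$ standard bubbles, and to build the perturbation $\bar R_{\var,k,m}$ by hand so that this neighbourhood traps the negative gradient flow of that functional. First, via the Cayley transform \eqref{inverse} one reads the equation near $q_0$ in Heisenberg coordinates, where \eqref{perturbed} becomes \eqref{maineq1} with a bounded $R=\bar R\circ\mathcal C$ that is continuous and positive near the image of $q_0$, say $R(q_0)=M>0$. Shrinking $\widetilde{\var}$, continuity gives $|\bar R-M|<\var/4$ on $B(q_0,\var)$ for every $\var\in(0,\widetilde{\var})$, which leaves room for an inner perturbation of size $\le 3\var/4$. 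We let $\bar R_{\var,k,m}$ equal $\bar R$ outside $B(q_0,\var)$, equal $M$ plus a small well pattern (described next) on $B(q_0,\var/4)$, and interpolate smoothly on the annulus in between; then automatically $\bar R_{\var,k,m}-\bar R\in C^0(\S^{2n+1})$, $\|\bar R_{\var,k,m}-\bar R\|_{C^0(\S^{2n+1})}<\var$ and $\bar R_{\var,k,m}\equiv\bar R$ on $\S^{2n+1}\setminus B(q_0,\var)$. Solutions are sought as critical points of
\[
J(v)=\tfrac12\int_{\S^{2n+1}}\Big(|\nabla_{\theta_0}v|^2+\tfrac{n^2}{4}v^2\Big)\,\d vol_{\theta_0}-\frac{\bar c(n)}{2+2/n}\int_{\S^{2n+1}}\bar R_{\var,k,m}\,(v_+)^{2+\frac2n}\,\d vol_{\theta_0},
\]
equivalently (after passing to $\Hn$) of the corresponding Folland--Stein functional on $E$; positivity of the resulting $v$ follows from the maximum principle once a nonzero critical point is found, and the solution transfers back to $\S^{2n+1}$.

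\emph{The well pattern.} Rescaling $B(q_0,\var/4)$ to a fixed-size gauge ball (legitimate because in the end the bubbles concentrate at scale $\lam^{-1}\to0$, much finer than any fixed separation), fix an integer $N=N(k,m)$ with $\binom{N}{s}\ge k$ for every $2\le s\le m$, together with $N$ points $\xi_1,\dots,\xi_N$ in that ball pairwise at gauge distance $\ge d_0$ for a fixed small $d_0$. Put $\bar R_{\var,k,m}=M-\sigma$ on the ball, except on disjoint small gauge balls $B(\xi_j,\rho_0)$ where it rises back to $M$ with one and the same prescribed profile near each tip $\xi_j$, so that every $\xi_j$ is a strict local maximum of $\bar R_{\var,k,m}$, of the flatness/degeneracy order that makes the finite-dimensional reduced energy of a single bubble concentrating at $\xi_j$ have a strict interior minimum in its scale and centre parameters; here $0<\sigma\le\var/4$ and $\rho_0$ is fixed and small. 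Since all the wells are identical, none is energetically preferred --- this is precisely what will allow \emph{every} choice of $s$ of them to host an $s$-bump solution.

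\emph{Variational gluing.} Fix $2\le s\le m$ and a subset $\{\xi_{j_1},\dots,\xi_{j_s}\}$. Writing $\delta_{a,\lam}$ for the standard bubble (the Jerison--Lee extremal transplanted to the sphere), consider
\[
\mathcal N=\Big\{\,v\ :\ \big\|v-\textstyle\sum_{i=1}^{s}\alpha_i\,\delta_{a_i,\lam_i}\big\|<\eta_0,\ \ a_i\in B(\xi_{j_i},r_0),\ \ \lam_i\ge\Lambda_0,\ \ |\alpha_i-M^{-n/2}|<\eta_0\,\Big\},
\]
a neighbourhood of normalised sums of $s$ bubbles anchored at the chosen wells. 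One runs the negative (pseudo-)gradient flow of $J$ from inside $\mathcal N$ and proves it cannot leave $\mathcal N$: (i) two bubbles whose centres stay $\ge d_0$ apart interact only at lower order than the energy change their collision would need, so they neither collide nor merge; (ii) no centre $a_i$ can exit $B(\xi_{j_i},r_0)$, because $\xi_{j_i}$ is a strict local maximum of $\bar R_{\var,k,m}$; (iii) each scale $\lam_i$ stays bounded and bounded away from collapse, because the designed well profile makes the reduced energy coercive in the $\lam_i$'s; (iv) no secondary concentration or vanishing occurs, since by the refined bubbling analysis a Palais--Smale sequence in the relevant energy window is, through a localised Pohozaev identity and the classification of bubbles in \cite{JL1988}, asymptotic to exactly such a sum of $s$ bubbles. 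A deformation argument (equivalently, minimising $J$ over $\mathcal N$) then produces a critical point $v\in\mathcal N$; by the maximum principle $v>0$, and by construction it is an $s$-bump positive solution of \eqref{perturbed}.

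\emph{Counting, and the main obstacle.} Critical points obtained from two different $s$-subsets of $\{\xi_1,\dots,\xi_N\}$ are distinct, because their blow-up sets are the corresponding subsets; since $\binom{N}{s}\ge k$, this yields at least $k$ positive $s$-bump solutions of \eqref{perturbed} for every $2\le s\le m$. In particular each such $\bar R_{\var,k,m}$ is the Webster scalar curvature of $v^{2/n}\theta_0$, whence the announced $C^0$-density. The main obstacle is Step (iii)--(iv) above: proving that the negative gradient flow never leaves $\mathcal N$, i.e.\ the interaction and neck estimates that rule out bubble collision, vanishing, or secondary concentration, together with the fine control of the reduced energy in the concentration scales. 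This is exactly where the prescribed flatness of the wells and the Pohozaev identity are indispensable and where the loss of compactness inherent to \eqref{maineq} must be defeated; carrying these estimates out on the closed sphere rather than on the flat model $\Hn$, and tracking the passage between the two pictures, is a further source of technical care.
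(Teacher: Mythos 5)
Your plan shares the paper's geometric architecture — transplant the problem to $\Hn$ via the Cayley transform, plant $N$ identical local maxima ("wells") of the perturbed curvature near the image of $q_0$ with $\binom{N}{s}\ge k$ for all $2\le s\le m$, and realise each $s$-element subset of wells as the blow-up set of a distinct $s$-bump solution by working in a neighbourhood of normalised sums of $s$ bubbles. That matches Section~\ref{sec:5}. However, the decisive step in your proposal — ``a deformation argument (equivalently, minimising $J$ over $\mathcal N$) then produces a critical point $v\in\mathcal N$'' — is not a valid deduction as stated, and you in fact flag it yourself as the ``main obstacle.'' At the critical exponent $I_R$ does not satisfy the Palais--Smale condition, so trapping the flow in a $V(k,\var)$-type open set gives a Palais--Smale sequence but not a critical point: the infimum can ``escape to infinity'' by concentration even if every flow line stays inside $\mathcal N$. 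Without an extra mechanism that converts trapping into compactness, the argument does not close.

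The paper resolves exactly this gap with a two-stage scheme that your proposal omits. First, Theorem~\ref{thm:4.1} runs the Coti Zelati--Rabinowitz/Bahri--Coron minimax for the \emph{subcritical} approximation
\[
-\Delta_\H u = R_l(\xi)\,H^{\tau} u^{(Q+2)/(Q-2)-\tau},\qquad 0<\tau<\overline\tau_l,
\]
for which Palais--Smale \emph{does} hold and the gradient-flow deformation yields genuine critical points in $V_l(m,\var)$; the neck estimates (Propositions~\ref{prop:2.1}--\ref{prop:2.4}), the exterior minimisation (Proposition~\ref{prop:3.1}), and the lower gradient bound (Proposition~\ref{prop:4.2}) are what make the cut-and-paste $U_l\mapsto W_l\mapsto H_l$ deformation land back in $\Gamma_l$. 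Second, Proposition~\ref{prop:5.1} sends $\tau\to 0$ and shows via the Pohozaev identity \eqref{Pohozaevid}, the isolated simple blow-up classification of Prajapat--Ramaswamy, and the refined boundary estimate of Lemma~\ref{lem:positivepoho}(ii) that the subcritical solutions \emph{cannot} blow up; this is where the flatness exponent $\beta\in(Q-2,Q)$ in $(R_2)$ enters and why your vague ``flatness/degeneracy order that makes the reduced energy coercive'' needs to be made precise.

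There is a second missing ingredient. The paper's profile $\psi$ is chosen so that $\mathcal X(\psi)<0$ for $\xi\neq 0$, which by the Kazdan--Warner identity \eqref{KW2} forces the limiting one-bubble problem
$-\Delta_\H u=\psi\,|u|^{4/(Q-2)}u$
to have \emph{no} nontrivial solution in $E$. This non-existence is exactly the hypothesis
\[
\big\{ u: I_{R_\infty^{(i)}}'(u)=0,\ u>0,\ c^{(i)}\le I_{R_\infty^{(i)}}(u)\le c^{(i)}+\delta_2 \big\}\cap V(1,\delta_3,O^{(i)},R_\infty^{(i)})=\emptyset
\]
of Theorem~\ref{thm:4.1} and Proposition~\ref{prop:5.1}: it rules out the scenario in which the minimax sequence stabilises on a single-bubble critical point of the limit functional and never produces a genuine $s$-bump configuration. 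Your ``identical wells rising back to $M$'' construction does not address this; merely having strict local maxima is not enough, because the Kazdan--Warner obstruction, not coercivity of a reduced energy, is what forbids the residual concentration. In short, the proposal needs (i) the subcritical relaxation so that critical points actually exist, (ii) the Pohozaev-based blow-up exclusion to pass to the limit, and (iii) the $\mathcal X(\psi)<0$ / Kazdan--Warner non-existence input for the gluing hypotheses. Without these the argument remains a heuristic.
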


By applying the Kazdan-Warner type condition \eqref{KW1}, we know that one cannot expect to perturb any $\bar{R}$ near any point $\zeta \in \S ^{2 n+1}$ in the sense of $C^{1}$ in order to obtain the existence of solutions.   For the precise  meaning of \emph{$s$ bumps}, see the  proof of Theorem \ref{thm:1} in Section \ref{sec:5}.	Roughly speaking,  a solution is said to have  $s$ \emph{bumps}  when the majority of its mass is concentrated in $s$ disjoint regions.  As both the number of \emph{bumps} and the number of solutions can be chosen arbitrarily, we can conclude the existence of infinitely many multi-bump solutions to equation \eqref{perturbed}.

The main feature of Theorem \ref{thm:1}  is that, even if a  given bounded function $\bar{R}$ which is positive somewhere  cannot be realized as the Webster scalar curvature of a contact form $\theta$ conformal to $\theta_{0}$,  nevertheless we can find a function $\bar{R}'$ arbitraly close to $\bar{R}$ in $C^{0}(\S ^{2 n+1})$ which is the Webster scalar curvature  as many conformal contact forms to $\theta_{0}$ as we want.  Here we give a  quite general existence result  since we can perturb  any given bounded  function which is positive somewhere   such that for the perturbed equations there exist arbitrarily many solutions.

As a consequence, we  have
\begin{cor}\label{cor:1}
	The  Webster scalar curvature  functions of contact forms  conformal to $\theta_0$ are dense in $C^{0}(\S^{2n+1})$ among bounded functions which are positive somewhere.
\end{cor}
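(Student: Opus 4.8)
The plan is to obtain Corollary \ref{cor:1} as an immediate consequence of Theorem \ref{thm:1}: all of the analysis is already packed into that theorem, and only a short translation between solutions of \eqref{perturbed} and conformal contact forms is needed.

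Fix a function $\bar R$ that is bounded on $\S^{2n+1}$ and positive somewhere. Since the statement concerns $C^{0}$-density, we work in $C^{0}(\S^{2n+1})$, so $\bar R$ is continuous and, being positive at some point $q_0$, it is continuous on a whole geodesic ball around $q_0$ with $\bar R(q_0)>0$; thus the hypotheses of Theorem \ref{thm:1} hold (with an arbitrary radius $\widetilde{\var}$). Given $\var>0$, shrink it if necessary so that $\var<\widetilde{\var}$, and apply Theorem \ref{thm:1} with, say, $k=1$ and $m=s=2$. This produces $\bar R_\var:=\bar R_{\var,1,2}\in L^{\infty}(\S^{2n+1})$ with $\bar R_\var-\bar R\in C^{0}(\S^{2n+1})$, $\|\bar R_\var-\bar R\|_{C^{0}(\S^{2n+1})}<\var$, and such that \eqref{perturbed} with $\bar R_{\var,k,m}$ replaced by $\bar R_\var$ admits a positive solution $v$ (with two bumps, though the bump count is irrelevant here). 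Moreover $\bar R_\var$ is again continuous, since $\bar R$ is and $\bar R_\var-\bar R\in C^{0}(\S^{2n+1})$.

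It remains to see that $v$ realizes $\bar R_\var$ as a Webster scalar curvature. Put $\theta:=v^{2/n}\theta_0$, a contact form conformal to $\theta_0$; subelliptic regularity for $-\Delta_{\theta_0}$ with right-hand side in $L^{\infty}$ ensures $v$ is regular enough for $\theta$ to be admissible. Multiplying $-\Delta_{\theta_0}v+\frac{n^2}{4}v=\bar c(n)\bar R_\var v^{1+\frac{2}{n}}$ by $\frac{2(n+1)}{n}$ and using $\bar c(n)=\frac{n}{2(n+1)}$ and $\operatorname{Scal}_{\theta_0}=\frac{n(n+1)}{2}$ gives $-\frac{2(n+1)}{n}\Delta_{\theta_0}v+\operatorname{Scal}_{\theta_0}v=\bar R_\var v^{1+\frac{2}{n}}$, whence $\operatorname{Scal}_\theta=v^{-\frac{n+2}{n}}\bigl(-\frac{2(n+1)}{n}\Delta_{\theta_0}v+\operatorname{Scal}_{\theta_0}v\bigr)=\bar R_\var$. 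Hence $\bar R_\var$ is the Webster scalar curvature of the conformal contact form $\theta$, and $\|\bar R_\var-\bar R\|_{C^{0}(\S^{2n+1})}<\var$; since $\var$ is arbitrary, the collection of such curvature functions is $C^{0}$-dense among the bounded functions that are positive somewhere.

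The corollary carries no genuine obstacle of its own — the whole difficulty sits in the construction of the multi-bump solutions of Theorem \ref{thm:1}. The only things to get right are the algebraic bookkeeping ensuring that a solution of \eqref{perturbed} yields exactly $\operatorname{Scal}_\theta=\bar R_\var$ with no spurious constant, and the observation that the local-continuity hypothesis of Theorem \ref{thm:1} is automatically available for the class of functions being approximated, so that the theorem may be invoked at all.
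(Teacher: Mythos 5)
Your proof is correct and follows the paper's approach: both deduce the corollary directly from Theorem \ref{thm:1} by observing that a continuous $\bar R$ which is positive somewhere satisfies its hypotheses, and that the resulting $\bar R_{\var,k,m}$ is then continuous and, being realized by a positive solution $v$ of \eqref{perturbed}, is the Webster scalar curvature of $v^{2/n}\theta_0$. The paper's own proof is terser (it simply notes that when $\bar R$ is smooth the construction in the proof of Theorem \ref{thm:1} yields a correspondingly regular perturbation), but the conformal-factor bookkeeping you spell out---multiplying by $\tfrac{2(n+1)}{n}$ and cancelling $v^{-\frac{n+2}{n}}$ against $v^{1+\frac{2}{n}}$---is exactly the implicit step there.
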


Next we consider the related problem \eqref{maineq1}. Before  stating the  results, we introduce some notations.

Let $E$ be the completion of the space $C_c^\infty(\Hn)$ with respect to the  $\|\cdot\|$ norm introduced by the scalar  product
\be\label{inner}
\langle u,v\rangle: = \int_{\Hn} \nabla_{\H}u \nabla_{\H}v \, \d z\,\d t.
\ee    Whenever there is no risk of misunderstanding, we suppress $\d z\,\d t $ from
the integration expressions on domains in $\Hn$  and omit the  integral region   if it is $\Hn$.

When $R\equiv 1$, all solutions of \eqref{maineq1} satisfying the finite energy assumption $u\in L^{2Q/(Q-2)}(\Hn)$   have been classified by  Jerison-Lee \cite{JL1988} and are given by
\be\label{bubbles}
w_{a, \lam }:=\lam ^{(2-Q)/2}w_{0,1}\circ \delta_\lam  \circ \tau_{a^{-1}},
\ee  for any $a \in \Hn$ and $\lam  >0$,  where $ w_{0,1}(z, t)=c_0(t^2+(1+|z|^2)^2)^{(2-Q) / 4}$ with  $c_0>0$ being a suitable constant  depends only on $n$. Similar classification result  has been obtained in  \cite{GV2001}
under the assumption of cylindrical symmetry. Recently,  Catino, Li, Monticelli and Roncoroni \cite{CLMR2023} proved  a classification of all positive solutions  in $\H^1$  and a classification of positive solutions when $n\geq 2$ that satisfy
a suitable decay condition at infinity, which is weaker than finite energy assumption. Inspired by \cite{CLMR2023},  Afeltra \cite{Afeltra2024}  obtained a compactness result for the CR Yamabe problem in  dimension three.

Denote the  Sobolev critical exponent $Q^*:=\frac{2Q}{Q-2}$.  It is well-known (see \cite{JL1987}) that $E$ can be embedded into $L^{Q^*}(\Hn)$ and the sharp Sobolev inequality (or Folland-Stein inequality \cite{FS1974}) is
\be\label{Sobolevineq}
S_n \Big(\int |u|^{Q^*}\Big)^{1 / Q^*} \leq \Big(\int |\nabla_{\H} u|^2\Big)^{1 / 2},
\ee
where $S_{n}=\frac{ 2n \sqrt{\pi}}{(2^{2 n} n !)^{1 /(2(n+1))}}$ is the best constant. Then for every $(a,\lam)\in \Hn\times (0,\infty)$,  $w_{a, \lam }$ is the solution to
\eqref{maineq1} with $R\equiv 1$. Moreover, the functions in \eqref{bubbles} and its non-zero constant multiples attain the sharp Sobolev inequality \eqref{Sobolevineq} and such functions are usually called \emph{Jerison-Lee's bubbles}.

Let $R \in L^\infty(\Hn)$, we define the energy functional $I_R:E\to \R$ by
$$I_{R}(u) = \frac{1}{2}\int |\nabla_{\H }u|^2 - \frac{1}{Q^*} \int R |u|^{Q^*}.
$$
Obviously a positive critical point gives rise to a positive solution to \eqref{maineq1}.

Let $R(\xi)\in L^{\infty}(\Hn)$,  $O^{(1)}, \ldots, O^{(k)} \subset \Hn$ are some open sets with $\operatorname{dist}(O^{(i)}, O^{(j)}) \geq 1$  for any $ i \neq j$. If $R \in C^{0}(\cup_{i=1}^{k} O^{(i)})$,
we define $V(k, \var):=V(k, \var, O^{(1)}, \ldots, O^{(k)}, R)$  as the following open set in  $E$ for $\var>0$:
\be\label{bumps}\begin{aligned}
	V(k, \var):=\Big\{
	u\in E:&\,\exists \,\al=(\al_{1}, \ldots, \al_{k}) \in \mathbb{R}^k,\, \exists\, \xi=(\xi_{1}, \ldots, \xi_{k}) \in O^{(1)}\times \ldots \times O^{(k)}, \\&\,\exists\,\lam =(\lam _{1}, \ldots, \lam _{k}),\,\lam _{i}>\var^{-1},\,\forall \,i\leq k, \text{ such that}\\ &\, |\al_{i}-R(\xi_{i})^{(2-Q)/4}|<\var,\, \forall \,i\leq k,\text{ and}\, \Big\|u-\sum_{i=1}^{k} \al_{i} w_{\xi_i,\lam_i}\Big\|<\var
	\Big\}.\end{aligned}
\ee
 The open set  $V(k, \var)$ recodes the information of the concentration rate and the locations of concentration points, it also describes the neighborhood of \emph{potential critical points at infinity}.

Recently,  there have been some works devoted to the existence results via
studying the flatness condition effect, see, e.g., \cite{GH2017,GHM2020,SG2011,RG2012,RG2012-2}.  Here we will adopt the flatness hypothesis introduced in \cite{PR2003}, which is modified from \cite{Li1995}.

\textbf{Flatness condition:}	For any real munber $\beta>1$, we say that a sequence $\{R_i\}$ of functions  satisfies  condition $(*)_{\beta}$ for some sequence of constants  $\{ L_1(\beta, i) \}$, $\{ L_2(\beta, i) \}$ in some region $\Om_i\subset \Hn$ if  $\{R_i\} \in C^{[\beta]-1,1}(\Om _i)$ satisfies
$$	\| \nabla R_i \|_{C^0(\Om _i)} \leq L_1(\beta, i)
	$$
	and, if $\beta\geq 2$,
	$$
	|\nabla^s R_i(\xi)| \leq L_2(\beta, i)|\nabla R_i(\xi)|^{(\beta-s)/(\beta-1)}
	$$
	for all $2\leq s \leq [\beta]$, $\xi \in \Om _i$, $\nabla R_i(\xi) \neq 0$. Here and in the following, $\nabla^{s}$  denotes all possible partial derivatives of order $s$.

For $1 \leq j \leq 2 n$, we denote
$$
L_j=\left\{\begin{aligned}
&X_j,&&\text{ if }\,1\leq j\leq n,\\&Y_{j-n}, &&\text{ if }\, n+1\leq j\leq 2n,
\end{aligned}\right.
$$
where $X_j$, $Y_j$ are the left invariant vector fileds defined by \eqref{vectorfields}.
Let $\mathscr{B}_k=\{L_{a_1} \cdots L_{a_j}: 1 \leq a_i \leq 2 n, i=1, \ldots, j, j \leq k\}$ and $\mathscr{A}_k$ be the  linear span over $\mathbb{C}$ of $\mathscr{B}_k \cup\{\operatorname{Id}\}$.

Let $\Om\subset \Hn$ be an open set.  Using the notations  in  Folland-Stein \cite{FS1974}, we define the nonisotropic Lipschitz space $\Gamma_{\beta}(\Om)$ as follows. If $\beta \in (0,1)$, define
$$\Gamma_{\beta}(\Om)=\Big\{f\in L^{\infty}(\Om)\cap C^0(\Om):\sup_{\xi,\zeta\in \Om}\frac{|f(\xi)-f(\xi\circ\zeta)|}{|\zeta|^{\beta}}<\infty\Big\}.$$
If $\beta=1$, define $$\Gamma_{1}(\Om)=\Big\{f\in L^{\infty}(\Om)\cap C^0(\Om):\sup_{\xi,\zeta\in \Om}\frac{|f(\xi\circ\zeta)-2f(\xi)+f(\xi\circ\zeta^{-1})|}{|\zeta|}<\infty\Big\}.$$
If $\beta=k+\al$ with $k\in \mathbb{N}^+$  and $\al\in (0,1)$, define
$$\Gamma_{k+\al}(\Om)=\Big\{f\in L^{\infty}(\Om)\cap C^0(\Om):\mathscr{L}f\in \Gamma_{\al}(\Om)\,\text{ for $\mathscr{L}\in \mathscr{B}_k$} \Big\}.$$
  We can also define Lipschitz space $\Gamma_{\beta}$ on CR manifold in terms of the normal coordinates, see \cite{FS1974}.
  Note that we can  identify $\Hn$ with its Lie algebra which is Euclidean
space $\R^{2n+1}$ with the Euclidean norm $|\cdot|$ and the linear coordinates $x_j$ via the exponential map. Hence, we are able to  discuss the  usual smooth space $C^k$ for $0\leq k\leq \infty$. We refer to \cite{FS1974,F1975} for more details and regularity results.


The family of solutions we constuct is of the form (after using the CR equivalence for  \eqref{maineq}) $u=\sum_{i=1}^{k} \alpha_{i} w_{\xi_i,\lam_i}+v$, where the contribution of the error term $v$ can be  negligible. Moreover, the multi-bump solutions concentrate  near some critical points of $R(\xi)$ and the bumps can be chosen  arbitrarily many.  For this purpose, we assume that  $R(\xi)\in  \Gamma_{2+\al}(\Hn)$ satisfies the following conditions:
 \begin{itemize}
\item[$(R_1)$]  $R(\xi)$ is periodic in some $\hat{\xi}\in \Hn$ with respect to left translation, that is, $R(\hat{\xi}\circ \xi) = R(\xi)$, $\forall\,\xi \in \Hn$.
\item[$(R_2)$]  Let $\Sigma$ be the set of the critical points $\bar{\xi}$ of $R(\xi)$ satisfying: there exists some real number $\beta\in (Q-2,Q)$ such that near 0, 
$$
  \mathcal{R}(\xi)=\mathcal{R}(0) + \sum_{j=1}^{n}(a_i|x_j|^\beta +b_j|y_j |^\beta) +c|t|^{\frac{\beta}{2}} +P(\xi),
$$
 where  $\mathcal{R}(\xi):=R(\overline{\xi}\circ \xi)$,  $a_i, b_i, c$ are some non-zero constants depending on $\bar{\xi}$,  $\sum_{j=1}^{n}(a_j +b_j)+\kappa c \neq 0$ with $$\kappa=\frac{\int|x_1|^\beta w_{0,1}^{2Q/(Q-2)}}{\int |t|^{\frac{\beta}{2}}w_{0,1}^{2Q/(Q-2)}},$$  and  $P(\xi)$ is $C^{[\beta]-1,1}$ (up to $[\beta]-1$ derivatives are Lipschitz functions, $[\beta]$ denotes the integer part of $\beta$) near 0 and satisfies $$\sum_{s=0}^{[\beta]}|\nabla^sP(\xi)||\xi|^{-\beta+s} =o(1)\quad \text{as $\xi$ tends to  0.}$$
 \end{itemize}
\begin{rem}
Condition $(R_2)$ guarantees that $R$ satisfies  condition $(*)_{\beta}$ in a neighborhood of 0.  Notably, the range  $\beta\in (Q-2,Q)$ is a technical hypothesis to do blow up analysis based on the earlier work in  \cite{PR2003}, where a sequence of solutions can not blow up at more than one point. We also conjecture that if  $\beta=Q-2$,  the phenomenon of multiple blowups   would occur, as shown in \cite{Li1996}.
\end{rem}

We now establish the existence of multi-bump solutions to problem \eqref{maineq1}.
\begin{thm}\label{thm:2}
Assume that $R \in \Gamma_{2+\al}(\Hn)$ satisfies $(R_1),(R_2)$ and
 \begin{itemize}
\item[$(R_3)$] $R_{\max} := \max_{\xi \in \Hn}R(\xi)>0$ is achieved, and $R^{-1}(R_{\max}):=\{ \xi \in \Hn: R(\xi) = R_{\max}\}$ has at least one bounded connected component, denoted as $\mathscr{C}$.
 \end{itemize}
Then for any integer $m \geq 2$, \eqref{maineq1} has infinitely many $m$-bump solutions in $E$.
More precisely, for any $\var >0$, $\xi^* \in \mathscr{C}$ and integer $m \geq 2$, there exists a constant $l^*>0$ such that for any integers $l^{(1)}, \ldots , l^{(k)}$ satisfying $2 \leq k \leq m$ and the conditions $\min_{1 \leq i \leq k}|l^{(i)}|, \min_{i \neq j}|l^{(i)}-l^{(j)}|\geq l^*$, there exists at least one solution  $u$ of \eqref{maineq1} in $V(k, \var , B_\var (\xi^{(1)}), \ldots , B_\var (\xi^{(k)}))$ with $kc-\var  \leq I_R(u) \leq kc+\var $,
where \begin{gather*}
c = (R(\xi^*))^{(2-Q)/2}(S_n)^Q/Q, \quad  \xi_l^{(i)} = (\hat{\xi})^{l^{(i)}} \circ\xi^*,
\end{gather*} and $V(k, \var , B_\var (\xi^{(1)}), \ldots , B_\var (\xi^{(k)}))$ are some sets of $E$ defined  according to \eqref{bumps}.
\end{thm}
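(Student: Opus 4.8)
The plan is to work variationally with $I_R$ on $E$ and to locate, for each admissible tuple $(l^{(1)},\dots,l^{(k)})$, a critical point of $I_R$ inside the open set $V(k,\var):=V(k,\var,B_\var(\xi_l^{(1)}),\dots,B_\var(\xi_l^{(k)}),R)$; varying the tuple then produces the multiplicity. Since the prescribed concentration points $\xi_l^{(i)}=(\hat{\xi})^{l^{(i)}}\circ\xi^*$ are translates of $\xi^*\in\mathscr{C}$ by powers of the period, the periodicity $(R_1)$ gives $R(\xi_l^{(i)})=R(\xi^*)$, which equals $R_{\max}$ because $\xi^*\in\mathscr{C}\subset R^{-1}(R_{\max})$; moreover $(R_1)$ makes the set $\Sigma$ of $(R_2)$ invariant under $\tau_{\hat{\xi}}$, so each $\xi_l^{(i)}\in\Sigma$ and the normal form of $(R_2)$ holds at $\xi_l^{(i)}$ with the same data as at $\xi^*$. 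Choosing $l^*$ large forces the balls $B_\var(\xi_l^{(i)})$ to be pairwise separated by distance $\ge 1$, so $V(k,\var)$ is well defined.

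The first step is the energy expansion on $V(k,\var)$. For $u=\sum_{i=1}^{k}\alpha_i w_{\xi_i,\lam_i}+v$, with $v$ in the orthogonal complement of the $Q=2n+2$ bubble directions $\{w_{\xi_i,\lam_i},\ \pa_{\lam_i}w_{\xi_i,\lam_i},\ \pa_{\xi_i}w_{\xi_i,\lam_i}\}$, a careful expansion — using the decomposition $\int R\,w_{\xi_i,\lam_i}^{Q^*}=R_{\max}\int w_{0,1}^{Q^*}+c_1\mathcal{D}(\xi_i)\lam_i^{-\beta}+o(\lam_i^{-\beta})$ that comes from $(R_2)$ (this is exactly where the flatness and the range $\beta\in(Q-2,Q)$ enter, and $\mathcal{D}(\xi_i)$ is governed by the sign of $\sum_{j=1}^{n}(a_j+b_j)+\kappa c$), the Green representation \eqref{Green}, and the Folland--Stein inequality \eqref{Sobolevineq} — yields schematically, once $v$ is controlled through the projected equation,
\[
I_R(u)=kc+c_1\sum_{i=1}^{k}\frac{\mathcal{D}(\xi_i)}{\lam_i^{\beta}}-c_2\!\!\sum_{1\le i\ne j\le k}\!\!\var_{ij}+o\Big(\sum_i\lam_i^{-\beta}+\!\!\sum_{i\ne j}\!\!\var_{ij}\Big)+\mathcal{Q}(v,v),
\]
where $\var_{ij}\asymp(\lam_i\lam_j)^{(2-Q)/2}d(\xi_i,\xi_j)^{-(Q-2)}$ is the interaction of the $i$-th and $j$-th bubble, $c_1,c_2>0$, and $\mathcal{Q}$ is positive definite after the bubble directions are projected off. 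Reading $I_R$ as a function $\Psi$ of $(\alpha,\xi,\lam)$: it is a strict local maximum in the amplitudes $\alpha_i$ (the $\sup_t I_R(tu)$ structure), a strict local minimum in the positions $\xi_i$ (the leading term $\tfrac{S_n^Q}{Q}R(\xi_i)^{(2-Q)/2}$ is minimized at the maximum point $\xi_l^{(i)}$), and — because $\beta>Q-2$ makes the self-interaction $\lam_i^{-\beta}$ and the mutual interaction $\var_{ij}$ balance precisely at $\lam_i\asymp d(\xi_i,\xi_j)^{(Q-2)/(\beta-Q+2)}$ — a strict local minimum in the scales $\lam_i$ at a value $\lam_i^*$ that is large once $l^*$ is large. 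The resulting critical value of $\Psi$ lies in $(kc-\var,kc+\var)$ and the configuration sits in the interior of $V(k,\var)$ (in particular $\lam_i^*>\var^{-1}$).

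To upgrade this reduced saddle to a genuine critical point of $I_R$, I would run a min--max deformation argument inside $V(k,\var)$, with the min taken over the scale/position directions and the max over the amplitude directions. One constructs a negative pseudo-gradient vector field for $I_R$ that is tangent to $\pa V(k,\var)$ along its degenerate faces (two bubbles about to merge, some $\lam_i$ dropping to $\var^{-1}$, some $\alpha_i$ leaving its window, or $\|v\|$ reaching $\var$); on each such face the expansion above exhibits either an energy barrier $I_R>kc+\var$ or a strict inward-pointing component, so the flow is confined to $V(k,\var)$. To secure the compactness needed for the min--max level to be critical one first works with the subcritical functionals $I_{R,\tau}(u)=\tfrac12\int|\nabla_{\H}u|^2-\tfrac{1}{Q^*-\tau}\int R|u|^{Q^*-\tau}$ — whose Palais--Smale sequences in $V(k,\var)$ are precompact — obtaining solutions $u_\tau\in V(k,\var)$ with $I_{R,\tau}(u_\tau)$ converging to the saddle level, and then lets $\tau\to0$. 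For this passage a refined bubbling analysis in the spirit of \cite{PR2003} is invoked: $(R_2)$ with $\beta\in(Q-2,Q)$ rules out $u_\tau$ concentrating at more than one point inside any $B_\var(\xi_l^{(i)})$, and the Garofalo--Lanconelli / local Pohozaev identity \eqref{KW2} forces the residual concentration to occur exactly at the $k$ prescribed points with the prescribed energy, so the limit $u\in V(k,\var)$ is a genuine $k$-bump solution of \eqref{maineq1} with $kc-\var\le I_R(u)\le kc+\var$. Since $u\ge 0$ solves \eqref{maineq1} with $R\in L^\infty$, subelliptic regularity and the Harnack inequality on $\Hn$ give $u>0$.

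Finally, the multiplicity. There are infinitely many tuples $(l^{(1)},\dots,l^{(k)})$ with $2\le k\le m$, $\min_i|l^{(i)}|\ge l^*$ and $\min_{i\ne j}|l^{(i)}-l^{(j)}|\ge l^*$ (for instance $(l^*,l^*+N)$ with $N\ge l^*$); the solution produced for one tuple concentrates near the $k$ distinct points $(\hat{\xi})^{l^{(i)}}\circ\xi^*$, and two solutions coming from tuples with distinct gap patterns $\{l^{(i)}-l^{(j)}\}$ cannot be carried into each other by the symmetry $\tau_{\hat{\xi}}$ (which preserves gaps), hence are genuinely distinct; this yields infinitely many $m$-bump solutions. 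The principal obstacle is the deformation lemma inside $V(k,\var)$: one must establish quantitatively that the pseudo-gradient flow is confined by the interaction term together with the amplitude and scale constraints on every face of $\pa V(k,\var)$, and, in tandem, the compactness (uniform in $\tau$) that turns the min--max value into a critical value. This is precisely where the flatness range $\beta\in(Q-2,Q)$, the non-vanishing of $\sum_{j=1}^n(a_j+b_j)+\kappa c$, the refined bubbling analysis, and the Pohozaev identity \eqref{KW2} are indispensable.
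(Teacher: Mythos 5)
Your proposal takes a genuinely different route from the paper's. The paper derives Theorem \ref{thm:2} by a short contradiction argument that verifies the hypotheses of Proposition \ref{prop:5.1} (via \cite[Theorem 2.1]{PR2003} and Corollary \ref{cor:A.2}) and then invokes it; Proposition \ref{prop:5.1} itself rests on Theorem \ref{thm:4.1}, whose proof is a variational gluing argument in the spirit of Coti Zelati--Rabinowitz and Li. That gluing proof never writes down a reduced functional $\Psi(\alpha,\xi,\lambda)$ with an explicit interaction expansion: it works with paths $g^{(i)}(\theta_i)$ whose supports are confined to disjoint balls $B_{S_l}(\xi_l^{(i)})$, defines a product minimax level $b_{l,\tau}$, proves $b_{l,\tau}=\sum_i c_{l,\tau}^{(i)}+o(1)$, and reaches a contradiction by cutting off a near-optimal deformed path using the decay estimates of Propositions \ref{prop:2.1}--\ref{prop:2.4} together with the exterior trace minimization of Proposition \ref{prop:3.1}. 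The introduction flags that this machinery is used \emph{precisely to avoid} ``the difficulty appearing in using Lyapunov--Schmidt reduction method to locate the concentrating points of the solutions,'' which is exactly the step your expansion attempts to carry out. What the gluing buys is that one never has to solve the projected equation for $v$ nor locate $\xi_i,\lambda_i$ by a delicate balance; what your route would buy, if it closed, is a more explicit description of the critical points.

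Two points in your sketch need repair. First, the sign bookkeeping in the expansion is wrong in a way that matters: at a maximum of $R$ the flatness coefficients $a_j,b_j,c$ are negative, so the correction $c_1\mathcal{D}(\xi_i)\lambda_i^{-\beta}$ to $\int R\,w^{Q^*}$ is negative, and it is only after optimizing over the amplitude $\alpha_i$ that the corresponding correction to $I_R$ becomes \emph{positive}. You carry the same sign into the formula for $I_R(u)$; with both the flatness term and $-c_2\varepsilon_{ij}$ negative there is no finite interior critical point in $\lambda_i$ and the asserted balance $\lambda_i\asymp d(\xi_i,\xi_j)^{(Q-2)/(\beta-Q+2)}$ evaporates, so the sign flip is load-bearing. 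Second, upgrading the reduced saddle to a genuine critical point of $I_R$ requires solving the orthogonal equation for $v=v(\alpha,\xi,\lambda)$ with uniform control as $\lambda_i\to\infty$ and then showing the resulting errors are subordinate to the principal terms $\lambda_i^{-\beta}$ and $\varepsilon_{ij}$; your sketch gestures at $\mathcal{Q}(v,v)$ being positive definite but does not indicate how to close this step, and this is precisely the reduction the paper's gluing/deformation argument is designed to sidestep. The part of the proposal that does track the paper is the final passage from the subcritical level $\tau>0$ to $\tau=0$ via the Prajapat--Ramaswamy blow-up analysis and the Pohozaev identity: the paper uses exactly these ingredients, packaged through Proposition \ref{prop:5.1} and Corollary \ref{cor:A.2}.
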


From the description of  $(R_3)$ we know that there exists a bounded  neighborhood $O$ of $\mathscr{C}$ such that $R_{\max}\geq \max_{\xi\in\pa O}R(\xi)+\delta$ with $\delta>0$ being a small  constant. This fact together with  $(R_2)$ implies that $R(\xi)$ has a sequence of local maximum points $\xi_j$ with $|\xi_j|\to  \infty$ as $j\to  \infty$. Furthermore,  $(R_3)$ is sharp in the sense that one can construct examples easily to show
that if  $(R_3)$ is not satiesfied, \eqref{maineq1} may have no nontrivial solutions, which shows that $(R_3)$ is
not merely a technical hypothesis, see Example \ref{exa} below.

\begin{exa}[Nonexistence]\label{exa} Suppose that $R(\xi) \in C^{1}(\Hn) \cap L^{\infty}(\Hn)$ and $\nabla_{\H} R$ are bounded in $\Hn$, $X_i R$ is nonnegative but not identically zero. Then the only nonnegative solution of \eqref{maineq1} in $E$ is the trivial solution $u\equiv 0$.
\end{exa}
\begin{proof}
	Let $u\geq 0$ be any  solution of  \eqref{maineq1} in  $E$. By using the Kazdan-Warner condition \eqref{KW1}  we obtain $\int X_i R u^{2Q/(Q-2)}=0$.
	The hypotheses on $R(\xi)$ imply that $u$ is identically zero in an open set, hence $u\equiv 0$ by the  unique continuation results (see, e.g., \cite{GL1992,GL1990}).
\end{proof}
From the definition in \eqref{bumps}	we know that $u\in V(k, \var, B_{\var}(\xi^{(1)}), \ldots, B_{\var}(\xi^{(k)}))$ implies $u$ has most of its mass concentrated in $B_{\var}(\xi^{(1)}), \ldots, B_{\var}(\xi^{(k)})$. In particular, if the tuples $(l^{(1)}, \ldots, l^{(k)})$  and $(\tilde{l}^{(1)}, \ldots, \tilde{l}^{(k)})$ are different, the  solutions $u$ and $\tilde{u}$ are different.

A more comprehensive understanding of the solutions derived in Theorem \ref{thm:2} can be achieved.
\begin{thm}\label{thm:3}
Assume that $R \in L^\infty(\Hn)$ satisfies $(R_1),(R_2)$ and
 \begin{itemize}
\item[$(R_3)'$] there exist a constant $A_1>1$ and a bounded open set $O \subset \Hn$ such that
\begin{gather*}
R \in C^1(\overline{O}),\\
1/A_1\leq R(\xi) \leq  A_1, \quad \forall\, \xi \in \overline{O},\\
\max_{\xi\in \overline{O}}R(\xi) = \sup_{\xi \in \Hn}R(\xi)>\max_{x\in \pa  O} R(\xi).
\end{gather*}
 \end{itemize}
Then for any $\var >0$,  \eqref{maineq1} has infinitely many $m$-bump solutions $u$ in $E$ satisfying
\be \label{8}
  c \leq I_R(u)\leq c+\var  \quad \text{ or } \quad 2c-\var  \leq I_R(u) \leq 2c+\var
\ee
and
$$
  \sup \{\|u\|_{L^\infty(\Hn)}: I_R' (u)=0, u>0, u \in E, u\,\text{satisfies}\,\eqref{8}\}=\infty,
$$
where $$c=(\max_{\xi\in \overline{O}}R(\xi))^{(2-Q)/2}(S_n)^Q/Q.$$
More precisely, for any $\var >0$, there exists $l^*>0$ such that for any integers $l^{(1)}, l^{(2)}$ satisfying $|l^{(1)}-l^{(2)}|\geq l^*$, there exists  at least one solution $u$ of  \eqref{maineq1}  in $V(1, \var , O,R)\cup V(2, \var , O_l^{(1)}, O_l^{(2)}, R)$, where
  $$O_l^{(i)}=(\hat{\xi})^{l^{(i)}}\circ O:=\{(\hat{\xi})^{l^{(i)}}\circ \xi:\xi\in  O\}\quad \text{ for $i=1,2$,}$$ and
 $V(1, \var , O,R)\cup V(2, \var , O_l^{(1)}, O_l^{(2)}, R)$ are some sets of $E$ defined  according to \eqref{bumps}.
\end{thm}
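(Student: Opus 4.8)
The plan is to obtain Theorem \ref{thm:3} as a refinement of the construction behind Theorem \ref{thm:2}, with the new ingredient being a blow-up/Pohozaev argument that forces the $L^\infty$-norms of the constructed solutions to be unbounded along a subsequence. First I would observe that $(R_3)'$ is a slightly weaker, more hands-on substitute for $(R_3)$: the bounded open set $O$ with $\max_{\overline O} R = \sup_{\Hn} R > \max_{\pa O} R$ plays exactly the role of the bounded neighborhood of $\mathscr{C}$ discussed in the remark following Theorem \ref{thm:2}, and combined with $(R_1)$ it produces a sequence of translated ``wells'' $O_l = (\hat\xi)^l\circ O$ in which $R$ has an interior maximum strictly dominating its boundary values. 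Because the construction is by left translation, it suffices to work with $k=1$ in the single well $O$ and with $k=2$ in the pair $O_l^{(1)}, O_l^{(2)}$ (with $|l^{(1)}-l^{(2)}|$ large so the wells are far apart), which is why the energy of the solutions lands in $[c,c+\var]$ or $[2c-\var, 2c+\var]$.

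The core existence step I would carry out is the variational gluing/flow argument already invoked for Theorem \ref{thm:2}: restrict the functional $I_R$ to a neighborhood of the form $V(k,\var, O^{(1)},\dots,O^{(k)},R)$ of a sum of $k$ widely separated, highly concentrated Jerison--Lee bubbles $\sum_i \al_i w_{\xi_i,\lam_i}$ with $\xi_i$ near the interior maxima of $R$ in the respective wells and $\lam_i$ large; show that on the boundary of this neighborhood the energy is strictly below the relevant level (this uses the flatness condition $(R_2)$, the sharp Folland--Stein inequality \eqref{Sobolevineq}, and the strict inequality $\max_{\overline O}R>\max_{\pa O}R$ to push the gradient flow of $I_R$ inward); then a deformation/min-max argument inside $V(k,\var)$, exactly as in the proof of Theorem \ref{thm:2}, yields a genuine critical point $u\in V(k,\var)$ with $kc-\var\le I_R(u)\le kc+\var$. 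Positivity of $u$ follows from the usual maximum-principle/truncation argument since $R>0$ on the support region, and distinctness of the solutions for different tuples $(l^{(1)},l^{(2)})$ follows from the disjointness of the concentration sets $B_\var(\xi_l^{(i)})$, as noted in the paragraph before the theorem.

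The genuinely new claim is the unboundedness of the sup-norms: $\sup\{\|u\|_{L^\infty}: I_R'(u)=0, u>0, u\text{ satisfies }\eqref{8}\}=\infty$. Here I would argue as follows. For each large $l$ the two-bump solution $u_l \in V(2,\var, O_l^{(1)}, O_l^{(2)}, R)$ contains a bubble of concentration rate $\lam_i(l) > \var^{-1}$ located near a maximum of $R$ inside the well $O_l^{(i)}$; since $w_{\xi_i,\lam_i}(\xi_i)\sim c_0\lam_i^{(Q-2)/2}$, it suffices to show $\lam_i(l)\to\infty$ as the wells recede to infinity. If instead $\lam_i(l)$ stayed bounded along some subsequence, then translating back by $(\hat\xi)^{-l^{(i)}}$ (which leaves $R$ invariant by $(R_1)$ and $I_R$ invariant) and using the $\Gamma_{2+\al}$-regularity together with standard elliptic estimates for $-\Delta_\H$, the renormalized solutions would converge in $C^2_{\mathrm{loc}}$ to a nontrivial finite-energy solution of $-\Delta_\H v = R_\infty v^{(Q+2)/(Q-2)}$ with $R_\infty$ a fixed translate of $R$; one then extracts, from the profile decomposition, a single limiting bubble at a point where the flatness expansion in $(R_2)$ is available, and the Pohozaev/Kazdan--Warner identity \eqref{KW2} applied to this limit — precisely because the sign condition $\sum_j(a_j+b_j)+\kappa c\neq 0$ forces a nonzero balance term — contradicts the vanishing required by \eqref{KW2}. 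Hence $\lam_i(l)\to\infty$ and $\|u_l\|_{L^\infty}\to\infty$. I expect this last step — ruling out bounded concentration rates via the blow-up limit and the Pohozaev obstruction, handled carefully in the subelliptic setting with only $\Gamma_{2+\al}$ regularity on $R$ — to be the main technical obstacle; the existence/gluing part is a routine adaptation of the machinery developed for Theorem \ref{thm:2}.
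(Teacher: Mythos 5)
The existence part of your proposal follows the paper's route (gluing / min-max via Proposition \ref{prop:5.1} and Theorem \ref{thm:4.1}, with the contradiction-and-periodicity structure exactly as in Theorem \ref{thm:2}), and your reading of $(R_3)'$ as a bounded-well substitute for $(R_3)$ is correct. The genuine problem is in your argument for the $L^\infty$-unboundedness, which is the only new assertion in Theorem \ref{thm:3}.

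You fix $\var$, consider the two-bump solutions $u_l\in V(2,\var, O_l^{(1)}, O_l^{(2)},R)$, and try to show $\lam_i(l)\to\infty$ as $l\to\infty$ by passing to a translated limit $v$ and invoking the Kazdan--Warner identity \eqref{KW2} plus the sign condition in $(R_2)$. This does not work for two independent reasons. First, for fixed $\var$ the construction only guarantees $\lam_i(l)>\var^{-1}$; there is nothing in Theorem \ref{thm:4.1}/Proposition \ref{prop:5.1} that forces $\lam_i(l)\to\infty$ as $l\to\infty$, so the claim you are trying to prove may simply be false. Second, if $\lam_i(l)\to\Lambda<\infty$ along a subsequence, your translated limit $v$ is an honest smooth positive finite-energy solution of $-\Delta_{\H}v=R_\infty v^{(Q+2)/(Q-2)}$ at a bounded concentration scale. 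Such a $v$ \emph{satisfies} \eqref{KW2} automatically (it is a necessary condition for any solution), and the sign condition $\sum_j(a_j+b_j)+\kappa c\neq 0$ enters only through the blow-up machinery of Corollary \ref{cor:A.2}, i.e.\ through the limiting profile of a genuine blow-up sequence with $\lam\to\infty$, not through a bounded limit. So the ``nonzero balance term'' you want does not materialize, and there is no contradiction. Moreover, a bounded-rate $v$ with $\Lambda\in(\var^{-1},\delta_3^{-1})$ need not lie in $V(1,\delta_3,O,R_\infty)$, so it does not even contradict the nonexistence hypothesis fed into Proposition \ref{prop:5.1}.

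The intended argument is much lighter: vary the neighborhood parameter rather than the translation parameter. For each $\delta\in(0,\var]$, the ``more precisely'' clause (obtained by the contradiction argument of Theorem \ref{thm:2} with $k=1,2$; the dichotomy ``either a $1$-bump solution of the limit problem exists, or the hypothesis of Proposition \ref{prop:5.1} holds and a $2$-bump solution is produced'' is what forces the union $V(1,\delta,O,R)\cup V(2,\delta,O_l^{(1)},O_l^{(2)},R)$) gives a positive critical point $u_\delta$ lying in $V(k,\delta,\ldots)$ for $k\in\{1,2\}$. By definition of $V(k,\delta,\ldots)$ its concentration rates satisfy $\lam_i>\delta^{-1}$, its energy lies within $o_\delta(1)$ of $kc$ and hence in the range \eqref{8} once $\delta$ is small, and by the subelliptic regularity estimates (as in Subsection \ref{sec:2}) a uniform $L^\infty$ bound would force a uniform bound on $\lam_i$ --- impossible as $\delta\to 0$. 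Hence $\|u_\delta\|_{L^\infty}\to\infty$, which is the claimed unboundedness. Your Pohozaev/Kazdan--Warner detour is both unnecessary and, as written, insufficient.
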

\begin{rem}
By utilizing \eqref{inverse},  it is evident that the solutions obtained in Theorems \ref{thm:2} and  \ref{thm:3} can be lifted to a solution of \eqref{maineq} on $\S^{2n+1}$ which is positive except at the point $\{(0, \ldots, 0,-1)\}$. In this sense, \eqref{maineq} is solvable under the assumptions of Theorems \ref{thm:2} and  \ref{thm:3}.
\end{rem}

\subsection{Plan of the paper and comment on the proof}

In search for metrics of constant scalar curvatures, Yamabe \cite{Y1960} initiates the subcritical method, which is now  one of the most natural approaches to study conformal equations with Sobolev critical exponent. We also refer to the reader \cite{Li1995,Li1996,Li93d,ES1986,LP1987,SchoenYau1994,Aubin1998,L1995,MM2020}. In this paper, we will   study the \emph{CR Nirenberg problem} by using the mentioned subcritical approach.  While recognizing the usefulness of compactness in finding solutions of equation \eqref{maineq}, one is left to ponder the dilemma: By selecting those functions $\bar{R}$  so that blow ups are impossible (i.e., compactness regained), we naturally miss functions that can afford a bounded and a blow up subcritical sequences. This intriguing thought breathes the idea that blow ups need not always be harmful in finding solutions. Under suitable conditions, we still can use a blow up subcritical sequence to produce a solution by removing the singularities.  Such considerations will be conducted in our final arguments.

We end the introduction with some remarks and history on the variational gluing technique developed by Ser\'e, Coti Zelati and Rabinowitz. The basic idea is as follows: Given finitely many solutions (at low energy), to translate their supports far apart and patch the pieces together create many multi-bump solutions.  The authors in  \cite{CR1,CR2,CES,Se}  have introduced the original and powerful ideas which permit the construction of such solutions via variational methods. In particular, they are able to find many homoclinic-type solutions to periodic Hamiltonian systems (see \cite{Se,CR1}) and to certain elliptic equations of nonlinear Schr\"odinger type on $\Rn$ with periodic coefficients (see \cite{CR2}). Li  has given a slight modification to the minimax procedure in \cite{CR1,CR2} and has applied it to certain problems where periodicity is not present, for example, the problem of prescribing scalar curvature on $\Sn$ (see \cite{Li1993,Li93d,Li1995,Li1996}).  Inspired by the above  works, we attempt to modify the above mentioned gluing method towards equations  in the CR setting or the conformal sub-Laplacian operators under a particular choice of  contact forms. This paper also overcomes the difficulty appearing in using  Lyapunov-Schmidt reduction method to locate the concentrating points of the solutions. We also believe that this gluing method can be applied to the construction of multi-bump solutions for various problems in conformal CR geometry, for instance,  the    Nirenberg type problem involving CR fractional sub-Laplacians, see, e.g.,  \cite{CW2017,LW2018}.

Let us introduce the structure of the paper and comment on the proof. Theorems \ref{thm:1}--\ref{thm:3} and  Corollary \ref{cor:1} are  derived in Section \ref{sec:5} from Proposition \ref{prop:5.1}, a more general result on \eqref{maineq1}. To  establish Proposition \ref{prop:5.1}, we first study a compactified problem Theorem \ref{thm:4.1} in Section \ref{sec:4}. Then we derive Proposition \ref{prop:5.1} by using Theorem \ref{thm:4.1} and some blow up analysis  in \cite{PR2003}. Theorem \ref{thm:4.1} is a  technical result in our paper, which is essential to make the variational gluing methods applicable. The proof of Theorem \ref{thm:4.1} will be divided into two parts:  we first  follow and refine the analysis of Bahri-Coron \cite{BC1988} to study the subcritical interaction of two well-spaced bubbles in Subsection \ref{sec:4.1} and then    apply the minimax procedure as in Coti Zelati-Rabinowitz \cite{CR1,CR2} to complete the  proof of Theorem \ref{thm:4.1} in Subsection \ref{sec:4.2}.
Our presentation is largely influenced by the papers \cite{Li1993,Li1995,Li1996,Li93d} which studied existence and compactness of solutions to the classical Nirenberg problem. Although certain parts of
the proof can be obtained by some modifications of the  arguments in \cite{Li1993,Li1995,Li1996,Li93d}, there are
plenty of technical difficulties which demand new ideas to handle  subelliptic  equations.

The present paper is organized as the following.  In Section \ref{sec:pre}, we present some analytic and geometric tools necessary to investigate the \emph{CR Nirenberg problem},  and several preliminary results will be established.
In Section \ref{sec:4}, existence and multiplicity result  for the subcritical case (Theorem \ref{thm:4.1})  will be stated, and its
proof will be sketched. The details of the proof  then will be carried out in Subections \ref{sec:4.1} and \ref{sec:4.2}. The main theorems are proved
in Section \ref{sec:5} with the aid of blow up analysis  developed by Prajapat-Ramaswamy \cite{PR2003} and the refine analysis of blow up profile  established in Appendix \ref{app:A}.

\subsection*{Notation}
We collect below a list of the main notation used throughout the paper.
\begin{itemize}\raggedright
	\item  We  always use the notation $\xi=(z, t)=(x,y,t)$ with $z=x+i y$, $x=(x_1,\ldots,x_n)\in\Rn$ and $y=(y_1,\ldots,y_n)\in\Rn$ to denote an element in $\Hn$. We denote  $\xi^{-1}$ as the inverse of $\xi$,  $(\xi)^k=\xi \circ \cdots \circ \xi$ means $k$-fold composition and $(\xi)^{-k}:=(\xi^{-1})^{k}$.   	
	\item 	We  denote the norm on $\Hn$ by $|\xi|=(|z|^4 +t^2)^{1/4}$ and the dilations by $\delta_\lam (\xi)= (\lam  z, \lam  ^2 t)$ for $\lam>0$.
	  The  distance function on $\Hn $ is denoted as  $d(\xi, \xi_0)=|\xi_0^{-1} \circ \xi|$ for any $\xi,\xi_0\in \Hn$, and  the left translation on $\Hn$ by $\xi_0$ is denoted as $\tau_{\xi_0}(\xi)=\xi_0 \circ \xi$.	
	\item	For any $\xi_0\in \Hn$ and $r>0$, denote the ball  $B_r(\xi_0)=\{\xi\in \Hn:d(\xi,\xi_0)<r\}$ and its boundary  $\pa B_{r}(\xi_0)=\{\zeta\in \Hn:d(\xi,\xi_0)=r\}$.  We will not keep writing the center $\xi_0$ if $\xi_0=0$.

\item  For any $q\in \S ^{2n+1}$,  we denote by $B(q,\var)$   the geodesic ball in $\S ^{2n+1}$ with radius $\var$ and center $q$.

	\item For $n\geq 1$, we denote $Q=2n+2$, $Q^*=\frac{2Q}{Q-2}$ and $H(z, t) =(\frac{4}{t^2 + (1+|z|^2)^2})^{(Q-2)/4}$.
		
	\item   The integral $\int$ always means $\int_{\Hn}$ unless specified.	
	
	\item  $C>0$ is a generic constant which can vary from line to line. Moreover, a notation $C(\al, \beta, \ldots)$ means that the positive constant $C$ depends on $\al, \beta, \ldots$.
\end{itemize}

\section{Preliminaries}\label{sec:pre}

In this section, we present some geometric and analytical backgrounds to understand our problem.  We also collect a Pohozaev identity, establish  some a priori estimates to subcritical solutions, and study a minimization problem on exterior domain.

\subsection{Review on the  CR geometry}\label{sec:2.1}
We start with recalling a basic material on CR manifolds, we refer to \cite{DT2006} for the details.

Let $M$ be an orientable CR manifold without boundary of CR dimension $n$. This is also equivalent to saying that $M$ is an orientable differentiable manifold of real dimension $(2 n+1)$ endowed with a pair $(H(M), J)$, where $H(M)$ is a subbundle of the tangent bundle $T(M)$ of real rank $2 n$ and $J$ is an integrable complex structure on $H(M)$. Since $M$ is orientable, there exists a 1-form $\theta$ called \emph{pseudo-Hermitian} structure on $M$. Then, we can associate each structure $\theta$ to a bilinear form $G_\theta$, called \emph{Levi form}, which is defined only on $H(M)$ by
$$
G_\theta(X, Y)=-(\d \theta)(J X, Y), \quad \forall\, X, Y \in H(M) .
$$
Since $G_\theta$ is symmetric and $J$-invariant, we then call $(M, \theta)$ \emph{strictly pseudo-convex} CR manifold if the Levi form $G_\theta$ associated with the structure $\theta$ is positive definite. The structure $\theta$ is then a contact form which immediately induces on $M$ the volume form $\theta \wedge(\d \theta)^n$.

Moreover, $\theta$ on a strictly pseudo-convex CR manifold $(M, \theta)$ also determines a \emph{normal} vector field $T$ on $M$, called  \emph{the Reeb vector field} of $\theta$. Via the Reeb vector field $T$, one can extend the Levi form $G_\theta$ on $H(M)$ to a semi-Riemannian metric $g_\theta$ on $T(M)$, called the Webster metric of $(M, \theta)$. Let
$$
\pi_H: T(M) \to  H(M)
$$
be the projection associated to the direct sum $T(M)=H(M) \oplus \mathbb{R} T$. Now, with the structure $\theta$, we can construct a unique affine connection $\nabla$, called the \emph{Tanaka-Webster connection} on $T(M)$. Using $\nabla$ and $\pi_H$, we can define the \emph{horizontal gradient} $\nabla_\theta$ by
$$
\nabla_\theta u=\pi_H \nabla u .
$$

Again, using the connection $\nabla$ and the projection $\pi_H$, one can define the sub-Laplacian $\Delta_\theta$ acting on a $C^2$-function $u$ via
$$
\Delta_\theta u=\operatorname{div}(\pi_H \nabla u) .
$$
Here $\nabla u$ is the ordinary gradient of $u$ with respect to $g_\theta$ which can be written as $g_\theta(\nabla u, X)=$ $X(u)$ for any $X$. Then integration by parts gives
$$
\int_M(\Delta_\theta u) f \,\theta \wedge(\d \theta)^n=-\int_M\langle\nabla_\theta u, \nabla_\theta f\rangle_\theta \,\theta \wedge(\d \theta)^n
$$
for any smooth function $f$. In the preceding formula, $\langle \cdot,\cdot\rangle_\theta$ denotes the inner product via the Levi form $G_\theta$ (or the Webster metric $g_\theta$ since both $\nabla_\theta u$ and $\nabla_\theta v$ are horizontal). 

Having $\nabla$ and $g_\theta$ in hand, one can talk about the curvature theory such as the curvature tensor fields, the pseudo-Hermitian Ricci and scalar curvature. Having all these, we denote by $\mathrm{Scal}_\theta$ the pseudo-Hermitian scalar curvature associated with the Webster metric $g_\theta$ and the connection $\nabla$, called the Webster scalar curvature, see \cite[Proposition 2.9]{DT2006}.

Being a pseudohermitian structure defined only up to a conformal factor on a CR manifold, the CR Yamabe problem  is a natural analogue of the Yamabe problem in Riemannian geometry. If $\widehat{\theta}=u^{2 / n} \theta$ for some smooth function $u>0$, the transformation law of the Webster curvature is
$$
\operatorname{Scal}_{\widehat{\theta}}=u^{-\frac{n+2}{n}}\Big(-\frac{2(n+1)}{n} \Delta_\theta u+\operatorname{Scal}_\theta u\Big).
$$
Clearly, the problem of solving $\operatorname{Scal}_{\widehat{\theta}}=h$ is equivalent to finding positive solutions $u$ to the following PDE:
\be\label{Yambe}
-\Delta_\theta u+\frac{n}{2(n+1)} \operatorname{Scal}_\theta u=\frac{n}{2(n+1)} h u^{1+2 / n} \quad \text { on }\, M .
\ee
When $h$ is constant, \eqref{Yambe} is known as the CR Yamabe problem.

Basic examples of CR manifolds include real hypersurfaces in $\mathbb{C}^{n+1}$, for example, any odd-dimensional unit sphere $\S^{2 n+1} \subset \mathbb{C}^{n+1}$ is a strictly pseudo-convex CR manifold.  Indeed, let $\theta_0$ be the standard contact form on the sphere $\S^{2 n+1}=\{\zeta=(\zeta^1, \ldots, \zeta^{n+1}) \in \mathbb{C}^{n+1} :| \zeta|^2=\sum_{j=1}^{n+1}|\zeta^j|^2=1\} \subset \mathbb{C}^{n+1}$, i.e.,
$$
\theta_0=\sqrt{-1}(\bar{\pa}-\pa)|\zeta|^2=\sqrt{-1} \sum_{j=1}^{n+1} (\zeta^j \, \d \bar{\zeta}^j-\bar{\zeta}^j \, \d \zeta^j) .
$$
Then $(\S^{2 n+1}, \theta_0)$ is a compact strictly pseudoconvex CR manifold of real dimension $(2 n+1)$.
The  Heisenberg group $\Hn$ as mentioned in the previous section  is  a more special example.   $\Hn$ plays a role among pseudoconvex pseudohermitian manifolds analogous to the role of $\Rn$ among Riemannian manifolds. In fact,  every pseudoconvex pseudohermitian manifold can locally be appoximated with $\Hn$, through coordinates analogous to the normal coordinates of Riemannian geometry known as pseudohermitian normal coordinates.

Since the Heisenberg group has zero Webster curvature and the pseudohermitian sub-Laplacian coincides with the Heisenberg Laplacian defined formerly, the \emph{CR Nirenberg}  problem, up to an inessential constant, is equivalent to finding a positive solution of \eqref{maineq1}.

We finally introduce the inversion map $\iota: \Hn \to  \Hn$ defined by
$$
\iota(\xi)=\iota(x, y, t):=(x,-y,-t)
$$
for every $\xi=(x, y, t) \in \Hn$, and the map $\varphi: \Hn \to  \Hn$ defined by Jerison and Lee in \cite{JL1987} which we shall refer to as the \emph{CR inversion} and which is defined by the following relations:
$$
\varphi(\xi):=\tilde{\xi},
$$
where $\tilde{\xi}=(\tilde{x}, \tilde{y}, \tilde{t})$ and
\be\label{CRinversion}
\tilde{x}:=\frac{x t+y|z|^2}{|\xi| ^4}, \quad \tilde{y}:=\frac{y t-x|z|^2}{|\xi| ^4}, \quad \tilde{t}:=\frac{-t}{|\xi| ^4} .
\ee
We explicitly remark that $|\varphi(\xi)| =\frac{1}{|\xi| }$.
Instead of using the CR inversion $\varphi$ defined in \eqref{CRinversion} as one of the generators of the group of CR maps on $\Hn$,  we will use the map $\check{\varphi}:=\varphi \circ \iota$ as in \cite{LD2012}, i.e., $\check{\varphi}(\xi)=(\check{x}, \check{y}, \check{t})$ for every $\xi \in \Hn$ with $(\check{x}, \check{y}, \check{t})$ being in turn defined by
\be\label{CRinversion2}
\check{x}=-\frac{x t+y|z|^2}{|\xi|^4}, \quad \check{y}=\frac{y t-x|z|^2}{|\xi|^4}, \quad \check{t}=\frac{t}{|\xi|^4} .
\ee
We make this choice because $\check{\varphi}(\check{\varphi}(\xi))=\xi$, while $\varphi(\varphi(\xi))=(-x,-y, t)$ for every $\xi=(x, y, t) \in\Hn\backslash\{0\}$.

\subsection{Pohozaev identity}
As in the Riemannian case, where the blow up analysis requires using the Pohozaev identity for $\Rn$, in the CR case we will need a Pohozaev formula for the Heisenberg group. Such roles of the Pohozaev type identity in analyzing the blow ups were first observed in Schoen\cite{Schoen43,Schoen44,Schoen45}.
 Pohozaev-type formulas have already been studied  on pseudohermitian manifold (see \cite{Afeltra2024}), Heisenberg group (see \cite{GL1992,PR2003})   and
  more general Carnot groups (see \cite{GV2000}).

We associate any  point $(z, t)=(x,y,t) \in \Hn$  with a $(2n+1)\times (2n+1)$ symmetric matrix $A =(a_{ij})$ defined by
$$
\left(\begin{array}{ccc}
I_n & 0_n & 2 y \\
0_n & I_n & -2 x \\
2 y & -2 x & 4|z|^2
\end{array}\right),
$$
where $I_n$ and $0_n$ denote respectively the identity matrix and the zero matrix in $\Rn$.
The matrix $A$ is related to  $\Delta_{\H}$ by the formula $\Delta_{\H} = \operatorname{div}(A \nabla)$, where $\nabla$ denotes the gradient in $\R^{2n+1}$.

Let $\Om \subset \Hn$ be an open set and $\mathcal{S}^2(\overline{\Om})$ denote the space of all continuous functions $u: \overline{\Om} \to \R$ such that $X_j u, Y_j u, X_j^2 u, Y_j^2 u$ are continuous functions in $\Om$ which can be extended to $\overline{\Om}$. Furthermore, let
\be\label{vectorfield2}
\mathcal{X}=\sum_{j=1}^n x_j \frac{\pa}{\pa x_j}+y_j \frac{\pa}{\pa y_j}+2 t \frac{\pa}{\pa t}
\ee
be  the generator for the one parameter family of dilations in $\Hn$ centered at the origin. Using this vector field, we can derive a Pohozaev type integral identity which is stated below.
\begin{lem}
Let $B_{\si}$ be a ball in $\Hn$
		centered at the origin with radius $\si>0$, $p\geq 1$ and $R\in \mathcal{S}^2(\overline{B}_{\si})$. Suppose that $u$ is a $C^2$ solution of $$-\Delta_{\H}u = R(\xi)|u|^{p-1}u\quad \text{ in }\, B_\si.$$
		Then  we have
		\begin{align}
		\int_{\pa {B_\si }}B(\si , \xi, u, \nabla_{\H}u)=& \Big( \frac{Q}{p+1} - \frac{Q-2}{2}\Big) \int_{B_\si }R|u|^{p+1}\notag\\&+ \frac{1}{p +1}\int_{B_\si } \mathcal{X}(R) |u|^{p+1}-\frac{1}{p +1}\int_{\pa  B_\si } R |u|^{p+1} \mathcal{X} \cdot \nu,\label{Pohozaevid}
		\end{align}
		where $\nu$  is the outward unit normal vector with respect to  $\pa B_{\si}$,	$\mathcal{X} \cdot \nu=\mathcal{X} \cdot \frac{\nabla d}{|\nabla d|}=\frac{\mathcal{X} d}{|\nabla d|}=\frac{d}{|\nabla d|}$ with $d$ being the distance function,   and  $$
		B(\si , \xi, u, \nabla_{\H}u) =
		\frac{Q-2}{2}(A \nabla u \cdot \nu) u-\frac{1}{2}|\nabla_{\H} u|^2 \mathcal{X} \cdot \nu +(A \nabla u \cdot \nu) \mathcal{X} (u).
		$$
	\end{lem}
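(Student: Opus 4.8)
The plan is to derive the identity \eqref{Pohozaevid} by multiplying the equation $-\Delta_{\H}u = R|u|^{p-1}u$ by the quantity $\mathcal{X}(u)$ and by $u$ itself, integrating over $B_\si$, and reorganizing the boundary terms via the divergence theorem. The key algebraic fact is that $\Delta_{\H} = \operatorname{div}(A\nabla)$ with $A$ the symmetric matrix displayed above, so all integrations by parts can be carried out in the Euclidean sense in $\R^{2n+1}$, treating $A\nabla u$ as the relevant flux vector and $\nu = \nabla d/|\nabla d|$ as the Euclidean outward normal to $\pa B_\si$.

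First I would compute $\int_{B_\si}(\Delta_{\H}u)\,\mathcal{X}(u)$. Writing $\Delta_{\H}u = \operatorname{div}(A\nabla u)$ and integrating by parts yields $\int_{\pa B_\si}(A\nabla u\cdot\nu)\mathcal{X}(u) - \int_{B_\si} A\nabla u\cdot\nabla(\mathcal{X}(u))$. The term $\nabla(\mathcal{X}(u))$ should be handled using the commutation properties of $\mathcal{X}$ with the horizontal vector fields: since $\mathcal{X}$ generates the anisotropic dilations $\delta_\lam$ under which $X_j,Y_j$ are homogeneous of degree $1$, one gets $\mathcal{X}(X_j u) = X_j(\mathcal{X}(u)) - X_j u$ and similarly for $Y_j$, hence $A\nabla u\cdot\nabla(\mathcal{X}(u))$ reassembles into $\tfrac12\mathcal{X}(|\nabla_{\H}u|^2) + |\nabla_{\H}u|^2$ up to a divergence. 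Using $\operatorname{div}\mathcal{X} = Q$ and integrating $\mathcal{X}(|\nabla_{\H}u|^2)$ by parts once more produces the bulk term $\tfrac{Q-2}{2}\int_{B_\si}|\nabla_{\H}u|^2$ together with the boundary term $\tfrac12\int_{\pa B_\si}|\nabla_{\H}u|^2\,\mathcal{X}\cdot\nu$. Meanwhile the right-hand side contributes $\int_{B_\si}R|u|^{p-1}u\,\mathcal{X}(u) = \tfrac1{p+1}\int_{B_\si}R\,\mathcal{X}(|u|^{p+1})$, which after integration by parts equals $-\tfrac1{p+1}\int_{B_\si}\mathcal{X}(R)|u|^{p+1} - \tfrac{Q}{p+1}\int_{B_\si}R|u|^{p+1} + \tfrac1{p+1}\int_{\pa B_\si}R|u|^{p+1}\mathcal{X}\cdot\nu$. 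Separately, multiplying the equation by $u$ and integrating gives $\int_{B_\si}|\nabla_{\H}u|^2 = \int_{\pa B_\si}(A\nabla u\cdot\nu)u + \int_{B_\si}R|u|^{p+1}$, which I would use to replace $\tfrac{Q-2}{2}\int_{B_\si}|\nabla_{\H}u|^2$ by $\tfrac{Q-2}{2}\big(\int_{\pa B_\si}(A\nabla u\cdot\nu)u + \int_{B_\si}R|u|^{p+1}\big)$, thereby generating the $\tfrac{Q-2}{2}(A\nabla u\cdot\nu)u$ piece of the boundary integrand $B(\si,\xi,u,\nabla_{\H}u)$ and the term $-\tfrac{Q-2}{2}\int_{B_\si}R|u|^{p+1}$. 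Collecting all contributions and matching the coefficient $\tfrac{Q}{p+1} - \tfrac{Q-2}{2}$ in front of $\int_{B_\si}R|u|^{p+1}$ finishes the identity.

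I expect the main obstacle to be the careful bookkeeping of the commutator identities $[\mathcal{X},X_j]$ and $[\mathcal{X},Y_j]$ and verifying that the quadratic gradient terms reassemble exactly into $\tfrac12|\nabla_{\H}u|^2\,\mathcal{X}\cdot\nu$ on the boundary and into the stated bulk coefficient: the cross terms coming from the off-diagonal entries of $A$ (the $2y,-2x,4|z|^2$ blocks, which encode the difference between $A\nabla$ and the naive Euclidean gradient) must be shown to contribute correctly, which is the one place where the sub-Riemannian geometry genuinely enters rather than being a formal Euclidean manipulation. I would also take care that the regularity hypothesis $R\in\mathcal{S}^2(\overline{B}_\si)$ and $u\in C^2$ is exactly what is needed to justify each integration by parts up to the boundary $\pa B_\si$, and that the identity $\mathcal{X}\cdot\nu = d/|\nabla d|$ is used only in the final cosmetic rewriting of the boundary terms rather than in the derivation proper.
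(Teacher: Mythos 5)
Your proposal is correct and follows exactly the standard Rellich--Pohozaev multiplier argument: multiply by $\mathcal{X}(u)$, integrate by parts using $\Delta_{\H}=\operatorname{div}(A\nabla)$, exploit the identity $A\nabla u\cdot\nabla v=\nabla_{\H}u\cdot\nabla_{\H}v$ together with the commutators $[\mathcal{X},X_j]=-X_j$, $[\mathcal{X},Y_j]=-Y_j$ and $\operatorname{div}\mathcal{X}=Q$, then eliminate $\int_{B_\si}|\nabla_{\H}u|^2$ by pairing with the $u$-multiplied equation. The paper itself does not supply a proof — it cites Garofalo--Lanconelli \cite{GL1992} and Prajapat--Ramaswamy \cite{PR2003} — and the argument in those references is precisely the one you sketch, so your approach is essentially identical to the source. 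One small remark: $A\nabla u\cdot\nabla(\mathcal{X}u)=\tfrac12\mathcal{X}(|\nabla_{\H}u|^2)+|\nabla_{\H}u|^2$ holds as an exact pointwise identity (not merely up to a divergence), since $A=\sigma\sigma^{T}$ with $\sigma$ the coefficient matrix of $X_1,\ldots,X_n,Y_1,\ldots,Y_n$, so the off-diagonal blocks of $A$ disappear once one rewrites everything in terms of $\nabla_{\H}$; this makes the bookkeeping you anticipated entirely routine.
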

\begin{proof}
The proof  can be found  in \cite[Theorem 2.1]{GL1992} (or \cite[Theorem 4.1]{PR2003}), so we omit it.
\end{proof}
The boundary term $B(\si , \xi, u, \nabla_{\H}u)$ has the following properties:
\begin{lem}\label{lem:positivepoho}
	\begin{itemize}
		\item[(i)]For $u(\xi) =|\xi|^{2-Q}$ and any $\si>0$, it holds
		$B(\si, \xi, u, \nabla_{\H} u)=0$ for  all  $\xi \in \pa B_\si$.
		\item[(ii)]		For $u(\xi) = |\xi|^{2-Q} + A + h(\xi)$, where $A>0$ is a  constant and $h(\xi)$ is differentiable near the origin with $h(0)=0$. Then
we have
		$$
		\lim _{\si  \to  0} \int_{\pa  B_\si } B(\si , \xi, u, \nabla_{\H} u)=-\frac{\sqrt{\pi}\Gamma(\frac{n+1}{2})}{2\Gamma(\frac{n}{2}+1)} A (Q-2)^2|\S ^{2 n-1}|<0,
		$$
		where $\Gamma$ is the Gamma function and $|\S ^{2 n-1}|$ is the surface measure of the unit sphere in $\R ^{2n}$.	
	\end{itemize}
\end{lem}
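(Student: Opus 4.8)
\emph{Setup and part (i).} Write $N(\xi)=|\xi|$, so that $\pa B_\si=\{N=\si\}$; I will use the identity $A\nabla w\cdot\nabla w=|\nabla_\H w|^2$ (equivalently $\Delta_\H=\operatorname{div}(A\nabla)$), the homogeneities $\mathcal{X}N=N$ and $\mathcal{X}(N^a)=aN^a$, and the recorded facts $\nu=\nabla N/|\nabla N|$ and $\mathcal{X}\cdot\nu=N/|\nabla N|$ on $\pa B_\si$. For (i) I would simply substitute $u=N^{2-Q}$: from $\nabla u=(2-Q)N^{1-Q}\nabla N$ one gets $A\nabla u\cdot\nu=(2-Q)N^{1-Q}|\nabla_\H N|^2/|\nabla N|$ and $|\nabla_\H u|^2=(2-Q)^2N^{2-2Q}|\nabla_\H N|^2$, while $\mathcal{X}(u)=(2-Q)u$. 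Plugging these into the three summands of $B(\si,\xi,u,\nabla_\H u)$, each one becomes a scalar multiple of $T:=N^{3-2Q}|\nabla_\H N|^2/|\nabla N|$, with coefficients $-\tfrac12(Q-2)^2$, $-\tfrac12(Q-2)^2$ and $(Q-2)^2$ respectively; their sum is $0$, so $B\equiv0$ pointwise on every $\pa B_\si$.

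\emph{Part (ii): reduction.} I would decompose $u=v+\phi$ with $v:=N^{2-Q}$ and $\phi:=A+h$. Because $B(\si,\xi,\cdot,\nabla_\H\cdot)$ is a quadratic form in $(w,\nabla w)$, we can write $B(u)=B(v)+\mathcal{C}(v,\phi)+B(\phi)$, abbreviating $B(w)$ for $B(\si,\xi,w,\nabla_\H w)$, where $\mathcal{C}(v,\phi)$ gathers the cross terms $\tfrac{Q-2}{2}[(A\nabla v\cdot\nu)\phi+(A\nabla\phi\cdot\nu)v]-(A\nabla v\cdot\nabla\phi)(\mathcal{X}\cdot\nu)+(A\nabla v\cdot\nu)\mathcal{X}(\phi)+(A\nabla\phi\cdot\nu)\mathcal{X}(v)$. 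By (i), $\int_{\pa B_\si}B(v)=0$. On $\pa B_\si$ for small $\si$ one has $\mathcal{X}\cdot\nu=\si/|\nabla N|\le\si$ and $\phi,\nabla_\H\phi$ bounded, so $|B(\phi)|\le C$ pointwise and $\int_{\pa B_\si}B(\phi)=O(|\pa B_\si|)\to0$ as $\si\to0$. Hence everything reduces to computing $\lim_{\si\to0}\int_{\pa B_\si}\mathcal{C}(v,\phi)$.

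\emph{Part (ii): the cross term and the flux.} Splitting $\phi=A+h$ in $\mathcal{C}$, I would rely on two facts: $A\nabla v\cdot\nu=(2-Q)N^{-1-Q}|z|^2/|\nabla N|$, with $\int_{\pa B_\si}|A\nabla v\cdot\nu|\,\d S$ bounded uniformly in $\si$; and $|h|,|\mathcal{X}(h)|=O(\si)$ on $\pa B_\si$ (from $h(0)=0$ and differentiability). These immediately dispose of the terms in which $h$ or $\mathcal{X}(h)$ multiplies $A\nabla v\cdot\nu$. The remaining $h$-terms, after using $\nabla v=(2-Q)N^{1-Q}\nabla N$ and the symmetry of $A$, reduce to fixed multiples of $\si^{2-Q}\int_{\pa B_\si}A\nabla h\cdot\nu\,\d S$; writing $h=L+(h-L)$ with $L$ the first-order Taylor polynomial of $h$ at $0$ and noting that $L$ is $\Delta_\H$-harmonic (so $\int_{\pa B_\si}A\nabla L\cdot\nu\,\d S=0$), this equals $\si^{2-Q}\int_{\pa B_\si}A\nabla(h-L)\cdot\nu\,\d S=\si^{2-Q}\cdot o(\si^{Q-2})\to0$ by continuity of $\nabla h$ at $0$. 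Thus only $\tfrac{Q-2}{2}(A\nabla v\cdot\nu)$ times the constant $A$ survives, and $\int_{\pa B_\si}A\nabla v\cdot\nu\,\d S$ is independent of $\si$ since $v$ is $\Delta_\H$-harmonic in $\Hn\setminus\{0\}$. To finish I would compute this flux explicitly: using $|\nabla_\H N|^2=|z|^2/N^2$, the co-area formula together with the anisotropic scaling $\xi\mapsto\delta_\si(\xi)$ gives $\int_{\pa B_\si}A\nabla v\cdot\nu\,\d S=(2-Q)(Q+2)\int_{\{N<1\}}|z|^2$, and a short polar-coordinate computation in $\R^{2n}_z$ — integrating $t$ over $|t|<\sqrt{1-|z|^4}$ and substituting $s=|z|^4$ — evaluates $\int_{\{N<1\}}|z|^2$ to $\tfrac{\sqrt\pi\,\Gamma((n+1)/2)}{2(n+2)\Gamma(n/2+1)}|\S^{2n-1}|$. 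Using $Q+2=2(n+2)$ and the prefactor $\tfrac{Q-2}{2}A$ then yields exactly $-\tfrac{\sqrt\pi\,\Gamma((n+1)/2)}{2\Gamma(n/2+1)}A(Q-2)^2|\S^{2n-1}|$.

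\emph{Main obstacle.} I expect the delicate step to be the vanishing of the $h$-cross-terms in part (ii): several of them are only borderline under naive estimates ($O(\si^{2-Q})$ pointwise against a boundary of Euclidean measure $O(\si^{Q-2})$), and one genuinely has to exploit that linear functions are $\Delta_\H$-harmonic — so that subtracting the linear Taylor part of $h$ at $0$ is lossless for the relevant flux integrals — in order to recover the extra power of $\si$. Everything else is routine: the cancellation in (i), and the final Beta-integral. Closely related boundary-term computations already appear in \cite{GL1992,PR2003}.
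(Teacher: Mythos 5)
The paper does not prove this lemma: it cites \cite[Proposition 4.3]{PR2003} and stops. Your proof is a self-contained verification and, as far as I can check, it is correct and follows the natural route one would also expect to find in \cite{PR2003}. Part (i) is a straightforward verification: with $T=N^{3-2Q}|\nabla_\H N|^2/|\nabla N|$ the three summands of $B$ come out to $-\tfrac12(Q-2)^2T$, $-\tfrac12(Q-2)^2T$, $(Q-2)^2T$, whose sum vanishes pointwise. For (ii), the quadratic decomposition $B(u)=B(v)+\mathcal C(v,\phi)+B(\phi)$ with $v=N^{2-Q}$, $\phi=A+h$ is the right organizing principle; you correctly identify that $\int B(v)=0$ by (i), $\int B(\phi)=O(\sigma^{Q-1})$, that the constant-$A$ part of the cross term only survives through $\tfrac{Q-2}{2}A\int_{\pa B_\sigma}A\nabla v\cdot\nu$, and that this flux equals $(2-Q)(Q+2)\int_{\{N<1\}}|z|^2$, whose Beta-integral evaluation gives exactly $-\tfrac{\sqrt\pi\,\Gamma((n+1)/2)}{2\Gamma(n/2+1)}A(Q-2)^2|\S^{2n-1}|$ after using $Q+2=2(n+2)$. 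I also verified your claim $|\nabla_\H N|^2=|z|^2/N^2$ and the resulting collapse of the $h$-cross terms numbered 2', 3', 5' to $\tfrac{Q-2}{2}\sigma^{2-Q}(A\nabla h\cdot\nu)$.

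You are right that the delicate point is disposing of $\sigma^{2-Q}\int_{\pa B_\sigma}A\nabla h\cdot\nu\,\d S$: since $|\pa B_\sigma|\sim\sigma^{Q-2}$, naive bounds give only $O(1)$. Your device of subtracting the linear Taylor polynomial $L$ at $0$, and using that $L$ is $\Delta_\H$-harmonic so that $\int_{\pa B_\sigma}A\nabla L\cdot\nu=0$, is exactly what is needed, and then $|\nabla(h-L)|=o(1)$ delivers the extra power of $\sigma$. One small caveat worth making explicit: that last step uses $\nabla h(\xi)\to\nabla h(0)$, i.e.\ continuity of $\nabla h$ at the origin. The hypothesis ``$h$ is differentiable near the origin'' should be read as $C^1$ (or at least as $\nabla h$ continuous at $0$) for this to go through; plain everywhere-differentiability would not suffice. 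This is harmless in the context where the lemma is applied (the function $\alpha$ produced by the Bôcher-type lemma in \cite[Proposition 5.7]{PR2003} is smooth near $0$), but you should state the assumption you are actually using rather than leave it implicit.
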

\begin{proof}
The proof can be found in  \cite[Proposition 4.3]{PR2003}, we omit it here.
\end{proof}

\subsection{Some a Priori estimates}\label{sec:2}
We intend to derive some a priori estimates for   solutions to  subcritical equations.  Our proofs are in the spirit of those in \cite{Li93d,Li1993} with some standard rescaling arguments.   We begin with introducing some notations.

Let $\Om\subset \Hn$ be an open set. We define the nonisotropic Sobolev space $S_k^p(\Om)$ (see   \cite{FS1974}) as follows:
For $1 \leq p \leq \infty$ and $k=0,1,2, \ldots$, we denote
$$
S_k^p(\Om)=\{f \in L^p(\Om): D f \in L^p(\Om) \,\text { for all } D \in \mathscr{A}_k\}.
$$
Here $Df$ is meant as a distribution derivative. $S_k^p(\Om)$ is a Banach space under the norm
$$
\|f\|_{S_k^p(\Om)}=\|f\|_{L^{p}(\Om)}+\sum_{D \in \mathscr{B}_k}\|D f\|_{L^{p}(\Om)}.
$$
We say $f\in S_k^p(loc)$  if   $\phi f\in S_k^p(\Hn)$ for every $\phi\in C_{c}^{\infty}(\Hn)$.
\begin{prop}\label{prop:2.1}
Suppose that $R\in L^{\infty}(\Hn \backslash B_1)$ and $\|R\|_{L^{\infty}(\Hn \backslash B_1)} \leq A_0$ for some constant $A_0>0$. Then there exist two positive constants $\mu_1 = \mu_1(n, A_0) $ and $C(n, A_0)$ such that for any  weak  solutions $u$ of
$$
-\Delta_{\H} u = R(\xi)|u|^{4/(Q-2)}u, \quad |\xi|\geq 1,
$$
satisfying  $u \in L^{Q^* }(\Hn \backslash B_1)$ and
\be \label{equ2.41}
  \int_{|\xi| \geq 1} |\nabla_{\H} u|^2 \le \mu_1,
\ee
we have
$$
\sup_{|\xi| \geq 2}|\xi|^{Q-2} |u(\xi)| \leq C(n, A_0) .
$$
\end{prop}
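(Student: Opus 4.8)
This is the subelliptic analogue of the classical decay $|u(x)|\lesssim|x|^{2-n}$ for small-energy solutions of Yamabe-type equations on exterior domains (cf.\ \cite{Li1993,Li93d}). The plan is to convert the decay statement at infinity into an interior regularity statement at the origin via the CR inversion $\check\varphi$ of Subsection \ref{sec:2.1}, and then to run an $\var$-regularity (small energy $\Rightarrow$ local $L^\infty$) argument.

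\emph{Inversion.} Since $|\check\varphi(\xi)|=1/|\xi|$ and $|J_{\check\varphi}(\xi)|\sim|\xi|^{-2Q}$ up to a dimensional constant, I would set $v(\xi):=|\xi|^{2-Q}\,u(\check\varphi(\xi))$, the subelliptic Kelvin transform of $u$. By the conformal covariance of the conformal sub-Laplacian under $\check\varphi$ (exactly as for the Cayley transform in Subsection \ref{sec:2.1}; see \cite{JL1987,LD2012}), $v$ is a weak solution of $-\Delta_{\H}v=\widetilde R\,|v|^{4/(Q-2)}v$ in $B_1\setminus\{0\}$, where $\widetilde R:=R\circ\check\varphi$ satisfies $\|\widetilde R\|_{L^\infty(B_1\setminus\{0\})}\le A_0$. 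Because $\check\varphi$ is an involution carrying $\{|\xi|\ge1\}$ onto $B_1\setminus\{0\}$ and fixing $\pa B_1$, and because the Dirichlet integral and the $L^{Q^*}$-norm are conformally invariant, hypothesis \eqref{equ2.41} becomes $\int_{B_1}|\nabla_{\H}v|^2\le\mu_1$, one has $v\in L^{Q^*}(B_1)$, and $\sup_{|\xi|\ge2}|\xi|^{Q-2}|u(\xi)|=\sup_{0<|\eta|\le1/2}|v(\eta)|$. Moreover $\{0\}$ has zero $2$-capacity in $\Hn$ (as $Q>2$), $\nabla_{\H}v\in L^2(B_1)$, and $|\widetilde R|\,|v|^{(Q+2)/(Q-2)}\in L^1(B_1)$ (since $(Q+2)/(Q-2)\le Q^*$), so inserting a capacitary cutoff $\eta_\var$ (equal to $1$ near $0$, $\|\nabla_{\H}\eta_\var\|_{L^2}\to0$) into the weak formulation and letting $\var\to0$ shows $v$ is in fact a weak solution of the same equation in all of $B_1$. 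Thus it suffices to bound $\|v\|_{L^\infty(B_{1/2})}$ by a constant depending only on $n$ and $A_0$.

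\emph{$\var$-regularity.} This remaining step is a Moser iteration. After a Brezis--Kato truncation to legitimise the test functions for an a priori merely $L^{Q^*}$ solution, testing the equation against $\eta^2|v|^{2(\beta-1)}v$ ($\beta\ge1$, $\eta$ a cutoff), controlling the cross terms by Young's inequality and the critical term $\beta A_0\int\eta^2|v|^{4/(Q-2)}|v|^{2\beta}$ by Hölder (exponents $Q/2$ and $Q/(Q-2)$) together with the Folland--Stein inequality \eqref{Sobolevineq}, one reaches an estimate of the form $\|\nabla_{\H}(\eta|v|^\beta)\|_{L^2}^2\le C\beta^2\|(\nabla_{\H}\eta)|v|^\beta\|_{L^2}^2+C\beta A_0\|v\|_{L^{Q^*}(\mathrm{supp}\,\eta)}^{4/(Q-2)}\|\nabla_{\H}(\eta|v|^\beta)\|_{L^2}^2$. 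Here \eqref{equ2.41} enters: once $\mu_1$ is small enough, $\|v\|_{L^{Q^*}(B_1)}$ is small (via \eqref{Sobolevineq} and the equation, as in \cite{Li1993,Li93d}), the last term is absorbed into the left-hand side, and one is left with a reverse-Hölder inequality $\|v\|_{L^{\beta Q^*}(B_{r'})}\le(C\beta)^{1/\beta}\|v\|_{L^{2\beta}(B_r)}$ on slightly smaller balls; iterating with $\beta_k=(Q/(Q-2))^k$ and radii decreasing to $1/2$ gives $\|v\|_{L^\infty(B_{1/2})}\le C(n,A_0)\|v\|_{L^{Q^*}(B_1)}\le C(n,A_0)$. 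Undoing the inversion of the previous step yields $\sup_{|\xi|\ge2}|\xi|^{Q-2}|u(\xi)|\le C(n,A_0)$.

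\emph{Main obstacle.} The genuinely delicate point is the $\var$-regularity step, and within it the fact that the small-energy hypothesis \eqref{equ2.41} is precisely what turns the Sobolev-critical nonlinearity into a true perturbation with constants depending only on $n$ and $A_0$; this is where the argument departs from elementary elliptic bootstrapping and where the Folland--Stein inequality is indispensable, and one must also take care to justify the iteration for an a priori low-regularity solution. An alternative that avoids the inversion is a pure rescaling argument: the automatically small tails of $\int|u|^{Q^*}$ give local $L^\infty$ bounds on far dyadic annuli, hence the preliminary rate $|u(\xi)|\le C|\xi|^{-(Q-2)/2}$, which is then upgraded to the sharp rate $|u(\xi)|\le C|\xi|^{2-Q}$ by a finite bootstrap using that $|\xi|^{2-Q}$ is $\Delta_{\H}$-harmonic in $\Hn\setminus\{0\}$; but the route through $\check\varphi$ is shorter.
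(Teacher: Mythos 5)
Your first half — reducing the exterior decay to an interior $L^\infty$ bound on $B_{1/2}$ via the subelliptic Kelvin transform built from $\check\varphi$, including the removability of $\{0\}$ — matches the paper's proof, which makes exactly this inversion (citing \cite{LD2012} for the conformal covariance). Where you diverge is the second half. The paper does not run a quantitative Moser iteration: it first gets only qualitative regularity of $v$ on $B_1$ (citing a Brezis--Kato-type lemma from \cite{CR2016} for $v\in L^q_{loc}$ for all $q$, then \cite{FS1974} for $v\in C^\alpha_{loc}$, with constants allowed to depend on $v$), and then obtains the uniform $C(n,A_0)$ bound by a Schoen-style rescaling contradiction: if $\|v_j\|_{L^\infty(B_{0.5})}\to\infty$, pick $\hat\xi_j$ maximizing $(0.9-|\hat\xi|)^{(Q-2)/2}|v_j|$, rescale by $v_j(\hat\xi_j)^{-2/(Q-2)}$, pass to a limit $w$ solving $-\Delta_{\H}w=\bar R\,|w|^{4/(Q-2)}w$ on all of $\Hn$ with $w(0)=1$ and $\int|\nabla_\H w|^2\le C_0(n)\mu_1$, and contradict the Folland--Stein inequality \eqref{Sobolevineq} once $\mu_1$ is small. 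This is a soft compactness argument; your Moser iteration is effective and self-contained, which is a genuine advantage, but it is a different mechanism.

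There is, however, a real loose end in your iteration as stated. The coefficient in front of the term you want to absorb is of the form $C\beta A_0\|v\|_{L^{Q^*}(\mathrm{supp}\,\eta)}^{4/(Q-2)}$, which grows linearly in the exponent $\beta$, so no fixed $\mu_1=\mu_1(n,A_0)$ makes absorption possible simultaneously at every level $\beta_k=(Q/(Q-2))^k$; taken literally, your chain of reverse-H\"older inequalities breaks down after finitely many steps. The standard repair is a two-stage scheme: use the smallness of $\|v\|_{L^{Q^*}(B_1)}$ (from $\mu_1$) only at the first level $\beta=Q/(Q-2)$, where the absorption constant is dimensional, to promote $v$ to $L^{Q^*\cdot Q/(Q-2)}_{loc}$ with a $C(n,A_0)$ bound; then for all higher $\beta$ re-split the critical term by H\"older using this gained integrability (so that $\eta|v|^\beta$ enters with $L^r$ norm for some $r<Q^*$, interpolate between $L^2$ and $L^{Q^*}$, and close with Young rather than absorption). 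With that modification your argument is sound and yields $C(n,A_0)$ directly; without it, the absorption claim for all $\beta$ is not justified.
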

\begin{proof}
We perform a CR inversion  \eqref{CRinversion2} on $u(\xi)$. Let
$$
\hat{\xi} =\check{\varphi}(\xi)=(\check{x}, \check{y}, \check{t}) , \quad |\xi| \geq 1,\quad \text{ and }\quad
v(\hat{\xi}) = \frac{1}{|\hat{\xi}|^{Q-2}}u(\hat{\xi}).
$$
Using \cite[Corollary 2.8]{LD2012} we know that $v$ satisfies
$$
-\Delta_{\H} v(\hat{\xi}) = R(\hat{\xi})|v(\hat{\xi})|^{4/(Q-2)}v(\hat{\xi}), \quad 0< |\hat{\xi}|<1.
$$
Furthermore, it follows from \eqref{equ2.41} that
$$
\int_{|\hat{\xi}| \leq 1}|\nabla_{\H } v|^2 + \int_{|\hat{\xi}| \leq 1} |v|^{Q^* } \leq C_0(n) \mu_1.
$$
Thus, we duduce from \cite[Lemma 2.5]{CR2016} that  $v \in L^q_{loc}(B_{1})$ for any $q \geq n$, and then by the regularity results in \cite[Theorem 10.13]{FS1974} we have $v \in C^{\al}_{loc}(B_{1})$ for some $\al\in (0,1)$. To complete the proof of Proposition \ref{prop:2.1},  we  need to give  a priori bound of $\|v\|_{L^\infty(B_{0.5})}$. We claim that there exists a constant
$C(n,A_0)>0$ such that
\be\label{prop2.1-1}
\|v\|_{L^\infty(B_{0.5})}\leq C(n,A_0).
\ee
This will be done by contradiction argument.

Suppose the contrary of \eqref{prop2.1-1}, then there exist two sequences of $\{R_j(\xi)\}$, $\{u_j(\xi)\}$ satisfying
\begin{gather*}
  \| R_j \|_{L^{\infty}(\Hn \backslash B_1)} \geq A_0,\\
  -\Delta_{\H} u_j = R_j(\xi) |u_j|^{4/(Q-2)}u_j,\quad |\xi|\geq 1, \\
  \int_{|\hat{\xi}|\leq 1} |\nabla_{\H } v_j|^2 + \int_{|\hat{\xi}|\leq 1}|v_j|^{Q^* } \geq C_0(n) \mu_1,
  \end{gather*}
  but $$\|v_j\|_{L^\infty(B_{0.5})} \geq j,$$
where $v_j$ is obtained by CR inversion on $u_j$ as before.

Since $v_j\in  C^{\al}_{loc}(B_{1})$, thus we can choose $\hat{\xi}_j$ such that
$$
(0.9 - |\hat{\xi}_j|)^{(Q-2)/2}|v_j(\hat{\xi}_j)| = \max_{|\hat{\xi}| \leq 0.9} (0.9 - |\hat{\xi}|)^{(Q-2)/2}|v_j(\hat{\xi})|.
$$
Let $\si _j = \frac{1}{2}(0.9 - |\hat{\xi}_j|)>0$. Some standard calculations   in \cite{Li93d,Li1993} show that
\begin{gather*}
  |\hat{\xi}_j| \leq 0.9, \\
  (\si _j)^{(Q-2)/2} \max_{d (\hat{\xi} , \hat{\xi}_j)\le \si _j}|v_j(\hat{\xi})| \to  \infty\quad \text{ as }\, j\to \infty,\\
  |v_j(\hat{\xi}_j)| \geq 2^{(2-Q)/2} \max_{d (\hat{\xi} , \hat{\xi}_j)\le \si _j}|v_j(\hat{\xi})|.
\end{gather*}

Without loss of generality, we assume that $v_j(\hat{\xi}_j) >0$.
Let
$$
w_j(\widetilde{\xi}) = \frac{1}{v_j(\hat{\xi}_j)} v_j( \hat{\xi}_j \circ\delta_{v_j(\hat{\xi}_j)^{-2/(Q-2)}}(\widetilde{\xi}) ), \quad |\widetilde{\xi}| < v_j(\hat{\xi}_j)^{2/(Q-2)} \si _j \to  \infty.
$$
Clearly, $w_j$ satisfies
\begin{gather*}
  \int_{|\widetilde{\xi}_j| \leq v_j(\hat{\xi}_j)^{2/(Q-2)} \si _j} |\nabla_{\H } w_j|^2 + |w_j|^{Q^* } \leq C_0(n) \mu_1,\\
  -\Delta_{\H} w_j(\widetilde{\xi}) = R_j( \hat{\xi}_j \circ\delta_{v_j(\hat{\xi}_j)^{-2/(Q-2)}}(\widetilde{\xi}) )|w_j(\widetilde{\xi})|^{4/(Q-2)}  w_j(\widetilde{\xi}), \quad \forall \, |\widetilde{\xi}| < v_j(\hat{\xi}_j)^{2/(Q-2)} \si _j,    \\
  w_j(0) = 1,  \\
  w_j(\widetilde{\xi}) \leq 2^{(Q-2)/2}, \quad \forall \, |\widetilde{\xi}| < v_j(\hat{\xi}_j)^{2/(Q-2)} \si _j.
\end{gather*}
By \cite[Theorem 6.1]{F1975}, $w_j$ is bounded in $S_2^q(loc)$, $q>1$. Thus, modulo a subsequence, we have $w_j \rightharpoonup w$ in $S_2^q(loc)$ for some  function $w\in S_2^q(loc)$.
Moreover, $w$ satisfies
\begin{gather}w(0)=1,\notag\\
\label{equ2.17}
-\Delta_{\H} w = \bar{R}|w|^{4/(Q-2)} w \quad \text{ in }\, \Hn ,\\\int |\nabla_{\H } w|^2 + |w|^{Q^*} \leq C_0(n) \mu_1,\notag
\end{gather}
where $\bar{R}$ is the weak $*$ limit of $\{ R_j( \hat{\xi}_j\circ\delta_{v_j(\hat{\xi}_j)^{-2/(Q-2)}}(\widetilde{\xi}) ) \}$ in $L_{loc}^\infty(\Hn )$ satisfying  $\|\bar{R}\|_{L^\infty(\Hn)} \leq A_0$.
Multiplying \eqref{equ2.17} with $w$ and integrating by parts, we obtain
$$
\int |\nabla_{\H }w|^2 = \int \bar{R}|w|^{Q^* }
\leq A_0\Big( \int |\nabla_{\H } w|^2 \Big)^{Q/(Q-2)}(S_n)^{-Q^*},
$$
where $S_n$ is defined by \eqref{Sobolevineq}. Therefore,
$$
1 \leq A_0\Big( \int |\nabla_{\H } w|^2 \Big)^{2/(Q-2)}(S_n)^{-Q^*}\leq A_0(C_0(n) \mu_1)^{2/(n-2)}(S_n)^{-Q^*}.
$$
This is a contradiction if we choose $\mu_1 = \mu_1(n, A_0)$ small enough such that $$A_0(C_0(n) \mu_1)^{2/(n-2)}(S_n)^{-Q^*}<1.$$
We have proved the validity of \eqref{prop2.1-1} and thus complete the proof.
\end{proof}
We can deduce from  Proposition \ref{prop:2.1} the following result.

\begin{prop}\label{prop:2.2}
Let $\mu_1$ and $C(n, A_0)$ be the constants in Proposition \ref{prop:2.1}. Then for any $2< l_1 < l_2 < \infty$, there exists a  constant $S_1 = S_1(n, A_0, \mu_1, l_1, l_2)>l_2$ such that for any $R \in L^{\infty}(B_{S_{1}}\backslash B_1)$ with $\|R \|_{L^{\infty}(B_{S_1}\backslash B_1)} \leq A_0$ and any weak solutions $u$ of
$$
-\Delta_{\H}u = R(\xi)|u|^{4/(Q-2)}u, \quad 1<|\xi|< S_1,
$$
satisfying
$$
\int_{1<|\xi|<S_1} |\nabla_{\H } u|^2 + \int_{1<|\xi|<S_1} |u|^{Q^* } \leq\mu_1,
$$
we have
$$
\sup_{l_1 \leq |\xi| \leq l_2}|\xi|^{Q-2}|u(\xi)| \leq 2C(n, A_0).
$$
\end{prop}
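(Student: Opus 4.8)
The plan is to argue by contradiction and combine a CR inversion with a compactness argument, so as to invoke the interior bound \eqref{prop2.1-1} obtained inside the proof of Proposition \ref{prop:2.1}. Suppose the statement is false. Then, taking $S_1^{(j)}:=l_2+j$ for $j\in\mathbb{N}$, there are functions $R_j\in L^\infty(B_{S_1^{(j)}}\backslash B_1)$ with $\|R_j\|_{L^\infty(B_{S_1^{(j)}}\backslash B_1)}\le A_0$ and weak solutions $u_j$ of $-\Delta_\H u_j=R_j|u_j|^{4/(Q-2)}u_j$ on $1<|\xi|<S_1^{(j)}$ such that
$$\int_{1<|\xi|<S_1^{(j)}}\big(|\nabla_\H u_j|^2+|u_j|^{Q^*}\big)\le\mu_1,\qquad \sup_{l_1\le|\xi|\le l_2}|\xi|^{Q-2}|u_j(\xi)|>2C(n,A_0).$$

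Next I would perform the CR inversion \eqref{CRinversion2} exactly as in the proof of Proposition \ref{prop:2.1}: writing $\hat\xi=\check{\varphi}(\xi)$ and setting $v_j(\hat\xi):=|\hat\xi|^{2-Q}u_j(\check{\varphi}(\hat\xi))$, the function $v_j$ solves $-\Delta_\H v_j=R_j(\check{\varphi}(\hat\xi))|v_j|^{4/(Q-2)}v_j$ on the annulus $1/S_1^{(j)}<|\hat\xi|<1$ by \cite[Corollary 2.8]{LD2012}, with $\int_{1/S_1^{(j)}<|\hat\xi|<1}(|\nabla_\H v_j|^2+|v_j|^{Q^*})\le C_0(n)\mu_1$; and since $|\check{\varphi}(\xi)|=1/|\xi|$ one gets the scale-invariant identity $\sup_{1/l_2\le|\hat\xi|\le 1/l_1}|v_j(\hat\xi)|=\sup_{l_1\le|\xi|\le l_2}|\xi|^{Q-2}|u_j(\xi)|>2C(n,A_0)$. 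As $S_1^{(j)}\to\infty$, the domains $\{1/S_1^{(j)}<|\hat\xi|<1\}$ exhaust the punctured ball $B_1\backslash\{0\}$, so every fixed compact subset of $B_1\backslash\{0\}$ lies in the domain of $v_j$ for $j$ large.

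Using the smallness of $\mu_1=\mu_1(n,A_0)$ together with the regularity tools used in the proof of Proposition \ref{prop:2.1} (\cite[Lemma 2.5]{CR2016}, \cite[Theorem 10.13]{FS1974} and \cite[Theorem 6.1]{F1975}), for $j$ large the $v_j$ are uniformly bounded in $C^{\al}_{loc}\cap S_2^q(loc)$ on compact subsets of $B_1\backslash\{0\}$. Passing to a subsequence, $v_j\to v$ in $C^0_{loc}(B_1\backslash\{0\})$, $\nabla_\H v_j\rightharpoonup\nabla_\H v$ in $L^2_{loc}$, and $R_j(\check{\varphi}(\cdot))\to\bar R$ weak-$*$ in $L^\infty_{loc}$ with $\|\bar R\|_{L^\infty(\Hn)}\le A_0$; the limit satisfies $-\Delta_\H v=\bar R|v|^{4/(Q-2)}v$ on $B_1\backslash\{0\}$ together with $\int_{B_1}(|\nabla_\H v|^2+|v|^{Q^*})\le C_0(n)\mu_1$ by weak lower semicontinuity and Fatou's lemma.

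Now $v$ is a finite-energy weak solution on the punctured ball, so its singularity at the origin is removable: as in the proof of Proposition \ref{prop:2.1}, \cite[Lemma 2.5]{CR2016} gives $v\in L^q_{loc}(B_1)$ for all $q\ge n$, whence $v\in C^{\al}_{loc}(B_1)$ by \cite[Theorem 10.13]{FS1974}. Thus $v$ is precisely a solution to which estimate \eqref{prop2.1-1} applies, so $\|v\|_{L^\infty(B_{0.5})}\le C(n,A_0)$; and since $l_1>2$ the set $K_0:=\{1/l_2\le|\hat\xi|\le 1/l_1\}$ is contained in $B_{0.5}$, giving $\sup_{K_0}|v|\le C(n,A_0)$. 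But $K_0$ is a compact subset of $B_1\backslash\{0\}$, so $\sup_{K_0}|v_j|\to\sup_{K_0}|v|\le C(n,A_0)<2C(n,A_0)$, contradicting $\sup_{K_0}|v_j|>2C(n,A_0)$ for $j$ large. Hence the proposition holds. I expect the main obstacle to be this passage to the limit on the punctured ball: one must produce uniform interior estimates for the $v_j$ that are valid up to (but not at) the origin, check that the limit is a genuine finite-energy weak solution on all of $B_1\backslash\{0\}$ so that the removable-singularity input applies, and verify carefully that $\sup_{l_1\le|\xi|\le l_2}|\xi|^{Q-2}|u|$ transforms into $\sup_{K_0}|v|$ under the CR inversion, which is exactly what lets \eqref{prop2.1-1} be used with the same constant $C(n,A_0)$.
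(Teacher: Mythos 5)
Your proof is correct and follows essentially the same contradiction-plus-compactness route as the paper: take $S_1^{(j)}=l_2+j$, extract a weak limit, and invoke the a priori bound established in Proposition \ref{prop:2.1}. The only cosmetic difference is where the CR inversion is performed: you invert the $u_j$ at the outset and apply the interior estimate \eqref{prop2.1-1} to the limit $v$ on the punctured ball, whereas the paper obtains the uniform interior bounds for the $u_j$ on the exterior annuli $B_{S_j/2}\backslash B_{1+\mu}$, passes to a weak $S_2^q(loc)$ limit $u$ on $\Hn\backslash B_1$, and then applies Proposition \ref{prop:2.1} itself as a black box. The paper's version is marginally tidier because it delegates the removable-singularity step at the origin entirely to the already-proven Proposition \ref{prop:2.1} rather than re-running it on the limit, but the two arguments are the same in substance and your computation that $\sup_{1/l_2\le|\hat\xi|\le 1/l_1}|v_j|=\sup_{l_1\le|\xi|\le l_2}|\xi|^{Q-2}|u_j|$ (using $|\hat\xi|=1/|\xi|$) is exactly the mechanism that makes the two viewpoints interchangeable.
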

\begin{proof}
Suppose the contrary, then  for $S_j = l_2 + j$, $j = 3, 4, 5 \ldots $, there exist two sequences of  $\{R_j\}$, $\{u_j\}$ satisfying
\begin{gather*} \|R_j\|_{L^{\infty}(B_{S_j}\backslash B_1)} \leq A_0,\\
  -\Delta_{\H}u_j = R_j(\xi)|u_j|^{4/(Q-2)}u_j,\quad 1<|\xi|< S_j, \\
  \int_{1<|\xi|<S_j} |\nabla_{\H } u_j|^2 + \int_{1<|\xi|<S_j} |u_j|^{Q^* } \leq\mu_1,
\end{gather*}
but
$$
\sup_{l_1\leq |\xi| \leq l_2}|\xi|^{Q-2}|u_j(\xi)|>2C(n, A_0).
$$

Arguing as in the proof of Proposition \ref{prop:2.1}, we know that for any $\mu \in (0,1)$, $\| u_j \|_{L^{\infty}(B_{S_j/2}\backslash B_{1+\mu})}$ is bounded by a constant independent of $j$. Let $u$ be the $S_2^q(loc)$ weak limit of $u_j$ and $\bar{R}(\xi)$ be the weak $*$ limit of $R_j(\xi)$ in $L^\infty(\Hn\backslash B_1)$, it holds
\begin{gather}
\| \bar{R} \|_{L^\infty(\Hn\backslash B_1)} \leq A_0, \notag\\-\Delta_{\H}u = \bar{R}(\xi)|u|^{4/Q-2}u, \quad |\xi|>1,\notag\\
\sup_{l_1\leq |\xi| \leq l_2}|\xi|^{Q-2}|u(\xi)|>2C(n, A_0).\label{prop2.2-1}
\end{gather}
We   immediately obtain a contradiction by \eqref{prop2.2-1} and Proposition \ref{prop:2.1}.
\end{proof}
Next we give a horizontal gradient estimate.
\begin{prop}\label{prop:2.3}
Suppose that $R\in L^{\infty}(B_{l_2}\backslash B_{l_1})$,  $l_2>100l_1>100$. Then for any  weak solutions $u$ of
$$
-\Delta_{\H} u = R(\xi)|u|^{4/Q-2}u, \quad l_1 \leq |\xi| \leq l_2,
$$
satisfying
$$
\sup_{l_1\leq |\xi| \leq l_2}|\xi|^{Q-2} |u(\xi)| \leq A
$$
for some constant $A>0$, we have
$$
|\nabla_{\H }u(\xi)| \leq \frac{C(n, A, \| R \|_{L^\infty(B_{l_2}\backslash B_{l_1})})}{|\xi|^{Q-1}},\quad 4l_1 \leq |\xi| \leq l_2/4.
$$
\end{prop}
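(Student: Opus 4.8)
The plan is to derive the pointwise horizontal gradient bound by a standard rescaling-plus-interior-estimate argument, exploiting the homogeneity of $-\Delta_{\H}$ under the dilations $\delta_\lambda$ and left translations $\tau_{\xi_0}$.

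First I would fix a point $\xi_0$ with $4l_1 \le |\xi_0| \le l_2/4$ and set $r = |\xi_0|$. I would then rescale and translate to normalize the problem near $\xi_0$: consider $v(\eta) := r^{Q-2} u\big(\tau_{\xi_0}\circ\delta_{r}(\eta)\big)$ for $\eta$ in a fixed ball, say $B_{1/8}$ (the left-invariance of the vector fields $X_j, Y_j$ and the scaling $\Delta_{\H}(u\circ\delta_r) = r^2 (\Delta_{\H}u)\circ\delta_r$ guarantee that $\tau_{\xi_0}\circ\delta_r(B_{1/8})$ sits inside the annulus $l_1 \le |\xi| \le l_2$ once $l_2 > 100 l_1$, since $r/8 \le |\xi_0|/8$ keeps us away from both $|\xi| = l_1$ and $|\xi| = l_2$). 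Under this change of variables $v$ solves $-\Delta_{\H} v = \widetilde{R}(\eta)|v|^{4/(Q-2)}v$ on $B_{1/8}$, where $\widetilde{R}(\eta) = R(\tau_{\xi_0}\circ\delta_r(\eta))$ has the same $L^\infty$ bound, and the hypothesis $\sup |\xi|^{Q-2}|u(\xi)| \le A$ translates exactly to $\|v\|_{L^\infty(B_{1/8})} \le C(n) A$ (because $|\tau_{\xi_0}\circ\delta_r(\eta)| \asymp r$ uniformly for $\eta \in B_{1/8}$).

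Next I would apply subelliptic regularity to $v$. Since $v$ is bounded and solves a subelliptic equation with bounded right-hand side $f := \widetilde{R}|v|^{4/(Q-2)}v \in L^\infty(B_{1/8})$, the $S_2^q$ estimates of Folland--Stein (cited as \cite[Theorem 6.1]{F1975}, already invoked in the proof of Proposition \ref{prop:2.1}) give $v \in S_2^q(B_{1/16})$ for every $q > 1$ with a bound depending only on $n$, $q$, $\|v\|_{L^\infty(B_{1/8})}$ and $\|\widetilde R\|_\infty$; taking $q$ large and using the Folland--Stein embedding theorems yields $v \in \Gamma_{1+\alpha}(B_{1/32})$ for some $\alpha \in (0,1)$, hence in particular $\|\nabla_{\H} v\|_{L^\infty(B_{1/32})} \le C(n, A, \|R\|_{L^\infty(B_{l_2}\setminus B_{l_1})})$. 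Finally I would undo the scaling: by the chain rule for the left-invariant fields, $\nabla_{\H} u$ at $\xi_0$ equals $r^{-(Q-1)}(\nabla_{\H} v)(0)$ up to a dimensional constant, which gives $|\nabla_{\H} u(\xi_0)| \le C\, |\xi_0|^{-(Q-1)}$, as claimed.

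The routine parts are the bookkeeping of constants under dilation and the verification that the rescaled ball lands inside the annulus; these are mechanical given $l_2 > 100 l_1 > 100$ and $4l_1 \le |\xi_0| \le l_2/4$. The only point requiring a little care — and the place I would be most careful about citing precisely — is invoking the correct subelliptic $L^p$-to-$C^{1,\alpha}$ (i.e.\ $S_2^q$-to-$\Gamma_{1+\alpha}$) regularity statement on the Heisenberg group for the inhomogeneous equation $-\Delta_{\H}v = f$ with merely bounded $f$; once that is in hand, the gradient bound is immediate. This mirrors the scalar a priori bounds established in Propositions \ref{prop:2.1} and \ref{prop:2.2}, differing only in that one extracts the first-derivative estimate from the regularity theory rather than just the $L^\infty$ bound, so no contradiction/blow-up argument is even needed here.
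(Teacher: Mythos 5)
Your proposal is correct and takes essentially the same approach as the paper: rescale by $r \asymp |\xi_0|$ to normalize the annulus (or a ball around $\xi_0$) to unit size, apply Folland--Stein subelliptic regularity to the rescaled bounded solution, and scale back using the homogeneity of the left-invariant fields. The only cosmetic differences are that the paper uses the pure dilation $v(\xi) = r^{Q-2}u(\delta_r\xi)$ and works on the fixed annulus $\{1/2 \le |\xi| \le 2\}$ rather than translating $\xi_0$ to the origin, and you omit the harmless factor $r^{-2}$ that actually appears in front of the rescaled nonlinearity (which only improves the $L^\infty$ bound on the right-hand side since $r>1$).
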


\begin{proof}
For any $r\in (4l_1, l_2/4)$, it holds
$$
  -\Delta_{\H} u = R(\xi)|u|^{4/(Q-2)}u, \quad r/2 \leq |\xi| \leq 2r,
$$
and $$
\sup_{r/2\leq |\xi| \leq 2r}|u(\xi)| \leq \sup_{r/2\leq |\xi| \leq 2r}\frac{A}{|\xi|^{Q-2}}\leq \Big( \frac{2}{r} \Big)^{Q-2}A.$$
Let $v(\xi) = r^{Q-2}u(\delta_{r}(\xi))$, then $v$ satisfies
\begin{gather*}
-\Delta_{\H} v(\xi) = \frac{1}{r^2} R(\delta_{r}(\xi))|v(\xi)|^{4/(Q-2)}v,\quad  1/2\leq |\xi| \leq 2,\\\sup_{1/2\leq |\xi| \leq 2}|v(\xi)| \leq 2^{Q-2}A,\\\sup_{1/2\leq |\xi| \leq 2}|-\Delta_{\H} v(\xi)| \leq \| R \|_{L^\infty(B_{l_2}\backslash B_{l_1})}2^{Q+2} A^{(Q+2)/(Q-2)}.
\end{gather*}
Now we deduce from the regularity theories in \cite[Theorem 10.13]{FS1974} that
$$
|\nabla_{\H }v(\xi)| \leq C(n,A, \| R \|_{L^\infty(B_{l_2}\backslash B_{l_1})}), \quad |\xi| = 1.
$$
As a consequence,
$$
|\nabla_{\H }u(\xi)| \leq \frac{C(n,A, \| R \|_{L^\infty(B_{l_2}\backslash B_{l_1})})}{|\xi|^{Q-1}}, \quad |\xi| = r.
$$
This completes the proof.
\end{proof}
\begin{prop}\label{prop:2.4}
Let $\mu_1$, $S_1$ and $C(n,A_0)$ be the constants in Proposition \ref{prop:2.2}. Then for any $2 < l_1 < l_2 < \infty$,  there exist two positive constants $ \mu_2 = \mu_2(n, A_0) \leq \mu_1$ and $\overline{\tau} = \overline{\tau}(n ,A_0, l_1, l_2)$, such that for any $0 \leq \tau \leq \overline{\tau}$, $R \in L^\infty(B_{S_1} \backslash B_1)$ with $\| R \|_{L^{\infty}(B_{S_1} \backslash B_1)} \leq A_0$, and any weak  solutions $u$ of
$$
-\Delta_{\H} u = R(\xi)|u|^{4/(Q-2)-\tau}u, \quad 1< |\xi| <2S_1,
$$
satisfying
$$
\int_{1 < |\xi| < 2S_1}|\nabla_{\H }u|^2 + \int_{1 < |\xi| < 2S_1}|u|^{Q^* } \leq \mu_2,
$$
we have
$$
\sup_{l_1 \leq |\xi| \leq l_2}|\xi|^{Q-2} |u(\xi)| \leq 3C(n, A_0)
$$
and
$$
\sup_{l_1 \leq |\xi| \leq l_2}|\xi|^{Q-1} |\nabla_{\H} u(\xi)| \leq 2C(n, A_0,A),
$$where $C(n, A_0,A)$ is the constant in Proposition \ref{prop:2.3} with $A$ replaced by $3C(n, A_0)$.
\end{prop}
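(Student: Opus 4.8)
The plan is to deduce both bounds from their critical‑exponent counterparts, Propositions~\ref{prop:2.2} and~\ref{prop:2.3}, by a contradiction/compactness argument that exploits the freedom to make the subcriticality parameter $\tau$ small. Fix $\mu_2=\mu_1$. Suppose no admissible threshold $\overline\tau$ exists; taking $\overline\tau=1/j$ we obtain $\tau_j\to 0$ with $0<\tau_j\le 1/j$, functions $R_j\in L^\infty(B_{S_1}\setminus B_1)$ with $\|R_j\|_{L^\infty}\le A_0$, and weak solutions $u_j$ of
\[
-\Delta_{\H}u_j=R_j(\xi)|u_j|^{4/(Q-2)-\tau_j}u_j,\qquad 1<|\xi|<2S_1,
\]
with $\int_{1<|\xi|<2S_1}\bigl(|\nabla_{\H}u_j|^2+|u_j|^{Q^*}\bigr)\le\mu_2$, for which at least one of the two asserted bounds fails; passing to a subsequence we may assume the \emph{same} bound fails for every $j$.

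The first step is to upgrade the uniform energy bound to uniform interior regularity. Since $\tau_j\in(0,1)$ we have pointwise $|R_j|\,|u_j|^{4/(Q-2)-\tau_j}|u_j|\le A_0\bigl(|u_j|^{Q^*-1}+1\bigr)$, so the small‑energy higher‑integrability estimate of \cite[Lemma 2.5]{CR2016} applies with $j$‑independent constants and gives, for every compact $K\subset\{1<|\xi|<2S_1\}$ and every $q$, a uniform bound on $\|u_j\|_{L^q(K)}$; bootstrapping with the subelliptic regularity of \cite[Theorem 10.13]{FS1974} and \cite[Theorem 6.1]{F1975} then yields uniform $S_2^q(K)$, hence uniform $C^{1,\al}(K)$, bounds. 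By the Arzel\`a--Ascoli theorem and a diagonal argument we may assume, along a further subsequence, that $u_j\to u$ in $C^1_{loc}(\{1<|\xi|<2S_1\})$, that $R_j\rightharpoonup\bar R$ weakly-$*$ in $L^\infty_{loc}$ with $\|\bar R\|_{L^\infty}\le A_0$, and---because $\tau_j\to0$ and the $u_j$ are locally uniformly bounded---that $R_j|u_j|^{4/(Q-2)-\tau_j}u_j\to\bar R|u|^{4/(Q-2)}u$ in $L^1_{loc}$. Passing to the limit in the weak formulation, $u$ is a weak---hence, by regularity, classical---solution of the \emph{critical} equation $-\Delta_{\H}u=\bar R|u|^{4/(Q-2)}u$ on $1<|\xi|<2S_1$, and by weak lower semicontinuity $\int_{1<|\xi|<S_1}\bigl(|\nabla_{\H}u|^2+|u|^{Q^*}\bigr)\le\mu_2=\mu_1$.

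Now Proposition~\ref{prop:2.2} applies to $u$ and $\bar R$---this is exactly why the hypothesis of Proposition~\ref{prop:2.4} is imposed on the larger annulus $1<|\xi|<2S_1$ with the \emph{same} constant $S_1$---and yields $\sup_{l_1\le|\xi|\le l_2}|\xi|^{Q-2}|u(\xi)|\le 2C(n,A_0)$; invoking Proposition~\ref{prop:2.2} on a mildly enlarged annulus still contained in $\{1<|\xi|<2S_1\}$ (the enlargement being compatible with $l_1>2$, which keeps every annulus off $\{|\xi|=1\}$, and with the choice of $S_1$) and then the rescaling estimate of Proposition~\ref{prop:2.3}, with decay bound $3C(n,A_0)$ and $\|\bar R\|_{L^\infty}\le A_0$, gives $|\nabla_{\H}u(\xi)|\le C(n,A_0,A)\,|\xi|^{1-Q}$ on $l_1\le|\xi|\le l_2$. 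Since $\{l_1\le|\xi|\le l_2\}$ is a compact subset of $\{1<|\xi|<2S_1\}$ and $u_j\to u$ in $C^1$ there, for $j$ large we get $\sup_{l_1\le|\xi|\le l_2}|\xi|^{Q-2}|u_j(\xi)|\le 2C(n,A_0)+o(1)<3C(n,A_0)$ and $\sup_{l_1\le|\xi|\le l_2}|\xi|^{Q-1}|\nabla_{\H}u_j(\xi)|\le C(n,A_0,A)+o(1)<2C(n,A_0,A)$, contradicting whichever bound was assumed to fail. This proves the proposition with $\mu_2=\mu_1$ and $\overline\tau$ the threshold just produced.

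The step I expect to be the main obstacle is the limit passage rather than any individual estimate: one must ensure that the interior regularity estimates for the $u_j$ are genuinely uniform in $j$ notwithstanding the moving exponent $\tau_j$---so that \cite[Lemma 2.5]{CR2016} and \cite[Theorem 10.13]{FS1974} deliver $j$‑independent constants---and that the weak-$*$ limit $\bar R$ together with the strong local convergence of the nonlinear term really produces an admissible weak solution of the critical equation on the correct annulus with energy at most $\mu_1$, so that Proposition~\ref{prop:2.2} may be invoked legitimately. The hypothesis $l_1>2$ is precisely what keeps every annulus appearing in the argument away from the inner sphere $\{|\xi|=1\}$, where no a priori bound is available.
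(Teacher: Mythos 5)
Your proof is correct and follows essentially the same route the paper intends (it says only that the proof is ``similar to Proposition~\ref{prop:2.2}''): argue by contradiction along a sequence $\tau_j\to 0$, use the uniform small‑energy interior regularity to extract a $C^1_{loc}$‑convergent subsequence, pass to the limit to obtain a solution of the \emph{critical} equation with a weak‑$*$ limit coefficient $\bar R$, apply Propositions~\ref{prop:2.2} and~\ref{prop:2.3} to that limit, and transfer the strict inequalities back to $u_j$ for $j$ large. One small caveat you inherit from the paper rather than introduce yourself: Proposition~\ref{prop:2.3} as stated only yields the gradient estimate on $4l_1\le|\xi|\le l_2/4$ under the ratio condition $l_2>100\,l_1$, so invoking it on the original annulus $l_1\le|\xi|\le l_2$ for arbitrary $2<l_1<l_2$ really requires either applying it with an auxiliary pair $(l_1',l_2')$ satisfying $l_1'>1$, $4l_1'\le l_1$, $4l_2\le l_2'$, $l_2'>100\,l_1'$ (which presupposes $l_1>4$), or a mild restatement of Proposition~\ref{prop:2.3} with less conservative constants; the paper glosses over the same point, so this does not mark a divergence between your argument and the intended one.
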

\begin{proof}The proof is similar to Proposition \ref{prop:2.2}, we omit it here.
\end{proof}

\subsection{A minimization problem}\label{sec:3}
For any $\xi_1, \xi_2 \in \Hn $ satisfy $d (\xi_1, \xi_2) \geq 10$, denote $\Om: = \Hn\backslash \{ B_1(\xi_1) \cup B_1(\xi_2)\}$. We define $E_\Om $ by taking the closure of $C_c^\infty(\overline{\Om })$  under the norm
$$
  \|u\|_{E_{\Om }}= \Big( \int_{\Om } |\nabla_{\H}u|^2 \Big)^{1/2} + \Big( \int_{\Om } |u|^{Q^*} \Big)^{1/Q^*}.
$$
Clearly, $E_\Om $ is a Banach space. Using similar arguments  in \cite[Poposition 3.2]{Li1993}, we know that
 $u \in E_{\Om }$ if and only if there exists $\bar{u} \in E$ such that $u = \bar{u}|_{\Om }$. Moreover, for any $u \in E_\Om $, we have a Sobolev type
 inequality on $E_\Om$:
\be \label{Sobolevineqdomain}
\Big(\int_{\Om }|u|^{Q^*}\Big)^{1/Q^*} \leq C(n)\Big( \int_{\Om }|\nabla_{\H }u|^2\Big)^{1/2},
\ee
where  the positive constant $C(n)$ depends only on $n$. In particular, it does not depend on $\xi_1, \xi_2$ provided $d (\xi_1, \xi_2) \geq 10$.

Let $R\in L^{\infty}(\Om)$ satisfy $\|R\|_{L^{\infty}(\Om )}\leq A_0$ for some constant $A_0 > 0$. We define a functional on $E_{\Om }$ by
$$
I_{R,\Om}(u) = \frac{1}{2}\int_{\Om } |\nabla_{\H }u|^2 - \frac{1}{Q^*-\tau} \int_{\Om }RH^{\tau}|u|^{Q^*-\tau},
$$
where $\tau \in [0,2/(Q-2)]$.  For any $u \in E_{\Om }$, using H\"older inequality and \eqref{Sobolevineqdomain}, we have
\be\label{16}
\Big| I_{R,\Om}(u) - \frac{1}{2}\int_{\Om } |\nabla_{\H }u|^2 \Big|
\leq A_0C_0(n) \Big( \int_{\Om } |\nabla_{\H }u|^2 \Big)^{(Q^*-\tau)/2}
\ee
with $C_0(n)$ being a positive constant depends only on $n$.

\begin{prop}\label{prop:3.1}
Let $E_{\Om}$ be defined as above. There exist two constants $r_0 =r_0(n, A_0)\in (0,1)$ and  $C_1 = C_1(n)>1$ such that for any $\xi_1, \xi_2 \in \Hn $ with $d(\xi_1, \xi_2) \geq 10$, and $\varphi \in H^{1/2}(\pa \Om)$ with $r= \| \varphi \|_{H^{1/2}(\pa \Om)} \leq r_0$, the following minimum problem is achieved:
\be\label{mini}
\min_{u\in E_{\Om}}\Big\{I_{R,\Om} (u) : u|_{\pa \Om} = \varphi, \int_{\Om } |\nabla_{\H }u|^2 \leq C_1r_0^2\Big\}.
\ee
The minimizer is unique (denoted $u_\varphi$) and satisfies $\int_{\Om } |\nabla_{\H }u_\varphi|^2 \leq C_1r^2/2$. Furthermore, the map $\varphi \mapsto u_\varphi$ is continuous from $H^{1/2}(\pa \Om)$ to $E_{\Om }$.
\end{prop}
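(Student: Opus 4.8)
The plan is to proceed by the direct method of the calculus of variations on the convex admissible set
\be
\mathcal{A}=\mathcal{A}(\varphi):=\Big\{u\in E_{\Om}:u|_{\pa\Om}=\varphi,\ \int_{\Om}|\nabla_{\H}u|^2\leq C_1r_0^2\Big\},
\ee
the constants $C_1=C_1(n)>1$ and $r_0=r_0(n,A_0)\in(0,1)$ being fixed in the course of the argument. The first ingredient is a \emph{uniform} trace--extension estimate: since $\pa\Om=\pa B_1(\xi_1)\cup\pa B_1(\xi_2)$ consists of two unit Kor\'anyi spheres with $d(\xi_1,\xi_2)\geq10$, left invariance of $\Hn$ shows that the trace operator $E_{\Om}\to H^{1/2}(\pa\Om)$ has a bounded linear right inverse $\mathcal{E}$ with $\int_{\Om}|\nabla_{\H}\mathcal{E}\varphi|^2\leq C_*(n)\|\varphi\|_{H^{1/2}(\pa\Om)}^2$, the constant $C_*(n)$ being independent of $\xi_1,\xi_2$. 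Set $w_\varphi:=\mathcal{E}\varphi$ and fix $C_1:=8C_*(n)$, so that $\int_{\Om}|\nabla_{\H}w_\varphi|^2\leq\tfrac{C_1}{8}r^2$; hence $w_\varphi\in\mathcal{A}$ and $\mathcal{A}\neq\emptyset$ as soon as $r\leq r_0$. Because $Q^*-\tau\geq\tfrac{2(Q-1)}{Q-2}>2$ for every $\tau\in[0,\tfrac2{Q-2}]$, estimate \eqref{16} gives, for any $u$ with $X:=\int_{\Om}|\nabla_{\H}u|^2\leq C_1r_0^2$, the bound $I_{R,\Om}(u)\geq X(\tfrac12-A_0C_0(n)X^{(Q^*-\tau-2)/2})\geq\tfrac14 X\geq0$, provided $r_0=r_0(n,A_0)$ is so small that $A_0C_0(n)(C_1r_0^2)^{1/(Q-2)}\leq\tfrac14$; since the exponent $(Q^*-\tau-2)/2$ stays in the fixed interval $[\tfrac1{Q-2},\tfrac2{Q-2}]$, this smallness can be chosen uniformly in $\tau$. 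Thus $\inf_{\mathcal{A}}I_{R,\Om}$ is finite and nonnegative.

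Next I would run the direct method. Let $\{u_k\}\subset\mathcal{A}$ be a minimizing sequence; it is bounded in $E_{\Om}$ (the gradient by the constraint, the $L^{Q^*}$ norm by \eqref{Sobolevineqdomain}), so along a subsequence $u_k\rightharpoonup u_\varphi$ weakly in $E_{\Om}$. The gradient constraint passes to the limit by weak lower semicontinuity, and $u_\varphi|_{\pa\Om}=\varphi$ because $u_k-w_\varphi$ lies in the closed (hence weakly closed) subspace of $E_{\Om}$ of functions with vanishing trace. For the energy, the Dirichlet term is weakly l.s.c.; for $\int_{\Om}RH^\tau|u_k|^{Q^*-\tau}$ I split $\Om$ into $\{|\xi|\leq M\}$ and $\{|\xi|>M\}$: since $H(z,t)=(4/(t^2+(1+|z|^2)^2))^{(Q-2)/4}$ decays like $|\xi|^{-(Q-2)}$ we have $\int_{\Hn}H^{Q^*}<\infty$, so by H\"older $\int_{|\xi|>M}|R|H^\tau|u_k|^{Q^*-\tau}\leq A_0(\int_{|\xi|>M}H^{Q^*})^{\tau/Q^*}\|u_k\|_{L^{Q^*}}^{Q^*-\tau}\to0$ as $M\to\infty$ uniformly in $k$, while on $\Om\cap\{|\xi|\leq M\}$ the compact Folland--Stein embedding yields $u_k\to u_\varphi$ in $L^{Q^*-\tau}$, so that piece converges. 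Hence $\int RH^\tau|u_k|^{Q^*-\tau}\to\int RH^\tau|u_\varphi|^{Q^*-\tau}$, so $I_{R,\Om}(u_\varphi)\leq\liminf I_{R,\Om}(u_k)=\inf_{\mathcal{A}}I_{R,\Om}$ and $u_\varphi$ is a minimizer. Comparing with $w_\varphi$ and using \eqref{16} once more, $\tfrac14\int_{\Om}|\nabla_{\H}u_\varphi|^2\leq I_{R,\Om}(u_\varphi)\leq I_{R,\Om}(w_\varphi)\leq\tfrac{C_1}{16}r^2+A_0C_0(n)(\tfrac{C_1}{8})^{(Q^*-\tau)/2}r_0^{Q^*-\tau-2}r^2\leq\tfrac{C_1}{8}r^2$ for $r_0$ small, so $\int_{\Om}|\nabla_{\H}u_\varphi|^2\leq\tfrac{C_1}{2}r^2$; in particular the minimizer sits strictly inside the constraint ball.

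Uniqueness will follow from strict convexity of $I_{R,\Om}$ on $\mathcal{A}$. For $u,v\in\mathcal{A}$ with $v\neq u$, the second derivative along the segment equals $\int_{\Om}|\nabla_{\H}(v-u)|^2-(Q^*-\tau-1)\int_{\Om}RH^\tau|u+t(v-u)|^{Q^*-\tau-2}(v-u)^2$; by H\"older (putting $H^\tau$ in $L^{Q^*/\tau}(\Hn)$ when $\tau>0$, directly when $\tau=0$) and \eqref{Sobolevineqdomain} the second term is, in absolute value, at most $C(n)A_0(C_1r_0^2)^{(Q^*-\tau-2)/2}\int_{\Om}|\nabla_{\H}(v-u)|^2\leq\tfrac12\int_{\Om}|\nabla_{\H}(v-u)|^2$ once $r_0$ is small. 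Hence the second derivative is $\geq\tfrac12\int_{\Om}|\nabla_{\H}(v-u)|^2>0$, so $I_{R,\Om}$ is strictly convex on the convex set $\mathcal{A}$ and therefore has at most one minimizer; this yields uniqueness of $u_\varphi$.

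Finally, since $u_\varphi$ lies in the interior of the constraint, testing against arbitrary zero-trace $\psi\in E_{\Om}$ shows $\int_{\Om}\nabla_{\H}u_\varphi\cdot\nabla_{\H}\psi=\int_{\Om}RH^\tau|u_\varphi|^{Q^*-\tau-2}u_\varphi\psi$. For continuity, take $\varphi_j\to\varphi$ in $H^{1/2}(\pa\Om)$ with all norms $\leq r_0$, set $\Phi_j=\mathcal{E}(\varphi_j-\varphi)$ (so $\int_{\Om}|\nabla_{\H}\Phi_j|^2\leq C_*(n)\|\varphi_j-\varphi\|_{H^{1/2}}^2\to0$) and $w_j=u_{\varphi_j}-u_\varphi$; then $w_j-\Phi_j$ has zero trace, hence is an admissible test function in the difference of the two Euler--Lagrange identities. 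Testing with it, bounding the nonlinear difference via $|\,|a|^{p-2}a-|b|^{p-2}b\,|\leq C(p)(|a|^{p-2}+|b|^{p-2})|a-b|$ and applying H\"older together with \eqref{Sobolevineqdomain} exactly as before (the resulting coefficient can be made $\leq\tfrac12$ by taking $r_0$ small), one absorbs to get $\int_{\Om}|\nabla_{\H}w_j|^2\leq 9\int_{\Om}|\nabla_{\H}\Phi_j|^2\to0$, whence $\|w_j\|_{L^{Q^*}(\Om)}\to0$ and $u_{\varphi_j}\to u_\varphi$ in $E_{\Om}$. The main obstacle I foresee is the uniform trace--extension estimate, which requires trace theory for the sub-Laplacian near the two characteristic points of the Kor\'anyi sphere, combined with left invariance to keep the constant independent of $\xi_1,\xi_2$; a secondary, purely bookkeeping point is to make every smallness threshold for $r_0$ uniform in $\tau\in[0,\tfrac2{Q-2}]$, which works because all the exponents appearing stay in a fixed compact subinterval of $(0,\infty)$.
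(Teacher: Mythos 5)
Your proposal follows the same skeleton as the paper's proof—trace extension with a uniform constant $C_*(n)$ by left invariance, choice of $C_1$ from that, coercivity of $I_{R,\Om}$ from \eqref{16} on the small ball, strict convexity via a second-derivative computation for uniqueness, and an Euler--Lagrange contraction estimate for continuity. The strict-convexity estimate and the continuity argument are correct and essentially match the paper's "strict local convexity" route. However, there is a genuine gap in your \emph{existence} step.

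You prove existence by the direct method and try to pass to the limit in the nonlinear term $\int_{\Om}RH^\tau|u_k|^{Q^*-\tau}$ via the two-region split. Both halves of that split degenerate at $\tau=0$, which is explicitly included in the range $\tau\in[0,2/(Q-2)]$ of the proposition: (a) your tail bound is
$\int_{|\xi|>M}|R|H^\tau|u_k|^{Q^*-\tau}\leq A_0\big(\int_{|\xi|>M}H^{Q^*}\big)^{\tau/Q^*}\|u_k\|_{L^{Q^*}}^{Q^*-\tau}$, and when $\tau=0$ the prefactor is $1$, so it does not vanish as $M\to\infty$; (b) on $\Om\cap\{|\xi|\leq M\}$ you invoke compactness of the Folland--Stein embedding into $L^{Q^*-\tau}$, which is only available for $Q^*-\tau<Q^*$, i.e.\ $\tau>0$. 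So as written the argument only proves the statement for $\tau>0$, not for the full stated range. (Even for $\tau>0$ the decay rate in (a) deteriorates as $\tau\downarrow0$, so you could not hope to make this uniform without a different idea.)

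The fix is already in your own write-up: the strict convexity you establish for uniqueness is exactly what the paper uses to get existence as well. Since $I_{R,\Om}$ is continuous on $E_{\Om}$ and (for $r_0$ small) convex on the closed, bounded, convex set $\mathcal{A}$, it is weakly lower semicontinuous there; $\mathcal{A}$ is weakly compact in the Hilbert ball, so the infimum is attained, with no need for strong convergence of the nonlinear term. That is precisely what the paper does after substituting $u=v+\Phi$ with $v|_{\pa\Om}=0$: it minimizes the strictly convex $J_{R,\Om}$ over the small ball, which covers $\tau=0$ uniformly. If you replace your compactness step by this convexity$\Rightarrow$weak-lsc argument, the proof becomes correct and aligns with the paper's.
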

\begin{rem}
Note that   $\pa \Om$ 	 is a smooth hypersurface of $\Hn$ with a finite number of non-degenerate characteristic points, one can give a meaning to $u|_{\pa\Om}$ and define the Sobolev space  $H^{1/2}(\pa \Om)$ by invoking the theory of traces,   see \cite{BCX2005} for more details.
\end{rem}
\begin{proof}[Proof of Proposition \ref{prop:3.1}]
According to the theory of traces in Bahouri-Chemin-Xu \cite[Theorem 1.8]{BCX2005} that there exist a constant $C_1 =C_1(n)>0$ and $\Phi \in E_{\Om }$ such that
\be\label{C1}
\int_{\Om } |\nabla_{\H }\Phi|^2 \leq \frac{C_1}{8}r^2\quad \text{ and }\quad \Phi|_{\pa \Om } = \varphi.
\ee
We fix the value of $C_1$ from now on and the value of $r_0$ will be determined in the following.

First it follows from \eqref{16}-\eqref{C1} that if $r_0(n, A_0)>0$ is chosen small enough, then
\be\label{upper}
I_{R,\Om}(\Phi) \leq \frac{1}{2}\int_{\Om }|\nabla_{\H }\Phi|^2 + A_0C_0(n) \Big( \int_{\Om }|\nabla_{\H }\Phi|^2 \Big)^{(Q^* -\tau)/2}\leq \frac{4}{5}\int_{\Om }|\nabla_{\H }\Phi|^2 \leq \frac{C_1}{10}r^2.
\ee
Since for any $u$ in $ \{u \in E_{\Om }: C_1r^2/2 \leq \int_{\Om } |\nabla_{\H }u|^2 \leq 2C_1r_0^2\}$, we derive from \eqref{16} that
$$I_{R,\Om} (u)\geq \Big(\frac{1}{2}-A_0 C_0(n)(2 C_1 r_0^2)^{2/(Q-2)-\tau / 2}\Big)\int_{\Om } |\nabla_{\H }u|^2.$$
Thus, if we choose $r_0>0$ to further satisfy  $A_0C_0(n)(2C_1 r_0^2)^{2/(Q-2)-\tau / 2} \leq 1/4$, then using \eqref{C1} and  \eqref{upper} we have $$I_{R,\Om} (u)\geq \frac{1}{4}\int_{\Om } |\nabla_{\H }u|^2\geq \frac{1}{4} \Big(\frac{1}{2}C_1r^2\Big) >I_{R,\Om}(\Phi).$$  Therefore, the minimizer is not achieved in the set $\{u \in E_{\Om }: C_1r^2/2 \leq \int_{\Om } |\nabla_{\H }u|^2 \leq 2C_1r_0^2\}$.

Next we prove the existence of the minimzer.
Write $u = v+ \Phi$, $v|_{\pa  \Om } = 0$, $J_{R,\Om}(v) =: I_{R,\Om}(u) = I_{R,\Om}(v + \Phi)$.
We only need to minimize $J_{R,\Om}(v)$ for $\int_{\Om } |\nabla_{\H }v|^2 \leq 2C_1 r_0^2$ due to the above argument. Obviously,  $J_{R,\Om}$ is strictly convex in the ball $\{ v \in E_{\Om }: v|_{\pa \Om } = 0, \int_{\Om } |\nabla_{\H }v|^2 \leq 2C_1r_0^2 \}$ if $r_0$ is small enough. Thus it is standard to conclude the existence of a unique local minimizer $v_\varphi$.

Finally, set $u= v_\varphi + \Phi$, then $u$ is a local minimizer and $u$ satisfies
$\int_{\Om } |\nabla_{\H }u_\varphi|^2 \leq C_1r^2/2$. The uniqueness and continuity of the map $\varphi \mapsto u_\varphi$ follows from the strict local convexity of $J_{R,\Om}$.
\end{proof}

\section{Construction of a family of approximate solutions}\label{sec:4}
Due to the presence of the  Sobolev critical exponent, the
Euler-Lagrange functional corresponding to \eqref{maineq1} does not satisfy the Palais-Smale condition. As previously mentioned in the introduction, we turn our attention to the following equation:
\be\label{subcritical}
-\Delta_{\H} u=R(\xi)H^{\tau} u^{(Q+2)/(Q-2)-\tau}, \quad u>0 \quad  \text{ in }\,\Hn,
\ee
which is the subcritical version of  \eqref{maineq} after using Green’s representation \eqref{Green} and the  Cayley transform \eqref{inverse}, where $\tau>0$ is a small constant and $H(z, t) =(\frac{4}{t^2 + (1+|z|^2)^2})^{(Q-2)/4}$.  In this section, we
will construct multi-bump solutions to the above subctitical type equations.

We first introduce some notations which are used throughout the paper.

Let $\{ R_l(\xi) \}$ be a sequence of functions  satisfying the following conditions.
\begin{enumerate}
  \item[(i)] There exists some  constant $A_1>0$ such that for any $l = 1, 2, 3,\ldots  ,$
  \be \label{18}
    |R_l(\xi)| \leq A_1, \quad \forall\, x \in \Hn .
  \ee
  \item[(ii)]
  For some integers $m \geq 2$, there exist $\xi_l^{(i)} \in \Hn $, $1 \leq i \leq m$, $S_l \leq \frac{1}{2}\min_{i \neq j}d (\xi_l^{(i)}, \xi_l^{(j)})$, such that $R_l$ is continuous near $\xi_l^{(i)}$ and
  \begin{align}\label{19}
    &\lim_{l \to  \infty} S_l = \infty, & \\\label{20}
    &R_l(\xi_l^{(i)}) = \max_{\xi \in B_{S_l}(\xi_l^{(i)})}R_l(\xi), \quad &1\leq i \leq m,\\\label{21}
    &\lim_{l \to  \infty} R_l(\xi_l^{(i)}) = a^{(i)}, &1\leq i \leq m,\\\label{22}
    & R_\infty^{(i)}(\xi) := (\text{weak $*$})\lim_{l \to  \infty} R_l (\xi_l^{(i)} \circ \xi), &1\leq i \leq m.
  \end{align}
  \item[(iii)] There exist some  constants $A_2, A_3>1$, $\delta_0, \delta_1 >0$, and some bounded open sets $O_l^{(1)}, \ldots , O_l^{(m)} \subset \Hn $, such that, if we define  for $1 \leq i \leq m$,
      \begin{gather*}
        \widetilde{O}_l^{(i)} = \{ \xi \in \Hn : \operatorname{dist}\,(\xi, O_l^{(i)}) < \delta_0 \}, \\
        O_l = \bigcup_{i=1}^m O_l^{(i)}, \quad \widetilde{O}_l = \bigcup_{i=1}^m \widetilde{O}_l^{(i)},
      \end{gather*}
      we have
      \begin{gather}\label{23-1}
        \xi_l^{(i)} \in O_l^{(i)}, \quad \operatorname{diam}(O_l^{(i)})<S_l/10, \\\label{23-2}
        R_l \in C^1( \widetilde{O}_l,[ 1/A_2, A_2 ] ), \\\label{24}
        R_l(\xi_l^{(i)}) \geq \max_{\xi \in \pa  O_l^{(i)}} R_l(\xi)+ c\delta_1,\\
        \max_{\xi \in \widetilde{O}_l}|\nabla_{\H}R_l(\xi)| \leq A_3, \label{25}
        \end{gather}
        where $c=c(\delta_0)>0$ is a constant such that $\operatorname{dist}(\xi_{l}^{(i)},\pa O_{l}^{(i)})\geq \delta_1/A_3$ for any $1\leq i\leq m$ and $\operatorname{diam} O := \sup\{ d(\xi, \zeta): \xi, \zeta \in O \}$ for any  set $O$ in $\Hn$.
    \end{enumerate}

For $\var>0$ small, we define $V_l(m, \var )= V(\var , O_l^{(1)}, \ldots , O_l^{(m)}, R_l)$.  In order to simplify our analysis, we only focus on the case $m=2$, as the more general result is similar in nature.


      If $u$ is a function in $V_l(2,\var)$, one can find an optimal representation, following the ideas introduced in \cite{BC1988,BC1991}. Namely, we have
      \begin{prop}\label{prop:4.1}
      There exists $\var _0  \in (0,1)$ depending only on $A_1, A_2, A_3, n, \delta_0$, but independent of $l$, such that for any $\var  \in (0, \var _0]$, $u \in V_l(2, \var )$, the following minimization problem
      \be \label{minimization}
        \min_{(\al  , \xi, \lam ) \in D_{4\var }}\Big\| u - \sum_{i=1}^{2} \al  _iw_{\xi_i,\lam _i} \Big\|
      \ee
      has a unique solution $(\al  , \xi, \lam )$ up to a permutation.  Moreover, the minimizer is achieved in $D_{2\var }$ for large $l$, where
        \begin{align*}
        D_{\var }=\big\{(\al  , \xi, \lam ) : &1/(2 A_2^{(Q-2) / 4})  \leq \al  _1, \al  _2 \leq 2 A_2^{(Q-2) / 4}, \\
        &\xi=  (\xi_1, \xi_2)\in O_l^{(1)} \times O_l^{(2)},  \lam =(\lam _1, \lam _2), \lam _1, \lam _2 \geq \var^{-1}\big\} .
        \end{align*}
In particular, we can write $u$ as $u = \sum_{i=1}^{2}\al_iw_{ \xi_i,\lam _i}+v$, where $v\in E$ and for each $i=1,2$, it holds\begin{gather*}
\langle w_{\xi_i,\lam _i}, v\rangle=\Big\langle\frac{\pa w_{\xi_i,\lam _i}}{\pa \lam_i}, v\Big\rangle=\langle X_j w_{\xi_i,\lam _i}, v\rangle=\langle Y_j w_{\xi_i,\lam _i}, v\rangle=\langle T w_{\xi_i,\lam _i}, v\rangle =0
\end{gather*}
 for all $j=1,\ldots,n$, where  $\langle\cdot, \cdot\rangle$ denotes the inner product defined by \eqref{inner} and $X_j,Y_j,T$ are the left invariant vector fields in \eqref{vectorfields}.
In addition, the variables $\{\al_i\}$ satisfy
\be\label{prop4.1-1}
|\al_i-R_l(\xi_i)^{(2 -Q) / 4}|=o_{\var}(1) \quad \text { for }\, i=1,2,
\ee
where $o_{\var}(1)\to 0$ as $\var\to 0$.
      \end{prop}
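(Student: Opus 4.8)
The plan is to follow the classical scheme for optimal bubble decomposition developed by Bahri--Coron, adapting it to the Heisenberg setting where the translation group and dilations replace the Euclidean ones. First I would observe that, by definition of $V_l(2,\var)$, for each $u \in V_l(2,\var)$ the infimum in \eqref{minimization} over $D_{4\var}$ is at most $\var$ (indeed some admissible triple already witnesses $\|u - \sum \al_i w_{\xi_i,\lam_i}\| < \var$), so the minimization is genuinely over a nonempty set. The first key step is to show the infimum is attained. The subtlety is that $D_{4\var}$ is not compact because the concentration parameters $\lam_i$ may run to $+\infty$ and the points $\xi_i$ may approach $\pa O_l^{(i)}$. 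I would handle the first issue by noting that as $\lam_i \to \infty$ the bubble $w_{\xi_i,\lam_i} \rightharpoonup 0$ weakly in $E$, so along a minimizing sequence the functional $\|u - \sum \al_i w_{\xi_i,\lam_i}\|$ can only decrease its infimum in the limit by dropping that bubble, which would force $\|u\|$ close to a single-bubble distance — contradicting that $u$ has mass genuinely near two separated regions $O_l^{(1)}, O_l^{(2)}$ with $\operatorname{dist}(O_l^{(1)},O_l^{(2)}) \geq 2S_l \to \infty$. Hence the minimizing $\lam_i$ stay bounded above, and they stay bounded below by $(4\var)^{-1}$; the $\xi_i$ stay in the closures $\overline{O_l^{(i)}}$ and the $\al_i$ in a compact interval, so a minimizer $(\al,\xi,\lam)$ exists after passing to a subsequence, and a standard argument using \eqref{24} (the maximum of $R_l$ on $\pa O_l^{(i)}$ is strictly smaller than at $\xi_l^{(i)}$) together with \eqref{prop4.1-1} pushes the minimizing $\xi_i$ away from the boundary, so in fact the minimizer lies in the interior $D_{2\var}$ for large $l$.

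The second key step is uniqueness up to permutation, which I would prove by a quantitative implicit-function-theorem argument near the "unperturbed" configuration. Write $F(\al,\xi,\lam) = \|u - \sum_{i=1}^2 \al_i w_{\xi_i,\lam_i}\|^2$. Because the two bubbles are forced to be well-separated and highly concentrated (their mutual interaction $\langle w_{\xi_1,\lam_1}, w_{\xi_2,\lam_2}\rangle$ is $o_\var(1)$, by the standard decay estimate for bubble interactions on $\Hn$, which I would cite or record from the Jerison--Lee bubble asymptotics), the Hessian of $F$ at a critical point is, up to $o_\var(1)$, block-diagonal with one nondegenerate block per bubble, each block being the (positive-definite) Gram-type matrix of the functions $w_{\xi_i,\lam_i}, \pa_{\lam_i} w_{\xi_i,\lam_i}, X_j w_{\xi_i,\lam_i}, Y_j w_{\xi_i,\lam_i}, T w_{\xi_i,\lam_i}$ suitably rescaled. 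This nondegeneracy — which is exactly the statement that the only solutions of the linearized equation at a Jerison--Lee bubble are spanned by these $2n+3$ generators, a fact I would take from the known nondegeneracy of the Heisenberg bubble — implies $F$ is strictly convex in a neighborhood of any critical point in $D_{4\var}$ for $\var$ small, so the critical point is locally unique; a connectedness/continuation argument over $D_{4\var}$ (or directly: any two critical points in $D_{4\var}$ would, by the separation of bubbles, have to match bubble-by-bubble up to permutation, and within each single-bubble problem strict convexity forces equality) yields global uniqueness up to the obvious swap of the two bubbles.

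The third step is routine: once the minimizer $(\al,\xi,\lam)$ exists, the Euler--Lagrange equations $\pa_{\al_i} F = \pa_{\lam_i} F = \nabla_{\xi_i} F = 0$ are precisely the orthogonality relations
$$
\langle w_{\xi_i,\lam_i}, v\rangle = \Big\langle \frac{\pa w_{\xi_i,\lam_i}}{\pa \lam_i}, v\Big\rangle = \langle X_j w_{\xi_i,\lam_i}, v\rangle = \langle Y_j w_{\xi_i,\lam_i}, v\rangle = \langle T w_{\xi_i,\lam_i}, v\rangle = 0
$$
for $v := u - \sum_i \al_i w_{\xi_i,\lam_i}$, using that $\{\pa_{\xi_i^{(k)}} w_{\xi_i,\lam_i}\}$ span the same space as $\{X_j w, Y_j w, T w\}$ because left translations on $\Hn$ are generated by the vector fields $X_j, Y_j, T$. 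Finally, \eqref{prop4.1-1} follows by testing the decomposition $u = \sum \al_i w_{\xi_i,\lam_i} + v$ against $w_{\xi_i,\lam_i}$: the orthogonality kills the $v$-term and the $j\neq i$ cross term is $o_\var(1)$, leaving $\al_i \|w_{\xi_i,\lam_i}\|^2 = \langle u, w_{\xi_i,\lam_i}\rangle + o_\var(1)$; comparing with the a priori bound $\|u - \sum \al_i' w_{\xi_i',\lam_i'}\| < \var$ coming from membership in $V_l(2,\var)$ with $|\al_i' - R_l(\xi_i')^{(2-Q)/4}| < \var$, and using continuity of $R_l$ near $\xi_l^{(i)}$ together with \eqref{23-1}--\eqref{25}, gives $|\al_i - R_l(\xi_i)^{(2-Q)/4}| = o_\var(1)$.

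The main obstacle I anticipate is the compactness/attainment step: ruling out escape of mass as $\lam_i \to \infty$ or $\xi_i \to \pa O_l^{(i)}$ cleanly, uniformly in $l$. This is where one genuinely uses the structural hypotheses \eqref{19}--\eqref{25} (well-separatedness $S_l \to \infty$, the strict maximum condition \eqref{24}, the uniform bounds \eqref{18}, \eqref{23-2}, \eqref{25}), and it requires the Heisenberg analogue of the Bahri--Coron interaction estimates — bounding $\langle w_{a,\lam}, w_{b,\mu}\rangle$ and $\int w_{a,\lam}^p w_{b,\mu}$ in terms of $d(a,b)$, $\lam$, $\mu$ — which must be established or cited with explicit decay rates valid on $\Hn$; the subelliptic Green's function asymptotics make these more delicate than in $\Rn$.
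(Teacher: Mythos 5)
Your proposal follows exactly the Bahri--Coron scheme that the paper invokes (the paper gives no proof, only the reference to \cite{BC1988,BC1991}): attainment by compactness, uniqueness via nondegeneracy of the linearization at a Jerison--Lee bubble, orthogonality from the Euler--Lagrange equations, and \eqref{prop4.1-1} by testing against each bubble. That overall architecture is correct and is what the paper means by "similar to the corresponding statements in \cite{BC1988,BC1991}."

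Two steps need tightening, however. The escape-of-$\lam_i$ argument as you phrase it is misleading: the $E$-norm is dilation-invariant, so $\|w_{\xi_i,\lam_i}\| = \|w_{0,1}\|$ for every $\lam_i$, and if (say) $\lam_2\to\infty$ with $\al_2$ in its allowed range then the cross term $\langle u-\al_1 w_{\xi_1,\lam_1},\, w_{\xi_2,\lam_2}\rangle\to 0$ by weak convergence while $\|\al_2 w_{\xi_2,\lam_2}\|^2 = \al_2^2\|w_{0,1}\|^2$ stays bounded below; thus $\|u-\al_1 w_{\xi_1,\lam_1}-\al_2 w_{\xi_2,\lam_2}\|^2$ converges \emph{up} to $\|u-\al_1 w_{\xi_1,\lam_1}\|^2 + \al_2^2\|w_{0,1}\|^2 \geq \al_2^2\|w_{0,1}\|^2$, which directly contradicts the infimum being $<\var$ once $\var$ is small. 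The reasoning about "dropping that bubble" and the mass distribution of $u$ is an unnecessary detour and, as stated, suggests the objective goes down, which it does not. Relatedly, \eqref{24} cannot be what keeps the minimizer inside $D_{2\var}$: the objective in \eqref{minimization} is a pure $E$-norm and does not see $R_l$ at all, so a condition on the boundary values of $R_l$ has no bearing here. What actually forces $\lam_i\geq(2\var)^{-1}$ and keeps $\xi_i$ interior is the quantitative stability of bubble decompositions — any two decompositions of the same $u$ within $O(\var)$ in norm must have parameters close bubble-by-bubble up to permutation — applied against the witnessing parameters $(\al',\xi',\lam')$ of the $V_l(2,\var)$ membership, which satisfy $\lam_i'\geq\var^{-1}$ and $\xi_i'\in O_l^{(i)}$. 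Conditions \eqref{23-1}--\eqref{25} are genuinely needed only in your final step, where the Lipschitz bound on $R_l$ near $\xi_l^{(i)}$ is used to pass from $|\al_i'-R_l(\xi_i')^{(2-Q)/4}|<\var$ to \eqref{prop4.1-1}; there your argument is correct.
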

\begin{proof}
The proof  is similar  to  the corresponding statements in \cite{BC1988,BC1991},
we omit it here.
\end{proof}
\begin{rem}
	A. Bahri introduced the theory of critical points at infinity which is a set of ideas
	and techniques to handle noncompactness issues in nonlinear partial differential equations, we refer
	to  \cite{B1988book} for more explanations. This method is very powerful and has been applied to obtain
	so called Bahri–Coron-type existence criterium in various noncompactness problems, including the prescribed Webster Scalar Curvature problem
	on CR manifolds, see, e.g., \cite{CAY2010,GH2017,CAY2013,GHM2020,R2013,R2012,GAG2015,SG2011,RG2012-2,RG2012,G2017}. We will adopt these ideas in this section.
\end{rem}
In the sequel, we will often spilt $u$, a function in $V_l(2, \var)$, $\var\in (0,\var_0]$, under the form
      \be \label{181}
        u = \al  _1^lw_{ \xi_1^l,\lam _1^l} +\al  _2^lw_{\xi_2^l, \lam _2^l} + v^l
      \ee
after making the minimization \eqref{minimization}.     Proposition \ref{prop:4.1}  guarantees the existence and uniqueness of $\al_{i}=\al_{i}(u)=\al_{i}^{l}$, $\xi_{i}=\xi_{i}(u)=\xi_{i}^{l}$ and $\lam _{i}=\lam _{i}(u)=\lam _{i}^{l}$ for $i=1,2$ (we omit the index $l$ for simplicity).

For any $R\in L^{\infty}(\Hn)$ and $u\in E$, we define energy functional related to \eqref{subcritical}
$$I_{R,\tau}(u):=\frac{1}{2}\int |\nabla_{\H }u|^2-\frac{1}{Q^*-\tau}\int R H^{\tau}|u|^{Q^*-\tau}$$
with $\tau\geq 0$ small. Clearly, $I_{R}=I_{R,0}$.

Now we follow and refine the analysis of Bahri and Coron \cite{BC1988,BC1991} to study the subcritical interaction of two well-spaced bubbles.   To continue our proof,     let $\overline{\tau}_l>0$ be a sequence satisfying
\be \label{26}
\lim_{l \to \infty}\overline{\tau}_l = 0, \quad \lim_{l \to \infty}(|\xi_l^{(1)}|+ |\xi_l^{(2)}|)^{\overline{\tau}_l} =1.
\ee
We first give a lower bound energy estimate for some well-spaced \emph{bubbles}.
      \begin{lem}\label{lem:3.1}
Let $\var_0$ be the constant in Proposition \ref{prop:4.1}. Suppose that $\var_1\in (0,\var_0)$ small enough and $l$ large enough, $0\leq \tau \leq \overline{\tau}_l$.
Then there exists a constant $A_4 = A_4(n,\delta_1, A_2)>1$ such that for  any $u \in V_l(2, \var _1)$ with $\xi_1(u) \in \widetilde{O}_l^{(1)}$, $\xi_2(u) \in \widetilde{O}_l^{(2)}$, and $\operatorname{dist}(\xi_1(u), \pa  \widetilde{O}_l^{(1)})< \delta_1/(2A_3)$ or $\operatorname{dist}(\xi_2(u), \pa  \widetilde{O}_l^{(2)})< \delta_1/(2A_3)$, we have
   $I_{R_l, \tau}(u) \geq c^{(1)} + c^{(2)} + 1/A_4$,
      where $c^{(i)} =(a^{(i)})^{(2-Q)/2}(S_n)^Q/Q$ for $i=1,2$.
       \end{lem}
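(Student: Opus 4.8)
The strategy is to argue by contradiction, combining the expansion of the energy functional $I_{R_l,\tau}$ on the neighborhood $V_l(2,\var_1)$ of two well-spaced bubbles with the topological assumption \eqref{24} that $R_l(\xi_l^{(i)})$ strictly dominates $R_l$ on $\pa O_l^{(i)}$. First, using the optimal representation from Proposition \ref{prop:4.1}, write $u = \al_1 w_{\xi_1,\lam_1} + \al_2 w_{\xi_2,\lam_2} + v$ with $v$ in the orthogonal complement, $\lam_i \geq \var_1^{-1}$, $\xi_i \in \widetilde{O}_l^{(i)}$, and $|\al_i - R_l(\xi_i)^{(2-Q)/4}| = o_{\var_1}(1)$. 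Since the two bubbles are supported essentially within $B_1(\xi_l^{(1)})$-type regions separated by distance $\geq 2S_l \to \infty$ after dilation, the interaction term $\int |\nabla_\H w_{\xi_1,\lam_1}\cdot \nabla_\H w_{\xi_2,\lam_2}|$ and the cross terms in $\int R_l H^\tau |u|^{Q^*-\tau}$ are $o(1)$ as $l \to \infty$ (for $\lam_i$ large); likewise the $H^\tau$ factor contributes $H^\tau(\xi_i) = H^\tau(\xi_l^{(i)})(1+o(1))$ and $(|\xi_l^{(1)}| + |\xi_l^{(2)}|)^{\overline\tau_l} \to 1$ by \eqref{26} kills the $\tau$-dependence of this factor to leading order. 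The standard bubble computation then gives
\[
I_{R_l,\tau}(u) = \sum_{i=1}^{2}\Big( \tfrac12 \al_i^2 \|w_{\xi_i,\lam_i}\|^2 - \tfrac{1}{Q^*-\tau} R_l(\xi_i) H^\tau(\xi_l^{(i)}) \al_i^{Q^*-\tau}\int w_{0,1}^{Q^*-\tau} \Big) + \tfrac12\|v\|^2 + o_l(1) + o_{\var_1}(1),
\]
and optimizing each single-bump term in $\al_i$ (which is what \eqref{prop4.1-1} records) shows the $i$-th summand is $\big(R_l(\xi_i)\big)^{(2-Q)/2}(S_n)^Q/Q + o(1)$, using that the bubbles realize equality in the Folland–Stein inequality \eqref{Sobolevineq}.

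Second, I would exploit the hypothesis that $\operatorname{dist}(\xi_1(u),\pa\widetilde O_l^{(1)}) < \delta_1/(2A_3)$ (the case of the second index being symmetric). Because $R_l$ is $C^1$ on $\widetilde O_l$ with $\|\nabla_\H R_l\|_{L^\infty(\widetilde O_l)} \le A_3$ by \eqref{25}, and because $\operatorname{dist}(\xi_l^{(1)},\pa O_l^{(1)}) \ge \delta_1/A_3$, a point $\xi_1$ within $\delta_1/(2A_3)$ of $\pa\widetilde O_l^{(1)}$ lies at controlled distance from $\pa O_l^{(1)}$, so
\[
R_l(\xi_1) \le \max_{\pa O_l^{(1)}} R_l + A_3\cdot\operatorname{dist}(\xi_1, \pa O_l^{(1)}) \le R_l(\xi_l^{(1)}) - c\delta_1 + (\text{small}),
\]
hence $R_l(\xi_1) \le a^{(1)} - \eta$ for some fixed $\eta = \eta(n,\delta_1,A_2) > 0$ once $l$ is large (using \eqref{21}). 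Since $t \mapsto t^{(2-Q)/2}$ is decreasing on $(0,\infty)$ and $R_l$ stays in $[1/A_2, A_2]$ on $\widetilde O_l$, this gives
\[
\big(R_l(\xi_1)\big)^{(2-Q)/2} \ge (a^{(1)})^{(2-Q)/2} + \tfrac{1}{A_4'}
\]
for a constant $A_4'$ depending only on $n,\delta_1,A_2$. Combining with the second bump term, which is $\ge (a^{(2)})^{(2-Q)/2}(S_n)^Q/Q + o(1) = c^{(2)} + o(1)$ since $R_l(\xi_2) \le a^{(2)} + o(1)$, and discarding the nonnegative $\tfrac12\|v\|^2$, we obtain $I_{R_l,\tau}(u) \ge c^{(1)} + c^{(2)} + \tfrac{1}{2A_4'}(S_n)^Q/Q + o_l(1) + o_{\var_1}(1)$, which exceeds $c^{(1)}+c^{(2)}+1/A_4$ for a suitable $A_4$ once $\var_1$ is small and $l$ large.

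The main obstacle is getting the error terms in the energy expansion uniform in $l$ and genuinely small, i.e., of smaller order than the fixed gain $\eta$ coming from \eqref{24}. This requires: (a) controlling the bubble interaction and the $v$-cross terms uniformly over $\xi_i \in \widetilde O_l^{(i)}$ with $d(\xi_l^{(1)},\xi_l^{(2)}) \ge 2S_l$ and $\lam_i \ge \var_1^{-1}$, which is the delicate Bahri–Coron type estimate already invoked via Proposition \ref{prop:4.1}; (b) handling the subcritical perturbation — the powers $Q^*-\tau$ and the weight $H^\tau$ — showing $\int w_{0,1}^{Q^*-\tau} \to \int w_{0,1}^{Q^*}$ and $H^\tau(\xi_l^{(i)}) = 1 + o(1)$ uniformly for $0\le\tau\le\overline\tau_l$, where \eqref{26} is exactly the hypothesis that makes $|\xi_l^{(i)}|^{\overline\tau_l}$ harmless; and (c) keeping track that the $o_{\var_1}(1)$ terms do not depend on $l$, so that one first fixes $\var_1$ small, then sends $l\to\infty$. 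Once these uniformities are in place the argument closes.
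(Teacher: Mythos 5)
Your proposal is correct and follows essentially the same route as the paper: decompose $u$ via the optimal representation of Proposition \ref{prop:4.1}, expand $I_{R_l,\tau}(u)$ into two single-bubble energies plus $o_{\var_1}(1)+o(1)$ errors, bound each single-bubble energy from below by $\tfrac1Q R_l(\xi_i)^{(2-Q)/2}(S_n)^Q$ using the Folland--Stein inequality, and then use the gradient bound \eqref{25} together with \eqref{24} to force $R_l(\xi_1)$ strictly below $R_l(\xi_l^{(1)})$, which (since $t\mapsto t^{(2-Q)/2}$ is decreasing) yields the fixed energy gain $1/A_4$. The only point to watch (present equally in the paper, whose proof tacitly replaces $\pa\widetilde O_l^{(1)}$ by $\pa O_l^{(1)}$) is that one must quantify precisely the distance from $\xi_1$ to $\pa O_l^{(1)}$ when $\xi_1$ is near $\pa\widetilde O_l^{(1)}$, but this is a bookkeeping matter and does not alter the structure of the argument.
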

      \begin{proof}
       We assume that  $\operatorname{dist}(\xi_1(u), \pa  O_l^{(1)})< \delta_1/(2A_3)$. It follows from \eqref{Sobolevineq}, \eqref{prop4.1-1}, and some direct computations that, for $\var_1$ small and $l$ large,
          \begin{align*}
          I_{R_{l,\tau}}(u) =& \sum_{i=1}^{2}I_{R_{l,\tau}}(\al  _i w_{\xi_i,\lam _i}) +o_{\var _1}(1)\\
          =&\sum_{i=1}^{2}I_{R_{l,\tau}}(R_l(\xi_i)^{(2-Q)/4}   w_{\xi_i,\lam _i}) +o_{\var _1}(1)\\
          =& \sum_{i=1}^{2}\Big\{ \frac{1}{2}R_l(\xi_i)^{(2-Q)/2}\int|\nabla_{\H }w_{0,1}|^2 \\
          &-\frac{1}{Q^*}R_l(\xi_i)^{-Q/2}\int R_l w_{\xi_i,\lam _i}^{Q^* -\tau} \Big\} +o_{\var _1}(1) +o(1)\\
          \geq &\sum_{i=1}^{2}\Big\{ \frac{1}{2}R_l(\xi_i)^{(2-Q)/2}\int|\nabla_{\H }w_{0,1}|^2\\
          &-\frac{1}{Q^*}R_l(\xi_i)^{(2-Q)/2}\int w_{0,1}^{Q^* } \Big\} +o_{\var _1}(1) +o(1)\\
          = & \sum_{i=1}^2 \frac{1}{Q}R_l(\xi_i)^{(2-Q)/2}(S_n)^Q +o_{\var _1}(1) +o(1).
          \end{align*}
          Combining above estimate with the assumption $\operatorname{dist}(\xi_1(u), \pa  O_l^{(1)})< \delta_1/(2A_3)$, we obtain
         \begin{align*}
        I_{R_{l,\tau}}(u)  \geq & \frac{1}{Q}(R_l(\xi_l^{(1)})-\delta_1/2)^{(2-Q)/2}(S_n)^Q\\& + \frac{1}{Q}R_l(\xi_l^{(2)})^{(2-Q)/2}(S_n)^Q + o_{\var _1}(1) +o(1)\\
        \geq &\sum_{i=1}^{2} c^{(i)} + 1/A_4,
            \end{align*}
     where the choice of $A_4$ is evident thanks to \eqref{21}, \eqref{22}, \eqref{24} and \eqref{25}. The proof is now complete.
        \end{proof}

        From now on, the values of $A_4$ and $\var _1$ are fixed.  The main result in this section can be stated as follows:
        \begin{thm}\label{thm:4.1}
          Suppose that $\{R_l\}$ is a sequence of functions  satisfying (i)--(iii). If there exist some bounded open sets $O^{(1)}, \ldots , O^{(m)} \subset \Hn $ and some  constants $\delta_2, \delta_3 >0$, such that, for all $1 \leq i \leq m$,
      \begin{gather*}
     (\xi_l^{(i)})^{-1}\circ \widetilde{O}_l^{(i)} \subset O^{(i)} \quad \text{ for all } \,l,\label{2.13}\\
     \big\{ u: I_{R_\infty^{(i)}}'(u) = 0, u>0, u \in E, c^{(i)} \leq I_{R_\infty^{(i)}}(u) \leq c^{(i)} + \delta_2 \big\}\cap V(1, \delta_3, O^{(i)}, R_\infty^{(i)})= \emptyset.\label{29}
      \end{gather*}
          Then for any $\var >0$, there exists integer $\overline{l}_{\var , m}>0$, such that for all $l \geq  \overline{l}_{\var , m}$, $\tau \in (0 , \overline{\tau}_l)$, there exists $u_l = u_{l, \tau} \in V_l(m, \var )$ which solves
          \be \label{30}
            -\Delta_{\H}u_l = R_l (\xi) H^\tau u_l^{(Q+2)/(Q-2)-\tau} ,
           \quad  u_l>0 \quad \text{ in }\, \Hn.
          \ee
          Furthermore, $u_l$ satisfies
          \be \label{31}
            \sum_{i=1}^{m}c^{(i)}- \var  \leq I_{R_{l, \tau}}(u_l) \leq \sum_{i=1}^{m}c^{(i)} + \var .
          \ee
        \end{thm}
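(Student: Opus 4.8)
The plan is to construct $u_l$ as a critical point of the functional $I_{R_l,\tau}$ restricted to (a suitable modification of) the set $V_l(m,\var)$, using the variational gluing / minimax scheme of Coti Zelati--Rabinowitz as adapted by Li. The key point is that the non-existence hypothesis \eqref{29} rules out critical points sitting on the ``boundary'' of the concentration set, so a deformation argument can push a suitable minimax family off the boundary and trap a genuine critical point in the interior.

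\textbf{Step 1: Reduction to $m=2$ and the gluing structure.} As indicated, it suffices to treat $m=2$; the general case follows by iterating the two-bump analysis along a partition of the $m$ concentration regions. So fix the optimal decomposition \eqref{181} for $u \in V_l(2,\var)$, $u = \al_1 w_{\xi_1,\lam_1} + \al_2 w_{\xi_2,\lam_2} + v$, with the orthogonality conditions from Proposition \ref{prop:4.1}. I would first show that on $V_l(2,\var)$ the functional nearly splits:
\be
I_{R_l,\tau}(u) = I^{(1)}_{R_l,\tau}(\al_1 w_{\xi_1,\lam_1} + v_1) + I^{(2)}_{R_l,\tau}(\al_2 w_{\xi_2,\lam_2} + v_2) + (\text{small interaction}),
\ee
where the interaction between two bubbles with $d(\xi_1,\xi_2)\ge 2S_l\to\infty$ and $\lam_i\ge\var^{-1}$ is estimated by the Bahri--Coron type interaction integral $\int w_{\xi_1,\lam_1}^{(Q+2)/(Q-2)} w_{\xi_2,\lam_2}$, which tends to $0$; this is the content of the refined analysis promised in Subsection \ref{sec:4.1}. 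The subcritical weight $H^\tau$ contributes only $O(\tau)$ corrections since $\tau\le\overline\tau_l\to 0$ and $(|\xi_l^{(i)}|)^{\overline\tau_l}\to1$ by \eqref{26}.

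\textbf{Step 2: The single-bump minimax value and the barrier.} For each region I would set up a one-bump minimax class $\Gamma_i$ of maps into $V(1,\delta_3,O^{(i)},R_\infty^{(i)})$ (roughly, maps sweeping out the concentration point $\xi_i$ over $\widetilde O_l^{(i)}$ while $\lam_i\to\infty$), with minimax value $b^{(i)}_l$. Lemma \ref{lem:3.1} provides the crucial barrier: any $u$ whose concentration point approaches $\pa\widetilde O_l^{(i)}$ has energy $\ge c^{(1)}+c^{(2)}+1/A_4$, strictly above the target $\sum c^{(i)}$. Combining this with the nonexistence hypothesis \eqref{29} — there are no critical points of $I_{R_\infty^{(i)}}$ in the energy window $[c^{(i)},c^{(i)}+\delta_2]$ inside $V(1,\delta_3,O^{(i)},R_\infty^{(i)})$ — and standard deformation lemma / pseudo-gradient flow arguments (the flow exists here because on $V_l$ away from the boundary the Palais--Smale condition holds modulo the bubbles, which is exactly why we work in the subcritical regime \eqref{subcritical}), one shows $b^{(i)}_l \le c^{(i)} + o_l(1)$ and that the minimax is ``good'': the flow starting from a near-optimal family stays in the interior of the concentration region and cannot escape through $\pa\widetilde O_l^{(i)}$ because of the energy barrier.

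\textbf{Step 3: Gluing and conclusion.} For the two-bump problem I would take the product minimax class built from $\Gamma_1\times\Gamma_2$, landing in $V_l(2,\var)$, with value $b_l \le c^{(1)}+c^{(2)}+o_l(1)$. Running the (negative pseudo-)gradient flow of $I_{R_l,\tau}$ on $V_l(2,\var)$: by Step 1 the flow essentially decouples into the two single-bump flows plus a vanishing interaction, so by Step 2 it does not exit $V_l(2,\var)$ through either $\pa\widetilde O_l^{(i)}$ (Lemma \ref{lem:3.1} barrier) nor through the $\al_i$-boundary of $D_{2\var}$ (which would raise energy above target by the same Sobolev computation) nor through $\lam_i$ becoming small (again an energy obstruction from \eqref{24}). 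Hence the standard minimax argument yields a critical point $u_l\in V_l(2,\var)$ with $\sum c^{(i)}-\var \le I_{R_l,\tau}(u_l)\le \sum c^{(i)}+\var$ for $l$ large. Positivity of $u_l$ follows by replacing $u$ with $|u|$ throughout (the bubbles and the functional are even) and the strong maximum principle for $-\Delta_\H$. Iterating over the $m$ regions gives \eqref{30}--\eqref{31}.

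\textbf{Main obstacle.} The hard part is Step 2 together with the verification that the flow on $V_l(2,\var)$ truly decouples enough to transfer the single-bump barrier information to the two-bump minimax: one must control, uniformly in $l$ and $\tau$, (a) the existence and regularity of a pseudo-gradient vector field tangent to the constraint manifold defined by the orthogonality conditions of Proposition \ref{prop:4.1}, (b) the smallness of $\nabla I_{R_l,\tau}$ in the ``bubble directions'' $\pa_{\lam_i},\pa_{\xi_i},\pa_{\al_i}$ compared to the transversal direction $v$, so that the only way for the flow to stall is at a genuine critical point, and (c) that $v$-component stays small, i.e. a quantitative estimate $\|v\|\le o_\var(1)$ preserved along the flow — this uses the subcriticality to get a coercive estimate on the error equation. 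These are precisely the places where the subelliptic setting forces genuinely new estimates (e.g. Folland--Stein regularity in place of Schauder, the CR inversion in place of Kelvin transform) rather than a verbatim translation of \cite{Li1993,Li1995}.
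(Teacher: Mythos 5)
Your overall plan (minimax over a product-type family of two-bump initial paths, negative gradient flow, Lemma~\ref{lem:3.1} as an energy barrier, hypothesis~\eqref{29} to rule out boundary critical points) is aligned with the paper's strategy, and you correctly identify that the flow must be shown not to escape $V_l(2,\var)$ through the $\xi$-, $\al$-, or $\lam$-boundaries. However, there is a genuine gap: the minimax class $\Gamma_l$ is defined by a \emph{support condition} --- each component $g^{(i)}(\theta)$ has support in $B_{S_l}(\xi_l^{(i)})$ --- and this is precisely what makes $b_{l,\tau}$ a meaningful lower bound and what gives the near-splitting of the functional in your Step 1. The negative gradient flow of $I_{R_l,\tau}$ does \emph{not} preserve compact support, so after deformation one obtains a family $U_l$ that may no longer be admissible, and ``the standard minimax argument yields a critical point'' does not close. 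The paper resolves this with a dedicated step (the construction of $H_l$): one uses the a priori decay estimates of Propositions~\ref{prop:2.2}--\ref{prop:2.4} to control $U_l(\theta)$ in the far annular region, cuts off with a function $\eta_l$, and replaces the exterior piece by the local minimizer from Proposition~\ref{prop:3.1} to restore membership in $\Gamma_l$ while keeping the energy below $b_{l,\tau}-\var_4/2$. Nothing in your proposal accounts for this, and without it the minimax comparison fails.

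A second, related omission is that the deformation step itself rests on Proposition~\ref{prop:4.2}, a uniform lower bound $\|I_{R_l,\tau_l}'(u)\|\ge\delta_4$ on the annulus $\widetilde V_l(2,\var_2)\setminus\widetilde V_l(2,\var_2/2)$ at energy near $c^{(1)}+c^{(2)}$, which is derived under the \emph{contrary} hypothesis (no solution exists) by a lengthy analysis of approximate Palais--Smale sequences: extracting weak limits of the rescaled/translated pieces, classifying them via Jerison--Lee, and using the nonexistence hypothesis~\eqref{29} to force a contradiction. Your ``main obstacle'' items (a)--(c) point at plausible difficulties but do not match the actual mechanism: the paper does not build a pseudo-gradient tangent to the constraint manifold of Proposition~\ref{prop:4.1}; it runs the unconstrained negative gradient flow (justified by Lemma~\ref{lem:4.2.5}, which says the flow cannot leave $\widetilde V_l(2,\var_2)$ before its energy drops) and then repairs the support afterwards. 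In short, your sketch captures the right high-level geometry but is missing the two load-bearing technical components of the argument (the cutoff/exterior-minimization step and the gradient lower bound via bubbling analysis), and the contradiction framing of the paper --- where nonexistence is assumed and then refuted by producing $H_l\in\Gamma_l$ with $\max I_{R_l,\tau}(H_l)<b_{l,\tau}$ --- is what makes both components fit together.
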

\begin{rem}
 \eqref{31}  follows from the  definition of $V_l(k,\var)$ provided that $u_l$ satisfies \eqref{30}.
	\end{rem}

        We prove Theorem \ref{thm:4.1} by contradiction argument. For simplicity, we only consider the case  $m=2$ since the changes for $m>2$ are evident.

         From now on, we suppose the contrary of Theorem \ref{thm:4.1}, namely,  for some $\var^*>0$, there exist a sequence of $l \to  \infty$ and $0<\tau_l <\overline{\tau}_l$, such that equation \eqref{30} for $\tau = \tau_l$ has no solution in $V_l(2, \var ^*)$ satisfying \eqref{31} with  $\var  = \var ^*$. Some complicated procedure will be followed to derive a contradiction. It will be outlined
         now and the details will be given in the next two subsections. The proof consists of two parts:
\begin{itemize}
	\item  \emph{Part 1.} Under the contrary of Theorem \ref{thm:4.1}, we obtain a uniform lower bound of the gradient
	vectors in some annular regions. It is a standard consequence of the Palais-Smale condition
	in variational argument
	\item \emph{Part 2.} Construct an approximating minimax curve via variational method. The result in Part 1 will be used to construct a deformation. In our setting, we will follow the nonnegative	gradient flow to make a deformation, which is an important process to derive a contradiction.	
\end{itemize}
Part 1 will be carried out in Subsection \ref{sec:4.1} and Part 2 in  Subsection \ref{sec:4.2}.

\subsection{First part of the proof of Theorem \ref{thm:4.1}}\label{sec:4.1}
  For $\var _2>0$, we denote $\widetilde{V}_l(2, \var _2)$ the set of functions $u$ in $E$ satisfies:  there exist $\al=(\al  _1, \al  _2) \in \R^2 $, $\xi = (\xi_1, \xi_2)\in O_l^{(1)} \times O_l^{(2)}$ and $\lam  = (\lam _1, \lam _2)\in \R^2$, such that
        \begin{gather*}\label{188}
          \lam _1, \lam _2> \var _2^{-1}, \\
          |\lam _i^{\tau_l}-1|< \var _2, \quad i = 1, 2, \\
          |\al  _i-R_l(\xi_i)^{(2-Q)/4}|< \var _2, \quad i=1, 2, \\
          \Big\| u-\sum_{i=1}^{2} \al  _i w_{\xi_i,\lam _i}^{1+O(\tau_l)}\Big\|<\var _2.
         \end{gather*}
         Throughout the paper, we denote $p_l=\frac{Q+2}{Q-2}-\tau_l$.
        \begin{lem}\label{lem4.2}
        For $\var _2= \var _2(n,\var _1, \var ^*)>0$ small enough, we have, for large $l$,
        \be \label{189}
          \widetilde{V}_l(2, \var _2) \subset V_l(2, o_{\var _2}(1)) \subset V_l(2, \var _1)\cap V_l(2, \var ^*),
        \ee
        where $o_{\var _2}(1)$ denotes some quantity which is independent of $l$ and tends to zero as $\var _2$ tends to zero.
         \end{lem}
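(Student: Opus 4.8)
The plan is to prove the pointwise statement that a given $u\in\widetilde V_l(2,\var_2)$ lies in $V_l(2,\var')$ \emph{for the very triple} $(\al,\xi,\lam)$ witnessing $u\in\widetilde V_l(2,\var_2)$ and a suitable $\var'=o_{\var_2}(1)$; the second inclusion in \eqref{189} then follows at once from the monotonicity of $V_l(2,\cdot)$ in its radius parameter (read off \eqref{bumps}). So I would fix $u\in\widetilde V_l(2,\var_2)$ with data $\al=(\al_1,\al_2)$, $\xi=(\xi_1,\xi_2)\in O_l^{(1)}\times O_l^{(2)}$, $\lam=(\lam_1,\lam_2)$ obeying the four defining inequalities of $\widetilde V_l$. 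Since $\xi_i\in O_l^{(i)}\subset\widetilde O_l$, condition \eqref{23-2} gives $R_l(\xi_i)\in[1/A_2,A_2]$, hence $|\al_i|\le A_2^{(Q-2)/4}+\var_2\le M$ for a constant $M=M(n,A_2)$. Writing $s_l=1+O(\tau_l)$ for the (fixed, $n$-dependent) exponent appearing in the definition of $\widetilde V_l$, the triangle inequality yields
\[
\Big\|u-\sum_{i=1}^2\al_iw_{\xi_i,\lam_i}\Big\|\ \le\ \Big\|u-\sum_{i=1}^2\al_iw_{\xi_i,\lam_i}^{s_l}\Big\|+M\sum_{i=1}^2\big\|w_{\xi_i,\lam_i}^{s_l}-w_{\xi_i,\lam_i}\big\|\ <\ \var_2+M\sum_{i=1}^2\big\|w_{\xi_i,\lam_i}^{s_l}-w_{\xi_i,\lam_i}\big\| .
\]
Thus the whole matter reduces to the bubble-power bound: for any $a\in\Hn$ and $\lam>\var_2^{-1}$ with $|\lam^{\tau_l}-1|<\var_2$, one has $\|w_{a,\lam}^{s_l}-w_{a,\lam}\|\le o_{\var_2}(1)+o_l(1)$, where $o_l(1)\to0$ as $l\to\infty$; this last ingredient is where $\lim_{l\to\infty}\overline\tau_l=0$ from \eqref{26} enters, via $\tau_l\le\overline\tau_l$.

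For the core estimate, I would first record that the rescaling $f\mapsto f_{a,\lam}:=\lam^{\beta}\big(f\circ\delta_\lam\circ\tau_{a^{-1}}\big)$ carrying $w_{0,1}$ to $w_{a,\lam}$ (with $\beta$ the normalisation exponent of \eqref{bubbles}) is a linear isometry of $(E,\langle\cdot,\cdot\rangle)$ --- equivalently, $\|w_{a,\lam}\|$ is independent of $(a,\lam)$ --- which is exactly the normalisation built into \eqref{bubbles} together with the left-invariance of $\nabla_\H$ and the scaling $\nabla_\H(g\circ\delta_\lam)=\lam(\nabla_\H g)\circ\delta_\lam$. Since $w_{a,\lam}^{s}=\lam^{\beta(s-1)}\,(w_{0,1}^{s})_{a,\lam}$, putting $\mu:=\lam^{\beta(s-1)}$ gives
\[
\big\|w_{a,\lam}^{s}-w_{a,\lam}\big\|^2=\mu^2\,\|w_{0,1}^{s}\|^2-2\mu\,\langle w_{0,1}^{s},w_{0,1}\rangle+\|w_{0,1}\|^2 .
\]
Then I would use two facts: (i) the maps $s\mapsto\|w_{0,1}^{s}\|^2=s^2\!\int w_{0,1}^{2s-2}|\nabla_\H w_{0,1}|^2$ and $s\mapsto\langle w_{0,1}^{s},w_{0,1}\rangle=s\!\int w_{0,1}^{s-1}|\nabla_\H w_{0,1}|^2$ are continuous at $s=1$ with common value $\|w_{0,1}\|^2$, which follows by dominated convergence from the decay $w_{0,1}(\xi)=O(|\xi|^{2-Q})$, $|\nabla_\H w_{0,1}(\xi)|=O(|\xi|^{1-Q})$ for $|\xi|\ge1$ and the smoothness of $w_{0,1}$ near $0$ (giving an $s$-uniform integrable majorant for $s$ near $1$); and (ii) $\mu=(\lam^{\tau_l})^{\beta(s_l-1)/\tau_l}=1+O(\var_2)$, because $|\lam^{\tau_l}-1|<\var_2$ and the exponent $\beta(s_l-1)/\tau_l$ is bounded by a constant depending only on $n$. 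Substituting $s=s_l$ and $\mu=1+O(\var_2)$ into the identity, the constant parts cancel ($1-2+1=0$), leaving $\|w_{a,\lam}^{s_l}-w_{a,\lam}\|^2=O(\var_2)+o_l(1)$, the claimed bound.

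Assembling, the two displays give $\|u-\sum_i\al_iw_{\xi_i,\lam_i}\|\le\var_2+M\cdot O(\var_2^{1/2})+o_l(1)$. Choosing first $\var_2=\var_2(n,\var_1,\var^*)$ small and then $l$ so large that $o_l(1)\le\var_2$, the right-hand side is bounded by some $\var'=\var'(\var_2)=o_{\var_2}(1)$ with $\var'\ge\var_2$ (e.g. twice the right-hand side); together with $\xi_i\in O_l^{(i)}$, $\lam_i>\var_2^{-1}\ge(\var')^{-1}$ and $|\al_i-R_l(\xi_i)^{(2-Q)/4}|<\var_2\le\var'$ --- all already in hand --- this places $u$ in $V_l(2,\var')$, the first inclusion. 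The second inclusion is immediate: by \eqref{bumps}, $0<\var''\le\var'$ implies $V_l(2,\var'')\subset V_l(2,\var')$, so shrinking $\var_2$ further so that $\var'\le\min\{\var_1,\var^*\}$ --- possible since $\var_1,\var^*$ are already fixed --- gives $V_l(2,\var')\subset V_l(2,\var_1)\cap V_l(2,\var^*)$, completing \eqref{189}. The one genuinely delicate point is controlling the factor $\mu=\lam^{\beta(s_l-1)}$ uniformly over arbitrarily large $\lam$; this is precisely what the tailor-made hypothesis $|\lam_i^{\tau_l}-1|<\var_2$ in the definition of $\widetilde V_l$ secures, and it is the only place where the subcritical perturbation $\tau_l$ and the concentration scales genuinely interact --- the remaining ingredients (continuity in $s$ of the Folland--Stein energies of powers of $w_{0,1}$, boundedness of the $\al_i$, monotonicity of $V_l(2,\cdot)$) are routine.
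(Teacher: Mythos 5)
Your proof is correct. The paper itself gives no argument here — it states only that the inclusion is ``straightforward to verify by using the definition of $\widetilde{V}_l(2,\var_2)$'' and omits the proof — and the route you take is the natural one the authors clearly intend: keep the witnessing triple $(\al,\xi,\lam)$ from $\widetilde V_l$, reduce to the single-bubble estimate $\|w_{a,\lam}^{s_l}-w_{a,\lam}\|$ via the triangle inequality and the boundedness $|\al_i|\le A_2^{(Q-2)/4}+\var_2$, then use the dilation/translation isometry of $(E,\langle\cdot,\cdot\rangle)$ to reduce to the unit bubble, with continuity in $s$ by dominated convergence and the constraint $|\lam_i^{\tau_l}-1|<\var_2$ controlling the prefactor $\lam^{\beta(s_l-1)}$ uniformly in $\lam$. (A cosmetic remark: the cancellation actually yields $\|w_{a,\lam}^{s_l}-w_{a,\lam}\|^2=O(\var_2^2)+o_l(1)$ rather than $O(\var_2)+o_l(1)$ since $\mu^2-2\mu+1=(\mu-1)^2$; this is slightly stronger than what you wrote and does not affect the conclusion.)
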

        \begin{proof}
        It is straightforward to verify \eqref{189} by	using the definition of $\widetilde{V}_l(2, \var _2)$. Therefore, we we omit it here.
            \end{proof}
 Now we state the main result in this section, which reveals the uniform lower bounds of the gradient vectors in certain regions of $E$.
        \begin{prop}\label{prop:4.2}
         Under the hypotheses of Theorem \ref{thm:4.1} and the contrary of Theorem \ref{thm:4.1}, there exist two constants $\var _2 \in (0, \min \{ \var _0, \var _1, \var ^*, \delta_3 \})$ and $\var _3 \in (0, \min \{ \var _0, \var _1, \var _2, \var ^*, \delta_3 \})$, which are independent of $l$, such that \eqref{189} holds for such $\var _2$, and there exist $\delta_4=\delta_4(\var _2, \var _3)>0$ and  $l_{\var _2, \var _3}' >1$ such that for any $l \geq l_{\var _2, \var _3}'$, $u \in \widetilde{V}_l(2, \var _2)\backslash \widetilde{V}_l(2, \var _2/2)$ satisfying $|I_{R_l, \tau_l}(u) - (c^{(1)} + c^{(2)})|< \var _3$, we have
          $\| I_{R_l, \tau_l}'(u)\| \geq \delta_4$,
where $I_{R_l, \tau_l}'$ denotes Fr\'echet derivative.
        \end{prop}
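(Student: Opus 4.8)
The plan is to argue by contradiction and harvest compactness from the subcritical Palais--Smale condition, with the non-existence hypothesis \eqref{29} entering precisely to exclude a non-concentrating limit profile. First I would fix $\var_2\in(0,\min\{\var_0,\var_1,\var^*,\delta_3\})$ small enough that Lemma \ref{lem4.2} yields \eqref{189} and that the quantity $o_{\var}(1)$ appearing in \eqref{prop4.1-1} is $<\min\{\var_2/4,\delta_3/2\}$ once $\var=\var_2$. Assuming the conclusion fails, the choices $\var_3=\delta_4=1/j$, $l'=j$ produce a sequence $l_j\to\infty$, $\tau_j\in(0,\overline\tau_{l_j})$, and $u_j\in\widetilde V_{l_j}(2,\var_2)\setminus\widetilde V_{l_j}(2,\var_2/2)$ with $I_{R_{l_j},\tau_j}(u_j)\to c^{(1)}+c^{(2)}$ and $\|I'_{R_{l_j},\tau_j}(u_j)\|\to 0$.

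Since \eqref{189} gives $u_j\in V_{l_j}(2,\var_1)$, Proposition \ref{prop:4.1} supplies an optimal splitting $u_j=\alpha_1^jw_{\xi_1^j,\lam_1^j}+\alpha_2^jw_{\xi_2^j,\lam_2^j}+v_j$ with the stated orthogonality relations, $(\alpha^j,\xi^j,\lam^j)\in D_{2\var_1}$, $\|v_j\|<\var_2$, and — crucially — $d(\xi_1^j,\xi_2^j)\ge 2S_{l_j}\to\infty$ by \eqref{23-1}; thus the two bubbles are uniformly well separated. For each $i$ I would then translate to the $i$-th concentration site: set $\hat u_j^{(i)}:=u_j\circ\tau_{\xi_{l_j}^{(i)}}$ and $\eta_i^j:=(\xi_{l_j}^{(i)})^{-1}\circ\xi_i^j\in O^{(i)}$, which stays in a fixed bounded set by \eqref{2.13}. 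Because $\langle\cdot,\cdot\rangle$ and $\|\cdot\|$ are left-translation invariant, $\hat u_j^{(i)}$ is bounded in $E$ and $\|I'_{R_{l_j}\circ\tau_{\xi_{l_j}^{(i)}},\tau_j}(\hat u_j^{(i)})\|\to 0$; passing to a subsequence $\hat u_j^{(i)}\rightharpoonup u_i^\infty$ in $E$, and using $R_{l_j}\circ\tau_{\xi_{l_j}^{(i)}}\rightharpoonup^* R_\infty^{(i)}$ (by \eqref{22}), $\tau_j\to 0$, $H^{\tau_j}\to 1$ locally uniformly, together with the compact embedding $E\hookrightarrow L^p_{loc}$ for $p<Q^*$, one gets $u_i^\infty\ge 0$ and $I'_{R_\infty^{(i)}}(u_i^\infty)=0$.

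I would finish by a dichotomy on the concentration rates. If $\lam_i^j$ stays bounded for some $i$ (along the subsequence), then $w_{\xi_i^j,\lam_i^j}\circ\tau_{\xi_{l_j}^{(i)}}=w_{\eta_i^j,\lam_i^j}\to w_{\eta_i^\infty,\lam_i^\infty}$ strongly while the other bubble, having its center escape to infinity, tends weakly to $0$; using $v_j\perp w_{\xi_i^j,\lam_i^j}$ one obtains $u_i^\infty=\alpha_i^\infty w_{\eta_i^\infty,\lam_i^\infty}+\hat v_\infty^{(i)}$ with $\|\hat v_\infty^{(i)}\|\le\var_2$ and $\hat v_\infty^{(i)}\perp w_{\eta_i^\infty,\lam_i^\infty}$, so $\|u_i^\infty\|\ge\alpha_i^\infty\|w_{0,1}\|>0$ and, by the strong maximum principle, $u_i^\infty>0$. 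Since $R_\infty^{(i)}\le a^{(i)}$ (from \eqref{20}--\eqref{21}), the sharp Sobolev inequality \eqref{Sobolevineq} gives $I_{R_\infty^{(i)}}(u_i^\infty)=\tfrac1Q\|u_i^\infty\|^2\ge c^{(i)}$, while splitting the energy of $u_j$ over the two well-separated regions (interaction and $v_j$-contributions being $o(1)$, the other region carrying $\ge c^{(i')}-o(1)$ by the same estimate) yields $I_{R_\infty^{(i)}}(u_i^\infty)\le I_{R_{l_j},\tau_j}(u_j)-c^{(i')}+o(1)\le c^{(i)}+\var_3+o(1)\le c^{(i)}+\delta_2$ for $j$ large; combined with $\lam_i^\infty\gtrsim\var_2^{-1}>\delta_3^{-1}$, $\eta_i^\infty\in\overline{O^{(i)}}$, $\|\hat v_\infty^{(i)}\|<\delta_3$ and $|\alpha_i^\infty-R_\infty^{(i)}(\eta_i^\infty)^{(2-Q)/4}|<\delta_3$, this places $u_i^\infty\in V(1,\delta_3,O^{(i)},R_\infty^{(i)})$, contradicting \eqref{29}. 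If instead $\lam_1^j,\lam_2^j\to\infty$, then the two bubbles are well-spaced \emph{and} concentrating, so the Bahri--Coron analysis of Subsection \ref{sec:4.1} (uniform invertibility of the linearized operator on the orthogonal complement of the bubble directions) together with $\|I'_{R_{l_j},\tau_j}(u_j)\|\to 0$ and $\tau_j\to 0$ forces $\|v_j\|\to 0$, $|\alpha_i^j-R_{l_j}(\xi_i^j)^{(2-Q)/4}|\to 0$ and $(\lam_i^j)^{\tau_j}\to 1$; hence $u_j\in\widetilde V_{l_j}(2,o_j(1))\subset\widetilde V_{l_j}(2,\var_2/2)$ for $j$ large, contradicting $u_j\notin\widetilde V_{l_j}(2,\var_2/2)$. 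Either branch gives a contradiction, which proves the proposition.

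The main obstacle will be making every estimate uniform in $l$: the invertibility of the linearized operator and the energy splitting over the two far-apart bubbles must hold with constants depending only on $n,A_1,A_2,A_3,\delta_0$, and the subcritical errors $H^{\tau_j}-1$ and $p_j-\tfrac{Q+2}{Q-2}$ must be tracked throughout. This quantitative refinement of the two-bubble interaction analysis of Bahri--Coron is exactly what Subsection \ref{sec:4.1} is devoted to; the rest (the translation/blow-up step and the energy bookkeeping at each site) is routine once that estimate is in place.
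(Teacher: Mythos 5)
Your contradiction setup, the optimal splitting via Proposition \ref{prop:4.1}, and the dichotomy on the concentration rates $\lam_i^j$ all match the architecture of the paper's proof: your ``$\lam_i^j$ bounded'' branch is the paper's Claim \ref{claim:1}, and your ``both $\to\infty$'' branch corresponds to Claims \ref{claim:2}--\ref{claim:10} and the final representation \eqref{101}. The first branch is essentially correct (modulo cosmetic imprecisions: the cross-term and $v_j$-contributions to the energy are $o_{\var_2}(1)$, not $o(1)$, so the upper bound on $I_{R_\infty^{(i)}}(u_i^\infty)$ should read $c^{(i)}+o_{\var_2}(1)+\var_3+o(1)$, which still lands below $c^{(i)}+\delta_2$ once $\var_2$ and $\var_3$ are small; and nonnegativity of $u_i^\infty$, taken for granted, needs the $\int(w^-)^{Q^*}=o_{\var_2}(1)$ argument before invoking the strong maximum principle).

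The genuine gap is in your second branch. You assert that ``uniform invertibility of the linearized operator on the orthogonal complement of the bubble directions'' together with $\|I'_{R_{l_j},\tau_j}(u_j)\|\to0$ and $\tau_j\to0$ forces $\|v_j\|\to0$, $|\alpha_i^j-R_{l_j}(\xi_i^j)^{(2-Q)/4}|\to0$, and---crucially---$(\lam_i^j)^{\tau_j}\to1$. This last assertion does not follow from invertibility on the complement. The quantity $\lam_i$ parametrizes a \emph{degenerate} direction of the linearization, so invertibility on the orthocomplement says nothing about it; testing $I'(u_j)$ against $\pa_\lam w_{\xi_i^j,\lam_i^j}$ controls the usual $\nabla R(\xi_i)$ term, not the subcritical scaling defect $\lam_i^{\tau_j}$. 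The defect arises because, for $\tau_j>0$, the bubble is not an approximate solution of the $H^{\tau_j}u^{p_j}$ equation uniformly in $\lam$: its energy picks up the factor $(\lam_i^j)^{2(p_j+1)/(p_j-1)-Q}\approx(\lam_i^j)^{\tau_j(Q-2)^2/4}$, which need not be close to $1$. The paper pins down $(\lam_i^l)^{\tau_l}=1+o_{\var_3}(1)+o(1)$ by an entirely different route: (i) Lemma \ref{lem:4.3} gives the a priori bound $(\lam_i^l)^{\tau_l}\le C$ by testing against $w_{\xi_i^l,\lam_i^l}$; (ii) the rescaling operators $\mathscr{T}_{l,\xi_i^l,\lam_i^l}$ are introduced to extract a nontrivial limit bubble at each concentration scale (necessary because, when both $\lam_i^l\to\infty$, the translations $T_{\xi_l^{(i)}}$ you used in the first branch only give a zero weak limit); (iii) Claims \ref{claim:4}, \ref{claim:7}, \ref{claim:8}, \ref{claim:9} perform an energy accounting showing $\sum_i[(\lam_i^l)^{2(p_l+1)/(p_l-1)-Q}-1]c^{(i)}\le\var_3+o(1)$, from which the scaling factor tends to $1$. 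Without this energy step you only know $(\lam_i^j)^{\tau_j}=1+o_{\var_2}(1)$ from the optimal representation, which is a \emph{fixed} constant independent of $j$, and you cannot conclude $u_j\in\widetilde V_{l_j}(2,\var_2/2)$. In short, the second branch requires the $\mathscr{T}$-rescaling plus energy bookkeeping; it is not a corollary of Lyapunov--Schmidt invertibility, which the paper in fact does not use at all in Subsection \ref{sec:4.1}.
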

    \begin{rem}
    	Proposition \ref{prop:4.2} will be used to construct an approximating minimaxing curve in Part 2.    Evidently we have, under the contrary of Theorem \ref{thm:4.1}, that for each $l$,
    $$
    	\inf\{ \| I_{R_l, \tau_l}'(u)\| : u \in \widetilde{V}_l(2, \var _2)\backslash \widetilde{V}_l(2, \var _2/2), I_{R_l, \tau_l}'(u)-(c^{(1)} + c^{(2)})< \var _3 \}>0.
    $$
    \end{rem}

           We prove Proposition \ref{prop:4.2} by contradiction argument. Suppose the statement in the Proposition \ref{prop:4.2} is not true, then no matter how small $\var _2, \var _3>0$ are, there exists a subsequence (still denoted as $\{u_l\}$) such that
          \begin{gather}\label{33}
            \{u_l\}\in \widetilde{V}_l(2, \var _2)\backslash \widetilde{V}_l(2, \var _2/2), \\\label{34}
            |I_{R_l, \tau_l}'(u_l)-(c^{(1)} + c^{(2)})| <\var _3, \\\label{35}
            \lim_{l\to  \infty } \| I_{R_l, \tau_l}'(u_l)\| =0.
          \end{gather}
        However, under the above assumptions, we can prove that there exists another subsequence, still denotes by $\{u_l\}$, such that $u_l\in \widetilde{V}_l(2, \var _2/2)$, which leads to a contradition. The existence of such
        sequence needs some lengthy indirect analysis to the interaction of two \emph{bubbles}. We break the proof of Proposition \ref{prop:4.2} into several claims.

     First we write
          \be \label{36}
            u_l= \al  _1^lw_{ \xi_1^l,\lam _1^l} +\al  _2^lw_{\xi_2^l, \lam _2^l} + v_l
          \ee  after making the minimization \eqref{minimization}.  By Proposition \ref{prop:4.1} and some standard arguments in \cite{B1988book,BC1988,BC1991}, if $\var _2>0$ small enough, we have
          \begin{gather}\label{37}
            (\lam _1^l)^{-1}, (\lam _2^l)^{-1} = o_{\var _2}(1), \\\label{38}
            |\al  _i^l -R_l(\xi_i^l)^{(2-Q)/4}| = o_{\var _2}(1), \\\label{39}
            \| v_l \| = o_{\var _2}(1), \\\label{40}
            \operatorname{dist}\,(\xi_1^l, O_l^{(1)}),~ \operatorname{dist}\,(\xi_2^l, O_l^{(2)}) = o_{\var _2}(1).
          \end{gather}

          Next we will derive some elementary estimates of the interaction of the \emph{bubbles} in \eqref{36} and find another representation of $u_l$ in \eqref{36}, from which we can deduce its location and concentrate rate. Let us introduce a linear isometry operator first.

          For any $\xi \in \Hn $, we define a linear isometry $T_{\xi}: E \to  E$ by
          $$
          (T_\xi u)(\cdot) = u(\xi \circ \cdot).
          $$It is easy to see $\|T_\xi u\|=\|u\|$.

          Now we give some estimates concerning with \emph{bubble}'s profile in \eqref{36}.
        \begin{claim}\label{claim:1}
 For $\var _2$ small enough, we have       $\lim_{l \to  \infty}\lam _1^l =\lim_{l \to  \infty}\lam _2^l =\infty$.
        \end{claim}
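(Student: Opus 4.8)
The plan is to argue by contradiction, translate one of the concentration points to the origin, extract a limiting positive solution, and show that it lies in one of the sets forbidden by the non-existence hypothesis of Theorem~\ref{thm:4.1}. By the symmetry of the two indices it suffices to prove $\lam_1^l\to\infty$. Suppose not: passing to a subsequence, $\lam_1^l\to\bar\lam_1$ with $\bar\lam_1<\infty$, and $\bar\lam_1\geq\var_2^{-1}$ by \eqref{37} together with the definition of $\widetilde V_l(2,\var_2)$. Apply the isometry $T_{\xi_1^l}$ to \eqref{36}; using $T_aw_{a,\lam}=w_{0,\lam}$ (immediate from \eqref{bubbles}) and $T_{\xi_1^l}w_{\xi_2^l,\lam_2^l}=w_{(\xi_1^l)^{-1}\circ\xi_2^l,\lam_2^l}$ we get
$$
\hat u_l:=T_{\xi_1^l}u_l=\al_1^lw_{0,\lam_1^l}+\al_2^lw_{(\xi_1^l)^{-1}\circ\xi_2^l,\lam_2^l}+T_{\xi_1^l}v_l .
$$
Since $d(\xi_1^l,\xi_2^l)\to\infty$ by \eqref{19}, \eqref{40} and condition (ii), the middle term is a bubble whose center escapes to infinity, hence tends weakly to $0$ in $E$; the first term converges strongly in $E$ to $(a^{(1)})^{(2-Q)/4}w_{0,\bar\lam_1}$ by \eqref{38}, \eqref{21} and the continuity of $\lam\mapsto w_{0,\lam}$; and $T_{\xi_1^l}v_l\rightharpoonup\bar v$ with $\|\bar v\|\leq o_{\var_2}(1)$ by \eqref{39}. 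Hence $\hat u_l\rightharpoonup\bar u:=(a^{(1)})^{(2-Q)/4}w_{0,\bar\lam_1}+\bar v$, with $\|\bar u\|\geq(a^{(1)})^{(2-Q)/4}\|w_{0,1}\|-o_{\var_2}(1)>0$ for $\var_2$ small, so $\bar u\not\equiv 0$.

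Next I would pass to the limit in the equation. The function $\hat u_l>0$ solves, weakly,
$$
-\Delta_{\H}\hat u_l-R_l(\xi_1^l\circ\,\cdot\,)\,H^{\tau_l}(\xi_1^l\circ\,\cdot\,)\,\hat u_l^{\,p_l}=\hat\phi_l,\qquad \|\hat\phi_l\|_{E'}=\|I_{R_l,\tau_l}'(u_l)\|\to 0,
$$
by \eqref{35} and the left-invariance of $\Delta_{\H}$. Since $\hat u_l$ is bounded in $E\hookrightarrow L^{Q^*}(\Hn)$, the subelliptic regularity theory of \cite{FS1974,F1975}, used as in Section~\ref{sec:2}, provides uniform $C^2$ bounds on compact sets, so along a further subsequence $\hat u_l\to\bar u$ in $C^2_{\mathrm{loc}}(\Hn)$. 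To identify the limit equation I would use that $p_l\to(Q+2)/(Q-2)$; that $H^{\tau_l}(\xi_1^l\circ\,\cdot\,)\to 1$ locally uniformly, because $|\xi_1^l|=|\xi_l^{(1)}|+o(1)$ by \eqref{40}, \eqref{25} while $\tau_l\log(|\xi_l^{(1)}|+|\xi_l^{(2)}|)\to 0$ by \eqref{26}; and that $R_l(\xi_1^l\circ\,\cdot\,)\to R_\infty^{(1)}$ weakly-$*$ in $L^\infty_{\mathrm{loc}}$, matching $R_l(\xi_l^{(1)}\circ\,\cdot\,)$ thanks to \eqref{40}, \eqref{25}. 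This yields $-\Delta_{\H}\bar u=R_\infty^{(1)}\bar u^{(Q+2)/(Q-2)}$ with $\bar u\geq 0$, $\bar u\not\equiv 0$, hence $\bar u>0$ by the strong maximum principle and $I_{R_\infty^{(1)}}'(\bar u)=0$.

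Finally I would locate $\bar u$ in a forbidden set. Since $\xi_l^{(1)}\circ\xi\in B_{S_l}(\xi_l^{(1)})$ for large $l$, \eqref{20} and \eqref{21} give $R_\infty^{(1)}\leq a^{(1)}$, so \eqref{Sobolevineq} yields $I_{R_\infty^{(1)}}(\bar u)=\tfrac1Q\|\bar u\|^2\geq c^{(1)}$. For the matching upper bound I would split $I_{R_l,\tau_l}(u_l)=\sum_{i=1}^2 I_{R_l,\tau_l}(\al_i^lw_{\xi_i^l,\lam_i^l})+o(1)$ using the separation of the bubbles and $\|v_l\|=o_{\var_2}(1)$, note each summand is $\geq c^{(i)}-o(1)$, combine with \eqref{34} to obtain $I_{R_l,\tau_l}(\al_1^lw_{\xi_1^l,\lam_1^l})\leq c^{(1)}+\var_3+o(1)$, and apply a Brezis--Lieb type argument to conclude $I_{R_\infty^{(1)}}(\bar u)\leq c^{(1)}+\var_3+o_{\var_2}(1)\leq c^{(1)}+\delta_2$ once $\var_2,\var_3$ are small. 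On the other hand $\bar u=(a^{(1)})^{(2-Q)/4}w_{0,\bar\lam_1}+\bar v$ with $0\in O^{(1)}$ (since $\xi_l^{(1)}\in\widetilde O_l^{(1)}$ and \eqref{2.13}), $R_\infty^{(1)}(0)=a^{(1)}$, $\bar\lam_1\geq\var_2^{-1}>\delta_3^{-1}$ (as $\var_2<\delta_3$), and $\|\bar v\|\leq o_{\var_2}(1)<\delta_3$; hence $\bar u\in V(1,\delta_3,O^{(1)},R_\infty^{(1)})$. Thus $\bar u$ is a positive critical point of $I_{R_\infty^{(1)}}$ in $V(1,\delta_3,O^{(1)},R_\infty^{(1)})$ with $c^{(1)}\leq I_{R_\infty^{(1)}}(\bar u)\leq c^{(1)}+\delta_2$, contradicting \eqref{29}. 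Therefore $\lam_1^l\to\infty$, and the same argument gives $\lam_2^l\to\infty$.

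The hard part is the passage to the limit: since no classical Schauder theory is available, the uniform local $C^2$ (or $S^q_2$) bounds on the approximate solutions $\hat u_l$ must be produced through the Folland--Stein $L^p$ theory and Moser iteration adapted to $-\Delta_{\H}$, exactly as prepared in Section~\ref{sec:2}, and one must control the three simultaneous degenerations $\tau_l\to 0$, the weight $H^{\tau_l}$, and the escape $|\xi_l^{(i)}|\to\infty$, which is the very reason the normalization \eqref{26} was imposed. The energy splitting with the Brezis--Lieb step, and the strong maximum principle for the sub-Laplacian, are routine but also rely on the preliminary estimates of Section~\ref{sec:2}.
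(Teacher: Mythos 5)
Your overall strategy matches the paper's: argue by contradiction, apply the left translation $T_{\xi_1^l}$, extract a weak limit $w$, show $w$ is a positive critical point in a range of energies, and contradict the non-existence hypothesis~\eqref{29}. However, there is a genuine gap in your passage to the limit: you identify the limiting prescribed function as $R_\infty^{(1)}$ (and the limiting bubble coefficient as $(a^{(1)})^{(2-Q)/4}$), but this presumes $\xi_1^l$ and the fixed point $\xi_l^{(1)}$ coincide asymptotically, which is not guaranteed. What you actually get, after extracting $\zeta=\lim_l(\xi_l^{(1)})^{-1}\circ\xi_1^l$ (a point in $\overline{O^{(1)}}$ that may be nonzero, since both $\xi_1^l$ and $\xi_l^{(1)}$ lie in $O_l^{(1)}$ whose diameter is only bounded, not shrinking), is that $T_{\xi_1^l}R_l(\cdot)\to R_\infty^{(1)}(\zeta\circ\cdot)=T_\zeta R_\infty^{(1)}(\cdot)$ weak-$*$, so the limit equation is $-\Delta_{\H}w=T_\zeta R_\infty^{(1)}|w|^{4/(Q-2)}w$; likewise \eqref{38} and the weak-$*$ limit \eqref{22} give $\alpha_1^l\to R_\infty^{(1)}(\zeta)^{(2-Q)/4}+o_{\var_2}(1)$, not $(a^{(1)})^{(2-Q)/4}$. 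Your cited justification via \eqref{21} only controls $R_l(\xi_l^{(1)})$, not $R_l(\xi_1^l)$, and the gradient bound \eqref{25} cannot close this gap because the two points are not asymptotically close.

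This is fixable along the paper's lines: note $I_{T_\zeta R_\infty^{(1)}}(u)=I_{R_\infty^{(1)}}(T_{\zeta^{-1}}u)$, so $w':=T_{\zeta^{-1}}w$ is a positive critical point of $I_{R_\infty^{(1)}}$ with the same energy, and $w'=\alpha_1 w_{\zeta,\lambda_1}+T_{\zeta^{-1}}w_0$ lies in $V(1,\delta_3,O^{(1)},R_\infty^{(1)})$ because $\zeta\in O^{(1)}$ (from \eqref{2.13}), $|\alpha_1-R_\infty^{(1)}(\zeta)^{(2-Q)/4}|=o_{\var_2}(1)<\delta_3$, $\lambda_1>\var_2^{-1}>\delta_3^{-1}$, and $\|T_{\zeta^{-1}}w_0\|=\|w_0\|=o_{\var_2}(1)<\delta_3$. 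Also, the lower energy bound should use $T_\zeta R_\infty^{(1)}\le a^{(1)}$ (which holds since $R_\infty^{(1)}(\zeta\circ\cdot)$ is a weak-$*$ limit of $R_l(\xi_1^l\circ\cdot)$ over $B_{S_l}(\xi_l^{(1)})$ and \eqref{20} applies), rather than $R_\infty^{(1)}\le a^{(1)}$. With these corrections the rest of your argument, including the energy-splitting and Br\'ezis--Lieb step for the upper bound, goes through and reproduces the paper's proof.
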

        \begin{proof}
        	Assume to the contrary that $\lam _1^l = \lam _1 + o(1)$ up to a subsequence. Here and in the following, we use $o(1)$  to denote any sequence tending to 0 as $l\to \infty$. Now the proof consists of three steps.

\textbf{Step 1} (Construct a positive solution). First, one observes from \eqref{36} that
   $$
          T_{\xi_1^l}u_l = \al  _1^lw_{0,\lam _1^l} + \al  _2^lw_{(\xi_1^l)^{-1} \circ \xi_2^l,\lam _2^l} +  T_{\xi_1^l}v_l.
 $$
  Then by Proposition \ref{prop:4.1},      by passing to a subsequence, we have
\be\label{3.10}
 \lim_{l \to  \infty}\al  _1^l = \al  _1 \in \Big[\frac{1}{2} (A_2)^{(2-Q)/4} - o_{\var _2}(1), 2(A_2)^{(2-Q)/4} + o_{\var _2}(1)\Big],
\ee
and $T_{\xi_1^l}v_l \rightharpoonup w_0$  weakly in $E$ for some $w_0\in E$.
           It follows from standard functional analysis arguments and \eqref{39} that
        \be \label{41}
          \|w_0\| \leq \liminf_{l \to  \infty}\| T_{\xi_1^l}v_l \| = o_{\var _2} (1).
        \ee
 Using the assumption (ii) (stated in the beginning of Section \ref{sec:4}), we get
        \be \label{42}
          \lim_{l \to  \infty} d (\xi_2^l, \xi_1^l) \geq  \lim_{l \to  \infty} S_l = \infty.
        \ee
   Therefore,
        \be \label{43}
          T_{\xi_1^l}u_l \rightharpoonup w:= \al  _1 w_{0,\lam _1} + w_0\quad \text{ weakly in $E$.}
        \ee
Obviously, $w \neq 0$ if $\var _2$ is small enough.

        Next we prove that $w$ is a weak solution of the following equation
        \be \label{44}
          -\Delta_{\H} w = T_\zeta R_\infty^{(1)}(\xi)|w|^{(Q+2)/(Q-2)}w \quad \text{ in } \,\Hn ,
        \ee
        where $\zeta \in O^{(1)}$ with  $\operatorname{dist}(\zeta, \pa  O^{(1)})> \delta_0/2$.

        For any $\phi \in C_c^\infty(\Hn )$, it follows from \eqref{35} that
      $$
          I_{R_l, \tau_l}'(u_l)(T_{(\xi_1^l)^{-1}}\phi) = o(1)\|
          T_{(\xi_1^l)^{-1}}\phi \| = o(1)\|\phi \| =o(1).
    $$
      Summing up \eqref{26}, \eqref{43}, \eqref{22} and \eqref{25}, we find that
        \begin{align*}
          o(1) &= \int \nabla_{\H} u_l \nabla_{\H} T_{(\xi_1^l)^{-1}}\phi-\int R_l H^{\tau_l}|u_l|^{p_l-1}u_lT_{(\xi_1^l)^{-1}}\phi\\
          &= \int \nabla_{\H}T_{\xi_1^l}u_l \nabla_{\H} \phi - \int T_{\xi_1^l} R_l(T_{\xi_1^l}H)^{\tau_l} |T_{\xi_1^l}u_l|^{p_l-1}T_{\xi_1^l}u_l\phi\\
          &=\int \nabla_{\H}w\nabla_{\H}\phi - \int T_\zeta R_\infty^{(1)}(\xi)|w|^{4/(Q-2)}w\phi + o(1),
          \end{align*}
        where $\zeta =\lim_{l \to  \infty}(\xi_l^{(1)})^{-1} \circ \xi_1^l$ along a subsequence. This means $w$ is a weak solution of \eqref{44}.

        The positivity of $w$ can be verified from the following argument.

        Let $w = w^+  - w^- $, where $w^+  = \max ( w, 0) $,  $w^- = \max ( -w, 0)$. It follows from \eqref{43} and \eqref{41} that
        \be\label{3.15}
        \int (w^-)^{Q^*}= o_{\var _2} (1).\ee  Multiplying \eqref{44} with $w^-$ and integrating by part, we have
      $$
          \int |\nabla_{\H }w^-|^2 \leq \int T_\zeta R_\infty^{(1)}(w^-)^{Q^*}
           \leq o_{\var _2}(1)\Big(\int(w^-)^{Q^*}\Big)^{2/Q^*} \leq o_{\var _2}(1)\int |\nabla_{\H }w^-|^2,
          $$
          where we used \eqref{3.15} in the second inequality and \eqref{Sobolevineq} in the last step.
         If $\var _2$ small enough, we immediately obtain $w^- \equiv 0$, namely, $w \geq 0$. It follows from \eqref{44} and strong maximum principle (see \cite{B1969}) that $w>0$.

   \textbf{Step 2} (Energy bound estimates).   Now we begin to estimate the value of $I_{T_\zeta R_\infty^{(1)}}(w)$ in order
      to produce a contradiction. The estimate we are going to establish is
        \be \label{45}
          c^{(1)} \leq I_{T_\zeta R_\infty^{(1)}}(w) \leq c^{(1)} + o_{\var _2}(1),
        \ee
        where $o_{\var _2}(1)\to  0$  as $\var _2 \to 0$.

        Firstly, we know from \eqref{44} that
          $\int |\nabla_{\H }w|^2 = \int T_\zeta R_\infty^{(1)}w^{Q^*}$.
Thus,
    $$
          I_{T_\zeta R_\infty^{(1)}}(w) = \frac{1}{2}\int |\nabla_{\H }w|^2 - \frac{1}{Q^*} \int T_\zeta R_\infty^{(1)}w^{Q^*} = \frac{1}{Q} \int |\nabla_{\H }w|^2.
   $$
        Then we conclude from \eqref{Sobolevineq}, \eqref{44}, and the fact $T_\zeta R_{\infty}^{(1)} \leq a^{(1)}$ that
  $$
          S_n \leq \frac{(\int |\nabla_{\H }w|^2)^{1/2}}{(\int w^{Q^*})^{1/Q^*}}
          \leq \frac{(\int |\nabla_{\H }w|^2)^{1/2}}{(\int T_\zeta R_\infty^{(1)}w^{Q^*})^{1/Q^*}}(a^{(1)})^{1/Q^*}= \Big(\int |\nabla_{\H }w|^2\Big)^{1/Q}(a^{(1)})^{1/Q^*}.
     $$
 Therefore, we complete the proof of the first inequality in \eqref{45}.

        On the other hand, we deduce from \eqref{18} that $|R_\infty^{(1)}(\xi)| \leq A_1$, $\forall\, \xi \in \Hn$.  Owing to  \eqref{Sobolevineq}, \eqref{36}, \eqref{37}, \eqref{39} and \eqref{42}, we have
          \begin{align*}
          I_{R_l, \tau_l}(u_l) &= I_{R_l, \tau_l}(\al  _1^lw_{ \xi_1^l,\lam _1^l}) + I_{R_{l, \tau_l}}(\al  _2^lw_{\xi_2^l, \lam _2^l}) + o_{\var _2}(1)\\
          &= I_{T_{\xi_1^l} R_l, \tau_l}(\al  _1^lw_{0,\lam _1^l}) + I_{R_l, \tau_l}(\al  _2^lw_{\xi_2^l, \lam _2^l}) + o_{\var _2}(1) + o(1)\\
          &= I_{T_{\xi_1^l} R_l, \tau_l}(\al  _1w_{ 0,\lam _1}) + I_{R_l, \tau_l}(\al  _2^lw_{\xi_2^l, \lam _2^l}) + o_{\var _2}(1) + o(1)\\
          &= I_{T_{\zeta} R_\infty^{(1)}}(\al  _1w_{0,\lam _1}) + I_{R_l, \tau_l}(\al  _2^lw_{\xi_2^l, \lam _2^l}) + o_{\var _2}(1) + o(1)\\
          &= I_{T_{\zeta} R_\infty^{(1)}}(w) + I_{R_l, \tau_l}(\al  _2^lw_{\xi_2^l, \lam _2^l}) + o_{\var _2}(1) + o(1),
          \end{align*}
        where $o(1)$ denotes some quantity which, for fixed $\var _2, \var _3$ goes to zero as $l \to  \infty$. Consequently,
        \be \label{47}
          I_{T_{\zeta} R_\infty^{(1)}}(w) = I_{R_l, \tau_l}(u_l) - I_{R_l, \tau_l}(\al  _2^lw_{\xi_2^l, \lam _2^l}) + o_{\var _2}(1) + o(1).
        \ee
 Combining \eqref{35} and \eqref{36}, we find
$$
          o(1) = I_{R_l, \tau_l}'(u_l)(\al  _2^lw_{\xi_2^l, \lam _2^l}) = I_{R_l, \tau_l}'(\al  _2^lw_{\xi_2^l, \lam _2^l})(\al  _2^lw_{\xi_2^l, \lam _2^l})  + o_{\var _2}(1) + o(1).
$$
        Namely,
        \begin{gather}
         \label{48}
        \int |\nabla_{\H}(\al  _2^lw_{\xi_2^l, \lam _2^l})| ^2= \int R_l H^{\tau_l} (\al  _2^lw_{\xi_2^l, \lam _2^l})^{Q^*-\tau_l} + o_{\var _2}(1) + o(1),\\\label{49}
        I_{R_l, \tau_l}(\al  _2^lw_{\xi_2^l, \lam _2^l}) = \frac{1}{Q}\int |\nabla_{\H}(\al  _2^lw_{\xi_2^l, \lam _2^l})| ^2 + o_{\var _2}(1) + o(1).
        \end{gather}
      From \eqref{Sobolevineq} and \eqref{3.10}, we obtain
        \be \label{50}
       \int |\nabla_{\H}(\al  _2^lw_{\xi_2^l, \lam _2^l})| ^2 \geq \frac{1}{2}\Big(A_2^{(2-Q)/4} - o_{\var _2}(1)\Big)(S_n)^Q > \frac{1}{4}(A_2)^{(2-Q)/4}(S_n)^Q>0.
        \ee
    Then, by \eqref{Sobolevineq}, \eqref{19}-\eqref{21}, \eqref{39}, \eqref{42}, and H\"older inequality, we have
        \begin{align*}
          S_n  &\leq \frac{(\int|\nabla_{\H }(\al  _2^lw_{\xi_2^l, \lam _2^l})|^2)^{1/2}}{(\int (\al  _2^lw_{\xi_2^l, \lam _2^l})^{Q^* })^{1/Q^{*}  }}\\
          &=\frac{(\int|\nabla_{\H }(\al  _2^lw_{\xi_2^l, \lam _2^l})|^2)^{1/2}}{(\int_{B_{S_l}(\xi_l^{(2)})} (\al  _2^lw_{\xi_2^l, \lam _2^l})^{Q^* })^{1/Q^{*}  }+ o(1)}\\
          &\leq\frac{(\int|\nabla_{\H }(\al  _2^lw_{\xi_2^l, \lam _2^l})|^2)^{1/2}}{(\int_{B_{S_l}(\xi_l^{(2)})} (\al  _2^lw_{\xi_2^l, \lam _2^l})^{Q^* -\tau_l})^{1/Q^{*}  }+ o(1)}\\
          &\leq\frac{(\int|\nabla_{\H }(\al  _2^lw_{\xi_2^l, \lam _2^l})|^2)^{1/2}R_l(\xi_l^{(2)})^{1/Q^{*}  }}{(\int_{B_{S_l}(\xi_l^{(2)})} R_l H^{\tau_l}(\al  _2^lw_{\xi_2^l, \lam _2^l})^{Q^* -\tau_l})^{1/Q^{*}  }+ o(1)}\\
          &=\frac{(\int|\nabla_{\H }(\al  _2^lw_{\xi_2^l, \lam _2^l})|^2)^{1/2}(a^{(2)})^{1/Q^{*}  } +o(1)}{(\int R_l H^{\tau_l}
          (\al  _2^lw_{\xi_2^l, \lam _2^l})^{Q^* -\tau_l})^{1/Q^{*}  }+ o(1)}.
          \end{align*}
Thus, using \eqref{48}, we establish that
 $$
          S_n \leq \Big(\int|\nabla_{\H }(\al  _2^lw_{\xi_2^l, \lam _2^l})|^2\Big)^{1/Q} (a^{(2)})^{1/Q^{*}  } +o(1).
$$
    This together with \eqref{49} gives
        \be \label{3.21}
          I_{R_l, \tau_l}(\al  _2^lw_{\xi_2^l, \lam _2^l}) \geq \frac{1}{Q}(a^{(2)})^{(2-Q)/2}(S_n)^Q + o_{\var _2}(1) + o(1) = c^{(2)} + o_{\var _2}(1) + o(1).
        \ee
        Putting \eqref{47}, \eqref{34} and above estimate together, we obtain the right hand side of \eqref{38}.

  \textbf{Step 3} (Completion of the proof).  Finally,     for $\var _2$ small enough, a contradiction arises from \eqref{43},  \eqref{44},  \eqref{45}, \eqref{29} and the positivity of $w$. This proves that $ \lim_{l \to  \infty}\lam _1^l =\infty$. Similarly $ \lim_{l \to  \infty}\lam _2^l =\infty$. Claim \ref{claim:1} has been established.
        \end{proof}

        For any $\lam >0$ any $\xi \in \Hn $, we define $\mathscr{T}_{l, \lam , \xi}: E \to  E$ by $$\mathscr{T}_{l, \lam , \xi}u(\cdot) = \lam ^{2/(1-p_l)}u(\xi \circ \delta_{\lam ^{-1}}(\cdot)).$$ It is clear that $$\mathscr{T}_{l, \lam , \xi}^{-1}u(\cdot)= \lam ^{2/(p_l-1)}u( \delta_{\lam }(\xi^{-1} \circ \cdot))$$
        and
        $$\int \nabla_{\H} u  \nabla_{\H}\mathscr{T}_{l, \lam , \xi}^{-1}\phi=\lam^{2(p_l +1)/(p_l-1) -Q} \int \nabla_{\H}  \mathscr{T}_{l, \lam , \xi}u  \nabla_{\H} \phi\quad \text{ for any }\, \phi\in C_{c}^{\infty}(\Hn).$$

        \begin{lem}\label{lem:4.3}
        There exists some constant $C= C(n,A_2)$, such that, for $\var _2$ small enough and $l$ large enough, we have $(\lam _1')^{\tau_l},  (\lam _2')^{\tau_l} \leq C$.
         \end{lem}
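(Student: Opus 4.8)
The plan is to carry $u_l$ into the frame adapted to the first bubble and read off the new concentration parameters. Starting from the optimal decomposition $u_l=\al_1^lw_{\xi_1^l,\lam_1^l}+\al_2^lw_{\xi_2^l,\lam_2^l}+v_l$ of \eqref{36}, I would apply $\mathscr{T}_{l,\lam_1^l,\xi_1^l}$. Using the group law on $\Hn$ (the interplay of $\tau_\xi$ with $\delta_\lam$) together with the identity $2/(1-p_l)=(2-Q)/2+O(\tau_l)$, a direct computation gives
\[
\mathscr{T}_{l,\lam_1^l,\xi_1^l}u_l=(\lam_1^l)^{O(\tau_l)}\Big(\al_1^lw_{0,1}+\al_2^lw_{\tilde{\xi}_2,\tilde{\lam}_2}\Big)+\mathscr{T}_{l,\lam_1^l,\xi_1^l}v_l,
\]
with $\tilde{\lam}_2=\lam_2^l/\lam_1^l$ and $\tilde{\xi}_2=\delta_{\lam_1^l}((\xi_1^l)^{-1}\circ\xi_2^l)$. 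Re-running the optimal representation of Proposition \ref{prop:4.1} then produces new scales $\lam_1'=1+o_{\var_2}(1)$ and $\lam_2'=(1+o_{\var_2}(1))\,\lam_2^l/\lam_1^l$, the amplitudes remaining in the range fixed by $A_2$ by \eqref{prop4.1-1}. Hence it is enough to bound $(\lam_1^l)^{\tau_l}$ and $(\lam_2^l)^{\tau_l}$ by a constant depending only on $n$ and $A_2$.

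For this I would use the membership $u_l\in\widetilde{V}_l(2,\var_2)$ directly rather than the optimal decomposition: there are points $\zeta_i\in O_l^{(i)}$, amplitudes near $R_l(\zeta_i)^{(2-Q)/4}$, and scales $\mu_i>\var_2^{-1}$ with $|\mu_i^{\tau_l}-1|<\var_2$ so that $\|u_l-\sum_i\al_iw_{\zeta_i,\mu_i}^{1+O(\tau_l)}\|<\var_2$. Since $\mu_i^{\tau_l}$ stays in the fixed compact interval $(1-\var_2,1+\var_2)$, one checks that $w_{\zeta_i,\mu_i}^{1+O(\tau_l)}\to w_{\zeta_i,\mu_i}$ in $E$ as $\tau_l\to0$, uniformly for $\mu_i$ in this regime: differentiate, observe that $(1+O(\tau_l))\,w_{\zeta_i,\mu_i}^{O(\tau_l)}\to1$ pointwise and boundedly, and invoke dominated convergence. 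Consequently $u_l$ is $(\var_2+o_{\tau_l}(1))$-close in $E$ to the genuine bubble sum $\sum_i\al_iw_{\zeta_i,\mu_i}$, and the uniqueness part of Proposition \ref{prop:4.1} forces the optimal scales of \eqref{36} to satisfy $\lam_i^l/\mu_i\to1$ as $\var_2,\tau_l\to0$. Therefore $(\lam_i^l)^{\tau_l}=(1+o(1))\,\mu_i^{\tau_l}$ lies in $(1/2,2)$ for $\var_2$ small and $l$ large, and $C=C(n,A_2)$ absorbs these factors together with the bound $R_l\in[1/A_2,A_2]$ entering the amplitudes. Along the way one also uses that the companion factors $H(\xi_i^l)^{\tau_l}$ occurring in the energy expansion are bounded; this follows from $\xi_i^l\in O_l^{(i)}$, $\operatorname{diam}(O_l^{(i)})<S_l/10\le\tfrac{1}{20}d(\xi_l^{(1)},\xi_l^{(2)})$, and \eqref{26}.

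The genuine obstacle is the second step. A priori nothing stops the concentration rates $\lam_1^l,\lam_2^l$, or their ratio, from growing with $l$, and the assertion is precisely that in the regime relevant for the contradiction argument $(\lam_i^l)^{\tau_l}$ cannot blow up. The only leverage is the constraint $|\mu_i^{\tau_l}-1|<\var_2$ built into $\widetilde{V}_l$, and transferring it to \eqref{36} needs the $E$-closeness of the two representations; in the subcritical setting one cannot simply quote the critical-exponent identities, because the modified bubbles $w_{\zeta,\mu}^{1+O(\tau_l)}$ and the weight $H^{\tau_l}$ have to be handled. That is exactly the role of the uniform convergence statement above — valid only because $\mu_i^{\tau_l}$ stays bounded — together with \eqref{26}, which keeps the weight factors bounded. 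Once these are in place the lemma follows, with $C$ depending on $n$ only through Sobolev and energy constants and on $A_2$ only through $R_l\in[1/A_2,A_2]$.
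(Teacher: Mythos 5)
Your route is genuinely different from the paper's and can be made to work, but one step needs more justification than "uniqueness," and the opening paragraph starts from a misreading. The paper's own proof tests the near-critical-point condition \eqref{35} against $w_{\xi_1^l,\lam_1^l}$ to get the balance relation \eqref{228}; a change of variables gives $\int w_{\xi_1^l,\lam_1^l}^{p_l+1}=(\lam_1^l)^{-(Q-2)\tau_l/2}\int w_{0,1}^{p_l+1}$, while \eqref{23-2}, \eqref{38} and \eqref{26} keep $R_l(\xi_1^l)$, $\al_1^l$ and $H(\xi_1^l)^{\tau_l}$ bounded, so the factor $(\lam_1^l)^{\tau_l}$ is pinned. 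You instead read off the constraint $|\lam_i^{\tau_l}-1|<\var_2$ from the definition of $\widetilde{V}_l(2,\var_2)$ and try to transfer it to the optimal parameters of \eqref{36}. That is a legitimate and arguably more elementary alternative: it sidesteps \eqref{35} entirely, and if carried out it also delivers the third conclusion of Claim~\ref{claim:2} for free. The uniform convergence $w_{\zeta,\mu}^{1+O(\tau_l)}\to w_{\zeta,\mu}$ in $E$ under the constraint that $\mu^{\tau_l}$ stays bounded is also correct as you state it.

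The pivot --- ``the uniqueness part of Proposition~\ref{prop:4.1} forces $\lam_i^l/\mu_i\to 1$'' --- is where there is a gap. Uniqueness of the minimizer does not by itself say that two near-minimizing parameter sets are close; what you actually need is the quantitative nondegeneracy of the bubble family (the Bahri--Coron interaction estimates): after using $d(\xi_1^l,\xi_2^l)\gtrsim S_l\to\infty$ to kill the cross terms so the comparison can be made bubble by bubble, one has $\|\al_i^l w_{\xi_i^l,\lam_i^l}-\al_i w_{\zeta_i,\mu_i}\|=O(\var_2)+o(1)$, and expanding the square together with the fact that $\langle w_{a,\lam},w_{a',\lam'}\rangle$ is close to $\|w_{0,1}\|^2$ only if $\lam/\lam'$ is close to $1$ and $\min(\lam,\lam')\,d(a,a')$ is small yields the desired $\lam_i^l/\mu_i\to 1$. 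Spelled out that way your argument closes; as written it does not follow. Two smaller points: the $\lam_1',\lam_2'$ in the statement are typos for $\lam_1^l,\lam_2^l$ (the paper's proof concludes ``$\ldots(\lam_2^l)^{\tau_l}\leq C$''), so the reinterpretation via $\mathscr{T}_{l,\lam_1^l,\xi_1^l}$ in your first paragraph is unnecessary; and the closing remark about $H^{\tau_l}$ plays no role in your own argument, since the $E$-norm you compare in carries no such weight.
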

        \begin{proof} Applying \eqref{35}, we deduce that $I_{R_l, \tau_l}'(u_l)(w_{ \xi_1^l,\lam _1^l}) =o(1)$. Now an explicit calculation from
        	\eqref{39}, \eqref{42}, Claim \ref{claim:1}, \emph{bubbles}’ interaction estimates in \cite[Part 1]{B1988book}, and Proposition \ref{prop:4.1} yields that
          \be \label{228}
            (\al  _1^l)^{p_l} \int R_lH^{\tau_l}w_{ \xi_1^l,\lam _1^l}^{p_l +1} = \al  _1^l \int |\nabla_{\H }w_{ \xi_1^l,\lam _1^l}|^2 + o(1) +o_{\var _2}(1).
          \ee
        Then the proof of the first term completed from \eqref{228}, \eqref{23-2}, \eqref{38}, \eqref{26}, and Claim \ref{claim:1}. Similarly, we have $(\lam _2^l)^{\tau_l} \leq C$.
        \end{proof}

        Without loss of generality, we assume that
          \be \label{53}
            \lam _1^l \leq \lam _2^l.
          \ee

A direct computation using \eqref{36} shows that
          \be \label{54}
            \mathscr{T}_{l,  \xi_1^l,\lam _1^l}u_l = \widetilde{\al  }_1^l w_{0,1} +\widetilde{\al  }_2^l w_{ \delta_{\lam _1^l}((\xi_1^l)^{-1}\circ \xi_2^l),\lam _2^l/\lam _1^l} + \mathscr{T}_{l,  \xi_1^l,\lam _1^l}v_l,
          \ee
          where
       $$
            \widetilde{\al  }_1^l =\al  _1^l(\lam _1^l)^{(Q-2)/2 - 2/(p_l -1)},\quad
            \widetilde{\al  }_2^l =\al  _2^l(\lam _2^l)^{(Q-2)/2 - 2/(p_l -1)}.
    $$
 Then we can verify the existence of $u_1 \in E$ and $\zeta_1 \in O^{(1)}$ such that
          \begin{gather}\label{55}
            \mathscr{T}_{l,  \xi_1^l,\lam _1^l}u_l\rightharpoonup u_1 \quad \text{ weakly in $E$,} \\\label{56}
            \lim_{l \to  \infty}(\xi_l^{(1)})^{-1} \circ \xi_1^l =\zeta_1,
          \end{gather}
up to a subsequence.

          Accordingly, by making use of \eqref{22}, \eqref{25}, \eqref{40} and \eqref{56}, we have
          \be \label{57}
            \lim_{l \to  \infty} R_l(\xi_1^l)^{(2-Q)/4} = R_\infty^{(1)}(\zeta_1)^{(2-Q)/4}.
          \ee
          For any $\phi \in C_c^\infty(\Hn )$, it follows from \eqref{35} that
            \begin{align*}
            o(1) =& I_{R_l, \tau_l}'(u_l)(\mathscr{T}_{l,  \xi_1^l,\lam _1^l}^{-1}\phi)\\
            =&(\lam _1^l)^{2(p_l +1)/(p_l-1) -Q} \Big\{ \int \nabla_{\H}\mathscr{T}_{l,  \xi_1^l,\lam _1^l}u_l \nabla_{\H} \phi - \int T_{\xi_l^{(1)}}R_l((\xi_l^{(1)})^{-1}\circ \xi_1^l \circ \delta_{1/\lam _1^l} (\cdot))\\
            &\times H^{\tau_l}(\xi_1^l \circ \delta_{1/\lam _1^l}(\xi_1^l)) |\mathscr{T}_{l,  \xi_1^l,\lam _1^l}u_l|^{p_l-1}(\mathscr{T}_{l,  \xi_1^l,\lam _1^l}u_l) \phi \Big\}.
            \end{align*}
Taking the limit $l\to  \infty$, and then using \eqref{55}, \eqref{26}, \eqref{56}, \eqref{22}, \eqref{40}, and Lemma \ref{lem:4.3} we obtain
  $$
            \int \nabla_{\H}u_1 \nabla_{\H} \phi - \int R_\infty^{(1)}(\zeta_1)|u_1|^{4/(Q-2)}u_1 \phi =0.
     $$
    Namely, $u_1$ satisfies
          \be \label{23-17}
            -\Delta_{\H}u_1 = R_\infty^{(1)}(\zeta_1) |u_1|^{4/(Q-2)}u_1.
          \ee
Moreover, we see from \eqref{54} that  $u_1 \not\equiv 0$ if $\var _2$ is small enough.
          We then argue as before to obtain $u_1>0$.

       By the classification  theorem of positive solutions of \eqref{23-17} in $E$ (see \cite{JL1988}), there exist $\xi^* \in \Hn $ and  $\lam ^* >0$  such that
          \be \label{59}
            u_1 = R_\infty^{(1)}(\zeta_1)^{4/(Q-2)}w_{ \xi^*,\lam ^*}.
          \ee
          \begin{claim}\label{claim:2}
          For $l$ large enough, we have $|\xi^*| = o_{\var _2}(1)$, $|\lam ^*-1| = o_{\var _2}(1)$, $(\lam _1^l)^{\tau_l} = 1+o_{\var _2}(1)$.
          \end{claim}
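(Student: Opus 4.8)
The plan is to play off the two descriptions of the weak limit $u_1$ of $\mathscr{T}_{l,\xi_1^l,\lam_1^l}u_l$. On one side, \eqref{59} together with \eqref{57} and the classification theorem gives $u_1 = c_1\, w_{\xi^*,\lam^*}$, where $c_1 = R_\infty^{(1)}(\zeta_1)^{(2-Q)/4}$ is a fixed constant bounded above and below by constants depending only on $A_1,A_2$. On the other side, rewriting \eqref{54} and passing to the weak limit, I would show $u_1 = A\,w_{0,1} + w_1$, where $A=\lim_{l\to\infty}\widetilde{\al}_1^l$ along a further subsequence (the sequence $\widetilde{\al}_1^l$ being bounded by Proposition \ref{prop:4.1} and Lemma \ref{lem:4.3}) and $\|w_1\|=o_{\var_2}(1)$. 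The whole claim then reduces to comparing $A\,w_{0,1}$ with $c_1\,w_{\xi^*,\lam^*}$.

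For the second description I need two inputs. First, the ``far'' bubble $\widetilde{\al}_2^l\, w_{\delta_{\lam_1^l}((\xi_1^l)^{-1}\circ\xi_2^l),\,\lam_2^l/\lam_1^l}$ converges weakly to $0$ in $E$: its coefficient $\widetilde{\al}_2^l=\al_2^l(\lam_2^l)^{(Q-2)/2-2/(p_l-1)} = \al_2^l[(\lam_2^l)^{\tau_l}]^{-\gamma_l}$ stays bounded by Lemma \ref{lem:4.3}, while its concentration point has norm $\lam_1^l\, d(\xi_1^l,\xi_2^l)\to\infty$ by Claim \ref{claim:1} and \eqref{42}, so the bump escapes to infinity. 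Second, the rescaling distorts the Dirichlet norm only by a factor $[(\lam_1^l)^{\tau_l}]^{-(Q-2)^2/(4-\tau_l(Q-2))}\leq 1$, hence $\|\mathscr{T}_{l,\xi_1^l,\lam_1^l}v_l\|\leq\|v_l\|=o_{\var_2}(1)$ by \eqref{39}, so the weak limit of $\mathscr{T}_{l,\xi_1^l,\lam_1^l}v_l$ has norm $o_{\var_2}(1)$. Together these give the displayed form of $u_1$.

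Equating the two expressions yields $\|A\,w_{0,1}-c_1\,w_{\xi^*,\lam^*}\|=o_{\var_2}(1)$. Since $\|w_{\xi,\lam}\|=\|w_{0,1}\|$ for every $(\xi,\lam)$, taking norms gives $|A|=c_1+o_{\var_2}(1)$; taking the inner product with $w_{0,1}$ then gives $\langle w_{\xi^*,\lam^*},w_{0,1}\rangle=\|w_{0,1}\|^2+o_{\var_2}(1)$, hence $\|w_{\xi^*,\lam^*}-w_{0,1}\|^2 = 2\|w_{0,1}\|^2-2\langle w_{\xi^*,\lam^*},w_{0,1}\rangle = o_{\var_2}(1)$. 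Because $(\xi,\lam)\mapsto\langle w_{\xi,\lam},w_{0,1}\rangle$ is continuous, tends to $0$ as $\lam\to0,\infty$ or $|\xi|\to\infty$, and (by Cauchy--Schwarz and positivity) attains its maximum $\|w_{0,1}\|^2$ only at $(\xi,\lam)=(0,1)$, it follows that $|\xi^*|=o_{\var_2}(1)$ and $|\lam^*-1|=o_{\var_2}(1)$. Finally, from $\widetilde{\al}_1^l=\al_1^l\,[(\lam_1^l)^{\tau_l}]^{-\gamma_l}$ with $\gamma_l=\tfrac{(Q-2)^2}{2(4-\tau_l(Q-2))}\to\tfrac{(Q-2)^2}{8}>0$, combined with $A=c_1+o_{\var_2}(1)$, $\al_1^l = c_1+o_{\var_2}(1)+o(1)$ (from \eqref{38} and \eqref{57}), and $c_1$ bounded away from $0$, I obtain $[(\lam_1^l)^{\tau_l}]^{-\gamma_l}=1+o_{\var_2}(1)$, whence $(\lam_1^l)^{\tau_l}=1+o_{\var_2}(1)$ for $l$ large.

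The main obstacle is the bookkeeping of the powers of $\lam_1^l$ and $\lam_2^l$ produced by the subcritical exponent $\tau_l$ under the rescaling $\mathscr{T}_{l,\xi_1^l,\lam_1^l}$ — both in the coefficients $\widetilde{\al}_i^l$ and in the norm distortion — and checking that Lemma \ref{lem:4.3} keeps all of them bounded (indeed between $1$ and a fixed constant), so that the passages to the weak limit are legitimate and the constant $A$ can be compared with $c_1$ quantitatively. Once that is in place, the geometric step matching a near-multiple of $w_{0,1}$ with a near-multiple of $w_{\xi^*,\lam^*}$ is routine.
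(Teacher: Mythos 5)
Your argument is correct and follows essentially the same route as the paper's: both split the weak limit $u_1$ into a near-multiple of $w_{0,1}$ coming from the rescaled first bubble (with the far bubble and error term discarded using Lemma \ref{lem:4.3}, \eqref{39}, \eqref{42} and \eqref{53}), compare it with the classification form $c_1\,w_{\xi^*,\lam^*}$ in \eqref{59}, and then extract $(\lam_1^l)^{\tau_l}\to 1$ from the identity $\widetilde{\al}_1^l = \al_1^l(\lam_1^l)^{(Q-2)/2-2/(p_l-1)}$ together with \eqref{60}. Your intermediate bookkeeping — the explicit norm-distortion factor for $\mathscr{T}_{l,\xi_1^l,\lam_1^l}$ and the inner-product argument pinning $(\xi^*,\lam^*)$ near $(0,1)$ — is a slightly more detailed version of the step the paper abbreviates as ``taking the limit $l\to\infty$''.
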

          \begin{proof}
       First of all,   using Lemma \ref{lem:4.3}, we find $(\lam _1^l)^{\tau_l} = A_{\var _2, \var _3} +o(1)$ along a subsequence, where $A_{\var _2, \var _3}>0$ is a  constant independent of $l$ for fixed $\var_2$ and $\var_3$.  Thanks to \eqref{38} and \eqref{57},  we have
          \be \label{60}
            \al  _1^l = R_\infty^{(1)}(\zeta_1)^{(2-Q)/4}+ o_{\var _2}(1) + o(1).
          \ee
Note that
          $$
          \widetilde{\al  }_1^l = \al  _1^l(\lam _1^l)^{(Q-2)/2 - 2/(p_l -1)} = \al  _1^l(\lam _1^l)^{-(Q-2)^2\tau_l/8 + O(\tau_l^2)}.
          $$
                   Therefore,
          \be \label{61}
            \widetilde{\al  }_1^l = R_\infty^{(1)}(\zeta_1)^{(2-Q)/4}(A_{\var _2, \var _3})^{-(Q-2)^2/8} +o_{\var _2}(1) + o(1).
          \ee
          From \eqref{37}, \eqref{42} and \eqref{53}-\eqref{55}, we see that
          \be \label{62}
            \widetilde{\al  }_1^l w_{0,1} + \mathscr{T}_{l,  \xi_1^l,\lam _1^l}v_l \rightharpoonup u_1 \quad \text{ weakly in $E$.}
          \ee
             It follows from \eqref{39}, \eqref{61}, \eqref{62}, and Lemma \ref{lem:4.3} that
       $$
            \| R_\infty^{(1)}(\zeta_1)^{(2-Q)/4}(A_{\var _2, \var _3})^{-(Q-2)^2/8}w_{0,1} - R_\infty^{(1)}(\zeta_1)^{(2-Q)/4}w_{ \xi^*,\lam ^*} \| =o_{\var _2}(1) + o(1).
   $$
Finally, taking the limit $l\to \infty$, we get
 $|\xi^*| =o_{\var _2}(1)$,  $\lam ^* = 1+o_{\var _2}(1)$, $ A_{\var _2, \var _3} = 1+o_{\var _2}(1)$. Claim \ref{claim:2} has been established.
          \end{proof}

          We define $\phi_l \in E$ by
          \be \label{63}
            \mathscr{T}_{l,  \xi_1^l,\lam _1^l}u_l= u_1 +\mathscr{T}_{l,  \xi_1^l,\lam _1^l}\phi_l.
          \ee
      It follows from \eqref{55} that
          \be \label{63-1}
            \mathscr{T}_{l,  \xi_1^l,\lam _1^l}\phi_l \rightharpoonup 0 \quad \text{ weakly in $E$.}
          \ee
          \begin{claim}\label{claim:3}
         For $\var_2$ small enough,   we have $\| I_{R_l, \tau_l}'(\phi_l) \|=o(1)$.
          \end{claim}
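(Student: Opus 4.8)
\textbf{Proof proposal for Claim \ref{claim:3}.} The plan is to pass to the rescaled frame, observe that $\mathscr{T}_{l,\xi_1^l,\lam_1^l}$ is an isometry of $E$ up to harmless constants, and then run a Brezis--Lieb-type splitting of the nonlinearity against the weak limit $u_1$. First I would record the change-of-variables identity already used to derive \eqref{23-17}: for every $u\in E$,
$$
I_{R_l,\tau_l}(u)=\gamma_l\,\widetilde I_l\big(\mathscr{T}_{l,\xi_1^l,\lam_1^l}u\big),\qquad \widetilde I_l(w):=\frac12\int|\nabla_{\H}w|^2-\frac{1}{p_l+1}\int\widetilde R_l\,\widetilde H_l^{\tau_l}|w|^{p_l+1},
$$
where $\gamma_l=(\lam_1^l)^{2(p_l+1)/(p_l-1)-Q}=(\lam_1^l)^{O(\tau_l)}$, $\widetilde R_l(\cdot)=R_l(\xi_1^l\circ\delta_{1/\lam_1^l}(\cdot))$ and $\widetilde H_l(\cdot)=H(\xi_1^l\circ\delta_{1/\lam_1^l}(\cdot))$. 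By Lemma \ref{lem:4.3} and \eqref{26} the factor $\gamma_l$ is bounded above and below uniformly in $l$, and the same scaling computation shows $\|\mathscr{T}_{l,\xi_1^l,\lam_1^l}u\|$ and $\|u\|$ are comparable, uniformly in $l$, for $\var_2$ small. Differentiating the identity, $\|I'_{R_l,\tau_l}(\phi_l)\|=o(1)$ is therefore equivalent to $\|\widetilde I_l'(w_l)\|=o(1)$, where $w_l:=\mathscr{T}_{l,\xi_1^l,\lam_1^l}\phi_l=\mathscr{T}_{l,\xi_1^l,\lam_1^l}u_l-u_1$ by \eqref{63}, and $w_l\rightharpoonup0$ weakly in $E$ by \eqref{63-1}.

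Next I would decompose, for any $\psi\in E$,
$$
\widetilde I_l'(w_l)(\psi)=\widetilde I_l'\big(\mathscr{T}_{l,\xi_1^l,\lam_1^l}u_l\big)(\psi)-\widetilde I_l'(u_1)(\psi)-\int\widetilde R_l\widetilde H_l^{\tau_l}\Big(|w_l|^{p_l-1}w_l-\big|\mathscr{T}_{l,\xi_1^l,\lam_1^l}u_l\big|^{p_l-1}\mathscr{T}_{l,\xi_1^l,\lam_1^l}u_l+|u_1|^{p_l-1}u_1\Big)\psi,
$$
the first two terms being exact thanks to additivity of the Dirichlet part. The first term is $o(1)\|\psi\|$: transferring \eqref{35} through the identity above gives $\|\widetilde I_l'(\mathscr{T}_{l,\xi_1^l,\lam_1^l}u_l)\|=o(1)$. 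The second term is $o(1)\|\psi\|$ as well: since $u_1=R_\infty^{(1)}(\zeta_1)^{(2-Q)/4}w_{\xi^*,\lam^*}$ solves \eqref{23-17}, one has $\widetilde I_l'(u_1)(\psi)=\int\big(R_\infty^{(1)}(\zeta_1)|u_1|^{4/(Q-2)}u_1-\widetilde R_l\widetilde H_l^{\tau_l}|u_1|^{p_l-1}u_1\big)\psi$; here $\widetilde R_l\to R_\infty^{(1)}(\zeta_1)$ on compact sets by \eqref{22}, \eqref{25}, \eqref{56}, while $\widetilde H_l^{\tau_l}\to1$ with a uniform power decay at infinity (this is exactly the role of $H^\tau$, which makes the term integrable on $E$), and $p_l\to(Q+2)/(Q-2)$, so dominated convergence yields $R_\infty^{(1)}(\zeta_1)|u_1|^{4/(Q-2)}u_1-\widetilde R_l\widetilde H_l^{\tau_l}|u_1|^{p_l-1}u_1\to0$ in the dual exponent, using that $u_1$ is a fixed function with fast decay.

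The remaining term is where the noncompactness is genuinely met, and I expect it to be the main obstacle. Writing $f_l:=\mathscr{T}_{l,\xi_1^l,\lam_1^l}u_l$ and $g(s):=|s|^{p_l-1}s$, the integrand above is $-\widetilde R_l\widetilde H_l^{\tau_l}\big(g(f_l)-g(f_l-u_1)-g(u_1)\big)\psi$. Since $f_l$ is bounded in $E$ (hence in $L^{Q^*}$) and $f_l\rightharpoonup u_1$ by \eqref{55}, the local compactness of $E\hookrightarrow L^{q}_{loc}$ gives, along a subsequence, $f_l\to u_1$ a.e.; combining this with the pointwise estimate $|g(a+b)-g(b)|\le C(p_l)\big(|b|^{p_l-1}|a|+|a|^{p_l}\big)$, the equi-integrability of $\{|f_l|^{p_l}\}$ and $\{|w_l|^{p_l}\}$ in $L^{(p_l+1)/p_l}$ (with respect to the weight $\widetilde R_l\widetilde H_l^{\tau_l}\,d\xi$, which is finite and provides the decay needed at infinity), and the uniform bound on $\widetilde R_l\widetilde H_l^{\tau_l}$, one obtains $\|g(f_l)-g(f_l-u_1)-g(u_1)\|_{L^{(p_l+1)/p_l}(\widetilde R_l\widetilde H_l^{\tau_l}\,d\xi)}\to0$; this is the Heisenberg-group, variable-exponent version of the Brezis--Lieb splitting (compare the arguments in \cite{BC1988,B1988book}, the only extra care being that $p_l$ and $\widetilde H_l^{\tau_l}$ vary with $l$, which is harmless because they converge). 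Hölder's inequality then bounds the term by $o(1)\|\psi\|$. Putting the three pieces together gives $\widetilde I_l'(w_l)(\psi)=o(1)\|\psi\|$ for all $\psi\in E$, hence $\|\widetilde I_l'(w_l)\|=o(1)$, and undoing the rescaling yields $\|I'_{R_l,\tau_l}(\phi_l)\|=o(1)$, as claimed.
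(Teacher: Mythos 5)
Your proposal is correct and follows essentially the same route as the paper's proof: pass to the rescaled frame via $\mathscr{T}_{l,\xi_1^l,\lam_1^l}$, use $I'_{R_l,\tau_l}(u_l)(\mathscr{T}^{-1}\phi)=o(1)\|\phi\|$ from \eqref{35}, substitute $u_1$ solving the limit equation \eqref{23-17} (which is exactly the content of the paper's \eqref{65}), and control the remaining nonlinear difference by the elementary inequality $|g(a+b)-g(a)-g(b)|\lesssim |a|^{p-1}|b|+|a||b|^{p-1}$ together with the weak convergence $\mathscr{T}_{l,\xi_1^l,\lam_1^l}\phi_l\rightharpoonup 0$ and the spatial decay of the fixed profile $u_1$. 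Your more explicit framing of the last step as a variable-exponent Brezis--Lieb splitting is a reasonable way to make rigorous what the paper compresses into ``some elementary inequalities,'' and the rest of the decomposition matches the paper's \eqref{64}--\eqref{65} term for term.
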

          \begin{proof}
            For any $\phi \in C_c^\infty(\Hn )$, it follows from \eqref{35}, \eqref{23-17}, \eqref{63} and Lemma \ref{lem:4.3} that
          \begin{align}
              o(1)\|\phi \|=&I_{R_l, \tau_l}'(u_l)(\mathscr{T}_{l,  \xi_1^l,\lam _1^l}^{-1}\phi)\notag\\
              =&(\lam _1^l)^{2(p_l +1)/(p_l-1) -Q} \Big\{ \int \nabla_{\H}\mathscr{T}_{l,  \xi_1^l,\lam _1^l}u_l \nabla_{\H} \phi - \int R_l( \xi_1^l \circ \delta_{1/\lam _1^l} (\cdot))\notag\\
              &\times H^{\tau_l}(\xi_1^l \circ \delta_{1/\lam _1^l}(\xi_1^l)) |\mathscr{T}_{l,  \xi_1^l,\lam _1^l}u_l|^{p_l-1}(\mathscr{T}_{l,  \xi_1^l,\lam _1^l}u_l) \phi \Big\}\notag\\
              =&(\lam _1^l)^{2(p_l +1)/(p_l-1) -Q} \Big\{ \int \nabla_{\H}u_1 \nabla_{\H} \phi + \int \nabla_{\H}\mathscr{T}_{l,  \xi_1^l,\lam _1^l}\phi_l \nabla_{\H} \phi\notag\\
              &- \int R_l( \xi_1^l \circ \delta_{1/\lam _1^l} (\cdot))
              H^{\tau_l}(\xi_1^l \circ \delta_{1/\lam _1^l}(\xi_1^l)) |\mathscr{T}_{l,  \xi_1^l,\lam _1^l}u_l|^{p_l-1}(\mathscr{T}_{l,  \xi_1^l,\lam _1^l}u_l) \phi \Big\}\notag\\
              =&(\lam _1^l)^{2(p_l +1)/(p_l-1) -Q} \Big\{ \int R_{\infty}^{(1)}(\zeta_1) u_1^{(Q+2)/(Q-2)} \phi + \int \nabla_{\H}\mathscr{T}_{l,  \xi_1^l,\lam _1^l}\phi_l \nabla_{\H} \phi\notag\\
              &- \int R_l( \xi_1^l \circ \delta_{1/\lam _1^l} (\cdot))
              H^{\tau_l}(\xi_1^l \circ \delta_{1/\lam _1^l}(\xi_1^l)) |\mathscr{T}_{l,  \xi_1^l,\lam _1^l}\phi_l|^{p_l-1}(\mathscr{T}_{l,  \xi_1^l,\lam _1^l}\phi_l) \phi \notag\\
              &+\int R_l( \xi_1^l \circ \delta_{1/\lam _1^l} (\cdot))
              H^{\tau_l}(\xi_1^l \circ \delta_{1/\lam _1^l}(\xi_1^l)) |\mathscr{T}_{l,  \xi_1^l,\lam _1^l}\phi_l|^{p_l-1}(\mathscr{T}_{l,  \xi_1^l,\lam _1^l}\phi_l) \phi\notag \\
              &- \int R_l( \xi_1^l \circ \delta_{1/\lam _1^l} (\cdot))
              H^{\tau_l}(\xi_1^l \circ \delta_{1/\lam _1^l}(\xi_1^l)) |\mathscr{T}_{l,  \xi_1^l,\lam _1^l}u_l|^{p_l-1}(\mathscr{T}_{l,  \xi_1^l,\lam _1^l}u_l) \phi \Big\}\notag\\
              =&I_{R_l, \tau_l}'(\phi_l)(\mathscr{T}_{l,  \xi_1^l,\lam _1^l}^{-1}\phi)+(\lam _1^l)^{2(p_l +1)/(p_l-1) -Q} \Big\{ \int R_{\infty}^{(1)}(\zeta_1) u_1^{(Q+2)/(Q-2)} \phi\notag\\
              &+\int R_l( \xi_1^l \circ \delta_{1/\lam _1^l} (\cdot))
              H^{\tau_l}(\xi_1^l \circ \delta_{1/\lam _1^l}(\xi_1^l))|\mathscr{T}_{l,  \xi_1^l,\lam _1^l}\phi_l|^{p_l-1}(\mathscr{T}_{l,  \xi_1^l,\lam _1^l}\phi_l) \phi\notag \\
              &- \int R_l( \xi_1^l \circ \delta_{1/\lam _1^l} (\cdot))
              H^{\tau_l}(\xi_1^l \circ \delta_{1/\lam _1^l}(\xi_1^l)) |\mathscr{T}_{l,  \xi_1^l,\lam _1^l}u_l|^{p_l-1}(\mathscr{T}_{l,  \xi_1^l,\lam _1^l}u_l) \phi \Big\}.\label{64}
            \end{align}
Then a direct calculation exploiting \eqref{26}, \eqref{57}, \eqref{59}, Claim \ref{claim:2}, H\"older inequalities and  Sobolev embedding theorems that
            \be \label{65}
              \Big| \int R_l( \xi_1^l \circ \delta_{1/\lam _1^l} (\cdot))
              H^{\tau_l}(\xi_1^l \circ \delta_{1/\lam _1^l}(\xi_1^l)) (u_1)^{p_l}\phi-\int R_{\infty}^{(1)}(\zeta_1) u_1^{(Q+2)/(Q-2)} \phi \Big| =o(1)\| \phi \|.
            \ee
  Finally, by \eqref{64}, \eqref{65}, Lemma \ref{lem:4.3} and some elementary inequalities, we deduce that
     $$
              |I_{R_l, \tau_l}'(\phi_l)(\mathscr{T}_{l,  \xi_1^l,\lam _1^l}^{-1}\phi)|
              = o(1)\| \phi \| + O(1)\int (|\mathscr{T}_{l,  \xi_1^l,\lam _1^l}\phi_l|^{p_l-1}u_1 + \mathscr{T}_{l,  \xi_1^l,\lam _1^l}\phi_l|u_1|^{p_l-1})|\phi|
           =o(1)\| \phi \|,
     $$
    where      the last inequaity follows from \eqref{63}, \eqref{59}, Claim \ref{claim:2}, H\"older inequalities and the Sobolev embedding theorems. Claim \ref{claim:3} has been established now.
          \end{proof}
          \begin{claim}\label{claim:4}
            $I_{R_l, \tau_l}(\phi_l) \leq c^{(2)} + \var _3 + o(1)$.
          \end{claim}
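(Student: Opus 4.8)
The plan is to split the energy $I_{R_l,\tau_l}(u_l)$ along the two scales appearing in the decomposition \eqref{63} and then to combine the resulting identity with the a priori bound \eqref{34} and a lower bound on the energy of the first profile $u_1$. Concretely, the aim is to prove
$$
I_{R_l,\tau_l}(u_l)=I_{R_\infty^{(1)}(\zeta_1)}(u_1)+I_{R_l,\tau_l}(\phi_l)+o(1),\qquad I_{R_\infty^{(1)}(\zeta_1)}(u_1)\ge c^{(1)},
$$
where $I_{R_\infty^{(1)}(\zeta_1)}$ is the functional with the constant coefficient $R_\infty^{(1)}(\zeta_1)$; granting both, \eqref{34} gives $I_{R_l,\tau_l}(\phi_l)\le\big(c^{(1)}+c^{(2)}+\var_3\big)-c^{(1)}+o(1)=c^{(2)}+\var_3+o(1)$.

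For the splitting I would avoid comparing $\|\mathscr{T}_{l,\xi_1^l,\lam_1^l}\cdot\|$ with $\|\cdot\|$ and instead work with the transformed functional $\widetilde I_l:=I_{R_l,\tau_l}\circ\mathscr{T}_{l,\xi_1^l,\lam_1^l}^{-1}$, which obeys $\widetilde I_l(\mathscr{T}_{l,\xi_1^l,\lam_1^l}w)=I_{R_l,\tau_l}(w)$ identically. By \eqref{63} and \eqref{63-1} one has $\mathscr{T}_{l,\xi_1^l,\lam_1^l}u_l=u_1+\mathscr{T}_{l,\xi_1^l,\lam_1^l}\phi_l$ with $\mathscr{T}_{l,\xi_1^l,\lam_1^l}\phi_l\rightharpoonup0$ in $E$, hence, along a subsequence, $\mathscr{T}_{l,\xi_1^l,\lam_1^l}\phi_l\to0$ a.e. and in $L^p_{loc}$ for $p<Q^*$. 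The quadratic part of $\widetilde I_l$ then splits by orthogonality of weak limits and the nonlinear part by a Brezis--Lieb type argument, using: that the rescaled coefficient $R_l(\xi_1^l\circ\delta_{1/\lam_1^l}(\cdot))$ is bounded by $A_1$ and converges, by \eqref{19}--\eqref{22}, \eqref{25} and \eqref{56}, locally uniformly to the constant $R_\infty^{(1)}(\zeta_1)$; that the rescaled weight $H^{\tau_l}(\xi_1^l\circ\delta_{1/\lam_1^l}(\cdot))\to1$ locally uniformly (a consequence of \eqref{26}, since $d(\xi_1^l,\xi_l^{(1)})<S_l/10$ is controlled by $d(\xi_l^{(1)},\xi_l^{(2)})$, so $|\xi_1^l|^{\tau_l}\to1$); and that the prefactors $(\lam_1^l)^{O(\tau_l)}$ produced by the non-isometric scaling are $1+o(1)$ by Claim \ref{claim:2}. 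This gives $\widetilde I_l(\mathscr{T}_{l,\xi_1^l,\lam_1^l}u_l)=\widetilde I_l(u_1)+I_{R_l,\tau_l}(\phi_l)+o(1)$, and since $\widetilde I_l(u_1)\to I_{R_\infty^{(1)}(\zeta_1)}(u_1)$ (the functionals $\widetilde I_l$ converge at the fixed function $u_1$), the first identity follows.

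For the lower bound, \eqref{23-17} gives $u_1>0$, $u_1\not\equiv0$ and $-\Delta_{\H}u_1=R_\infty^{(1)}(\zeta_1)u_1^{(Q+2)/(Q-2)}$, so testing against $u_1$ yields $I_{R_\infty^{(1)}(\zeta_1)}(u_1)=\tfrac1Q\int|\nabla_{\H}u_1|^2$. Since $R_\infty^{(1)}$ is the weak-$*$ limit of the functions $R_l(\xi_l^{(1)}\circ\cdot)$, each bounded above on $B_{S_l}$ by $R_l(\xi_l^{(1)})\to a^{(1)}$ with $S_l\to\infty$, one has $R_\infty^{(1)}(\zeta_1)\le a^{(1)}$; then \eqref{Sobolevineq}, applied exactly as in the derivation of the first inequality of \eqref{45}, gives $\int|\nabla_{\H}u_1|^2\ge(a^{(1)})^{(2-Q)/2}(S_n)^Q$, i.e. $I_{R_\infty^{(1)}(\zeta_1)}(u_1)\ge c^{(1)}$. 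Combining the two displays with \eqref{34} completes the argument.

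The main obstacle is the energy splitting in the subcritical regime: one has to run the Brezis--Lieb decomposition while simultaneously freezing the coefficient $R_l(\xi_1^l\circ\delta_{1/\lam_1^l}(\cdot))$ at its limiting constant value, trivialising the weight $H^{\tau_l}$ through \eqref{26}, and absorbing the scaling prefactors $(\lam_1^l)^{O(\tau_l)}$ — which is precisely why it is cleaner to transport everything through the identity $\widetilde I_l\circ\mathscr{T}_{l,\xi_1^l,\lam_1^l}=I_{R_l,\tau_l}$ rather than to estimate directly with the non-isometric operator $\mathscr{T}_{l,\xi_1^l,\lam_1^l}$, so that all error terms remain of order $o(1)$.
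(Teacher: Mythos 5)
Your overall plan is the paper's: split $I_{R_l,\tau_l}(u_l)$ along the two scales via a change of variable, lower-bound $I_{R_\infty^{(1)}(\zeta_1)}(u_1)$ by $c^{(1)}$, and combine with \eqref{34}. Organizing the change of variable through $\widetilde I_l=I_{R_l,\tau_l}\circ\mathscr{T}_{l,\xi_1^l,\lam_1^l}^{-1}$ is a reasonable way to carry out what the paper merely calls ``some calculations'', and your weak-convergence/Brezis--Lieb splitting and the Sobolev lower bound for $u_1$ are both sound.

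The one genuine gap is your treatment of the scaling prefactor. Since $\mathscr{T}_{l,\xi_1^l,\lam_1^l}$ is not an isometry in the subcritical regime, the factor $(\lam_1^l)^{2(p_l+1)/(p_l-1)-Q}$ pulls out of the whole of $\widetilde I_l$, so what the splitting actually produces is the paper's \eqref{66}--\eqref{67}:
$$I_{R_l,\tau_l}(u_l)=I_{R_l,\tau_l}(\phi_l)+(\lam_1^l)^{2(p_l+1)/(p_l-1)-Q}\big(I_{R_\infty^{(1)}(\zeta_1)}(u_1)+o(1)\big)+o(1).$$
You assert this prefactor is $1+o(1)$ by Claim \ref{claim:2}, but Claim \ref{claim:2} gives $(\lam_1^l)^{\tau_l}=1+o_{\var_2}(1)$, a statement about the size of $\var_2$, not a limit in $l$; for the fixed $\var_2$ in force here, $(\lam_1^l)^{\tau_l}$ need not tend to $1$ as $l\to\infty$. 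Consequently $\widetilde I_l(u_1)$ need not converge to $I_{R_\infty^{(1)}(\zeta_1)}(u_1)$, your proposed equality carries a hidden $o_{\var_2}(1)$ term, and the conclusion would only read $I_{R_l,\tau_l}(\phi_l)\le c^{(2)}+\var_3+o_{\var_2}(1)+o(1)$, which is weaker than the claim.

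The fix is what the paper does: do not try to absorb the prefactor; instead use a one-sided bound. Since $2(p_l+1)/(p_l-1)-Q=\frac{(Q-2)^2}{4}\tau_l+O(\tau_l^2)>0$ and $\lam_1^l\to\infty$, one has $(\lam_1^l)^{2(p_l+1)/(p_l-1)-Q}\ge 1$; as the bracketed quantity is $\ge c^{(1)}+o(1)>0$ for $l$ large (your argument), this yields $I_{R_l,\tau_l}(u_l)\ge I_{R_l,\tau_l}(\phi_l)+c^{(1)}+o(1)$, and \eqref{34} then gives the claim. With that replacement the rest of your proposal is correct and coincides with the paper's proof.
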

          \begin{proof}
         By a change of variable and using Claim \ref{claim:2}, \eqref{63}, \eqref{63-1} and  \eqref{59},  some calculations lead
         to
            \begin{align}
              I_{R_l, \tau_l}(u_l)=&I_{R_l, \tau_l}(\phi_l)+ (\lam _1^l)^{2(p_l +1)/(p_l-1) -Q} \Big\{ \frac{1}{2}\int |\nabla_{\H }u_1|^2 \notag\\
              &- \frac{1}{Q^*} \int R_l( \xi_1^l \circ \delta_{1/\lam _1^l} (\cdot))
              H^{\tau_l}(\xi_1^l \circ \delta_{1/\lam _1^l}(\cdot))|u_1|^{p_l+1}  \Big\}+ o(1).\label{66}
              \end{align}
                   We derive from \eqref{56}, \eqref{22} and \eqref{Sobolevineq} that
                \begin{align}
                &\frac{1}{2}\int |\nabla_{\H }u_1|^2
               - \frac{1}{Q^*} \int R_l( \xi_1^l \circ \delta_{1/\lam _1^l} (\cdot))
               H^{\tau_l}(\xi_1^l \circ \delta_{1/\lam _1^l}(\cdot)) |u_1|^{p_l+1}\notag\\
               =&I_{R_\infty^{(1)}(\zeta_1)}(u_1)+o(1)\notag\\
               \geq& \frac{1}{Q}R_\infty^{(1)}(\zeta_1)^{(2-Q)/2}(S_n)^Q + o(1)\notag \\\geq& c^{(1)} + o(1).\label{67}
                \end{align}
            Claim \ref{claim:4} follows from \eqref{66},  \eqref{67}, \eqref{34}, and the fact  $(\lam _1^l)^{2(p_l +1)/(p_l-1) -Q} \geq 1$.
          \end{proof}

          From \eqref{63}, \eqref{36} and \eqref{59} we have
          \be \label{3.39}
            \phi_l = u_l -\mathscr{T}_{l,  \xi_1^l,\lam _1^l}^{-1}u_1 = \al  _2^lw_{\xi_2^l, \lam _2^l} + w_l,
          \ee
          where
    $$
            w_l = \al  _1^lw_{ \xi_1^l,\lam _1^l} - (\lam _1^l)^{2/(p_l-1)-(Q-2)/2}w_{\xi _1^l \circ \delta_{1/\lam _1^l}(\xi^*),\lam ^* \lam_1^l,} + v_l.
    $$
          Using Claim \ref{claim:2} and \eqref{60}, we have, for large $l$, that
          \be \label{70}
            \| w_l \|= o_{\var _2}(1).
          \ee

          Now we repeat the previous arguments on $\phi_l$ instead of $u_l$. For simplicity, we only carry out some crucial steps and omit similar proofs.

          Using \eqref{3.39} we have
          \be \label{72}
            \mathscr{T}_{l, \xi_2^l, \lam _2^l}\phi_l= \overline{\al  }_2^l w_{0,1} + \mathscr{T}_{l, \xi_2^l, \lam _2^l}w_l,
          \ee
          where
          \be \label{73}
            \overline{\al  }_2^l = \al  _2^l(\lam _2^l)^{(Q-2)/2-2/(p_l-1)}.
          \ee
Then we can verify the existence of  $u_2 \in E$ and $\zeta_2 \in O^{(2)}$ such that
          \begin{gather}\label{74}
            \mathscr{T}_{l, \xi_2^l, \lam _2^l}\phi_l \rightharpoonup u_2 \quad \text{ weakly in $E$,} \\\label{75}
            \lim_{l \to  \infty}(\xi_l^{(2)})^{-1} \circ \xi_2^l =\zeta_2,
          \end{gather}
up to a subsequence.

Accordingly, by making use of \eqref{22}, \eqref{25} and \eqref{75}, we have
          \be \label{76}
            \lim_{l \to  \infty}R_l(\xi_2^l)^{(2-Q)/4} = R_l(\zeta_2)^{(2-Q)/4}.
          \ee

          For any  $\phi \in C_c^\infty(\Hn )$, it follows from Claim \ref{claim:3} and Lemma \ref{lem:4.3} that
           \begin{align*}
            o(1) =& I_{R_l, \tau_l}'(\phi_l)(\mathscr{T}_{l, \xi_2^l, \lam _2^l}^{-1}\phi)\notag\\
            =&(\lam _1^l)^{2(p_l +1)/(p_l-1) -Q} \Big\{ \int \nabla_{\H}\mathscr{T}_{l, \xi_2^l, \lam _2^l}u_l \nabla_{\H} \phi - \int T_{\xi_l^{(2)}}R_l((\xi_l^{(2)})^{-1}\circ \xi_2^l \circ \delta_{1/\lam _2^l} (\cdot))\notag\\
            &\times H^{\tau_l}(\xi_2^l \circ \delta_{1/\lam _2^l}(\xi_2^l)) |\mathscr{T}_{l, \xi_2^l, \lam _2^l}u_l|^{p_l-1}(\mathscr{T}_{l, \xi_2^l, \lam _2^l}u_l) \phi \Big\}.
            \end{align*}
    Taking the limit $l \to  \infty$ and arguing as before, we have
$$
            \int \nabla_{\H}u_2 \nabla_{\H} \phi - \int R_\infty^{(1)}(\zeta_2)|u_2|^{4/(Q-2)}u_2 \phi =0.
$$
Namely, $u_2$ satisfies
          \be \label{77}
            -\Delta_{\H}u_2 = R_\infty^{(2)}(\zeta_2) |u_2|^{4/(Q-2)}u_2.
          \ee

          Arguing as before, for $\var _2$ small enough we can prove that $u_2>0$ and for some $\xi^{**} \in \Hn $ and $\lam ^{**} >0$,
          \be \label{78}
            u_2 = R_\infty^{(2)}(\zeta_2)^{(2-Q)/4}w_{\xi^{**},\lam ^{**}}.
          \ee
          \begin{claim}\label{claim:5}
            For $l$ large enough, we have $|\xi^{**}| = o_{\var _2}(1)$, $|\lam ^{**}-1|= o_{\var _2}(1)$, $(\lam _2^l)^{\tau_l} = 1+o_{\var _2}(1)$.
          \end{claim}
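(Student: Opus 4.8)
The plan is to transcribe, line for line, the proof of Claim \ref{claim:2}, now with $\phi_l$, $\xi_2^l$, $\lam_2^l$, $\zeta_2$, $\xi^{**}$, $\lam^{**}$, $R_\infty^{(2)}$ playing the roles of $u_l$, $\xi_1^l$, $\lam_1^l$, $\zeta_1$, $\xi^*$, $\lam^*$, $R_\infty^{(1)}$. First I would invoke Lemma \ref{lem:4.3} to pass to a further subsequence along which $(\lam_2^l)^{\tau_l} = B_{\var_2, \var_3} + o(1)$ for some constant $B_{\var_2, \var_3}>0$ which is independent of $l$ once $\var_2, \var_3$ are fixed. Combining \eqref{38} (for $i=2$) with \eqref{76} gives $\al_2^l = R_\infty^{(2)}(\zeta_2)^{(2-Q)/4} + o_{\var_2}(1) + o(1)$. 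Since $(Q-2)/2 - 2/(p_l-1) = -(Q-2)^2\tau_l/8 + O(\tau_l^2)$ and $(\lam_2^l)^{\tau_l}$ is bounded (so $(\lam_2^l)^{O(\tau_l^2)} \to 1$), the definition \eqref{73} then yields
\[
\overline{\al}_2^l = R_\infty^{(2)}(\zeta_2)^{(2-Q)/4}(B_{\var_2, \var_3})^{-(Q-2)^2/8} + o_{\var_2}(1) + o(1).
\]

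Next I would pass to the weak limit in \eqref{72}. The operator $\mathscr{T}_{l, \xi_2^l, \lam_2^l}$ is not an exact isometry of $E$ in the subcritical regime, but it rescales the $\|\cdot\|$-norm only by a factor $(\lam_2^l)^{O(\tau_l)}$, bounded above and below by Lemma \ref{lem:4.3}; hence \eqref{70} gives $\|\mathscr{T}_{l, \xi_2^l, \lam_2^l} w_l\| = O(o_{\var_2}(1))$, so that this term converges weakly to some $w^\infty$ with $\|w^\infty\| = O(o_{\var_2}(1))$. Combining this with \eqref{74} and with the classification \eqref{78} of $u_2$, passage to the weak limit in \eqref{72} produces
\[
\big\| R_\infty^{(2)}(\zeta_2)^{(2-Q)/4}(B_{\var_2, \var_3})^{-(Q-2)^2/8} w_{0,1} - R_\infty^{(2)}(\zeta_2)^{(2-Q)/4} w_{\xi^{**}, \lam^{**}} \big\| = o_{\var_2}(1) + o(1).
\]
Letting $l \to \infty$ and using that $w_{0,1}$ and $w_{\xi^{**}, \lam^{**}}$ are normalized Jerison--Lee bubbles for one and the same limiting equation, two such bubbles can be $o_{\var_2}(1)$-close in $E$ only if their parameters are $o_{\var_2}(1)$-close and their amplitudes agree up to $o_{\var_2}(1)$; this forces $|\xi^{**}| = o_{\var_2}(1)$, $|\lam^{**} - 1| = o_{\var_2}(1)$, and $(B_{\var_2, \var_3})^{-(Q-2)^2/8} = 1 + o_{\var_2}(1)$, i.e.\ $B_{\var_2, \var_3} = 1 + o_{\var_2}(1)$. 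Substituting this last identity back into the first step yields $(\lam_2^l)^{\tau_l} = 1 + o_{\var_2}(1)$, which establishes the claim.

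The only genuinely delicate point is the bookkeeping of the almost-scaling operator $\mathscr{T}_{l, \xi_2^l, \lam_2^l}$: one must keep the stray powers $(\lam_2^l)^{O(\tau_l)}$ and $(\lam_2^l)^{O(\tau_l^2)}$ arising in the norm and in the transported Euler--Lagrange equation under control uniformly in $l$, which is precisely what Lemma \ref{lem:4.3} delivers, and one must check that the weak-$E$ limit of $\mathscr{T}_{l, \xi_2^l, \lam_2^l}\phi_l$ indeed solves \eqref{77} with the single bubble \eqref{78} --- already carried out in \eqref{74}--\eqref{78}. Beyond this the argument is a faithful copy of the proof of Claim \ref{claim:2}, so I anticipate no further obstruction.
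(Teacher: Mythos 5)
Your proposal is correct and is exactly the argument the paper intends: the paper's proof of Claim~\ref{claim:5} simply states ``similar to the proof of Claim~\ref{claim:2}, omitted,'' and your line-by-line transcription (with $\phi_l, \xi_2^l, \lam_2^l, \zeta_2, \xi^{**}, \lam^{**}, R_\infty^{(2)}$ in place of $u_l, \xi_1^l, \lam_1^l, \zeta_1, \xi^*, \lam^*, R_\infty^{(1)}$, using \eqref{70}, \eqref{72}--\eqref{76}, and \eqref{78} in place of \eqref{39}, \eqref{54}, \eqref{57}, \eqref{59}) is precisely that argument. Your explicit remark on the near-isometry of $\mathscr{T}_{l,\xi_2^l,\lam_2^l}$ --- that it scales the norm by $(\lam_2^l)^{O(\tau_l)}$, which Lemma~\ref{lem:4.3} controls --- is a worthwhile clarification that the paper leaves implicit even in the proof of Claim~\ref{claim:2}.
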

          \begin{proof}
            The proof is similar to  the proof of Claim \ref{claim:2},  we  omit it here.
          \end{proof}

          We define $\eta_l \in E$ by
          \be \label{257}
            \mathscr{T}_{l, \xi_2^l, \lam _2^l}\phi_l = u_2 +\mathscr{T}_{l, \xi_2^l, \lam _2^l}\eta_l.
          \ee
        Clearly,
     \be\label{3.49}
            \mathscr{T}_{l, \xi_2^l, \lam _2^l}\eta_l\rightharpoonup 0 \quad \text{ weakly  in $E$.}
 \ee
          \begin{claim}\label{claim:6}
     For $\var _2$ small enough,  we have       $\| I_{R_l, \tau_l}'(\eta_l) \|=o(1)$.
          \end{claim}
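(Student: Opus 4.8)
The plan is to mirror the argument for Claim \ref{claim:3}, carrying it out around the second concentration point instead of the first. Fix $\phi\in C_c^\infty(\Hn)$ and evaluate the functional $I_{R_l,\tau_l}'(\phi_l)$ — which is already known to satisfy $\|I_{R_l,\tau_l}'(\phi_l)\|=o(1)$ by Claim \ref{claim:3} — at the rescaled test function $\mathscr{T}_{l,\xi_2^l,\lam_2^l}^{-1}\phi$. Since $(\lam_2^l)^{\tau_l}\le C$ by Lemma \ref{lem:4.3} (applied to $\lam_2^l$), the map $\mathscr{T}_{l,\xi_2^l,\lam_2^l}^{-1}$ is bounded on $E$ with norm $(\lam_2^l)^{O(\tau_l)}\le C$, so the left side is $o(1)\|\phi\|$; after a change of variables it becomes $(\lam_2^l)^{2(p_l+1)/(p_l-1)-Q}$ — a factor bounded above and below because $2(p_l+1)/(p_l-1)-Q=O(\tau_l)$ — times
$$
\int\nabla_\H(\mathscr{T}_{l,\xi_2^l,\lam_2^l}\phi_l)\,\nabla_\H\phi-\int R_l(\xi_2^l\circ\delta_{1/\lam_2^l}(\cdot))\,H^{\tau_l}(\xi_2^l\circ\delta_{1/\lam_2^l}(\cdot))\,|\mathscr{T}_{l,\xi_2^l,\lam_2^l}\phi_l|^{p_l-1}(\mathscr{T}_{l,\xi_2^l,\lam_2^l}\phi_l)\,\phi .
$$

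Next I would insert the splitting $\mathscr{T}_{l,\xi_2^l,\lam_2^l}\phi_l=u_2+\mathscr{T}_{l,\xi_2^l,\lam_2^l}\eta_l$ from \eqref{257} and reorganize this expression exactly as in \eqref{64}: the part $\int\nabla_\H u_2\,\nabla_\H\phi-\int R_\infty^{(2)}(\zeta_2)\,u_2^{(Q+2)/(Q-2)}\phi$ vanishes by the equation \eqref{77} for $u_2$; the $\mathscr{T}_{l,\xi_2^l,\lam_2^l}\eta_l$-contributions reassemble into $I_{R_l,\tau_l}'(\eta_l)(\mathscr{T}_{l,\xi_2^l,\lam_2^l}^{-1}\phi)$; and the remainder splits into two error groups. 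The first, $\big|\int R_l(\xi_2^l\circ\delta_{1/\lam_2^l}(\cdot))H^{\tau_l}(\cdots)u_2^{p_l}\phi-\int R_\infty^{(2)}(\zeta_2)u_2^{(Q+2)/(Q-2)}\phi\big|$, is $o(1)\|\phi\|$: by \eqref{22}, \eqref{25}, \eqref{75} and Claim \ref{claim:5} one has $R_l(\xi_2^l\circ\delta_{1/\lam_2^l}(\cdot))\to R_\infty^{(2)}(\zeta_2)$ in $L^\infty_{loc}$, $H^{\tau_l}\to1$ locally, $p_l\to(Q+2)/(Q-2)$, and $u_2=R_\infty^{(2)}(\zeta_2)^{(2-Q)/4}w_{\xi^{**},\lam^{**}}$ decays at infinity, so Hölder's inequality together with the Folland–Stein embedding \eqref{Sobolevineq} close it.

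The second error group consists of the mixed nonlinear terms
$$
O(1)\int\big(|\mathscr{T}_{l,\xi_2^l,\lam_2^l}\eta_l|^{p_l-1}u_2+|\mathscr{T}_{l,\xi_2^l,\lam_2^l}\eta_l|\,u_2^{p_l-1}\big)|\phi| ,
$$
which tends to $0$ thanks to $\mathscr{T}_{l,\xi_2^l,\lam_2^l}\eta_l\rightharpoonup0$ weakly in $E$ (see \eqref{3.49}), the local compactness of the Folland–Stein embedding combined with the rapid decay of $u_2$, and Hölder's inequality — precisely the estimate appearing at the end of the proof of Claim \ref{claim:3}. Collecting everything gives $|I_{R_l,\tau_l}'(\eta_l)(\mathscr{T}_{l,\xi_2^l,\lam_2^l}^{-1}\phi)|=o(1)\|\phi\|$ for every $\phi\in C_c^\infty(\Hn)$; since $\mathscr{T}_{l,\xi_2^l,\lam_2^l}$ is also bounded on $E$ with norm $(\lam_2^l)^{O(\tau_l)}\le C$ and $C_c^\infty(\Hn)$ is dense in $E$, this upgrades to $\|I_{R_l,\tau_l}'(\eta_l)\|=o(1)$. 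As in Claim \ref{claim:3}, the only genuinely delicate step is the vanishing of the mixed nonlinear terms; the rest is bookkeeping with the scaling operators $\mathscr{T}_{l,\xi_i^l,\lam_i^l}$ and the convergences $(\lam_1^l)^{\tau_l},(\lam_2^l)^{\tau_l}=1+o_{\var_2}(1)$ furnished by Claims \ref{claim:2} and \ref{claim:5}.
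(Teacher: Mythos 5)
Your argument carries out exactly the analogy the paper itself invokes: the paper disposes of Claim \ref{claim:6} by stating it is proved like Claim \ref{claim:3}, and you reproduce that proof verbatim with $\phi_l$, $u_2$, $(\xi_2^l,\lam_2^l)$, $\zeta_2$, $\eta_l$ in place of $u_l$, $u_1$, $(\xi_1^l,\lam_1^l)$, $\zeta_1$, $\phi_l$, starting from $\|I_{R_l,\tau_l}'(\phi_l)\|=o(1)$ (Claim \ref{claim:3}), using the decomposition \eqref{257}, the limit equation \eqref{77}, the weak convergence \eqref{3.49}, and the scaling bound from Lemma \ref{lem:4.3} applied to $\lam_2^l$. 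The approach and all ingredients match; this is correct and is the paper's intended route.
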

          \begin{proof}
            The proof is similar to the proof of Claim \ref{claim:3}, we  omit it here.
          \end{proof}
          \begin{claim}\label{claim:7}
            $I_{R_l, \tau_l}(\eta_l) \leq \var _3 + o(1)$.
          \end{claim}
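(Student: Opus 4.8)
The plan is to reproduce, essentially line by line, the argument already used for Claim~\ref{claim:4}, now with the triple $(u_1,\phi_l,\lam_1^l)$ and the constant $R_\infty^{(1)}(\zeta_1)$ replaced by $(u_2,\eta_l,\lam_2^l)$ and $R_\infty^{(2)}(\zeta_2)$ respectively. The role played here by \eqref{257}, \eqref{3.49}, the classification \eqref{78} and Claim~\ref{claim:5} is exactly the one played by \eqref{63}, \eqref{63-1}, \eqref{59} and Claim~\ref{claim:2} in the proof of Claim~\ref{claim:4}.

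First I would establish the energy splitting that is the analogue of \eqref{66}. Performing in $I_{R_l,\tau_l}(\phi_l)$ the change of variables induced by $\mathscr{T}_{l,\xi_2^l,\lam_2^l}$, substituting \eqref{257}, using the weak convergence \eqref{3.49} and the uniform bound $(\lam_2^l)^{\tau_l}\le C$ from Lemma~\ref{lem:4.3}, and expanding the subcritical term by a Brezis--Lieb type argument so that the interaction between $u_2$ and $\mathscr{T}_{l,\xi_2^l,\lam_2^l}\eta_l$ contributes only $o(1)$, I expect to obtain
$$
I_{R_l,\tau_l}(\phi_l)=I_{R_l,\tau_l}(\eta_l)+(\lam_2^l)^{2(p_l+1)/(p_l-1)-Q}\Big\{\tfrac12\!\int|\nabla_\H u_2|^2-\tfrac{1}{Q^*}\!\int R_l(\xi_2^l\circ\delta_{1/\lam_2^l}(\cdot))H^{\tau_l}(\xi_2^l\circ\delta_{1/\lam_2^l}(\cdot))|u_2|^{p_l+1}\Big\}+o(1).
$$

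Next, arguing as in \eqref{67} with \eqref{75}, \eqref{22}, the Folland--Stein inequality \eqref{Sobolevineq} and the inequality $R_\infty^{(2)}(\zeta_2)\le a^{(2)}$, the bracketed quantity equals $I_{R_\infty^{(2)}(\zeta_2)}(u_2)+o(1)\ge \tfrac{1}{Q}R_\infty^{(2)}(\zeta_2)^{(2-Q)/2}(S_n)^Q+o(1)\ge c^{(2)}+o(1)$. Since $\tau_l>0$ makes the exponent $2(p_l+1)/(p_l-1)-Q$ strictly positive while $\lam_2^l\ge\lam_1^l\to\infty$ by \eqref{53} and Claim~\ref{claim:1}, the prefactor satisfies $(\lam_2^l)^{2(p_l+1)/(p_l-1)-Q}\ge 1$ for $l$ large. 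Combining this with the two displays above and with Claim~\ref{claim:4}, I would conclude
$$
I_{R_l,\tau_l}(\eta_l)\le I_{R_l,\tau_l}(\phi_l)-c^{(2)}+o(1)\le \var_3+o(1).
$$

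The main obstacle is the first step: one must check that no positive energy is lost in passing from $\phi_l$ to $\eta_l$, i.e.\ that the cross terms in $\|\mathscr{T}_{l,\xi_2^l,\lam_2^l}\phi_l\|^2$ and in $\int R_lH^{\tau_l}|\mathscr{T}_{l,\xi_2^l,\lam_2^l}\phi_l|^{p_l+1}$ are genuinely $o(1)$. This requires the weak convergence \eqref{3.49}, the fact that $u_2$ is a single, well-located bubble by \eqref{78} and Claim~\ref{claim:5}, and the bubble-interaction estimates of \cite[Part 1]{B1988book} already invoked in the analysis of $u_1$; the subcritical factors $H^{\tau_l}$ and the gap between $p_l+1$ and the critical exponent are harmless thanks to Lemma~\ref{lem:4.3} and \eqref{26}.
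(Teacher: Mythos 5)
Your proposal is correct and follows precisely the route the paper intends: the energy splitting you derive for $I_{R_l,\tau_l}(\phi_l)$ in terms of $I_{R_l,\tau_l}(\eta_l)$ and the $u_2$-bubble is exactly the display that appears later in the paper's proof of Claim~\ref{claim:9}, the lower bound on the bracketed quantity via \eqref{22}, \eqref{75} and \eqref{Sobolevineq} is the analogue of \eqref{67}, and the conclusion then follows from Claim~\ref{claim:4} together with $(\lam_2^l)^{2(p_l+1)/(p_l-1)-Q}\ge 1$, mirroring how Claim~\ref{claim:4} itself is deduced from \eqref{66}, \eqref{67} and \eqref{34}.
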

          \begin{proof}
             The proof is similar to the proof of Claim \ref{claim:4},  we  omit it here.
          \end{proof}

          \begin{claim}\label{claim:8}
           For $\var _2$ small enough,  we have $\eta_l \to  0$ strongly in $E$.
          \end{claim}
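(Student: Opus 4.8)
The plan is to show that $\eta_l$, being an approximate critical point of $I_{R_l,\tau_l}$ whose norm is already forced to be small by the preceding construction, must converge to $0$: a small approximate critical point of a strictly superlinear functional cannot survive.

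First I would record that $\|\eta_l\| = o_{\var_2}(1)+o(1)$. Combining \eqref{257} with \eqref{72}, \eqref{73} and \eqref{78} yields
\[
\mathscr{T}_{l,\xi_2^l,\lam_2^l}\eta_l \;=\; \overline{\al}_2^l\,w_{0,1}\;-\;R_\infty^{(2)}(\zeta_2)^{(2-Q)/4}\,w_{\xi^{**},\lam^{**}}\;+\;\mathscr{T}_{l,\xi_2^l,\lam_2^l}w_l .
\]
By Claim \ref{claim:5}, \eqref{76} and \eqref{38} we have $\overline{\al}_2^l = R_\infty^{(2)}(\zeta_2)^{(2-Q)/4}+o_{\var_2}(1)+o(1)$ and $|\xi^{**}|+|\lam^{**}-1|=o_{\var_2}(1)$, while \eqref{70} and Lemma \ref{lem:4.3} give $\|\mathscr{T}_{l,\xi_2^l,\lam_2^l}w_l\|\le C\|w_l\| = o_{\var_2}(1)$; by continuity of $(\xi,\lam)\mapsto w_{\xi,\lam}$ in $E$ the right-hand side has norm $o_{\var_2}(1)+o(1)$, and since $\mathscr{T}_{l,\xi_2^l,\lam_2^l}^{-1}$ has operator norm bounded uniformly in $l$ (Lemma \ref{lem:4.3}), it follows that $\|\eta_l\| = o_{\var_2}(1)+o(1)$. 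Hence $m_l:=\int|\nabla_{\H}\eta_l|^2$ can be made arbitrarily small by first fixing $\var_2$ small and then taking $l$ large.

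Next, pairing the estimate of Claim \ref{claim:6} with $\eta_l$ itself gives $I_{R_l,\tau_l}'(\eta_l)(\eta_l)=o(1)\|\eta_l\|$, that is,
\[
m_l \;=\; \int R_l H^{\tau_l}|\eta_l|^{Q^*-\tau_l}\;+\;o(1)\sqrt{m_l}.
\]
Here $H$ is a positive constant multiple of the bubble $w_{0,1}$ (compare the definition of $H$ in the Notation section with \eqref{bubbles}), so $H\in L^{Q^*}(\Hn)$. Applying H\"older's inequality with exponents $Q^*/\tau_l$ and $Q^*/(Q^*-\tau_l)$, then $\|R_l\|_{\infty}\le A_1$ and \eqref{Sobolevineq}, we obtain
\[
\int R_l H^{\tau_l}|\eta_l|^{Q^*-\tau_l}\;\le\; A_1\Big(\int H^{Q^*}\Big)^{\tau_l/Q^*}\Big(\int|\eta_l|^{Q^*}\Big)^{(Q^*-\tau_l)/Q^*}\;\le\; C\,m_l^{(Q^*-\tau_l)/2},
\]
with $C=C(n,A_1)$ independent of $l$, using $(\int H^{Q^*})^{\tau_l/Q^*}=1+o(1)$. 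If $m_l>0$, dividing the two displays by $\sqrt{m_l}$ gives $\sqrt{m_l}\,\big(1-C\,m_l^{(Q^*-\tau_l-2)/2}\big)\le o(1)$. Since $Q^*-2=\tfrac{4}{Q-2}>0$ and $\tau_l\to0$, the exponent $(Q^*-\tau_l-2)/2$ is positive for $l$ large, so the smallness of $m_l$ forces $C\,m_l^{(Q^*-\tau_l-2)/2}\le\tfrac12$, whence $\sqrt{m_l}=o(1)$. Therefore $\eta_l\to0$ strongly in $E$, which is Claim \ref{claim:8}.

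The only delicate point is the bound on $\int R_l H^{\tau_l}|\eta_l|^{Q^*-\tau_l}$: because $\Hn$ has infinite measure, the subcritical quantity $\|\eta_l\|_{L^{Q^*-\tau_l}}$ is \emph{not} controlled by $\|\eta_l\|$ alone, and it is precisely the integrability $H\in L^{Q^*}(\Hn)$ built into the weight that lets H\"older's inequality close the estimate with a constant uniform in $l$; once that is in place, everything else is the standard dichotomy argument for small Palais--Smale-type sequences.
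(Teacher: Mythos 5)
Your proof is correct, and it takes a somewhat different route from the one the paper alludes to. The paper's (omitted) argument is declared to be by contradiction using both Claim \ref{claim:6} and Claim \ref{claim:7}: presumably one assumes $\eta_l\not\to 0$, extracts a nontrivial weak limit which by Claim \ref{claim:6} is an approximate critical point, and then uses the energy bound $I_{R_l,\tau_l}(\eta_l)\leq\var_3+o(1)$ from Claim \ref{claim:7} against the positive energy gap enforced by the Folland--Stein inequality. You instead avoid Claim \ref{claim:7} altogether: you first pin down $\|\eta_l\|=o_{\var_2}(1)+o(1)$ directly from the decomposition \eqref{257}, \eqref{72}, \eqref{78} together with Claim \ref{claim:5}, \eqref{38}, \eqref{76}, \eqref{70} and the uniform boundedness of $\mathscr{T}_{l,\xi_2^l,\lam_2^l}^{\pm 1}$, and then close with the dichotomy obtained by pairing Claim \ref{claim:6} with $\eta_l$. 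The observation you flag explicitly --- that because $H$ is a constant multiple of $w_{0,1}$ and hence $H\in L^{Q^*}(\Hn)$, the H\"older split of $\int R_l H^{\tau_l}|\eta_l|^{Q^*-\tau_l}$ gives a bound $C\,m_l^{(Q^*-\tau_l)/2}$ with $C$ uniform in $l$ despite $\Hn$ having infinite measure --- is exactly what makes the direct argument close without invoking Claim \ref{claim:7}. Both routes are valid; yours is marginally more self-contained, at the cost of being slightly less robust if the weight were not integrable in $L^{Q^*}$.
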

          \begin{proof}
 The proof makes use of  contradiction argument  and  Claims \ref{claim:6} and \ref{claim:7},  we  omit the details here.
          \end{proof}
          Rewriting \eqref{63} and \eqref{257}, we have
          \be \label{93}
            u_l = \mathscr{T}_{l,  \xi_1^l,\lam _1^l}^{-1}u_1 + \mathscr{T}_{l,  \xi_1^l,\lam _1^l}^{-1}u_2 + \eta_l.
          \ee

          \begin{claim}\label{claim:9}
            For $\var _2$ small enough, we have
             $ (\lam _i^l)^{\tau_l} = 1+o_{\var _3}(1) +o(1)$ for $i=1,2$.
          \end{claim}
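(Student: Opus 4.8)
The plan is to sharpen the crude estimates $(\lam_i^l)^{\tau_l}=1+o_{\var_2}(1)$ of Claims \ref{claim:2} and \ref{claim:5} by feeding the clean decomposition \eqref{93}, together with $\|\eta_l\|\to0$ (Claim \ref{claim:8}) and the energy pinching $|I_{R_l,\tau_l}(u_l)-(c^{(1)}+c^{(2)})|<\var_3$, into an explicit energy expansion. Write $\gamma_l:=2(p_l+1)/(p_l-1)-Q$; since $p_l=\tfrac{Q+2}{Q-2}-\tau_l$ one has $\gamma_l=\tfrac{(Q-2)^2}{4}\tau_l+O(\tau_l^2)>0$ and $\gamma_l/\tau_l\to\tfrac{(Q-2)^2}{4}$. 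Because $d(\xi_1^l,\xi_2^l)\ge S_l\to\infty$, the two rescaled profiles $\mathscr{T}_{l,\xi_1^l,\lam_1^l}^{-1}u_1$ and $\mathscr{T}_{l,\xi_2^l,\lam_2^l}^{-1}u_2$ carry asymptotically disjoint mass, and $\|\eta_l\|\to0$ kills every cross term with the remainder; together with the standard bubble interaction estimates of \cite{B1988book,BC1988,BC1991} this gives
\[
I_{R_l,\tau_l}(u_l)=I_{R_l,\tau_l}\!\big(\mathscr{T}_{l,\xi_1^l,\lam_1^l}^{-1}u_1\big)+I_{R_l,\tau_l}\!\big(\mathscr{T}_{l,\xi_2^l,\lam_2^l}^{-1}u_2\big)+o(1).
\]

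I would then evaluate each summand via the change of variables associated with $\mathscr{T}_{l,\xi_i^l,\lam_i^l}$. The bilinear identity recalled just before Lemma \ref{lem:4.3} gives $\|\mathscr{T}_{l,\xi_i^l,\lam_i^l}^{-1}u_i\|^2=(\lam_i^l)^{\gamma_l}\|u_i\|^2$, while the same substitution (using $Q^*-\tau_l=p_l+1$) turns the nonlinear term into $(\lam_i^l)^{\gamma_l}\int R_l(\xi_i^l\circ\delta_{1/\lam_i^l}(\cdot))\,H^{\tau_l}(\xi_i^l\circ\delta_{1/\lam_i^l}(\cdot))\,u_i^{Q^*-\tau_l}$. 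Now $\lam_i^l\to\infty$ (Claim \ref{claim:1}) makes $\delta_{1/\lam_i^l}(\cdot)\to0$ on compacts, so by \eqref{22}, \eqref{25}, \eqref{56} and the Lipschitz regularity of $R_l$ near $\xi_l^{(i)}$ one has $R_l(\xi_i^l\circ\delta_{1/\lam_i^l}(\cdot))\to R_\infty^{(i)}(\zeta_i)$ locally uniformly, and by \eqref{26}, $0<\tau_l<\overline\tau_l$, and $H(\xi)\asymp|\xi|^{-(Q-2)}$ for $|\xi|$ large, one has $H^{\tau_l}(\xi_i^l\circ\delta_{1/\lam_i^l}(\cdot))\to1$; with $|R_l|\le A_1$ and $u_i^{Q^*}\in L^1$, dominated convergence yields $\int R_l H^{\tau_l}|\mathscr{T}_{l,\xi_i^l,\lam_i^l}^{-1}u_i|^{Q^*-\tau_l}=(\lam_i^l)^{\gamma_l}\big(R_\infty^{(i)}(\zeta_i)\!\int u_i^{Q^*}+o(1)\big)$. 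Since $u_i>0$ solves \eqref{23-17} (resp.\ \eqref{77}), testing against $u_i$ gives $\|u_i\|^2=R_\infty^{(i)}(\zeta_i)\int u_i^{Q^*}$, so both pieces share the factor $(\lam_i^l)^{\gamma_l}\|u_i\|^2$; using $\tfrac12-\tfrac1{Q^*-\tau_l}=\tfrac1Q+O(\tau_l)$, the boundedness of $(\lam_i^l)^{\gamma_l}$ (Lemma \ref{lem:4.3}), and $\|u_i\|^2=R_\infty^{(i)}(\zeta_i)^{(2-Q)/2}(S_n)^Q$ (from the bubble form \eqref{59}, resp.\ \eqref{78}), I obtain
\[
I_{R_l,\tau_l}\!\big(\mathscr{T}_{l,\xi_i^l,\lam_i^l}^{-1}u_i\big)=(\lam_i^l)^{\gamma_l}\widetilde c_i+o(1),\qquad \widetilde c_i:=\tfrac1Q R_\infty^{(i)}(\zeta_i)^{(2-Q)/2}(S_n)^Q .
\]

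To close, I would use an elementary monotonicity argument. By \eqref{20}--\eqref{22} one has $R_\infty^{(i)}(\zeta_i)\le a^{(i)}$, hence $\widetilde c_i\ge c^{(i)}>0$; and $\lam_i^l>\var_2^{-1}>1$ with $\gamma_l>0$ gives $(\lam_i^l)^{\gamma_l}\ge1$. Inserting these and $|I_{R_l,\tau_l}(u_l)-(c^{(1)}+c^{(2)})|<\var_3$ into the energy splitting,
\[
c^{(1)}+c^{(2)}\le(\lam_1^l)^{\gamma_l}\widetilde c_1+(\lam_2^l)^{\gamma_l}\widetilde c_2\le c^{(1)}+c^{(2)}+\var_3+o(1),
\]
and isolating each term gives $c^{(i)}\le(\lam_i^l)^{\gamma_l}\widetilde c_i\le c^{(i)}+\var_3+o(1)$, i.e.\ $(\lam_i^l)^{\gamma_l}=1+o_{\var_3}(1)+o(1)$; since $(\lam_i^l)^{\tau_l}=\big((\lam_i^l)^{\gamma_l}\big)^{\tau_l/\gamma_l}$ with $\tau_l/\gamma_l\to4/(Q-2)^2$ and $(\lam_i^l)^{\tau_l}\ge1$, raising to this power yields $(\lam_i^l)^{\tau_l}=1+o_{\var_3}(1)+o(1)$ for $i=1,2$. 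The main difficulty is the bookkeeping in the middle step: arranging that the substitution for $\mathscr{T}_{l,\xi_i^l,\lam_i^l}$ produces the \emph{same} prefactor $(\lam_i^l)^{\gamma_l}$ in the quadratic and in the nonlinear terms so it can be factored out, and --- this is the one place where the hypothesis \eqref{26} on $\overline\tau_l$ is genuinely used --- verifying $H^{\tau_l}(\xi_i^l\circ\delta_{1/\lam_i^l}(\cdot))\to1$ with enough uniformity to pass to the limit, despite the bumps $\xi_l^{(i)}$ possibly escaping to infinity.
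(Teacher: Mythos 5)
Your proposal is correct, and it follows essentially the same route as the paper: both proofs rest on Claim \ref{claim:8}, the energy pinching \eqref{34}, the positivity of $\gamma_l:=2(p_l+1)/(p_l-1)-Q$ (so $(\lam_i^l)^{\gamma_l}\geq 1$), the lower bound $\widetilde c_i\geq c^{(i)}$ coming from $R_\infty^{(i)}(\zeta_i)\leq a^{(i)}$, and then isolating each summand by monotonicity. The only organizational difference is that the paper reuses the already-derived sequential decomposition \eqref{66}, \eqref{67} (peeling $u_l\to\phi_l\to\eta_l$ and deducing the two inequalities \eqref{94}--\eqref{95} plus \eqref{96}), while you recombine \eqref{93} into a direct two-bubble splitting using the asymptotic disjointness $d(\xi_1^l,\xi_2^l)\geq S_l\to\infty$; the paper's sequential route is a touch shorter because \eqref{66}--\eqref{67} are already in hand, but both lead to the same inequality $\sum_i\{(\lam_i^l)^{\gamma_l}-1\}c^{(i)}\leq\var_3+o(1)$. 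Your final observation about where \eqref{26} is used (controlling $H^{\tau_l}(\xi_i^l\circ\delta_{1/\lam_i^l}(\cdot))\to 1$ even though $\xi_l^{(i)}$ may escape to infinity) is accurate and is implicit in the paper's use of Lemma \ref{lem:4.3}.
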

          \begin{proof}
        We first deduce from \eqref{66}, \eqref{67}, and Lemma \ref{lem:4.3} that
            \be \label{94}
              I_{R_l, \tau_l}(u_l) \geq I_{R_l, \tau_l}(\phi_l)+ (\lam _1^l)^{2(p_l +1)/(p_l-1) -Q} c^{(1)} + o(1).
            \ee
   In view of Claim \ref{claim:5}, \eqref{78}-\eqref{3.49}, some calculations similar to the proof of Claim \ref{claim:4} lead to
   \begin{align*}
   I_{K_l, \tau_l}(\phi_l)= & I_{K_l, \tau_l}(\eta_l)+(\lam_2^l)^{2(p_l+1) /(p_l-1)-Q}\Big\{\frac{1}{2}\int |\nabla_{\H }u_2|^2-\frac{1}{Q^*} \int R_l( \xi_2^l \circ \delta_{1/\lam _2^l} (\cdot)) \\
   & H^{\tau_l}(\xi_2^l \circ \delta_{1/\lam _2^l}(\cdot)) |u_2|^{p_l+1}\Big\}+o(1) .
   \end{align*}
  Similar to the calculation in \eqref{67}, we derive from \eqref{22}, \eqref{75} and \eqref{Sobolevineq} that
   $$
   \frac{1}{2}\int |\nabla_{\H }u_2|^2 - \frac{1}{Q^*} \int R_l( \xi_2^l \circ \delta_{1/\lam _2^l} (\cdot))
   H^{\tau_l}(\xi_2^l \circ \delta_{1/\lam _2^l}(\cdot)) |u_2|^{p_l+1} \geq c^{(2)}+o(1) .
   $$
         Combining the above estimates with  Lemma \ref{lem:4.3}  we have
            \be \label{95}
              I_{R_l, \tau_l}(\phi_l) \geq I_{R_l, \tau_l}(\eta_l)+ (\lam _2^l)^{2(p_l +1)/(p_l-1) -Q} c^{(2)} + o(1).
            \ee
             Then we use Claim \ref{claim:8} to deduce that
            \be \label{96}
              I_{R_l, \tau_l}(\eta_l) = o(1).
            \ee
            Finally, we put together \eqref{34}, \eqref{94}-\eqref{96} to obtain
        $$
              \sum_{i=1}^{2}\{ (\lam _i^l)^{2(p_l +1)/(p_l-1) -Q}-1 \}c^{(i)} \leq \var _3 +o(1).
     $$
       This completes the proof of Claim \ref{claim:9}.
          \end{proof}
          \begin{claim}\label{claim:10}
            Let $\delta_5 =\delta_1/(2A_3)>0$. Then if $\var _2$ is chosen to be small enough, we have, for large $l$, that $
              \operatorname{dist}\,(\xi_i^l, \pa  O_l^{(i)}) \geq \delta_5$ for $i=1,2$.
          \end{claim}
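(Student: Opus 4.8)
The plan is to read off a sharp energy identity for $u_l$, combine it with the hypothesis $|I_{R_l,\tau_l}(u_l)-(c^{(1)}+c^{(2)})|<\var_3$ to force the limiting curvature values $R_\infty^{(i)}(\zeta_i)$ to lie within $o_{\var_3}(1)+o(1)$ of the maxima $a^{(i)}$, and then to contradict the strict decay of $R_l$ near $\pa O_l^{(i)}$ provided by \eqref{24}--\eqref{25}.

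First I would assemble the energy splitting already in hand, keeping the bubble energies $I_{R_\infty^{(i)}(\zeta_i)}(u_i)$ explicit rather than bounding them below by $c^{(i)}$. Starting from the exact identity \eqref{66}, its $\phi_l$--$\eta_l$ analogue obtained in the proof of Claim \ref{claim:9}, the vanishing \eqref{96} (a consequence of the strong convergence $\eta_l\to0$ from Claim \ref{claim:8}), Claim \ref{claim:9}, and the elementary fact that $2(p_l+1)/(p_l-1)-Q=O(\tau_l)$ — so that $(\lam_i^l)^{2(p_l+1)/(p_l-1)-Q}=1+o_{\var_3}(1)+o(1)$ — together with \eqref{59}, \eqref{78} and $\int|\nabla_\H w_{0,1}|^2=(S_n)^Q$, I obtain
\[
I_{R_l,\tau_l}(u_l)=\frac{1}{Q}\sum_{i=1}^{2}R_\infty^{(i)}(\zeta_i)^{(2-Q)/2}(S_n)^Q+o_{\var_3}(1)+o(1).
\]
On the other hand, \eqref{20}--\eqref{22} give $R_\infty^{(i)}(\xi)\le a^{(i)}$ on every fixed compact set for $l$ large, and since the exponent $(2-Q)/2$ is negative this yields $\frac1Q R_\infty^{(i)}(\zeta_i)^{(2-Q)/2}(S_n)^Q\ge c^{(i)}$. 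Comparing the displayed identity with $|I_{R_l,\tau_l}(u_l)-(c^{(1)}+c^{(2)})|<\var_3$, both nonnegative differences $\frac1Q R_\infty^{(i)}(\zeta_i)^{(2-Q)/2}(S_n)^Q-c^{(i)}$ are then $\le o_{\var_3}(1)+o(1)$, which — using $a^{(i)},R_\infty^{(i)}(\zeta_i)\in[1/A_2,A_2]$ by \eqref{23-2} — forces
\[
R_\infty^{(i)}(\zeta_i)\ge a^{(i)}-o_{\var_3}(1)-o(1),\qquad i=1,2 .
\]

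Next I would transfer this to the prescribed function at the concentration points. Since $R_l(\xi_i^l)\to R_\infty^{(i)}(\zeta_i)$ (cf. \eqref{57}, \eqref{76}), the bound just obtained gives $R_l(\xi_i^l)\ge a^{(i)}-o_{\var_3}(1)-o(1)$. Suppose now, towards a contradiction, that along a subsequence $\operatorname{dist}(\xi_i^l,\pa O_l^{(i)})<\delta_5=\delta_1/(2A_3)$ for some $i$. Choosing $\hat\zeta_l\in\pa O_l^{(i)}$ with $d(\xi_i^l,\hat\zeta_l)<\delta_1/(2A_3)$, and noting that $\xi_i^l\in\widetilde{O}_l^{(i)}$ by \eqref{40}, so that $\xi_i^l$, $\hat\zeta_l$ and a short horizontal path joining them all lie in $\widetilde{O}_l^{(i)}$, the bound \eqref{25} and the mean value inequality give $R_l(\xi_i^l)\le\max_{\pa O_l^{(i)}}R_l+\delta_1/2$; then \eqref{24} and \eqref{21} yield $R_l(\xi_i^l)\le R_l(\xi_l^{(i)})-c\delta_1+\delta_1/2+o(1)\le a^{(i)}-c_0+o(1)$ for a fixed constant $c_0=c_0(\delta_0,\delta_1,A_3)>0$. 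Taking $\var_3$ small enough that the $o_{\var_3}(1)$ above is $<c_0/2$ and $l$ large contradicts the previous lower bound, so $\operatorname{dist}(\xi_i^l,\pa O_l^{(i)})\ge\delta_5$ for $i=1,2$ once $l$ is large.

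The main obstacle is the first step: one must ensure that every interaction term and every $H^{\tau_l}$-weight error in the energy splitting is genuinely of order $o_{\var_3}(1)+o(1)$ rather than merely $o_{\var_2}(1)$, and this is precisely what the strong convergence $\eta_l\to0$ (Claim \ref{claim:8}) and the sharp control $(\lam_i^l)^{\tau_l}=1+o_{\var_3}(1)+o(1)$ (Claim \ref{claim:9}) are designed to deliver. Once the energy identity is in place, the transfer to $R_l$ via \eqref{57}--\eqref{76} and the geometric contradiction against \eqref{24}--\eqref{25} are routine; a minor point to record is that the mean value inequality on $\Hn$ is to be applied along the short horizontal path joining $\xi_i^l$ to $\hat\zeta_l$, which remains inside $\widetilde{O}_l^{(i)}$ thanks to the $\delta_0$-margin in the definition of $\widetilde{O}_l^{(i)}$.
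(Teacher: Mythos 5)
Your argument is correct and arrives at the same geometric contradiction as the paper's intended proof. The paper omits the proof of Claim~\ref{claim:10} and says it is ``similar to the proof of Lemma~\ref{lem:3.1}''; that proof expands $I_{R_l,\tau_l}(u_l)$ directly through the bubble decomposition of $u_l\in V_l(2,\var_1)$ to obtain a one-sided lower bound $I_{R_l,\tau_l}(u_l)\geq \sum_i\frac1Q R_l(\xi_i^l)^{(2-Q)/2}(S_n)^Q+o_{\var_1}(1)+o(1)$, and then uses the proximity to $\pa O_l^{(i)}$ together with \eqref{24}--\eqref{25} to push this above $c^{(1)}+c^{(2)}+1/A_4$, contradicting \eqref{34} once $\var_3<\var_2<1/A_4$. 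You instead re-derive a two-sided identity $I_{R_l,\tau_l}(u_l)=\frac1Q\sum_i R_\infty^{(i)}(\zeta_i)^{(2-Q)/2}(S_n)^Q+o_{\var_3}(1)+o(1)$ from the machinery of Claims~\ref{claim:3}--\ref{claim:9} (exactly identities \eqref{66}, \eqref{67}, \eqref{95}, \eqref{96} together with Claim~\ref{claim:9}'s sharp control of $(\lam_i^l)^{\tau_l}$), extract the lower bound $R_l(\xi_i^l)\geq a^{(i)}-o_{\var_3}(1)-o(1)$, and collide it with the same geometric upper bound. The two arguments are logically equivalent; yours is a bit heavier (it needs the full sequence of claims rather than the self-contained Lemma~\ref{lem:3.1}), but it is also sharper, and it cleanly packages the step the paper leaves implicit, namely that near-minimality of the energy forces $R_l(\xi_i^l)$ to be within $o_{\var_3}(1)+o(1)$ of $a^{(i)}$. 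Two small technical points worth noting: the mean value step along the horizontal path uses the comparability of the gauge distance $d$ with the Carnot--Carath\'eodory distance, and the net margin $c\delta_1-\delta_1/2>0$ relies on the constant $c=c(\delta_0)$ in \eqref{24} being at least of the size needed to absorb that comparability constant; both issues are already implicit in the paper's own statement of \eqref{24} and do not affect the validity of your argument.
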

          \begin{proof}
            The proof is similar to the proof of Lemma \ref{lem:3.1}, we  omit it here.
          \end{proof}
  Now we are in the position to prove Proposition \ref{prop:4.2}.
  \begin{proof}[Proof of Proposition \ref{prop:4.2}]
  Applying \eqref{22}, \eqref{25}, \eqref{56}, \eqref{59}, and Claim \ref{claim:9}, we deduce that
  	\begin{align*}
  	\mathscr{T}_{l, \xi_2^l, \lam _2^l}^{-1}u_1 &=(\lam _1^l)^{2/(p_l-1)}u_1( \delta_{\lam _1^l}((\xi_1^l)^{-1} \circ \cdot))\\
  	&=(\lam _1^l)^{2/(p_l-1)-(Q-2)/2}R_\infty^{(1)}(\zeta_1)^{(2-Q)/4}w_{\xi _1^l \circ \delta_{1/\lam _1^l}(\xi^*),\lam ^* \lam_1^l}\\
  	&=R_\infty^{(1)}(\zeta_1)^{(2-Q)/4}w_{\xi _1^l \circ \delta_{1/\lam _1^l}(\xi^*),\lam ^* \lam_1^l,}+ o_{\var _3}(1)\\
  	&=R_l\big(\xi_1^l \circ \delta_{1/\lam _1^l}(\xi^*)\big)^{(2-Q)/4}w_{\xi _1^l \circ \delta_{1/\lam _1^l}(\xi^*),\lam ^* \lam_1^l}+ o_{\var _3}(1) +o(1).
  	\end{align*}
  	Similarly, we have
$$
  	\mathscr{T}_{l, \xi_2^l, \lam _2^l}^{-1}u_2 =R_l\big(\xi_2^l \circ \delta_{1/\lam _2^l}(\xi^{**})\big)^{(2-Q)/4}w_{ \xi _2^l \circ \delta_{1/\lam _2^l}(\xi^{**}),\lam ^{**}\lam_2^l,}+ o_{\var _3}(1)+o(1).
 $$
  	Therefore, we can rewrite \eqref{93} as (see Claim \ref{claim:8} and the above)
  	\begin{align}
  	u_l =&R_l\big(\xi_1^l \circ \delta_{1/\lam _1^l}(\xi^*)\big)^{(2-Q)/4}w_{\xi_1^l \circ \delta_{1/\lam _1^l}(\xi^*),\lam ^* \lam_1^l,}\notag\\
  	&+R_l\big(\xi_2^l \circ \delta_{1/\lam _2^l}(\xi^{**})\big)^{(2-Q)/4}w_{ \xi_2^l \circ \delta_{1/\lam _2^l}(\xi^{**}), \lam ^{**}\lam_2^l}+ o_{\var _3}(1)+o(1).\label{101}
  	\end{align}
  	 	We now fix the value of $\var _2$  small enough to make all the previous arguments hold and then make $\var _3$ small (depending on $\var_2$)  such that (using Claim \ref{claim:9}):
  	\be \label{102}
  	|(\lam ^*\lam _i^l)^{\tau_l}-1|= o_{\var _3}(1) +o(1) < \var _2/2 \quad \text{ for }\, i=1,2.
  	\ee
  	From \eqref{101}, \eqref{102}, Claims \ref{claim:1} and  \ref{claim:9}, we see that for $\var _3$ small, we have,  for large $l$, $u_l \in \widetilde{V}_l(2, \var _2/2)$.
  	This contradicts to  \eqref{33}. We conclude the proof of Proposition \ref{prop:4.2}.
  \end{proof}

  \subsection{Complete the proof of Theorem \ref{thm:4.1}}\label{sec:4.2}
   In this subsection we will complete the proof of Theorem \ref{thm:4.1}.      Precisely, under the contrary of Theorem \ref{thm:4.1} and combining with the Proposition \ref{prop:4.2} established in the previous  subsection, a contradiction will be produced by adopting and modifying the minimax procedure as in \cite{CR1,CR2,Li1993,CES,Se}.
To reduce overlaps, we will omit the proofs of several intermediate results which closely follow standard arguments, giving appropriate references.
We start the proof by defining a certain family  of sets and minimax values and giving some notations.

  Let $\Om$ be a smooth bounded domain in $\Hn$.       Define the space $S_0^1(\Om)$ by taking the closure of $C_c^{\infty}(\Om )$ under the norm 
      $$
            \| u\|_{S_0^1(\Om )}= \Big(\int_{\Om }|\nabla_{\H}u|^2\Big)^{1/2}+ \Big(\int_{\Om }|u|^2\Big)^{1/2}.
$$
By means of \eqref{Sobolevineq}, this norm is equivalent to the norm generated by the inner product
$\langle u,v\rangle_{S_0^1(\Om )}=\int_{\Om}\nabla_{\H } u \nabla_{\H } v$.
 Using the invariance under translations and dilations, it is easy to see that $S_n$ is also the best
Sobolev constant for the embedding $S_0^1(\Om)\hookrightarrow L^{Q^*}(\Om)$ and is not achieved, see \cite{JL1988}.

          In the following part of this section,  we write $\tau_l=\tau$, $p_l= p$.

Now, for each $i = 1, 2$, we define
            \begin{align*}
            \gamma_{l, \tau}^{(i)} =&\{ g^{(i)} \in C([0,1], S_0^1(B_{S_l}(\xi_l^{(i)}))): g^{(i)}(0) = 0, I_{R_l, \tau}(g^{(i)}(1))<0\},\\
            c_{l, \tau}^{(i)} =& \inf_{g^{(i)} \in \gamma_{l, \tau}^{(i)}}\max_{0 \leq \theta_i \leq 1}I_{R_l, \tau}(g^{(i)}(\theta_i)).
            \end{align*}
      We have abused the notation a little by writing $I_{R_l, \tau}: S_0^1(B_{S_l}(\xi_l^{(i)})) \to  \R $ for $i=1,2$.

          \begin{prop}\label{prop:4.2.1}
          Let $\{ R_l \}$ be a sequence of functions  satisfying \eqref{18}, \eqref{20} and \eqref{21}. Then it
          holds $c_{l, \tau}^{(1)}=c^{(1)} + o(1)$ for $i=1,2$, where $o(1)\to 0$ as $l\to \infty$.          \end{prop}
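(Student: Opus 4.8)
The plan is to sandwich $c_{l,\tau}^{(i)}$ between $c^{(i)}-o(1)$ and $c^{(i)}+o(1)$, proving the two inequalities separately; throughout, $o(1)$ denotes a quantity controlled by a function of $l$ alone (given the constraint $\tau<\overline{\tau}_l$) that tends to $0$ as $l\to\infty$, and in particular uniform in the competing objects below.

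For the upper bound I would exhibit an explicit path. Pick a cutoff $\chi_l\in C_c^\infty(B_{S_l}(\xi_l^{(i)}))$ with $\chi_l\equiv1$ on $B_{S_l/2}(\xi_l^{(i)})$ and $|\nabla_{\H}\chi_l|\le C/S_l$, and set $\varphi_{l,\lam}:=\chi_l\,w_{\xi_l^{(i)},\lam}$. For a suitable $\lam=\lam_l\to\infty$, chosen by a diagonal argument so that simultaneously the tail of $w_{\xi_l^{(i)},\lam_l}$ outside $B_{S_l}(\xi_l^{(i)})$ vanishes and $R_l$ is close to $a^{(i)}$ on the bulk of the bubble's mass (here one uses only that $R_l$ is continuous at $\xi_l^{(i)}$ together with \eqref{21}), the standard truncated-bubble estimates of \cite{BC1988} give $\|\varphi_{l,\lam_l}\|^2=S_n^Q+o(1)$ and $\int R_lH^{\tau}\varphi_{l,\lam_l}^{Q^*-\tau}=a^{(i)}S_n^Q+o(1)$, where $\int|\nabla_{\H}w_{0,1}|^2=\int w_{0,1}^{Q^*}=S_n^Q$ since $w_{0,1}$ is an extremal of \eqref{Sobolevineq}, where $H^{\tau}=1+o(1)$ since $1\le t^2+(1+|z|^2)^2$ forces $0<H\le2^{(Q-2)/2}$ while $\tau<\overline{\tau}_l\to0$, and where $\int w_{0,1}^{Q^*-\tau}\to\int w_{0,1}^{Q^*}$ as $\tau\to0$ by dominated convergence. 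Along the ray $g^{(i)}(\theta)=\theta T\varphi_{l,\lam_l}$, with $T$ large enough that $I_{R_l,\tau}(T\varphi_{l,\lam_l})<0$, the function $s\mapsto\tfrac12 s^2\|\varphi_{l,\lam_l}\|^2-\tfrac{1}{Q^*-\tau}s^{Q^*-\tau}\int R_lH^{\tau}\varphi_{l,\lam_l}^{Q^*-\tau}$ attains its maximum over $[0,T]$ at an interior critical point, and evaluating that maximum (it equals $(\tfrac12-\tfrac{1}{Q^*-\tau})$ times the corresponding Dirichlet integral) shows it is $\tfrac1Q(a^{(i)})^{(2-Q)/2}S_n^Q+o(1)=c^{(i)}+o(1)$. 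Hence $c_{l,\tau}^{(i)}\le c^{(i)}+o(1)$.

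For the lower bound, set $\mathcal N_{l,\tau}:=\{u\in S_0^1(B_{S_l}(\xi_l^{(i)}))\setminus\{0\}:\int|\nabla_{\H}u|^2=\int R_lH^{\tau}|u|^{Q^*-\tau}\}$. Every $g^{(i)}\in\gamma_{l,\tau}^{(i)}$ meets $\mathcal N_{l,\tau}$: the continuous map $\theta\mapsto\|g^{(i)}(\theta)\|^2-\int R_lH^{\tau}|g^{(i)}(\theta)|^{Q^*-\tau}$ is $\ge\|g^{(i)}(\theta)\|^2(1-C\|g^{(i)}(\theta)\|^{Q^*-\tau-2})\ge0$ near $\theta=0$ and is negative at $\theta=1$ since $I_{R_l,\tau}(g^{(i)}(1))<0$ with $Q^*-\tau>2$ gives $\int R_lH^{\tau}|g^{(i)}(1)|^{Q^*-\tau}>\|g^{(i)}(1)\|^2$; so it has a zero $\theta_*$, which moreover satisfies $g^{(i)}(\theta_*)\ne0$ because negativity of that map forces $\|g^{(i)}\|$ bounded below. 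Therefore $c_{l,\tau}^{(i)}\ge\inf_{\mathcal N_{l,\tau}}I_{R_l,\tau}$, and it remains to estimate this infimum. For $u\in\mathcal N_{l,\tau}$ put $A=\int|\nabla_{\H}u|^2>0$; using $R_l\le a^{(i)}+o(1)$ on $B_{S_l}(\xi_l^{(i)})$ (by \eqref{20}--\eqref{21}), $H^{\tau}\le1+o(1)$, Hölder's inequality on $B_{S_l}(\xi_l^{(i)})$ (of measure $|B_1|S_l^Q$), and \eqref{Sobolevineq} with best constant $S_n$ for $S_0^1(B_{S_l}(\xi_l^{(i)}))$, one gets
\[
A\le\big(a^{(i)}+o(1)\big)S_n^{-(Q^*-\tau)}\big(|B_1|S_l^Q\big)^{\tau/Q^*}A^{(Q^*-\tau)/2},
\]
and since $1-\tfrac{Q^*-\tau}{2}<0$ this forces $A\ge(a^{(i)})^{(2-Q)/2}S_n^Q+o(1)$; the volume factor satisfies $(|B_1|S_l^Q)^{\tau/Q^*}\to1$ because $S_l^{\overline{\tau}_l}\to1$, which follows from $S_l\le\tfrac12 d(\xi_l^{(1)},\xi_l^{(2)})\le\tfrac12(|\xi_l^{(1)}|+|\xi_l^{(2)}|)$ (triangle inequality for the Korányi gauge) together with \eqref{26}. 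Since $I_{R_l,\tau}(u)=(\tfrac12-\tfrac{1}{Q^*-\tau})A$ on $\mathcal N_{l,\tau}$, this gives $\inf_{\mathcal N_{l,\tau}}I_{R_l,\tau}\ge c^{(i)}+o(1)$, hence $c_{l,\tau}^{(i)}\ge c^{(i)}+o(1)$; combining with the upper bound finishes the proof.

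I expect the main obstacle to be bookkeeping rather than any single hard estimate: controlling the three interlocked limits $l\to\infty$, $\lam\to\infty$, $\tau\to0^+$ simultaneously and uniformly in the competing paths — in particular keeping the truncated-bubble errors and the volume factor $(|B_1|S_l^Q)^{\tau/Q^*}$ in check — together with the one genuinely subelliptic point, namely that $H^{\tau}=1+o(1)$ on all relevant scales, which reduces here to the elementary bounds $0<H\le2^{(Q-2)/2}$ and $\tau<\overline{\tau}_l$.
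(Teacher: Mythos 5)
Your overall structure is the right one — a cutoff-bubble path for the upper bound plus a Nehari-manifold argument combined with Hölder and the Folland--Stein inequality for the lower bound — and it is almost certainly what the authors intend when they write that Proposition~\ref{prop:4.2.1} follows from ``standard functional analysis arguments.'' The paper does not actually give a proof, so there is nothing to compare against line by line, but your route is the standard one.

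There is, however, a genuine error in your stated justification of the single ``subelliptic point.'' You claim that $H^{\tau}=1+o(1)$ on the relevant scales ``reduces here to the elementary bounds $0<H\le 2^{(Q-2)/2}$ and $\tau<\overline{\tau}_l$.'' This gives only $H^{\tau}\le 1+o(1)$ and cannot give the lower bound $H^{\tau}\ge 1-o(1)$, because $H(\xi)\sim 4^{(Q-2)/4}|\xi|^{-(Q-2)}\to 0$ as $|\xi|\to\infty$, and the centers $\xi_l^{(i)}$ may diverge. The lower bound is what you actually need for the upper estimate of $c_{l,\tau}^{(i)}$: in $\max_s\big(\tfrac12 s^2 A-\tfrac{1}{Q^*-\tau}s^{Q^*-\tau}B\big)$, the quantity $B=\int R_lH^{\tau}\varphi_{l,\lam_l}^{Q^*-\tau}$ enters with a negative exponent, so an under-estimate of $B$ would only prove a worse upper bound. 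The correct reason $H^{\tau}\ge 1-o(1)$ on $B_{S_l}(\xi_l^{(i)})$ is exactly \eqref{26}: on that ball one has $|\xi|\le |\xi_l^{(i)}|+S_l\le\tfrac32\big(|\xi_l^{(1)}|+|\xi_l^{(2)}|\big)$ by the Cygan triangle inequality for the Kor\'anyi gauge, and $\big(|\xi_l^{(1)}|+|\xi_l^{(2)}|\big)^{\overline{\tau}_l}\to1$ then gives the claim. You already invoke \eqref{26} and the triangle inequality for the volume factor $(|B_1|S_l^{Q})^{\tau/Q^*}$ in the lower-bound estimate, so the ingredient is available to you — the slip is in the concluding summary, and the cure is to cite it explicitly where $H^{\tau}$ is bounded from below. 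A smaller point worth making explicit: your ``diagonal argument'' for $\lam_l$ must also keep $\lam_l^{\tau}=1+o(1)$ (uniformly for $\tau<\overline{\tau}_l$), because the change of variables yields $\int w_{\xi_l^{(i)},\lam_l}^{Q^*-\tau}=\lam_l^{-(Q-2)\tau/2}\int w_{0,1}^{Q^*-\tau}$; the paper's proof of Proposition~\ref{prop:4.2.2} states this constraint on $\lam_l$ explicitly, and without it $B$ acquires an uncontrolled factor. This is compatible with concentrating the bubble inside the region where $R_l\approx a^{(i)}$ provided one uses the uniform $C^1$ bound \eqref{25}, which is available throughout Section~\ref{sec:4} even though Proposition~\ref{prop:4.2.1} nominally lists only \eqref{18}, \eqref{20}, \eqref{21}.
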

          \begin{proof}The proof can be completed by using the definition of $c_{l, \tau}^{(i)}$ with some standard functional analysis arguments, we omit the details here.
          \end{proof}

          We define
          \begin{gather}\notag
            \Gamma_l = \{ G =g^{(1)} +g^{(2)}: g^{(1)}, g^{(2)}\text{ satisfy }\, \eqref{110}-\eqref{113}\}, \\\label{110}
            g^{(1)}, g^{(2)} \in C([0,1]^2, E), \\\label{111}
            g^{(1)}(0, \theta_2)= g^{(2)}(\theta_1,0)=0, \quad 0 \leq \theta_1, \theta_2 \leq 1, \\\label{112}
            I_{R_l, \tau}(g^{(1)}(1, \theta_2))<0,~ I_{R_l, \tau}(g^{(2)}(\theta_1,1))<0, \quad 0 \leq \theta_1, \theta_2 \leq 1,\\\label{113}
            \operatorname{supp}g^{(i)}(\theta) \subset B_{S_l}(\xi_l^{(i)}), \quad \theta=(\theta_1,\theta_2) \in [0, 1]^2,~i=1,2, \\
            b_{l, \tau} = \inf_{G \in \Gamma_l}\max_{\theta \in [0,1]^2} I_{R_l, \tau}(G(\theta)).\notag
          \end{gather}
  \begin{rem}
  	Observe that if $G=g^{(1)}+g^{(2)}$ with $g^{(1)}\in \gamma_{l, \tau}^{(1)}$, $g^{(2)}\in \gamma_{l, \tau}^{(2)}$, $\operatorname{supp}g^{(1)}\cap \operatorname{supp}g^{(2)}=\emptyset$,
  	then $I_{R_{l}, \tau}(G)=I_{R_{l}, \tau}(g^{(1)})+I_{R_{l}, \tau}
  	(g^{(2)})$.
  \end{rem}
          \begin{prop}\label{prop:4.2.2}
          Let $\{R_l \}$ be a sequence of functions satisfying \eqref{18}, \eqref{20} and \eqref{21}, then it holds $b_{l, \tau} =c_{l, \tau}^{(1)} +c_{l, \tau}^{(2)} +o(1)$.
          \end{prop}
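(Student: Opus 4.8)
The plan is to prove the two inequalities $b_{l,\tau}\le c_{l,\tau}^{(1)}+c_{l,\tau}^{(2)}+o(1)$ and $b_{l,\tau}\ge c_{l,\tau}^{(1)}+c_{l,\tau}^{(2)}+o(1)$ separately. The upper bound is elementary: given any $g^{(1)}\in\gamma_{l,\tau}^{(1)}$, $g^{(2)}\in\gamma_{l,\tau}^{(2)}$, one builds a product path $G(\theta_1,\theta_2)=g^{(1)}(\theta_1)+g^{(2)}(\theta_2)$. Because $\operatorname{supp}g^{(1)}\subset B_{S_l}(\xi_l^{(1)})$ and $\operatorname{supp}g^{(2)}\subset B_{S_l}(\xi_l^{(2)})$ are disjoint (as $S_l\le\frac12\min_{i\ne j}d(\xi_l^{(i)},\xi_l^{(j)})$), the remark after \eqref{113} gives $I_{R_l,\tau}(G(\theta))=I_{R_l,\tau}(g^{(1)}(\theta_1))+I_{R_l,\tau}(g^{(2)}(\theta_2))$ exactly, so $\max_\theta I_{R_l,\tau}(G(\theta))=\max_{\theta_1}I_{R_l,\tau}(g^{(1)}(\theta_1))+\max_{\theta_2}I_{R_l,\tau}(g^{(2)}(\theta_2))$. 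One must check $G\in\Gamma_l$, i.e., \eqref{110}--\eqref{113}: continuity and support conditions are immediate, the boundary conditions \eqref{111} follow from $g^{(i)}(0)=0$, and \eqref{112} follows because $I_{R_l,\tau}(g^{(1)}(1,\theta_2))=I_{R_l,\tau}(g^{(1)}(1))<0$ using again the splitting (one may need to assume the endpoints $g^{(i)}(1)$ are also compactly supported in $B_{S_l}(\xi_l^{(i)})$, which can be arranged in the definition of $\gamma^{(i)}_{l,\tau}$ or by a small modification of the path). Taking the infimum over $g^{(1)},g^{(2)}$ yields $b_{l,\tau}\le c_{l,\tau}^{(1)}+c_{l,\tau}^{(2)}$, which combined with Proposition \ref{prop:4.2.1} gives the upper bound with the $o(1)$.

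The lower bound is the substantive direction and is where a linking/intersection argument is needed. The strategy is: given $G=g^{(1)}+g^{(2)}\in\Gamma_l$, I want to locate $\theta^*=(\theta_1^*,\theta_2^*)\in[0,1]^2$ with $I_{R_l,\tau}(G(\theta^*))\ge c_{l,\tau}^{(1)}+c_{l,\tau}^{(2)}+o(1)$. Since the supports are disjoint, $I_{R_l,\tau}(G(\theta))=I_{R_l,\tau}(g^{(1)}(\theta))+I_{R_l,\tau}(g^{(2)}(\theta))$ for every $\theta$, where now each $g^{(i)}$ depends on both $\theta_1,\theta_2$. The idea is to view, for fixed $\theta_2$, the slice $\theta_1\mapsto g^{(1)}(\theta_1,\theta_2)$ as a candidate path in the mountain-pass class for $I_{R_l,\tau}$ on $S_0^1(B_{S_l}(\xi_l^{(1)}))$: by \eqref{111} it starts at $0$, and by \eqref{112} it ends at a point of negative energy, so $\max_{\theta_1}I_{R_l,\tau}(g^{(1)}(\theta_1,\theta_2))\ge c_{l,\tau}^{(1)}$ for every $\theta_2$ (and symmetrically in the other variable). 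The difficulty is that the two maxima are attained at different values of $\theta_1$ for different $\theta_2$, so one cannot simply add them. This is resolved by a topological (degree/intersection) argument: define $\Phi:[0,1]^2\to\R^2$ by $\Phi(\theta)=\big(I_{R_l,\tau}(g^{(1)}(\theta))-c_{l,\tau}^{(1)},\ I_{R_l,\tau}(g^{(2)}(\theta))-c_{l,\tau}^{(2)}\big)$ (or a related map built from the path-segments truncated at their mountain-pass level), and show via the boundary behavior on $\partial[0,1]^2$ dictated by \eqref{111}--\eqref{112} that $\Phi$ must vanish — or more precisely that the set where both coordinates are $\ge 0$ is nonempty — by a Poincaré–Miranda / Brouwer-degree argument. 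At such a point both energies simultaneously reach their respective mountain-pass values, giving $I_{R_l,\tau}(G(\theta^*))\ge c_{l,\tau}^{(1)}+c_{l,\tau}^{(2)}$.

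The main obstacle is making this intersection argument rigorous in infinite dimensions, since $I_{R_l,\tau}$ is neither monotone along the slices nor does it have a clean min-max behavior without the disjoint-support splitting — here the splitting is exactly what saves us, reducing the two-parameter problem to a product of two one-parameter mountain-pass problems coupled only through the parametrization, a coupling that is handled purely topologically. I expect the clean way to write it is the standard Coti Zelati--Rabinowitz lemma (see \cite{CR1,CR2,Li1993}): for a continuous map $h=(h_1,h_2):[0,1]^2\to\R^2$ with $h_1(0,\theta_2)\le 0$, $h_1(1,\theta_2)$ large, $h_2(\theta_1,0)\le 0$, $h_2(\theta_1,1)$ large (appropriately normalized so that these correspond to the endpoint conditions \eqref{111}--\eqref{112} after subtracting $c_{l,\tau}^{(i)}$ and composing with the monotone reparametrization along mountain-pass paths), there exists $\theta^*$ with $h_1(\theta^*)=h_2(\theta^*)=0$; applying this with $h_i$ essentially the truncated energy along the $i$-th slice yields the desired $\theta^*$. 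I would cite this lemma, verify the hypotheses using the support-disjointness splitting and \eqref{111}--\eqref{112}, and then conclude, again invoking Proposition \ref{prop:4.2.1} to replace $c_{l,\tau}^{(i)}$ by $c^{(i)}+o(1)$ if a cleaner statement is wanted; the error terms are all $o(1)$ as $l\to\infty$ and uniform in $\tau\in(0,\overline\tau_l)$.
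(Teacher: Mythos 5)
Your proof is correct and follows essentially the same two-sided strategy as the paper: the lower bound $b_{l,\tau}\geq c^{(1)}_{l,\tau}+c^{(2)}_{l,\tau}$ rests on the Coti Zelati--Rabinowitz linking argument, which the paper likewise defers to \cite[Proposition~3.4]{CR1}, and the upper bound exploits the disjoint-support energy splitting along a product path $G=g^{(1)}+g^{(2)}$. The only divergence is stylistic and in the upper bound: the paper builds an explicit test path from cut-off bubbles, $g_l^{(i)}(\theta_i)=\theta_i C_1 R_l(\xi_l^{(i)})^{(2-Q)/4}\eta(\xi_l^{(i)}\circ\cdot)w_{\xi_l^{(i)},\lam_l}$, and computes its energy (hence the $o(1)$ loss, absorbed via Proposition~\ref{prop:4.2.1}), whereas you take arbitrary near-optimal $g^{(i)}\in\gamma^{(i)}_{l,\tau}$ and pass to the infimum, which actually gives the sharper bound $b_{l,\tau}\le c^{(1)}_{l,\tau}+c^{(2)}_{l,\tau}$ with no loss; the paper's explicit $G_l$ is reused immediately afterward (Lemmas~\ref{lem:4.2.1}--\ref{lem:4.2.4} and Step 1 of the deformation), so the explicit choice is a downstream convenience rather than a necessity for this proposition. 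Two small remarks on your write-up: the caveat about the endpoints $g^{(i)}(1)$ possibly lacking compact support is vacuous, since $\gamma^{(i)}_{l,\tau}$ consists of paths valued in $S_0^1(B_{S_l}(\xi_l^{(i)}))$ so \eqref{113} is automatic; and the unmodified map $\Phi(\theta)=\big(I(g^{(1)}(\theta))-c^{(1)}_{l,\tau},\,I(g^{(2)}(\theta))-c^{(2)}_{l,\tau}\big)$ is negative on \emph{both} faces $\{\theta_1=0\}$ (energy $0$) and $\{\theta_1=1\}$ (energy $<0$), so Poincar\'e--Miranda does not apply to it directly --- the truncation/reparametrization you allude to is where the real work hides, and that is precisely what \cite{CR1} supplies.
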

          \begin{proof}
We first prove that $b_{l, \tau} \geq c_{l, \tau}^{(1)}+c_{l, \tau}^{(2)}$. Indeed it can be achieved from  the definition  of  $c_{l, \tau}^{(i)}$ with additional compactness argument on  $[0,1]^2$, we omit it here and refer to \cite[Proposition 3.4]{CR1} for details.

        On the other hand, for   $0 \leq \theta_1, \theta_2 \leq 1$, let
           $
              g_l^{(i)}(\theta_i) = \theta_i C_1R_l(\xi_l^{(i)})^{(2-Q)/4}\eta(\xi_l^{(i)} \circ \cdot)w_{\xi_l^{(i)},\lam _l}$ for $i=1,2$,
      where $\lam_l\to \infty$ is a sequence satisfying $(\lam_l)^{\tau}=1+o(1)$, and
       $C_1 = C_1(n,A_1, A_2)>1$ is a constant, such  that  for $l$  large,  $
             I_{R_l, \tau}g_l^{(i)}(1)<0$ for each $i=1,2$.
 We fix the value of $C_1$ from now on.

            For $\theta = (\theta_1, \theta_2) \in [0, 1]^2$, let $
              G_l(\theta) = g_l^{(1)}(\theta_1) + g_l^{(2)}(\theta_2)$.
Clearly, $G_l \in \Gamma_l$ and
            \begin{align*}
              \max_{\theta \in[0, 1]^2}I_{R_l, \tau}(G_l(\theta)) =&\max_{\theta \in[0, 1]^2}I_{R_l, \tau}(g_l^{(1)}(\theta_1)) +\max_{\theta \in[0, 1]^2}I_{R_l, \tau}(g_l^{(1)}(\theta_1))\\
              \leq&\sum_{i=1}^2\max_{0 \leq s < \infty}I_{R_l, \tau}(s\eta(\xi_l^{(i)} \circ \cdot)w_{\xi_l^{(i)},\lam _l,})+o(1)\\
              =&\sum_{i=1}^2\frac{1}{Q}(a^{(i)})^{(2-Q)/2}(S_n)^Q  +o(1)\\
              =&c_{l, \tau}^{(1)} +c_{l, \tau}^{(2)} +o(1),
            \end{align*}
            where the last equality is due to Proposition \ref{prop:4.2.1}. Therefore, $b_{l, \tau} \leq c_{l, \tau}^{(1)} +c_{l, \tau}^{(2)} +o(1)$.
          \end{proof}

          In the subsequent analysis, we show that under the contrary of Theorem \ref{thm:4.1}, it is possible to construct $H_{l} \in \Gamma_l$ for large $l$, such that
         $$
            \max_{\theta \in[0, 1]^2}I_{R_l, \tau}(H_l(\theta))< b_{l, \tau},
         $$
          which contradicts to the definition of $b_{l, \tau}$. A lengthy construction is required to establish this fact and a brief sketch of it will be given now.

         \textbf{Step 1:} Choose some suitably small number $\var _4>0$, we can construct $G_l \in \Gamma_l$ such that
        $$
            \max_{\theta \in[0, 1]^2}I_{R_l, \tau}(G_l(\theta)) \leq b_{l, \tau}+ \var _4.
       $$
         Furthermore, $G_l$ satisfies some further properties.

          \textbf{Step 2:} We follow the negative gradient flow of $I_{R_l, \tau}$ to deform $G_l$ to $U_l$ with
         $$
            \max_{\theta \in[0, 1]^2}I_{R_l, \tau}(U_l(\theta)) \leq b_{l, \tau}- \var _4.
      $$
          However, $U_l$ is not necessarily in $\Gamma_l$ any more since the deformation may not preserve properties \eqref{113}.

       \textbf{Step 3:} Applying Propositions \ref{prop:4.2}, \ref{prop:2.4} and \ref{prop:3.1}, we modify $U_l$ to obtain $H_l \in \Gamma_l$ with
        $$
            \max_{\theta \in[0, 1]^2}I_{R_l, \tau}(H_l(\theta))\leq b_{l, \tau}-\var _4/2.
      $$

\textbf{Step 4:} Complete the proof by using the minimax structure of $H_l$.

          All four steps are completed for large $l$. Now we start to establish these  steps.

        \textbf{Step 1: Construction of $G_l$.}

        Let $G_l$ be the one we have just defined. We establish some properties of $G_l$ which are needed.
          \begin{lem}\label{lem:4.2.1}
            For any $ \var \in (0,1)$, if  $I_{R_{l}, \tau}(g_{l}^{(i)}(\theta_{i})) \geq c_{l, \tau}^{(i)}-\var$ for  $i = 1,2$,  then there exist two
            constants $\Lam_1 = \Lam_1(\var , A_1, A_3)>1$ and $C_{0}=C_{0}(n)>0$, such that for any $l \geq \Lam_1$, $0 \leq \theta_1, \theta_2 \leq 1$, we have $|C_{1} \theta_{i}-1| \leq C_{0} \sqrt{\var}$ for $i=1,2$, where $C_1$ is the constant  in the proof of Proposition \ref{prop:4.2.2}.
           \end{lem}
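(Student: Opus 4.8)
The plan is to reduce the statement to a one–variable analysis of the energy restricted to the rays $\theta_i\mapsto g_l^{(i)}(\theta_i)$, and then to exploit the strict concavity at the maximum of the limiting energy.

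For fixed $i$, the curve $\theta_i\mapsto g_l^{(i)}(\theta_i)$ is a scalar multiple of the single function $W_l:=\eta(\xi_l^{(i)}\circ\,\cdot\,)\,w_{\xi_l^{(i)},\lam_l}$: indeed $g_l^{(i)}(\theta_i)=s(\theta_i)\,W_l$ with $s(\theta_i):=C_1\,R_l(\xi_l^{(i)})^{(2-Q)/4}\,\theta_i$. Hence $I_{R_l,\tau}(g_l^{(i)}(\theta_i))=\psi_l(s(\theta_i))$, where
\[
\psi_l(s):=\frac{s^2}{2}\int|\nabla_{\H}W_l|^2-\frac{|s|^{Q^*-\tau}}{Q^*-\tau}\int R_l H^{\tau}|W_l|^{Q^*-\tau}.
\]
Because $\theta_i\in[0,1]$ and $R_l(\xi_l^{(i)})\in[1/A_2,A_2]$ by \eqref{23-2}, the parameter $s(\theta_i)$ stays in a bounded interval $[0,M]$ with $M=M(n,A_1,A_2)$. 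The first step is to evaluate the two integrals as $l\to\infty$: by the standard bubble estimates of Bahri--Coron \cite{BC1988,BC1991} (the cut-off $\eta$ costs only $o(1)$ since $\lam_l\to\infty$), together with the continuity of $R_l$ near $\xi_l^{(i)}$, \eqref{21}, the concentration of $|W_l|^{Q^*-\tau}$ at $\xi_l^{(i)}$, the relation $(\lam_l)^{\tau}=1+o(1)$, and \eqref{26} (which forces $H(\xi_l^{(i)})^{\tau}=1+o(1)$ because $|\xi_l^{(i)}|\to\infty$), one obtains
\[
\int|\nabla_{\H}W_l|^2=(S_n)^Q+o(1),\qquad \int R_l H^{\tau}|W_l|^{Q^*-\tau}=a^{(i)}(S_n)^Q+o(1),
\]
the $o(1)$ being controlled once $A_1,A_3$ are fixed. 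Consequently $\psi_l\to\psi_\infty$ uniformly on $[0,M]$, with $\psi_\infty(s)=\tfrac12(S_n)^Q s^2-\tfrac1{Q^*}a^{(i)}(S_n)^Q|s|^{Q^*}$.

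Next I would analyze $\psi_\infty$. It is strictly increasing on $[0,s_\infty]$ and strictly decreasing and concave on $[s_\infty,\infty)$, where $s_\infty:=(a^{(i)})^{(2-Q)/4}$ is its unique maximum point; moreover $\psi_\infty(s_\infty)=\tfrac1Q(a^{(i)})^{(2-Q)/2}(S_n)^Q=c^{(i)}$ and $\psi_\infty''(s_\infty)=-(Q^*-2)(S_n)^Q<0$, a quantity depending only on $n$. A Taylor expansion at $s_\infty$ together with this unimodality yields constants $\kappa=\kappa(n)>0$ and $\var_*=\var_*(n,A_2)>0$ such that $\psi_\infty(s)\le c^{(i)}-\kappa|s-s_\infty|^2$ whenever $|s-s_\infty|\le\var_*$. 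Now combine: by Proposition \ref{prop:4.2.1} one has $c_{l,\tau}^{(i)}=c^{(i)}+o(1)$, so the hypothesis $I_{R_l,\tau}(g_l^{(i)}(\theta_i))\ge c_{l,\tau}^{(i)}-\var$ reads $\psi_l(s(\theta_i))\ge c^{(i)}-\var-o(1)$, hence $\psi_\infty(s(\theta_i))\ge c^{(i)}-\var-o(1)$. For $\var$ small, the zeroth–order information already confines $s(\theta_i)$ to $\{|s-s_\infty|\le\var_*\}$, and then the quadratic bound gives $\kappa|s(\theta_i)-s_\infty|^2\le\var+o(1)$, i.e. $|s(\theta_i)-s_\infty|\le\sqrt{(\var+o(1))/\kappa}$. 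Since $s_\infty=(a^{(i)})^{(2-Q)/4}=R_l(\xi_l^{(i)})^{(2-Q)/4}+o(1)$ by \eqref{21}, and $R_l(\xi_l^{(i)})^{(2-Q)/4}$ is bounded away from $0$, dividing through by $R_l(\xi_l^{(i)})^{(2-Q)/4}$ converts this into $|C_1\theta_i-1|\le C_0\sqrt{\var}+o(1)$. Choosing $l\ge\Lam_1=\Lam_1(\var,A_1,A_3)$ large enough to absorb the $o(1)$ (whose rate is governed only by the quantities controlled through $A_1,A_3$) then yields $|C_1\theta_i-1|\le C_0\sqrt\var$ for $i=1,2$; matching the precise dependence $C_0=C_0(n)$ claimed in the statement is a routine matter of bookkeeping, the structural constants being pushed into the threshold $\Lam_1$.

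I expect the genuine work to lie in the first step: the uniform asymptotic evaluation of $\int|\nabla_{\H}W_l|^2$ and $\int R_l H^{\tau}|W_l|^{Q^*-\tau}$ — isolating the cut-off error, replacing $R_l$ by its value at $\xi_l^{(i)}$ via continuity and the concentration of the bubble, and in particular controlling the slowly varying factors $(\lam_l)^{\tau}$ and $H(\xi_l^{(i)})^{\tau}$ with the help of $(\lam_l)^{\tau}=1+o(1)$ and \eqref{26}. Everything after that is elementary one–variable calculus, and the only mild care needed is to keep the constants' dependences straight.
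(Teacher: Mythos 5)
Your proof is correct and follows essentially the same route as the paper: compute the one-variable energy along the ray $\theta_i \mapsto g_l^{(i)}(\theta_i)$, show via the standard bubble estimates and Proposition \ref{prop:4.2.1} that it converges to a unimodal profile with a strict second-order maximum at the value of the parameter corresponding to $C_1\theta_i = 1$, and use the quadratic drop-off near that maximum to conclude. The only cosmetic difference is that you absorb the factor $R_l(\xi_l^{(i)})^{(2-Q)/4}$ into your parameter $s(\theta_i)$, diagnosing the maximizer as $s_\infty = (a^{(i)})^{(2-Q)/4}$, whereas the paper sets $s = C_1\theta_i$ so that the limiting function $\big(\tfrac{Q}{2}+o(1)\big)s^2-\big(\tfrac{Q-2}{2}+o(1)\big)s^{p+1}$ is normalized to peak at $s=1$ — a pure bookkeeping rescaling.
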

          \begin{proof}
          We only take into account the case $i = 1$ since the other case can be covered in the same  way.  Let $s = C_1 \theta_1$, a direct calculation shows that
              \begin{align*}
              I_{R_l, \tau}(g_l^{(1)}(\theta_1)) =& \frac{1}{2}s^2 R_l(\xi_l^{(1)})^{(2-Q)/2} \int |\nabla_{\H}(\eta(\xi_l^{(1)} \circ \cdot)w_{\xi_l^{(1)},\lam _l })|^2\\
              &- \frac{1}{p+1}s^{p+1}  R_l(\xi_l^{(1)})^{-(Q-2)(p+1)/4} \int R_l H^{\tau}|\eta(\xi_l^{(1)} \circ \cdot)w_{\xi_l^{(1)},\lam _l }|^{p+1}\\
              =&\Big( \frac{1}{2} +o(1) \Big)s^2R_l(\xi_l^{(1)})^{(2-Q)/2}\int|\nabla_{\H}w_{0,1}|^2\\
              &-\Big( \frac{1}{Q^*} +o(1) \Big)s^{p+1}R_l(\xi_l^{(1)})^{(2-Q)/2}\int w_{0,1}^{Q^* }\\
              =& \Big[ \Big( \frac{Q}{2} +o(1) \Big)s^2 -\Big( \frac{Q-2}{2} +o(1) \Big)s^{p+1} \Big]c_{l, \tau}^{(1)},
              \end{align*}
      where Proposition \ref{prop:4.2.2} is used in the last step. Hence, using above identity and the hypothesis $I_{R_{l}, \tau}(g_{l}^{(1)}(\theta_{1})) \geq c_{l, \tau}^{(1)}-\var$, we complete the proof.
          \end{proof}
          \begin{lem}\label{lem:4.2.2}
          For any $ \var \in (0,1)$, there exists $\Lam_2= \Lam_2(\var , A_1, A_3)>\Lam_1$ such that for $l \geq \Lam_2$, $0 \leq \theta_1, \theta_2 \leq 1$, we have $I_{R_l, \tau}(g_l^{(i)}(\theta_i)) \leq c_{l, \tau}^{(i)} +\var/10$ for $i=1,2$.
           \end{lem}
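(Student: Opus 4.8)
The plan is to reuse the explicit energy identity derived inside the proof of Lemma \ref{lem:4.2.1} and then carry out an elementary one–variable maximization.

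First I would observe that, setting $s=C_1\theta_i$ (so that $s$ ranges over the bounded interval $[0,C_1]$, with $C_1>1$), the computation performed in the proof of Lemma \ref{lem:4.2.1}, \emph{before the hypothesis $I_{R_l,\tau}(g_l^{(i)}(\theta_i))\ge c_{l,\tau}^{(i)}-\var$ is invoked}, already yields the unconditional identity
\[
I_{R_l,\tau}(g_l^{(i)}(\theta_i))=\Big[\Big(\frac{Q}{2}+o(1)\Big)s^2-\Big(\frac{Q-2}{2}+o(1)\Big)s^{p+1}\Big]\,c_{l,\tau}^{(i)},\qquad i=1,2,
\]
where the $o(1)$'s tend to $0$ as $l\to\infty$ and, crucially, do not depend on $\theta_i$: they come only from the cut-off estimate $\int|\nabla_{\H}(\eta(\xi_l^{(i)}\circ\cdot)w_{\xi_l^{(i)},\lam_l})|^2=\int|\nabla_{\H}w_{0,1}|^2+o(1)$, the corresponding $L^{p+1}$-expansion, the bound $H(\xi_l^{(i)})^{\tau}=1+o(1)$, and the normalization $(\lam_l)^{\tau}=1+o(1)$. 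Moreover $c_{l,\tau}^{(i)}=c^{(i)}+o(1)$ by Proposition \ref{prop:4.2.1}.

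Next I would analyze the bracket. Put $\psi_\infty(s)=\frac{Q}{2}s^2-\frac{Q-2}{2}s^{Q^*}$; since $Q^*=\frac{2Q}{Q-2}>2$ one computes $\psi_\infty'(s)=Qs-Qs^{Q^*-1}$, so $\psi_\infty$ attains its maximum over $[0,\infty)$ at $s=1$ with $\psi_\infty(1)=1$, whence $\psi_\infty\le 1$ on all of $[0,\infty)$. On the other hand $s\mapsto s^{p+1}=s^{p_l+1}$ converges to $s\mapsto s^{Q^*}$ uniformly on the compact interval $[0,C_1]$: on $[0,1]$ one estimates $0\le s^{p_l+1}-s^{Q^*}=s^{p_l+1}(1-s^{\tau_l})$, whose maximum over $[0,1]$ tends to $0$ as $\tau_l\to0$, while on $[1,C_1]$ one has $0\le s^{Q^*}-s^{p_l+1}\le C_1^{Q^*}(C_1^{\tau_l}-1)\to0$. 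Consequently the bracket above converges to $\psi_\infty(s)$ uniformly for $s\in[0,C_1]$, so that
\[
\max_{0\le\theta_i\le1}\Big[\Big(\frac{Q}{2}+o(1)\Big)s^2-\Big(\frac{Q-2}{2}+o(1)\Big)s^{p+1}\Big]\le \max_{[0,C_1]}\psi_\infty+o(1)=1+o(1).
\]

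Finally I would combine these facts. Since $a^{(i)}=\lim_{l\to\infty}R_l(\xi_l^{(i)})\in[1/A_2,A_2]$, the limiting level $c^{(i)}=(a^{(i)})^{(2-Q)/2}(S_n)^Q/Q$ is bounded above by a constant depending only on $n$ and $A_2$, hence $c_{l,\tau}^{(i)}$ is bounded above uniformly for $l$ large; combining the two displays gives
\[
I_{R_l,\tau}(g_l^{(i)}(\theta_i))\le(1+o(1))\,c_{l,\tau}^{(i)}=c_{l,\tau}^{(i)}+o(1),
\]
with $o(1)\to0$ as $l\to\infty$ uniformly in $\theta_1,\theta_2\in[0,1]$. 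Choosing $\Lam_2=\Lam_2(\var,A_1,A_3)>\Lam_1$ large enough that this last error term is $\le\var/10$ for all $l\ge\Lam_2$ then completes the proof. The only point demanding care is precisely this uniformity in $\theta_i$ — that the error terms in the energy identity are $s$-independent and that $s^{p_l+1}\to s^{Q^*}$ uniformly on $[0,C_1]$; granting that, the statement reduces to the trivial bound $\frac{Q}{2}s^2-\frac{Q-2}{2}s^{Q^*}\le 1$.
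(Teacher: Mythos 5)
Your proof is correct and follows the same strategy that the paper's (omitted) proof points to: you reuse the unconditional energy identity from the proof of Lemma~\ref{lem:4.2.1}, pass to the limit $\tau_l\to0$ uniformly on the compact $s$-range, and then maximize the limiting polynomial $\tfrac{Q}{2}s^2-\tfrac{Q-2}{2}s^{Q^*}$, which is exactly the one-parameter maximization invoked in the proof of Proposition~\ref{prop:4.2.2}. The explicit verification that the $o(1)$ coefficients are $\theta_i$-independent and that $s^{p_l+1}\to s^{Q^*}$ uniformly on $[0,C_1]$ is precisely the detail the paper suppresses, and you supply it cleanly.
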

          \begin{proof}The proof is similar to that of Proposition \ref{prop:4.2.2}, we omit the details here.
          \end{proof}
          \begin{lem}\label{lem4.2.3}
          For any $\var\in (0,1)$, there exists $\Lam_3 = \Lam_3(n,\var , A_1, A_3)> \Lam_2$ such that for $l \geq \Lam_3$, we have
$I_{R_l, \tau}(G_l(\theta))|_{\theta \in \pa [0, 1]^2} \leq \max \{ c^{(1)} + \var , c^{(2)} + \var  \}$.
           \end{lem}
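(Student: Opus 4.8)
The plan is to exploit the disjointness of the supports of $g_l^{(1)}$ and $g_l^{(2)}$ so that the energy of $G_l$ splits additively, and then to observe that on $\pa [0,1]^2$ at least one of $\theta_1,\theta_2$ equals $0$ or $1$, which forces the corresponding summand to be nonpositive.

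First I would record that, by assumption (ii), $2S_l\le \min_{i\neq j} d(\xi_l^{(i)},\xi_l^{(j)})$, so the balls $B_{S_l}(\xi_l^{(1)})$ and $B_{S_l}(\xi_l^{(2)})$ are disjoint; since $\operatorname{supp}g_l^{(i)}(\theta_i)\subset B_{S_l}(\xi_l^{(i)})$ by \eqref{113}, the remark following the definition of $\Gamma_l$ gives, for every $\theta=(\theta_1,\theta_2)\in[0,1]^2$,
$$I_{R_l,\tau}(G_l(\theta)) = I_{R_l,\tau}(g_l^{(1)}(\theta_1)) + I_{R_l,\tau}(g_l^{(2)}(\theta_2)).$$

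Next I would fix $\var\in(0,1)$ and choose $\Lam_3\ge \Lam_2$, with $\Lam_2$ as in Lemma \ref{lem:4.2.2}, large enough that for all $l\ge \Lam_3$ one has $I_{R_l,\tau}(g_l^{(i)}(1))<0$ for $i=1,2$ (possible by the choice of $C_1$ in the proof of Proposition \ref{prop:4.2.2}) and $|c_{l,\tau}^{(i)}-c^{(i)}|\le 9\var/10$ for $i=1,2$ (possible by Proposition \ref{prop:4.2.1}). Now take $\theta\in\pa[0,1]^2$, so that $\theta_i\in\{0,1\}$ for some index $i$. If $\theta_i=0$ then $g_l^{(i)}(0)=0$, hence $I_{R_l,\tau}(g_l^{(i)}(\theta_i))=0$; if $\theta_i=1$ then $I_{R_l,\tau}(g_l^{(i)}(\theta_i))<0$. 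In either case this summand is $\le 0$. For the other index $j$, Lemma \ref{lem:4.2.2} together with the choice of $\Lam_3$ gives $I_{R_l,\tau}(g_l^{(j)}(\theta_j))\le c_{l,\tau}^{(j)}+\var/10\le c^{(j)}+\var$. Adding the two contributions yields $I_{R_l,\tau}(G_l(\theta))\le c^{(j)}+\var\le \max\{c^{(1)}+\var,\,c^{(2)}+\var\}$, which is the assertion.

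There is no genuine analytic obstacle here; the only point demanding attention is to verify that $\Lam_3$ can be chosen depending only on $n,\var,A_1,A_3$. This is the case because $\Lam_2$ depends only on $\var,A_1,A_3$, the negativity $I_{R_l,\tau}(g_l^{(i)}(1))<0$ holds for $l$ beyond a bound governed by those constants together with $n$ (through the Sobolev constant $S_n$), and the convergence $c_{l,\tau}^{(i)}\to c^{(i)}$ in Proposition \ref{prop:4.2.1} is controlled by the bounds \eqref{18}, \eqref{20} and \eqref{21}.
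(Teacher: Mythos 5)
Your argument is correct and is the fleshed-out version of what the paper compresses into the single line ``Lemma~\ref{lem4.2.3} follows immediately from Lemma~\ref{lem:4.2.2}.'' In particular you correctly supply the two ingredients left implicit there: (a) the additive splitting $I_{R_l,\tau}(G_l(\theta))=I_{R_l,\tau}(g_l^{(1)}(\theta_1))+I_{R_l,\tau}(g_l^{(2)}(\theta_2))$, which follows from the disjoint supports guaranteed by $S_l\le\tfrac12\min_{i\ne j}d(\xi_l^{(i)},\xi_l^{(j)})$ and the remark after~\eqref{113}; and (b) the observation that on $\pa[0,1]^2$ at least one $\theta_i\in\{0,1\}$, so the corresponding summand is $\le 0$ (equal to $0$ if $\theta_i=0$, strictly negative if $\theta_i=1$ by the choice of $C_1$ in the construction of $g_l^{(i)}$), while the other summand is bounded by $c_{l,\tau}^{(j)}+\var/10\le c^{(j)}+\var$ via Lemma~\ref{lem:4.2.2} and Proposition~\ref{prop:4.2.1}. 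This is necessary: Lemma~\ref{lem:4.2.2} alone only gives the cruder bound $c^{(1)}+c^{(2)}+\var$ on all of $[0,1]^2$, so the boundary structure you use is exactly what sharpens it to $\max\{c^{(1)}+\var,c^{(2)}+\var\}$. The stated dependence of $\Lam_3$ on $n,\var,A_1,A_3$ is accounted for as you describe.
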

          \begin{proof}
             Lemma \ref{lem4.2.3} follows immediately from Lemma \ref{lem:4.2.2}.
          \end{proof}
          \begin{lem}\label{lem:4.2.4}
            There exists some universal constant $C_0 = C_0(n)>1$ such that for any $\var\in (0,1/2)$, $l \geq \Lam_3(\var , A_1, A_3)$ and $\theta \in [0, 1]^2$, $
              I_{R_l, \tau}(G_l(\theta))\geq c_{l, \tau}^{(1)} +c_{l, \tau}^{(2)} -\var$   implies that   $|C_1 \theta_i -1|\leq C_0 \sqrt{\var }$  for  $i=1,2$.
              \end{lem}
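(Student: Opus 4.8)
The plan is to reduce the statement entirely to the one-bubble estimates of Lemmas \ref{lem:4.2.1} and \ref{lem:4.2.2}. The structural point that makes this work is that the two model profiles live on disjoint balls: since $S_l\le\frac12\min_{i\ne j}d(\xi_l^{(i)},\xi_l^{(j)})$, the balls $B_{S_l}(\xi_l^{(1)})$ and $B_{S_l}(\xi_l^{(2)})$ are disjoint, and $\operatorname{supp}g_l^{(i)}(\theta_i)\subset B_{S_l}(\xi_l^{(i)})$. Consequently the functional is additive on $G_l(\theta)=g_l^{(1)}(\theta_1)+g_l^{(2)}(\theta_2)$, i.e.
$$
I_{R_l,\tau}\bigl(G_l(\theta)\bigr)=I_{R_l,\tau}\bigl(g_l^{(1)}(\theta_1)\bigr)+I_{R_l,\tau}\bigl(g_l^{(2)}(\theta_2)\bigr),
$$
as recorded in the Remark following Proposition \ref{prop:4.2.2}.

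Fixing $i\in\{1,2\}$ and letting $j$ be the other index, I would first use the upper bound of Lemma \ref{lem:4.2.2}, namely $I_{R_l,\tau}(g_l^{(j)}(\theta_j))\le c_{l,\tau}^{(j)}+\var/10$ for $l\ge\Lam_2(\var,A_1,A_3)$. Inserting this and the additivity identity into the hypothesis $I_{R_l,\tau}(G_l(\theta))\ge c_{l,\tau}^{(1)}+c_{l,\tau}^{(2)}-\var$ and discarding the $j$-th piece gives
$$
I_{R_l,\tau}\bigl(g_l^{(i)}(\theta_i)\bigr)\ \ge\ \bigl(c_{l,\tau}^{(1)}+c_{l,\tau}^{(2)}-\var\bigr)-\bigl(c_{l,\tau}^{(j)}+\tfrac{\var}{10}\bigr)\ =\ c_{l,\tau}^{(i)}-\tfrac{11}{10}\var ,
$$
so each single-bump energy is within $\frac{11}{10}\var$ of its mountain-pass value $c_{l,\tau}^{(i)}$.

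Next I would apply the argument of Lemma \ref{lem:4.2.1} to that single index $i$ — its proof only ever treats one index at a time — with the parameter $\var$ there replaced by $\frac{11}{10}\var$, which is admissible because $\var<\frac12$ forces $\frac{11}{10}\var<1$. Since the function $s\mapsto(\frac{Q}{2}+o(1))s^2-(\frac{Q-2}{2}+o(1))s^{p+1}$, whose value at $s=C_1\theta_i$ produces $I_{R_l,\tau}(g_l^{(i)}(\theta_i))/c_{l,\tau}^{(i)}$, has a nondegenerate maximum (equal to $1+o(1)$) at $s=1$, the lower bound above forces $|C_1\theta_i-1|\le C_0(n)\sqrt{11\var/10}\le\sqrt2\,C_0(n)\sqrt{\var}$, provided $l$ is at least some threshold $\Lam_3=\Lam_3(n,\var,A_1,A_3)$ — the constant of Lemma \ref{lem4.2.3}, enlarged if necessary so that Lemmas \ref{lem:4.2.1} and \ref{lem:4.2.2} both apply with parameter $\frac{11}{10}\var$. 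Carrying this out for $i=1$ and $i=2$ and relabelling $\sqrt2\,C_0(n)$ as $C_0(n)$ finishes the proof.

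There is no genuine obstacle here; the lemma is bookkeeping on top of the two preceding ones. The only point needing a line of care is the uniformity in $\theta\in[0,1]^2$ of the $o(1)$-remainders appearing in the expansions invoked inside Lemmas \ref{lem:4.2.1} and \ref{lem:4.2.2}: they depend on $l$ only through the concentration scale $\lam_l$ (with $\lam_l^\tau=1+o(1)$) and not on $\theta$, which is exactly what lets a single $l$-threshold $\Lam_3(n,\var,A_1,A_3)$ serve simultaneously for all $\theta$.
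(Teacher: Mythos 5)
Your proposal is correct and is exactly the argument the paper intends: the paper's own proof simply says that the lemma follows from Lemmas \ref{lem:4.2.1} and \ref{lem:4.2.2}, and you have supplied the straightforward bookkeeping — additivity of $I_{R_l,\tau}$ on disjoint supports, the upper bound from Lemma \ref{lem:4.2.2} on the $j$-th piece, the resulting lower bound $I_{R_l,\tau}(g_l^{(i)}(\theta_i))\ge c_{l,\tau}^{(i)}-\tfrac{11}{10}\var$, and then Lemma \ref{lem:4.2.1} applied with parameter $\tfrac{11}{10}\var$. Your remark that $\var<\tfrac12$ is precisely what keeps $\tfrac{11}{10}\var<1$ so that Lemma \ref{lem:4.2.1} is applicable also correctly explains the restricted range in the statement.
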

          \begin{proof}Lemma \ref{lem:4.2.4} follows  from   Lemmas  \ref{lem:4.2.1} and \ref{lem:4.2.2}, we omit the details here.
          \end{proof}

\textbf{Step 2: The deformation of $G_l$.}

          Let
      $$
            M_l = \sup \{ \| I_{R_l, \tau}' (u) \|: u \in V_l(2, \var _1) \}, \quad
            \beta_l = \operatorname{dist}(\pa  \widetilde{V}_l(2, \var _2), \pa  \widetilde{V}_l(2, \var _2/2)).
        $$
    One can see from the definition of  $M_l$ that there exists a constant $C_2 = C_2(n,A_1, \var _2)>1$ such that $M_l \leq C_2$.
    It is also clear from the definition of $\widetilde{V}_l(2, \var _2)$ that $\beta_l \geq \var _2/4$.

      By Lemma \ref{lem:4.2.4}, we choose $\var _4$ to satisfy, for $l$ large, that
          \begin{gather}\label{121}
            \var _4 < \min\Big\{\var _3,\frac{1}{2A_4},\frac{\var _2 \delta_4^2}{8C_2}\Big\}, \\
            \begin{aligned}
            I_{R_l, \tau}&(G_l(\theta)) \geq c_{l, \tau}^{(1)} + c_{l, \tau}^{(2)} -\var _4\text{ implies that}\\
            & \qquad \qquad G_l(\theta) \in \widetilde{V}_l(2, \var _2/2), \xi_1(G_l(\theta)) \in O_l^{(1)}, \xi_2(G_l(\theta)) \in O_l^{(2)},
            \end{aligned}\label{123}
          \end{gather}
where  $\delta_4=\delta_4(\var_2,\var_3)$ is the constant   in   Proposition \ref{prop:4.2}. $G_l(\theta)$ has been defined by now.

          We know from Lemma \ref{lem:4.2.2} that for $l$ large enough,
       $  \max_{\theta \in [0, 1]^2} I_{R_l, \tau}(G_l(\theta)) \leq c_{l, \tau}^{(1)} + c_{l, \tau}^{(2)} +\var _4$.

     For any $u_0\in \widetilde{V}_l(2, \var _2/2)$,     we consider the negative gradient of $I_{R_l, \tau}$:
          \be \label{124}
          \begin{aligned}
            \frac{\d }{\d s}\phi(s, u_0)&=-I_{R_l, \tau}' (\phi(s, u_0)), \quad s \geq 0,\\
            \phi(0, u_0) &= u_0.
          \end{aligned}
          \ee
Under the contrary of Theorem \ref{thm:4.1}, we know that $I_{R_{l}, \tau}$ satisfies the Palais-Smale condition. Furthermore,  the flow defined above never stops before exiting $V_{l}(2, \var^{*})$.

Now         we define $U_l \in C([0, 1]^2, E)$ by the following.
\begin{itemize}
	\item If $I_{R_l, \tau}(G_l(\theta)) \leq c_{l, \tau}^{(1)} + c_{l, \tau}^{(2)} -\var _4$, we define $s_l^*(\theta) = 0$.
	\item	 If $I_{R_l, \tau}(G_l(\theta)) > c_{l, \tau}^{(1)} + c_{l, \tau}^{(2)} -\var _4$, then, according to \eqref{123}, $G_l(\theta) \in \widetilde{V}_l(2, \var _2/2)$, $\xi_1(G_l(\theta)) \in O_l^{(1)}$, $\xi_2(G_l(\theta)) \in O_l^{(2)}$. We define $s_l^*(\theta) =\min \{s>0 : I_{R_l, \tau}(\phi(s, G_l(\theta))) = c_{l, \tau}^{(2)} -\var _4\}$.
\end{itemize}
We set $$U_l(\theta) = \phi(s_l^*(\theta), G_l(\theta)).$$
          The above definition is justified in the following.
          \begin{lem}\label{lem:4.2.5}
          For any $u_0 \in \widetilde{V}_l(2, \var _2/2)$, with $\xi_1(u_0) \in O_l^{(1)}$, $\xi_2(u_0) \in O_l^{(2)}$, and $c_{l, \tau}^{(1)} + c_{l, \tau}^{(2)} -\var _4 < I_{R_l, \tau}(u_0) <c_{l, \tau}^{(1)} + c_{l, \tau}^{(2)} +\var _4$, the flow line $\phi(s, u_0)$ $(s\geq 0)$ cannot leave $\widetilde{V}_l(2, \var _2)$ before reaching $I_{R_l, \tau}^{-1}(c_{l, \tau}^{(1)} + c_{l, \tau}^{(2)} -\var _4)$.
           \end{lem}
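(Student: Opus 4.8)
The plan is to argue by contradiction: suppose that for some $u_0\in\widetilde{V}_l(2,\var_2/2)$ with $\xi_1(u_0)\in O_l^{(1)}$, $\xi_2(u_0)\in O_l^{(2)}$, and $c_{l,\tau}^{(1)}+c_{l,\tau}^{(2)}-\var_4<I_{R_l,\tau}(u_0)<c_{l,\tau}^{(1)}+c_{l,\tau}^{(2)}+\var_4$, the flow line $\phi(s,u_0)$ exits $\widetilde{V}_l(2,\var_2)$ at some first time $s_1>0$ before the energy drops to the level $c_{l,\tau}^{(1)}+c_{l,\tau}^{(2)}-\var_4$. Since $I_{R_l,\tau}$ is non-increasing along $\phi(\cdot,u_0)$, on the whole interval $[0,s_1]$ the energy stays in $(c_{l,\tau}^{(1)}+c_{l,\tau}^{(2)}-\var_4,\,c_{l,\tau}^{(1)}+c_{l,\tau}^{(2)}+\var_4)$, which by Proposition \ref{prop:4.2.1} and Proposition \ref{prop:4.2.2} is contained, for $l$ large, in the window $(c^{(1)}+c^{(2)}-\var_3,\,c^{(1)}+c^{(2)}+\var_3)$ since $\var_4<\var_3$ by \eqref{121}. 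First I would invoke the Palais–Smale-type estimate of Proposition \ref{prop:4.2}: as long as $\phi(s,u_0)$ lies in $\widetilde{V}_l(2,\var_2)\setminus\widetilde{V}_l(2,\var_2/2)$ and has energy in that window, one has $\|I_{R_l,\tau}'(\phi(s,u_0))\|\geq\delta_4$.

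The key quantitative step is then a standard "tube" argument bounding the time spent in the annular region $\widetilde{V}_l(2,\var_2)\setminus\widetilde{V}_l(2,\var_2/2)$. On any subinterval of $[0,s_1]$ where $\phi(s,u_0)$ stays in this annulus, the flow speed is $\|\tfrac{\d}{\d s}\phi\|=\|I_{R_l,\tau}'(\phi)\|\leq M_l\leq C_2$, so traversing the annulus (whose "width" is $\beta_l\geq\var_2/4$) takes time at least $\beta_l/C_2\geq\var_2/(4C_2)$. During that same time the energy decreases at rate $-\tfrac{\d}{\d s}I_{R_l,\tau}(\phi)=\|I_{R_l,\tau}'(\phi)\|^2\geq\delta_4^2$, so the total energy drop while crossing the annulus is at least $\delta_4^2\cdot\beta_l/C_2\geq\var_2\delta_4^2/(4C_2)$. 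But by the choice \eqref{121}, $\var_4<\var_2\delta_4^2/(8C_2)$, hence the total energy available above the target level $c_{l,\tau}^{(1)}+c_{l,\tau}^{(2)}-\var_4$ is at most $2\var_4<\var_2\delta_4^2/(4C_2)$, strictly less than the drop forced by one crossing of the annulus. Therefore the flow cannot reach $\partial\widetilde{V}_l(2,\var_2)$ — to do so it would have to cross the full annulus starting from inside $\widetilde{V}_l(2,\var_2/2)$ — before the energy has already descended to $c_{l,\tau}^{(1)}+c_{l,\tau}^{(2)}-\var_4$, contradicting the assumption. This proves the lemma.

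I expect the main obstacle to be making the "width" estimate $\operatorname{dist}(\partial\widetilde{V}_l(2,\var_2),\partial\widetilde{V}_l(2,\var_2/2))\geq\var_2/4$ and the speed-versus-energy-dissipation comparison fully rigorous in the present functional setting, in particular verifying that the relevant portion of the flow indeed lies in $V_l(2,\var_1)$ (so that $M_l\leq C_2$ applies) and that $\xi_1(\phi(s,u_0))\in O_l^{(1)}$, $\xi_2(\phi(s,u_0))\in O_l^{(2)}$ persist along the flow — the latter following from Claim \ref{claim:10} together with continuity, since the energy stays in the admissible window. A secondary technical point is handling the case distinction in the definition of $s_l^*(\theta)$ consistently with the level $c_{l,\tau}^{(2)}-\var_4$ used there versus the level $c_{l,\tau}^{(1)}+c_{l,\tau}^{(2)}-\var_4$ appearing in the statement; this is reconciled by noting that the flow first descends past the intermediate level and the same tube argument applies on each relevant sub-window.
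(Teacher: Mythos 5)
Your proof is correct and is precisely the ``tube'' (speed-versus-dissipation) argument that the paper invokes by referring to \cite[Lemma 5]{BC1991} without writing out the details: the choice of $\var_4<\var_2\delta_4^2/(8C_2)$ in \eqref{121} is made exactly so that one crossing of the shell $\widetilde{V}_l(2,\var_2)\setminus\widetilde{V}_l(2,\var_2/2)$, on which $\|I_{R_l,\tau}'\|\geq\delta_4$ by Proposition \ref{prop:4.2} and $\|I_{R_l,\tau}'\|\leq M_l\leq C_2$ by $\widetilde{V}_l(2,\var_2)\subset V_l(2,\var_1)$, forces an energy drop exceeding the available budget $2\var_4$. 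Your closing remark about the discrepancy between the level $c_{l,\tau}^{(2)}-\var_4$ in the definition of $s_l^*$ and the level $c_{l,\tau}^{(1)}+c_{l,\tau}^{(2)}-\var_4$ in the lemma concerns what appears to be a typo elsewhere in Step 2 and is not needed for the proof of the lemma itself.
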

          \begin{proof}The proof can be done exactly in the same way as in \cite[Lemma 5]{BC1991}, so we omit it.
          \end{proof}
      \begin{rem}
      We   see from Lemma \ref{lem:4.2.5} that $s_{l}^{*}(\theta)$ is well defined. Since $I_{R_{l}, \tau}$ has no critical point in $
      	\widetilde{V}_{l}(2, \var_{2}) \cap\{ u \in E: | I_{R_{l}, \tau}(u)-c^{(1)}-c^{(2)}|\leq \var_{4}\} \subset V_{l}(2, \var^{*}) \cap\{u  \in E:|I_{R_{l}, \tau}(u)-c^{(1)}-c^{(2)}|\leq \var^{*}\}$
      	under the contradiction hypothesis, $s_{l}^{*}(\theta)$ is continuous in $\theta$ (see also  \cite[Proposition 5.11]{Li1993} and \cite[Lemma 5]{BC1991}), hence $U_l \in C([0,1]^{2}, E)$.
      \end{rem}

\textbf{Step 3: The construction of $H_l$.}

It follows from the construction of $U_l$ that  $\max_{\theta\in [0,1]^2}I_{R_l,\tau}(U_l(\theta))\leq c_{l, \tau}^{(1)}+c_{l, \tau}^{(2)}-\var_{4}$.
Since the gradient flow does not keep property \eqref{113}, $U_l(\theta)$ is not necessarily in $\Gamma_{l}$ any more. It follows from Lemma \ref{lem:4.2.5} that if $I_{R_{l}, \tau}(G_{l}(\theta))>c_{l, \tau}^{(1)}+c_{l,\tau}^{(2)}-\var_{4}$, then the  gradient flow $\phi(s,G_l(\theta))$ $(s\geq 0)$ cannot leave  $\widetilde{V}_{l}(2, \var_{2})$ before reaching $I_{R_{l}, \tau}^{-1}(c_{l, \tau}^{(1)}+c_{l, \tau}^{(2)}-\var_{4})$.      It follows that if $I_{R_l, \tau}(G_l(\theta))> c_{l, \tau}^{(1)} + c_{l, \tau}^{(2)}- \var _4$, then $U_l(\theta) \in \widetilde{V}_l (2,\var_2)
          \subset V_l(2, o_\var (1))$ with  $\xi_1(U_l(\theta)) \in O_l^{(1)}$, $\xi_2(U_l(\theta)) \in O_l^{(2)}$, which implies that
          \begin{gather}\label{125}
            \int_{\Om _l} |\nabla_{\H}U_l(\theta)|^2 + |U_l(\theta)|^{Q^* } = o_{\var _2}(1), \\\label{126}
            \| U_l(\theta) \|_{H^{1/2}(\pa \Om _l)}= o_{\var _2}(1),
          \end{gather}
          where
  \begin{gather*}
   \Om _l = \Hn \backslash \{ B_S(\xi_l^{(1)})\cup B_S(\xi_l^{(2)}) \},\\
  S = 4(\operatorname{diam} O^{(1)} + \operatorname{diam} O^{(2)}),\\ \operatorname{diam} O^{(i)}=\sup\{d(\xi,\xi_0):\xi,\xi_0\in \Hn\}\quad \text{ for }\, i=1,2,
  \end{gather*}
and $O^{(1)}$, $O^{(2)}$ are defined by \eqref{2.13}.

          Without loss of generality, we can assume that $\var _2>0$ has been so small that we can apply Proposition \ref{prop:3.1}. We modify $U_l(\theta)$ in $\Om _l$ after making the following minimization.

          Let
       $$
            \varphi_l(\theta) = U_l(\theta)|_{\pa \Om_l }.
   $$
Because of \eqref{125} and \eqref{126}, we can apply Proposition \ref{prop:3.1} to obtain the minimizer  $u_{\varphi_l}(\theta)$ to  the problem \eqref{minimization} with $\varphi=\varphi_l(\theta)$, $\Om=\Om_l$.
   We define for $\theta \in [0, 1]^2$ that
      $$
            W_l(\theta)(\xi)=\begin{cases}
                          U_l(\theta)(\xi), & \xi\in \Hn \backslash \Om _l, \\
                          u_{\varphi_l}(\theta)(\xi), & \xi\in \Om _l.
                        \end{cases}
$$
       It follows from Proposition \ref{prop:3.1} that $W_l \in C([0, 1]^2, E)$ and satisfies
          \begin{gather}\label{127}
            \max_{\theta \in [0, 1]^2}I_{R_l, \tau}(W_l(\theta)) \leq \max_{\theta \in [0, 1]^2}I_{R_l, \tau}(U_l(\theta)) \leq c_{l, \tau}^{(1)} + c_{l, \tau}^{(2)}- \var _4,\\
            \int_{\Om _l}|\nabla_{\H}W_l(\theta)|^2 + |W_l(\theta)|^{Q^* }= o_{\var _2}(1), \label{127-2}\\\label{128}
            -\Delta_{\H}W_l(\theta)= R_l(\xi)H^\tau|W_l(\theta)|^{p-1}W_l(\theta) \quad \text{ in }\,\Om _l.
          \end{gather}
Moreover, $ W_l(\theta)\geq 0$ in $\Om _l$ can be proved by using \eqref{127-2} and \eqref{Sobolevineqdomain}, see also the proof in the
Claim \ref{claim:1}. $ W_l(\theta)> 0$ in $\Om _l^c$
can be seen from the definition of $V_l(2, o_{\var_2}(1))$ and Proposition \ref{prop:4.1}.

Write
\begin{align*}
 \Omega_l^1:=&(B_{l_1}(\xi_l^{(1)}) \backslash B_S(\xi_l^{(1)})) \cup(B_{l_1}(\xi_l^{(2)}) \backslash B_r(z_l^{(2)})), \\
 \Om_l^2:=&(B_{l_2}(\xi_l^{(1)}) \backslash B_{l_1}(\xi_l^{(1)})) \cup(B_{l_2}(\xi_l^{(2)}) \backslash B_{l_1}(\xi_l^{(2)})), \\
 \Om_l^3:=&(\Hn \backslash B_{l_2}(\xi_l^{(1)})) \cap(\Hn \backslash B_{l_2}(\xi_l^{(2)})).
\end{align*}
Obviously, $\Om_l=\Om_l^1 \cup \Om_l^2 \cup \Om_l^3$ for large $l$.
For $l_2>100l_1>1000S$ (the values of $l_1$, $l_2$ will be determined
          in the end), we introduce the cut-off functions $\eta_l \in C_c^{\infty}(\Hn)$ satisfying
          \begin{gather*}
            \eta_l(\xi)=\begin{cases}
                          1, & \xi\in B_{l_1}(\xi_l^{(1)})\cap B_{l_1}(\xi_l^{(2)}), \\
                          0, & \xi \in (\Hn \backslash B_{l_2}(\xi_l^{(1)}))\cup (\Hn \backslash B_{l_2}(\xi_l^{(2)})),\\
                          \geq 0, & \text{ otherwise},
                        \end{cases}\\
                        |\nabla_{\H}\eta_l(\xi)|\leq \frac{10}{l_2 -l_1}, \quad \xi \in \Hn,
          \end{gather*}
and set  $H_l(\theta) = \eta_lW_l(\theta)$.

  \textbf{Step 4:} Now we complete the proof by using the minimax structure of $H_l$.
 Roughly speaking, we  will prove that $H_{l}(\theta)\in \Gamma_{l}$  but its  energy bound contradicts  to  $b_{l,\tau}$.

           Multiplying   $(1-\eta_l)W_l(\theta)$ on both sides of \eqref{128} and integrating by parts, we have
  $$
          \int_{\Om_l} \nabla_{\H}W_l(\theta) \nabla_{\H}((1-\eta_l)W_l(\theta)) = \int_{\Om_l} R_l H^\tau (1-\eta_l)|W_l(\theta)|^{p+1}.
$$
A direct computation shows that
\begin{align*}
& \int_{\Om_l^3} |\nabla_{\H}W_l(\theta)|^2-\int_{\Om_l^3} R_l H^\tau|W_l(\theta)|^{p+1} \\
= & -\int_{\Om_2^l}\nabla_{\H}W_l(\theta) \nabla_{\H}((1-\eta_l)W_l(\theta))+\int_{\Om_2^l} R_l H^\tau(1-\eta_l)|W_l(\theta)|^{p+1} \\
\geq & -\int_{\Om_2^l}|\nabla_{\H}W_l(\theta)|^2-\frac{10}{l_2-l_1}\int_{\Om_2^l}|\nabla_{\H}W_l(\theta)||W_l(\theta)| - 2 A_1 \int_{\Om_2^l}|W_l(\theta)|^{p+1} .
\end{align*}
    By Proposition \ref{prop:2.4} we know that
          \begin{gather}\label{129}
            |W_l(\theta)(\xi)| \leq \frac{C_3(n,A_1)}{d(\xi, \xi_l^{(i)})^{Q-2}}, \quad l_1 \leq d(\xi, \xi_l^{(i)}) \leq l_2, \\\label{131}
            |\nabla_{\H} W_l(\theta)(\xi)| \leq \frac{C_3(n,A_1)}{d(\xi, \xi_l^{(i)})^{Q-1}}, \quad l_1 \leq d(\xi, \xi_l^{(i)}) \leq l_2 ,
          \end{gather}
    when $l$ is chosen large enough.
     By combining    \eqref{129} and \eqref{131}, we have
 \be\label{132}
    \int_{\Om_l^3} |\nabla_{\H}W_l(\theta)|^2-\int_{\Om_l^3} R_l H^\tau|W_l(\theta)|^{p+1}
    \geq  -C_0(n)C_3(n,A_1)\Big(\frac{1}{l_1^2}-\frac{1}{l_2^2}\Big) .
\ee
  Using the above  estimates \eqref{129}--\eqref{132}, we obtain
            \begin{align*}
            I_{R_l, \tau}(H_l(\theta))
            =&\frac{1}{2}\int|\nabla_{\H}  \eta_l|^2 |W_l(\theta))|^2 +\int\eta_l W_l(\theta)\nabla_{\H}\eta_l\nabla_{\H} W_l(\theta) \\
            &+\frac{1}{2}\int \eta_l^2|\nabla_{\H} W_l(\theta)|^2 -\frac{1}{p+1} \int R_lH^\tau|\eta_l W_l(\theta)|^{p+1}
            \\=&I_{R_l, \tau}(W_l(\theta))+\frac{1}{2}\int_{\Om_l^2}|\nabla_{\H}  \eta_l|^2 |W_l(\theta))|^2 +\int_{\Om_l^2}\eta_l W_l(\theta)\nabla_{\H}\eta_l\nabla_{\H} W_l(\theta) \\
            &+\frac{1}{2}\int_{\Om_l^2\cup \Om_l^3} (\eta_l^2-1)|\nabla_{\H} W_l(\theta)|^2 +\frac{1}{p+1} \int_{\Om_l^2\cup \Om_l^3} R_lH^\tau(1-\eta_l^{p+1})| W_l(\theta)|^{p+1}
            \\
            \leq& I_{R_l, \tau}(W_l(\theta))+C_0(n)C_3(n,A_1)\frac{\ln l_2-\ln l_1}{(l_2-l_1)^2}+ C_0(n)C_3(n,A_1)\frac{1}{l_1l_2}
           \\&+C_0(n)C_3(n,A_1)\Big(\frac{1}{l_1^2}-\frac{1}{l_2^2}\Big) \\& +\frac{1}{2} \int_{\Om_l^2} (|\eta_l|^2-1)|\nabla_{\H} W_l(\theta)|^2
            + \frac{1}{p+1} \int_{\Om_l^2} R_lH^\tau (1-|\eta_l|^{p+1})| W_l(\theta)|^{p+1}\\
            \leq& I_{R_l, \tau}(W_l(\theta))+C_0(n)C_3(n,A_1)\frac{\ln l_2-\ln l_1}{(l_2-l_1)^2} +C_0(n)C_3(n,A_1)\frac{1}{l_1l_2}\\&+C_0(n)C_3(n,A_1)\Big(\frac{1}{l_1^2}-\frac{1}{l_2^2}\Big)+C_0(n)C_3(n,A_1)\Big(\frac{1}{l_1^4}-\frac{1}{l_2^4}\Big).
            \end{align*}
     Now using \eqref{127}  and choosing $l_2 >200l_1$, $l_1>10S$ to be  large enough, we have
      $$
         I_{R_l, \tau}(H_l(\theta)) \leq  c_{l, \tau}^{(1)} + c_{l, \tau}^{(2)} -\var _4/2.
      $$
          Then for $l$ large enough (depending on $l_{1}, l_{2}, \var's, C's$), it holds
          $H_l \in \Gamma_l$. Therefore, for $l$ large enough, we obtain
         $$
            \max_{\theta \in [0, 1]^2}I_{R_l, \tau}(H_l(\theta)) \leq c_{l, \tau}^{(1)} + c_{l, \tau}^{(2)} -\var _4/2 <b_{l, \tau},
       $$
which contradicts to the definition of  $b_{l, \tau}$. We now complete the proof of Theorem \ref{thm:4.1}.
\section{Blow up analysis and proof of main theorems}\label{sec:5}
In this section we  present the main result Proposition \ref{prop:5.1}, from which we deduce Theorems \ref{thm:1}--\ref{thm:3} and Corollary \ref{cor:1}. The crucial ingredients of our proofs are the understanding of the blow up profiles, see the work in Prajapat-Ramaswamy \cite{PR2003}.
\subsection{Subcritical approximation}
We state the main result as following:
\begin{prop}\label{prop:5.1}
  Assume that $\{R_l\}$ is a sequence of functions satisfying conditions (i)-(iii) and $(R_2)$. Assume also that there exist some bounded open sets $O^{(1)}, \ldots , O^{m} \subset \Hn$ and some  constants $\delta_2, \delta_3>0$, such that for all $1 \leq i \leq m$,
  \begin{gather*}
    (\xi_l^{(i)})^{-1} \circ \widetilde{O}_l^{(i)} \subset O^{(i)} \quad \text{ for all }\,l,  \\
    \{ u: I_{R_\infty^{(i)}}'(u) = 0, u>0, u \in E, c^{(i)} \leq I_{R_\infty^{(i)}}(u) \leq c^{(i)} + \delta_2 \}\cap V(1, \delta_3, O^{(i)}, R_\infty^{(i)})= \emptyset.
  \end{gather*}
  Then for any $\var  >0$, there exists  integer $\overline{l}_{\var , m}>0$, such that, for all $l \geq \overline{l}_{\var , m}$, there exists $u_l \in V_l(m, \var )$ which solves
  \be \label{139}
-\Delta_{\H}u_l = R_l (\xi) u_l^{(Q+2)/(Q-2)}  ,\quad u_l>0 \quad \text{ in }\, \Hn.
  \ee
  Furthermore, $u_l$ satisfies
  \be \label{140}
    \sum_{i=1}^{m}c^{(i)}- \var  \leq I_{R_{l}}(u_l) \leq \sum_{i=1}^{m}c^{(i)} + \var .
  \ee
\end{prop}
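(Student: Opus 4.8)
The plan is to realise $u_l$ as the strong limit, along a sequence $\tau_j\downarrow 0$, of the subcritical solutions produced by Theorem \ref{thm:4.1}, the only real issue being to make sure that none of the $m$ bubbles is lost in the limit. Since the hypotheses of Proposition \ref{prop:5.1} contain those of Theorem \ref{thm:4.1}, given $\var>0$ I would first apply that theorem with $\var/2$ in place of $\var$: there is an integer $\overline{l}_{\var/2,m}$ so that for every $l\geq\overline{l}_{\var/2,m}$ and every $\tau\in(0,\overline{\tau}_l)$ there exists $u_{l,\tau}\in V_l(m,\var/2)$ solving \eqref{30} with $\bigl|I_{R_l,\tau}(u_{l,\tau})-\sum_{i=1}^m c^{(i)}\bigr|\leq\var/2$. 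Fixing such an $l$, I would choose $\tau_j\downarrow 0$, set $u_j:=u_{l,\tau_j}$ and $p_j:=\tfrac{Q+2}{Q-2}-\tau_j$, and observe that $\{u_j\}$ is bounded in $E$; passing to a subsequence, $u_j\rightharpoonup u_l$ weakly in $E$, while the optimal decomposition of Proposition \ref{prop:4.1} writes $u_j=\sum_{i=1}^m\alpha_i^j w_{\xi_i^j,\lambda_i^j}+v^j$ with $\|v^j\|<\var/2$, $\xi_i^j\in O_l^{(i)}$, $|\alpha_i^j-R_l(\xi_i^j)^{(2-Q)/4}|=o_{\var}(1)$ and $\lambda_i^j>2/\var$. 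Passing to a further subsequence I may assume that $\xi_i^j\to\xi_i^\infty\in\overline{O_l^{(i)}}$ and that, for each $i$, either $\lambda_i^j\to\lambda_i^\infty\in[2/\var,\infty)$ or $\lambda_i^j\to\infty$; let $\mathcal{I}\subset\{1,\dots,m\}$ collect the indices of the second type.

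The heart of the matter is to prove $\mathcal{I}=\emptyset$, i.e.\ that no bubble degenerates. Since the centres $\xi_i^j$ lie in the sets $O_l^{(i)}$, which are pairwise separated by distance $\geq S_l$, the indices in $\mathcal{I}$ give rise to \emph{distinct} concentration points of the subcritical solutions $u_j$, so the local blow-up analysis of Prajapat--Ramaswamy \cite{PR2003} applies near each of them, and by condition $(R_2)$ the function $R_l$ obeys the flatness condition $(*)_\beta$ with $\beta\in(Q-2,Q)$ in a neighbourhood of each such point. I would then invoke \cite{PR2003}, refined by the profile estimates of Appendix \ref{app:A}, in two steps: (a) in the sharp range $\beta<Q$, a blowing-up sequence of subcritical solutions cannot concentrate at more than one point, so $|\mathcal{I}|\leq 1$; (b) if $\mathcal{I}=\{i_0\}$, the single (simple) blow-up at the critical point $\xi^*$ of $R_l$ subject to $(R_2)$ is ruled out by the CR Pohozaev identity \eqref{Pohozaevid}. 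For (b), applying \eqref{Pohozaevid} on a small ball about the concentration point and inserting the renormalised blow-up profile of Appendix \ref{app:A} (whose expansion at the centre has the form $|\xi|^{2-Q}+A+\cdots$ with $A>0$), the boundary integral $\int_{\partial B_\sigma}B$ converges to a strictly negative constant by Lemma \ref{lem:positivepoho}(ii), whereas the terms on the right-hand side, namely $\bigl(\tfrac{Q}{p_j+1}-\tfrac{Q-2}{2}\bigr)\int R_lH^{\tau_j}u_j^{p_j+1}=O(\tau_j)$, $\tfrac{1}{p_j+1}\int\mathcal{X}(R_lH^{\tau_j})u_j^{p_j+1}$, and the remaining boundary term, are controlled through $\tau_j\to 0$, the blow-up rate $\lambda_{i_0}\to\infty$, and the non-degeneracy coefficient $\sum_j(a_j+b_j)+\kappa c$ of $(R_2)$; the incompatibility of the two sides, which relies precisely on the sign $\tau_j>0$ and on $\beta\in(Q-2,Q)$, gives the required contradiction. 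This exclusion of blow-up is the main obstacle of the proof.

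Once $\mathcal{I}=\emptyset$, every $\lambda_i^j$ stays bounded, so no bubble degenerates and there is no loss of mass at infinite scale; combining this with the uniform decay guaranteed by $u_j\in V_l(m,\var/2)$ and the a priori estimates of Subsection \ref{sec:2} (which preclude escape of mass to spatial infinity), I would obtain $u_j\to u_l$ in $L^{Q^*}(\Hn)$ and hence, testing the equations for $u_j$ against $u_j-u_l$, $u_j\to u_l$ strongly in $E$. Then $u_l$ is a nonnegative weak solution of \eqref{139}; it is nontrivial because $\|u_l\|\geq\bigl\|\sum_i\alpha_i^\infty w_{\xi_i^\infty,\lambda_i^\infty}\bigr\|-\var/2>0$, so $u_l>0$ by the strong maximum principle (\cite{B1969}) and $u_l$ is smooth by Folland--Stein regularity \cite{FS1974}. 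Moreover $u_l=\sum_{i=1}^m\alpha_i^\infty w_{\xi_i^\infty,\lambda_i^\infty}+v_l$ with $\|v_l\|\leq\var/2<\var$, $\lambda_i^\infty\geq 2/\var>\var^{-1}$, $|\alpha_i^\infty-R_l(\xi_i^\infty)^{(2-Q)/4}|=0<\var$, and, arguing as in Claim \ref{claim:10}, $\xi_i^\infty\in O_l^{(i)}$; hence $u_l\in V_l(m,\var)$. Finally $u_j\to u_l$ strongly and $H^{\tau_j}\to 1$ give $I_{R_l,\tau_j}(u_j)\to I_{R_l}(u_l)$, so $\sum_i c^{(i)}-\var\leq I_{R_l}(u_l)\leq\sum_i c^{(i)}+\var$, which is \eqref{140}. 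Taking $\overline{l}_{\var,m}:=\overline{l}_{\var/2,m}$ completes the plan.
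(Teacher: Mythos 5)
Your plan follows the same route as the paper's proof: realize $u_l$ as a limit of the subcritical solutions from Theorem \ref{thm:4.1} as $\tau\downarrow 0$, and rule out blow-up via the CR Pohozaev identity \eqref{Pohozaevid}. The core of step (b) --- the contradiction around a single isolated simple blow up point, combining the Bôcher-type expansion $a|\xi|^{2-Q}+b+\cdots$ (with $b>0$) for the rescaled profile with Lemma \ref{lem:positivepoho}(ii) to get a strictly negative limit of the boundary integral, while the right-hand side terms of \eqref{Pohozaevid} have non-negative liminf --- is precisely what the paper does.

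However, step (a), the claim that $|\mathcal{I}|\leq 1$ because ``a blowing-up sequence of subcritical solutions cannot concentrate at more than one point,'' misapplies the single-blow-up result of \cite{PR2003}. That theorem is invoked in the paper only for the \emph{limit} problem in the proof of Theorem \ref{thm:2}, not for the multi-bump subcritical solutions $u_j\in V_l(m,\var/2)$, whose $m$ bubbles sit in pairwise-separated regions $O_l^{(i)}$ and can a priori blow up independently; the paper's own outline says the opposite of your claim, namely that if blow-up occurs at all there must be exactly $m$ isolated simple blow-up points. The slip is harmless, because step (a) is not actually needed: the Pohozaev contradiction in (b) applies around any one $i_0\in\mathcal{I}$, whatever $|\mathcal{I}|$ is, and already forces $\mathcal{I}=\emptyset$. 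Two smaller misattributions: the profile expansion $|\xi|^{2-Q}+A+\cdots$ is the Bôcher lemma of \cite[Prop.~5.7]{PR2003}, not a result of Appendix \ref{app:A} (the Appendix enters the proofs of Theorems \ref{thm:2}--\ref{thm:3}, not of Proposition \ref{prop:5.1}); and the $\mathcal{X}_i(R_i)$ contribution to \eqref{Pohozaevid} is controlled in the paper via \cite[Cor.~5.15]{PR2003}, not via the non-degeneracy coefficient $\sum_j(a_j+b_j)+\kappa c$ of $(R_2)$, which again is a Theorem \ref{thm:2}--\ref{thm:3} ingredient through Corollary \ref{cor:A.2}.
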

The proof of Proposition \ref{prop:5.1} is by contradiction argument,
depending on  blow up analysis for a family of subcritical equations \eqref{30} approximating \eqref{139}. More precisely, if the sequence
of  subcritical solutions $u_{l, \tau}$ $(0<\tau<\overline{\tau}_{l})$ obtained in Theorem \ref{thm:4.1} is uniformly bounded
as $\tau\to  0$, some subelliptic estimates in \cite[Claim 5.3]{PR2003}  imply that there exists a subsequence converging to a
positive solution $u_l$ of \eqref{139} satisfying  \eqref{140}. However, a prior $\{u_{l,\tau}\}$ might blow up,  we have to rule out this possibility.

Note that $u_{l, \tau}\in V_{l}(m, o_{\var_2}(1))$, which consists of functions with $m$ $(m \geq 2)$  \emph{bumps},     some blow up analysis results in \cite[Sections 6-7]{PR2003} imply that, as $\tau\to  0$, there is no blow up occurring under the hypotheses of Proposition \ref{prop:5.1}.    Thus we  only need to show the boundedness of $\{u_{l, \tau}\}$ as $\tau \to  0$, this will be done by  contradiction argument with the aid of blow up analysis established in Prajapat-Ramaswamy \cite{PR2003}.  We give a brief introduction for readers' convenience.

Let $\{\tau_i\}_{i=1}^{\infty}$ be a sequence of nonnegative constants satisfying $\lim _{i \to  \infty} \tau_i=0$, and set $p_i=\frac{Q+2}{Q-2}-\tau_i$.
Suppose that $0 \leq u_i \in \Gamma_2(\Om)$ satisfies
\be\label{blowupeq}
-\Delta_{\H}u_i = R_i (\xi) u_i^{p_i} \quad  \text{ in }\, \Om,
\ee
where $\Om $ is a domain in $\Hn$ and  $R_i \in \Gamma_{2+\al}(\Om )$, $0<\al<1$ satisfy, for some positive constants $A_1$ and $A_2$,
\be\label{bdd}
1 / A_1 \leq R_i \quad \text { and } \quad\|R_i\|_{C^1(\Om )} \leq A_2.
\ee

We recall the notion of various types of blow up points, which were introduced by Schoen \cite{Schoen43,Schoen44,Schoen45}.	 This incisive concept  helps to regain compactness, and forms a natural demarcation to more complicated types of blow up phenomenon.
\begin{defn}\label{def:blowuppoint}
	Suppose that $\{u_i\}$ satisfies \eqref{blowupeq} and  $\{R_i\}$ satisfies \eqref{bdd}.
	\begin{itemize}
		\item[(1)]$\overline{\xi}\in \Om$ is called a blow up point of $\{u_i\}$ if there exists a
		sequence $\xi_i\in \Om $ such that $\xi_i$ is a local maximum point of $u_i$
		satisfying $u_i(\xi_i)\to \infty$ and $\xi_i\to \overline{\xi}$ as $i\to \infty$. For simplicity, we will often say
		that $\xi_i\to \overline{\xi}$ is a  blow up point of $\{u_i\}$.
		\item[(2)]$\overline{\xi}\in \Om$ is an isolated blow up point of $\{u_i\}$ if  $\xi_i\to \overline{\xi}$ is a blow up point such that
		$$	u_i(\xi)\leq \overline C d( \xi,\xi_i)^{-2/(p_i-1)} \quad \text{ for any }\, \xi\in B_{\overline{r}}(\xi_i)\setminus\{\xi_i\},$$
		where	$0<\overline{r}<\operatorname{dist}(\overline{\xi},\Om)$ and  $\overline C>0$ are some constants.
				\item[(3)]For any  $\theta \in \pa B_1$, we define the function $f_{u_i, \theta}(s):[0, R] \to \R $ (for a fixed $R>0$) as
				$$
				f_{u_i, \theta}(s) = s^{2/(p_i-1)}u_i(\xi_i\circ \delta_s \theta),
				$$
				where  $\delta_s \theta$ is the dilation in $\Hn$. 	We say that an isolated blow up point $\overline{\xi}\in \Om$ of $\{u_i\}$ is simple if there exists $\rho>0$ (independent of $i$ and $\theta \in \pa B_1$) such that $f_{u_i, \theta}$ has precisely one critical point in $(0,\rho)$ for every $\theta\in  \pa B_1$ for large $i$.
	\end{itemize}
\end{defn}

	Roughly speaking, item (2) (or (3), respectively) in the above definition describes the situation when clustering of bubbles (or bubble towers, respectively) is excluded among various blow up scenarios. 	We also remark that item (3) is a modified  definition of isolated simple blow up point when comparing with Riemannian manifold. Indeed,  according to Schoen \cite{Schoen43,Schoen44,Schoen45}, a simple blow up point on a sphere $\Sn$ $(n\geq3)$ is a point where the solution of \eqref{Nirenberg},  with the exponent $\frac{n+2}{n-2}$ substituted by $p\in (1,\frac{n+2}{n-2}]$,  approximates the \emph{standard solution} up to a conformal transformation, in a neighborhood. This definition was further reformulated by Li in \cite{Li1996} using spherical averages. However, this definition does not seem to work for the Heisenberg group since 	the \emph{standard solution} in the case of CR sphere $\S^{2n+1}$ is not radial.

Before starting the proof of Proposition \ref{prop:5.1}, we  have additional remarks on the solutions $\{u_{l,\tau}\}$. The following statements can be found in \cite{PR2003}:
\begin{itemize}
	\item	By some standard blow up arguments, the blow up points  cannot occur in $\Rn \setminus (\cup_{j=1}^{m}{\widetilde{O}}_{l}^{(j)})$ since the energy of $\{u_{l, \tau}\}$ in that region is small using the fact  $u_{l, \tau} \in V_{l}(m, o_{\var_2}(1))$. Hence the blow up points can occur only in $\cup_{j=1}^{m}{\widetilde{O}}_{l}^{(j)}$.
	\item	 Using  Proposition 7.1 in \cite{PR2003} and  the definition of $V_{l}(m,o_{\var_2}(1))$,  there are at most $m$ isolated blow up points, namely, the blow up occurs in $\{\overline{\xi}_{1}, \ldots, \overline{\xi}_{m}\}$ for some $\overline{\xi}_{j} \in \widetilde{O}_{l}^{(j)}$ $(1\leq j\leq m)$.  Here we used the finite energy condition \eqref{31}.
	\item 	Under the flatness condition $(R_2)$, we conclude from \cite[Proposition 6.2]{PR2003}  that an isolated blow up point has to be an isolated simple blow up point. From the structure of functions in $V_{l}(m,o_{\var_2}(1))$ we know that if the blow up does occur, there have to be exactly $m$ isolated simple  blow up points.  		
\end{itemize}

Let us consider this situation only, namely, $\{\overline{\xi}_{1}, \ldots,\overline{\xi}_{m}\}$  is the blow up set and they are all \emph{isolated simple} blow up points. Moreover,  in our situation, $R_{i}=RH^{\tau_i}$ is the sequence of functions in \eqref{blowupeq} with $\Om=\Hn$.  We assume that  the blow up occurs at $u_{i}=u_{l, \tau_{i}}$ and  we suppress the dependence of $l$ in the notation since $l$ is fixed in the blow up analysis. Now we complete the proof of Proposition \ref{prop:5.1} by  checking balance via the   Pohozeav identity \eqref{Pohozaevid}.
\begin{proof}[Proof of Proposition \ref{prop:5.1}]
Let  $u_i$ be the solution of \eqref{blowupeq} with $R_{i}=RH^{\tau_i}$ and  $\Om=\Hn$. Without loss of generality, we may assume  that $\overline{\xi}_1=0$ and $\xi_{i}=(x^{(i)},y^{(i)},t^{(i)})\to 0$ be the sequence as in Definition  \ref{def:blowuppoint}. By \eqref{20} and \eqref{21},  we also assume that $R_i>0$  in $B_1$.  Applying the Pohozaev identity \eqref{Pohozaevid}   to  $u_i$, we obtain
	\begin{align}
		\int_{\pa {B_\si (\xi_i)}}B(\si , \xi_i, u_i, \nabla_{\H}u_i) =& \Big( \frac{Q}{p_i+1} - \frac{Q-2}{2}\Big) \int_{B_\si (\xi_i)}R_iu_i^{p_i+1}\notag\\& + \frac{1}{p_i +1}\int_{B_\si (\xi_i)} \mathcal{X}_i(R_i)u_i^{p_i+1}-\frac{1}{p_i +1}\int_{\pa  B_\si (\xi_i)} R_i u_i^{p_i+1} \mathcal{X}_i \cdot \nu,\label{identity}
	\end{align}
	where  $\nu$ is the outward unit normal vector with respect to $ \pa B_\si (\xi_i)$ and $$
	B(\si , \xi_i, u_i, \nabla_{\H}u_i) =
	\frac{Q-2}{2}(A \nabla u_i \cdot \nu) u_i-\frac{1}{2}|\nabla_{\H} u_i|^2 \mathcal{X}_i \cdot \nu +(A \nabla u_i \cdot \nu) \mathcal{X}_i (u_i)
	$$
	with $$\mathcal{X}_i=\sum_{j=1}^n\Big((x-x^{(i)})_j \frac{\pa}{\pa x_j}+(y-y^{(i)})_j \frac{\pa}{\pa y_j}\Big)+2(t-t^{(i)}+2(x^{(i)} \cdot y-y^{(i)} \cdot x)) \frac{\pa}{\pa t}.
	$$

We are going to derive a contradiction to \eqref{identity}, by showing that for small $\si>0$,
\be\label{identity1}
\liminf_{i \to  \infty}u_i(\xi_i)^{2} \times \text{RHS of \eqref{identity}}\geq 0	
\ee
and
\be\label{identity2}
\liminf_{i \to  \infty}u_i(\xi_i)^{2}\int_{\pa {B_\si (\xi_i)}}B(\si , \xi_i, u_i, \nabla_{\H}u_i)<0.
\ee
Hence Proposition \ref{prop:5.1} will be established.
	
 Note that  $\frac{Q}{p_i+1} - \frac{Q-2}{2}\geq 0$, we have
$$
\liminf_{i \to  \infty}u_i(\xi_i)^{2} \Big( \frac{Q}{p_i+1} - \frac{Q-2}{2}\Big) \int_{B_\si (\xi_i)}R_iu_i^{p_i+1} \geq 0.
$$
Using  Corollary 5.15 in \cite{PR2003}  we know
 $$
 \liminf_{i \to  \infty} \frac{u_i(\xi_i)^{2}}{p_i +1}\int_{B_\si (\xi_i)} \mathcal{X}_i(R_i)u_i^{p_i+1}= 0.
 $$
It follows from \cite[Proposition 4.3]{PR2003} that
	$$
	0\leq   \int_{\pa  B_\si (\xi_i)} R_i u_i^{p_i+1} \mathcal{X}_i \cdot \nu = O(u_i(\xi_i)^{-p_i-1}),
	$$
	which leads to
	$$\lim_{i \to  \infty}-\frac{u_i(\xi_i)^2}{p_i +1}\int_{\pa  B_\si (\xi_i)} R_i u_i^{p_i+1} \mathcal{X}_i \cdot \nu=0.$$
Thus, we complete the proof of \eqref{identity1}.  It  remains to prove \eqref{identity2}.

In a small punctured disc centered at $0$, we derive from the Bôcher type Lemma in \cite[Proposition 5.7]{PR2003} that
$$
	\lim_{i \to  \infty}u_i(\xi_i)u_i(\xi) = a |\xi|^{2-Q} +b +\al  (\xi),
$$
	where $a, b>0$ are  two constants and $\al  (\xi)$ is a smooth function near $0$ with $\al  (0) = 0$. It follows from Lemma \ref{lem:positivepoho} that, when $\si>0$ is  small,
	$$
	\liminf_{i \to  \infty}u_i(\xi_i)^{2}\int_{\pa {B_\si (\xi_i)}}B(\si , \xi_i, u_i, \nabla_{\H}u_i) = \liminf_{i \to  \infty}\int_{\pa {B_\si (\xi_i)}}B(\si , \xi_i, h_i, \nabla_{\H}h_i)<0,
	$$
where  $h_i(\xi): = u_i(\xi_i)u_i(\xi)$. This gives the proof of  \eqref{identity2}.

In conclusion, from the above arguments we know that there will be no blow up occur under
the hypotheses of Proposition \ref{prop:5.1}. We complete the proof.
\end{proof}
\subsection{Final arguments}
We are  ready to complete the proofs of the main results in this paper.
\begin{proof}[Proof of Theorem \ref{thm:1}]
Let $q_0\in \S^{2n+1}$ be the south pole, we write \eqref{perturbed} as the form \eqref{maineq1} by  using the  CR equivalence.
	Under the hypotheses of  Theorem \ref{thm:1} we know that $R(\xi)$ satisfies
$$
	\| R \|_{L^\infty(\Hn)} \leq A_1,\quad
	R \in C^0(\Hn \backslash B_S), \quad
	\lim_{|\xi| \to  \infty}R(\xi) = R_\infty,
$$
where $A_1 >0$, $S>1$ and $R_\infty >0$ are some constants.
Let $\psi(\xi) \in C^{\infty}(\Hn)$ satisfy   $(R_2)$ and
$$
  \| \psi \|_{C^2(\Hn)}< \infty,\quad
  \lim_{|\xi| \to  \infty}\psi(\xi) =: \psi_{\infty}>0,\quad
    \mathcal{X}(\psi)<0, \quad \forall\, \xi \neq 0,
$$
where $\mathcal{X}$ is the vector field defined by \eqref{vectorfield2}.
It follows from the Kazdan-Warner type condition \eqref{KW2} that
$$
  -\Delta_{\H} u=\psi |u|^{4/(Q-2)}u \quad \text{ in }\,\Hn
$$
has no nontrivial solution in $E$.

For any $\var \in (0,1)$, $k \geq 1$ and $m\geq 2$, let $\overline{k}$ be an integer  such that for any $2\leq s\leq m$ it holds $C_{\over{k}}^s\geq k$, where $C_{\over{k}}^s$ is a combination number. Then we choose $e_1,  \ldots , e_{\overline{k}} \in \pa  B_1$ to be $\overline{k}$ distinct points. Let
$$
  A_{S} = \max_{|\xi|\geq S}|R(\xi)-R_\infty| +\max_{|\xi|\geq S}|\psi(\xi)-\psi_\infty|, \quad S>1,
$$
and $\widetilde{\Om }_l^{(i)}$ be the connected component of
$$
\{ \xi: \var (\psi((e_i)^{-l}\circ \xi)-\psi_\infty)+ R_\infty - A_{\sqrt{l}}>R(\xi) \}
$$
which contains $ (e_i)^l$. Define
$$
  S_l^{(i)} = \min_{1\leq i  \leq m}\sup\{ |(e_i)^{-l}\circ \xi|: \xi \in  \widetilde{\Om }_l^{(i)}\}
$$
and
$$
  R_{\var , k, m, l}(\xi)= \begin{cases} \var (\psi((e_i)^{-l}\circ \xi)-\psi_\infty)+ R_\infty - A_{\sqrt{l}} & \text{ if }\,x \in \widetilde{\Om }_l^{(i)}, \\ R(\xi) & \text { otherwise. }\end{cases}
$$
It is easy to prove that  $\operatorname{diam}(\widetilde{\Om }_l^{(i)}) \leq \sqrt{l}$ for large $l$ and $\lim_{l \to  \infty}S_l^{(i)} = \infty$.

With the function $R_{\var , k, m, l}$ defined above,  we claim that for large $l$, the equation
\be \label{153}
  -\Delta_{\H}u = R_{\var , k, m, l}(\xi)u^{(Q+2)/(Q-2)}, \quad u>0 \quad \text{ in }\,\Hn
\ee
has at least $k$ solutions with $s$ bumps in $E$. To verify it,  let $\{e_{j_1}, \ldots , e_{j_s}\}$ be any distinct $s$ points among $\{e_1, \ldots , e_{\overline{k}}\}$. For $1\leq i\leq s$, we define
\begin{gather*}
  \xi_l^{(i)} = (e_{j_i})^l, \\
  O_l^{(i)} = B_1(\xi_l^{(i)}), \quad \widetilde{O}_l^{(i)} =  B_2(\xi_l^{(i)}), \\
  R_\infty^{(i)} = \var (\psi - \psi_\infty) + R_\infty, \\
  a^{(i)} = \var (\psi(0) - \psi_\infty) + R_\infty.
\end{gather*}
By using  Proposition \ref{prop:5.1},  we  conclude that there exists at least one positive  solution in $V_l(s, \var )$ for large $l$.  Obviously, if we choose a different set of $s$ points among $\{e_1, \ldots , e_{\overline{k}}\}$, we get different solutions since their mass are distributed in different regions by the definition of $V_l(s, \var )$. Due to the choice of $\overline{k}$, \eqref{153} has at least $k$ positive solutions for large $l$.

Finally,  we fix $l$ large enough to make the above arguments work for all $2 \leq s \leq m$, and set $R_{\var , k, m} = R_{\var , k, m, l}$. Evidently, there exist at least $k$ positive  solutions with $s$ $(2 \leq s \leq m)$ bumps to the  equation \eqref{maineq1} with $R=R_{\var , k, m}$.
Theorem \ref{thm:1} is proved  by using the inverse of  Calay transform \eqref{inverse}.
\end{proof}
\begin{proof}[Proof of Corollary \ref{cor:1}]
	One can see from the proof in Theorem \ref{thm:1} that if $\bar{R}\in C^{\infty}(\Hn)$, then
	$\bar{R}_{\var , k, m}-\bar{R}$ can also be achieved.
\end{proof}

\begin{proof}[Proof of Theorem \ref{thm:2}]
We prove it by contradiction argument. Suppose not,  then for some  $\overline{\var }>0$ and $k \geq 2$, there exists a sequence of integers $I_l^{(1)}, \ldots , I_l^{(k)}$ such that
$$
  \lim_{l \to  \infty}|I_l^{(i)}| = \infty, \quad
  \lim_{l \to  \infty}|I_l^{(i)}- I_l^{(j)}| = \infty, \quad i\neq j,
$$
but \eqref{maineq1} has no solution in $V(k, \overline{\var }, B_{\overline{\var }}(\xi_l^{(1)}), \ldots ,B_{\overline{\var }}(\xi_l^{(k)}))$ satisfying $kc-\overline{\var } \leq I_R \leq kc+\overline{\var }$, where $c = (R(\xi^*))^{(2-Q)/2}(S_n)^Q/Q$ and  $\xi_l^{(i)} = (\hat{\xi})^{I_l^{(i)}} \circ\xi^*$.

For  $\var >0$ small, define
\begin{gather*}
R_l(\xi)=R(\xi), \\
O_l^{(i)}=B_{\var }(\xi_l^{(i)}), \quad \widetilde{O}_l^{(i)} = B_{2\var }(\xi_l^{(i)}), \\
S_l = \min_{i \neq j}\big\{ \sqrt{|I_l^{(i)}|}, \sqrt{|I_l^{(i)}- I_l^{(j)}|} \big\},\\
R_\infty^{(i)}(\xi)= R_\infty(\xi) = \lim_{l \to  \infty} R((\hat{\xi})^l\circ \xi),\\
a^{(i)} =R(\xi^*).
\end{gather*}
Obviously,  $R_\infty$ is periodic in $\hat{\xi}$ with respect to left translation and satisfies $(R_2)$ and $R_\infty(\xi^*) = \sup_{\xi \in \Hn}R_\infty(\xi)>0$.
Let $u$ be the positive solution of \eqref{maineq1} with $R(\xi)=R_\infty(\xi)$.
It follows from \cite[Theorem 2.1]{PR2003} that $u$ has no more than one blow up point.
Furthermore, Corollary \ref{cor:A.2} tells us one point blow up may not occur either. Nevertheless, by Proposition  \ref{prop:5.1}, we immediately derive a contradiction.
\end{proof}

\begin{proof}[Proof of Theorem \ref{thm:3}]
The proof is similar to the proof of Theorem \ref{thm:2}, we omit it here.
\end{proof}

\appendix

\section{Refined analysis of blow up profile}\label{app:A}
This appendix is a continuation of  the   blow up analysis studied in \cite{PR2003}. Herein, we present a more detailed characterization of the blow up phenomenon.   We keep using the notation $(z, t) \in \Cn \times \R$ or $(x, y, t) \in \Rn \times \Rn \times \R$ to denote some element $\xi$ of $\Hn$.

\begin{prop}\label{prop:anotherblowup}
Suppose that $\{\bar{R}_i\} \subset \Gamma_{2+\al}(\S^{2n+1})$ with uniform $C^1$ modulo of continuity and satisfies for some point $q_0 \in \S^{2n+1}$, $\var_0>0$, $A_1>0$ independent of $i$ and $2\leq \beta<n$, $$\text{$\{\bar{R}_i\}$ is bounded in $C^{[\beta], \beta-[\beta]}(B(q_0,\var_0))$, \quad $\bar{R}_i(q_0) \geq A_1$}$$  and
	$$
	\bar{R}_i(\xi)=\bar{R}_i(0)+Q_i^{(\beta)}(\xi)+\bar{R}_i(\xi), \quad|\xi| \leq \var_0,
	$$
	where $\xi$ is some pseudo-Hermitian normal coordinates system centered at $q_0$, $Q_i^{(\beta)}(\xi)$ satisfies $Q_i^{(\beta)}(\delta_{\lam}(\xi))=\lam^\beta Q_i^{(\beta)}(\xi)$, $\forall\, \lam>0$, $\xi \in \Hn$, and $R_i(\xi)$ satisfies  $$\sum_{s=0}^{[\beta]}|\nabla^s R_i(\xi)||\xi|^{-\beta+s} \to 0$$ uniformly in $i$ as $\xi \to 0$.
	
	Suppose also that $Q_i^{(\beta)} \to  Q^{(\beta)}$ in $C^1(\S^{2n+1})$ and for some constant $A_2>0$ that
\be\label{5.13}
	A_2|\xi|^{\beta-1} \leq|\nabla Q^{(\beta)}(\xi)|, \quad|\xi| \leq \var_0,
\ee
	and
\be\label{5.14}
\left(\begin{aligned}
	\int \widetilde{X} Q^{(\beta)}(\hat{\xi} \circ \xi)w_{0,1}^{2Q/(Q-2)} \\
\int Q^{(\beta)}(\hat{\xi} \circ \xi)w_{0,1}^{2Q/(Q-2)}
\end{aligned}\right) \neq 0, \quad \forall\, \hat{\xi} \in \Hn,
\ee
where   $\widetilde{X}:=(X_1,\ldots,X_n,Y_1,\ldots,Y_n,T)$.
 Let $v_i$ be positive solutions of \eqref{maineq} with $\bar{R}=\bar{R}_i$. 		If $q_0$ is an isolated simple blow up point of $v_i$, then $v_i$ has to have at least another blow up point.
\end{prop}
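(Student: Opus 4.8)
The plan is to argue by contradiction: assume $q_0$ is an isolated simple blow up point of $v_i$ and that $v_i$ has \emph{no} other blow up point, then derive a contradiction from the Pohozaev identity \eqref{Pohozaevid}. First I would switch to the Heisenberg picture via the Cayley transform \eqref{inverse}, so that the problem becomes $-\Delta_{\H} u_i = R_i(\xi) u_i^{p_i}$ with $p_i = (Q+2)/(Q-2)$ (here $\tau_i = 0$ since we deal with \eqref{maineq} rather than the subcritical approximant, but the same Pohozaev machinery from \cite{PR2003} applies), with $0$ an isolated simple blow up point and $\xi_i \to 0$ the sequence of local maxima. Since by hypothesis there is no other blow up point, standard arguments (see \cite[Section 5]{PR2003}) give that away from $0$ the rescaled functions $u_i(\xi_i) u_i(\cdot)$ are uniformly bounded on compact subsets of a fixed punctured ball $B_{2\si_0}\setminus\{0\}$, and one can pass to a limit.

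The core of the argument is the two-sided estimate of the Pohozaev boundary term. On one hand, by the Bôcher-type result \cite[Proposition 5.7]{PR2003} one has, in a small punctured ball,
$$
\lim_{i\to\infty} u_i(\xi_i) u_i(\xi) = a|\xi|^{2-Q} + b + \alpha(\xi)
$$
with $a > 0$, $b \geq 0$ constants and $\alpha$ smooth near $0$ with $\alpha(0)=0$; Lemma \ref{lem:positivepoho}(ii) then forces $b > 0$ (otherwise $b=0$ produces a singular solution contradiction, as in \cite{PR2003}) and gives
$$
\liminf_{i\to\infty} u_i(\xi_i)^2 \int_{\pa B_\si(\xi_i)} B(\si,\xi_i,u_i,\nabla_{\H}u_i) < 0
$$
for $\si$ small. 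On the other hand, I would estimate $u_i(\xi_i)^2$ times the right-hand side of the Pohozaev identity. The terms $\big(\tfrac{Q}{p_i+1}-\tfrac{Q-2}{2}\big)\int R_i u_i^{p_i+1}$ and the boundary term $-\tfrac{1}{p_i+1}\int_{\pa B_\si} R_i u_i^{p_i+1}\,\mathcal{X}\cdot\nu$ are handled exactly as in the proof of Proposition \ref{prop:5.1} (the first is nonnegative in the limit since the exponent gap is nonnegative — here it is $0$, so that term vanishes — and the second is $O(u_i(\xi_i)^{-p_i-1})\to 0$ after rescaling). The decisive term is $\tfrac{1}{p_i+1}\int_{B_\si(\xi_i)} \mathcal{X}_i(R_i) u_i^{p_i+1}$: using the flatness expansion $\bar R_i = \bar R_i(0) + Q_i^{(\beta)} + R_i$ with $Q_i^{(\beta)}$ homogeneous of degree $\beta$ under $\delta_\lam$, and the isolated simple blow up estimates (sharp pointwise bounds on $u_i$ near $\xi_i$ and the concentration rate $u_i(\xi_i)^{-2/(Q-2)}$) together with \cite[Corollary 5.15]{PR2003}, the dominant contribution after multiplying by $u_i(\xi_i)^2$ scales like
$$
u_i(\xi_i)^{- \frac{2(\beta - (Q-2))}{Q-2}} \int \widetilde{X}\!\cdot\! \nabla Q^{(\beta)}\big(\hat\xi \circ \xi\big)\, w_{0,1}^{2Q/(Q-2)}\, + (\text{concentration-rate factor})\cdot\!\int Q^{(\beta)}(\hat\xi\circ\xi) w_{0,1}^{2Q/(Q-2)}
$$
for a suitable limiting translation parameter $\hat\xi$ determined by the location of $\xi_i$ relative to $0$ in the rescaled coordinates. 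The non-degeneracy hypothesis \eqref{5.14} (the vector of these two integrals is never zero for any $\hat\xi\in\Hn$) is precisely what guarantees this quantity does not vanish and has a definite sign in the limit — after possibly choosing the orientation of the Pohozaev identity (i.e. testing against $\mathcal{X}_i$ versus a translated/reflected version), this term forces
$$
\liminf_{i\to\infty} u_i(\xi_i)^2 \times \text{RHS of \eqref{identity}} \geq 0,
$$
contradicting the strictly negative boundary-term limit above. Hence the assumption that $q_0$ is the only blow up point is untenable.

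The main obstacle I anticipate is the delicate bookkeeping in the term $u_i(\xi_i)^2 \int \mathcal{X}_i(R_i) u_i^{p_i+1}$: one must (a) justify that the rescaled bubble profile is exactly $w_{0,1}$ up to translation and dilation (isolated simple blow up, via \cite[Proposition 6.2]{PR2003}), (b) control the non-homogeneous remainder $R_i$ using the hypothesis $\sum_{s=0}^{[\beta]}|\nabla^s R_i(\xi)||\xi|^{-\beta+s}\to 0$ to show it contributes $o(1)$ after scaling, (c) track how the vector field $\mathcal{X}_i$ interacts with the group structure (the left-translation by $\xi_i$ and the dilation), which produces the correction terms $2(x^{(i)}\cdot y - y^{(i)}\cdot x)$ visible in the definition of $\mathcal{X}_i$, and (d) identify the limiting translation $\hat\xi$ and argue that \eqref{5.13}–\eqref{5.14} apply with that $\hat\xi$. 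Matching the scaling exponents in $\beta$ versus $Q-2$, and verifying that the blow-up rate $u_i(\xi_i)$ enters with exactly the power that makes the two competing pieces of the $\mathcal{X}_i(R_i)$ integral balance (this is where $\beta \in (Q-2,Q)$, equivalently the stated $2\le\beta<n$ in normal coordinates, is used), is the technical heart and will require careful use of the fine estimates in \cite[Sections 5--7]{PR2003} rather than any new idea.
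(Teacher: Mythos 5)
Your proposal runs the contradiction through the Pohozaev identity in the same way the paper proves Proposition \ref{prop:5.1}, but the paper's actual proof of Proposition \ref{prop:anotherblowup} uses a different engine: the Kazdan--Warner identities \eqref{KW1}--\eqref{KW2}, applied globally on $\Hn$, not the ball-local Pohozaev identity \eqref{Pohozaevid}. The distinction matters. The Pohozaev identity with the dilation field $\mathcal{X}_i$ only ever tests the equation against the \emph{scaling} direction, and after Euler's identity for the $\beta$-homogeneous part this can at best detect the second component of \eqref{5.14}, namely $\int Q^{(\beta)}(\hat\xi\circ\xi)\,w_{0,1}^{2Q/(Q-2)}$. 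The first component, the vector of integrals $\int \widetilde X Q^{(\beta)}(\hat\xi\circ\xi)\,w_{0,1}^{2Q/(Q-2)}$, comes from testing the equation against the left-invariant \emph{translation} vector fields $X_j,Y_j,T$ — this is the translational Kazdan--Warner condition \eqref{KW1}, which is not a boundary-free consequence of the dilation Pohozaev identity on a ball. Your formula for the "dominant contribution" indeed writes both integrals down, but the Pohozaev identity you invoke never produces the $\widetilde X Q^{(\beta)}$ piece, and you give no mechanism for obtaining it.

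The consequence is a real gap, not just a presentational one. The non-degeneracy hypothesis \eqref{5.14} asserts that the pair of integrals is never simultaneously zero; it is entirely consistent with the $\widetilde X$-component being nonzero while the $Q^{(\beta)}$-component vanishes for the limiting $\hat\xi$. In that case your Pohozaev argument terminates without contradiction: the rescaled $\mathcal{X}_i(R_i)$ term goes to zero, the boundary term is strictly negative, and you get $0<0$ from the wrong side — except that, at the critical exponent and for $\beta<Q-2$, what you actually get is either blowup of the right-hand side (giving $\int Q^{(\beta)}(\hat\xi\circ\xi)\,w_{0,1}^{2Q/(Q-2)}=0$ as a forced conclusion) or a finite limit, and in neither case do you touch the first component of \eqref{5.14}. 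The paper's proof resolves this by running the translation Kazdan--Warner identity \eqref{KW1} twice (once to pin down $|\xi_i|=O(u_i(\xi_i)^{-2/(Q-2)})$ using the lower bound \eqref{5.13}, and once to derive $\int X Q^{(\beta)}(\xi_0\circ\xi)\,w_{0,\Lam}^{2Q/(Q-2)}=0$) and the dilation Kazdan--Warner identity \eqref{KW2} once (to derive the analogous vanishing against $\sum x_jX_j+y_jY_j+2tT$), and then combining the two via Euler's identity to force \emph{both} components of \eqref{5.14} to vanish. You would need to supplement your Pohozaev scheme with exactly this translational Kazdan--Warner input, at which point the Pohozaev identity becomes superfluous — the global Kazdan--Warner identities already close the argument.

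A secondary omission: you note in point (d) that identifying the limiting translation $\hat\xi$ is delicate, but you do not realize that the estimate $|\xi_i|=O(u_i(\xi_i)^{-2/(Q-2)})$ — which is what makes $\hat\xi$ well-defined at all and is established in the paper as equation \eqref{5.16} — is itself derived from the translational Kazdan--Warner condition together with the lower bound \eqref{5.13}. This preliminary step is load-bearing and does not come from the Pohozaev identity either.
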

\begin{proof}
 Suppose the contrary: $q_0$ is the only blow up point of $v_i$. We first make a Cayley transform  with $q_0$ being the north pole  with inverse $\mathcal{C}$, then  equation \eqref{maineq} with $\bar{R}=\bar{R}_i$ is equivalent to
\be\label{5.15}
-\Delta_{\H} u_i=R_i(\xi)u_i^{(Q+2)/(Q-2)},\quad u_i>0\quad \text{ in }\, \Hn,
\ee
where
 $$
 u_i(\xi)=\Big(\frac{2^{2 n+2}}{((1+|z|^2)^2+t^2)^{n+1}}\Big)^{\frac{Q-2}{2Q}} v_i(\mathcal{C}(\xi))\quad  \text{ and }\quad  R_i(\xi) =\bar{R}_i(\mathcal{C}(\xi)) .
 $$
 It is easy to see that our hypotheses hold in the Heisenberg coordinates.

 Let $\xi_i \to  0$ be the local maximum of $u_i$. It follows from \cite[Lemma 5.12]{PR2003} that
 $$
|\nabla R_i(\xi_i)|=O\big(u_i(\xi_i)^{-2}+u_i(\xi_i)^{-(2 / (Q-2))([\beta]-1+ \beta-[\beta]) / (\beta-1)}\big) .
 $$	
  First we establish that
\be\label{5.16}
 |\xi_i|=O\big(u_i(\xi_i)^{-2 /(Q-2)}\big) .
\ee
  Since we have assumed that $v_i$ has no other blow up point other than $q_0$, it follows from \cite[Proposition 5.7]{PR2003} and the Harnack inequality that   $u_i(\xi) \leq C(\var)u_i(\xi_i)^{-1}|\xi|^{2-Q} $ for $|\xi|\geq \var>0$.

Let $X$ be any left invariant vector field in \eqref{vectorfields}. It follows from the Kazdan-Warner type condition \eqref{KW1} that
\be\label{5.17}
 \int  X R_i u_i^{2 Q /(Q-2)} =0 .
\ee
Then  for $\var>0$ small we have
 $$
	 \Big|\int_{B_\var} \nabla_{\H} R_i(\xi_i\circ\xi) u_i(\xi_i\circ\xi)^{2 Q /(Q-2)}\Big| \leq  C(\var) u_i(\xi_i)^{-2 Q /(Q-2)}.
 $$
  Using our hypotheses $\nabla Q^{(\beta)}$ and $R_i$, we have
 $$
  \Big|\int_{B_\var}(1+o_{\var}(1))\nabla_{\H} Q_i^{(\beta)}(\xi_i\circ\xi) u_i(\xi_i\circ\xi)^{2 Q /(Q-2)}\Big|\leq  C(\var ) u_i(\xi_i)^{-2 Q /(Q-2)}.
 $$Multiplying the above by $m_i^{(2/(Q-2))(\beta-1)}$ with $m_i=u_i(\xi_i)$ we have
$$
\Big|\int_{B_{\var}}(1+o_{\var}(1)) \nabla_{\H} Q_i^{(\beta)}(\tilde{\xi}_i\circ \delta_{m_i^{2Q/(Q-2)}}(\xi)) u_i(\xi_i\circ\xi)^{2 Q /(Q-2)}\Big|
\leq C(\var) u_i(\xi_i)^{(2 /(Q-2 ))(\beta-1-n)},
$$
 where $\tilde{\xi}_i=u_i(\xi_i)^{2 / (Q-2)} \xi_i$.
 Suppose that \eqref{5.16} is false, namely, $|\tilde{\xi}_i| \to  \infty$ along a subsequence. Then it follows from \cite[Proposition 5.2]{PR2003} (we may choose $S_i\leq |\tilde{\xi}_i|/4$) that
\begin{align*}
& \Big|\int_{|\xi| \leq S_i m_i^{-2 /(Q-2 )}}(1+o_{\var}(1)) \nabla_{\H} Q_i^{(\beta)}(\tilde{\xi}_i\circ \delta_{m_i^{2Q/(Q-2)}}(\xi)) u_i(\xi_i\circ\xi)^{2 Q /(Q-2)}\Big| \\
=&\Big|\int_{|\hat{\xi}| \leq R_i}(1+o_{\var}(1)) \nabla_{\H} Q_i^{(\beta)}(\tilde{\xi}_i\circ\hat{\xi})(m_i^{-1} u_i(\xi_i\circ\delta_{m_i^{2/(Q-2)}}(\hat{\xi}) ))^{2 Q /(Q-2 )}\Big| \sim|\tilde{\xi}_i|^{\beta-1} .
\end{align*}

On the other hand, it follows from \cite[Lemma 5.10]{PR2003} that
\begin{align*}
& \Big|\int_{S_i m_i^{-2 /(Q-2 )} \leq|\xi| \leq \var}(1+o_{\var}(1)) \nabla_{\H} Q_i^{(\beta)}(\tilde{\xi}_i \circ \delta_{m_i^{2/(Q-2)}}(\xi)) u_i(\xi_i\circ \xi)^{2 Q /(Q-2)}\Big| \\
 \leq& C\Big|\int_{S_i m_i^{-2 /(Q-2)} \leq|\xi| \leq \var}(|\delta_{m_i^{2 /(Q-2)}}(\xi) |^{\beta-1}+|\tilde{\xi}_i|^{\beta-1}) u_i(\xi_i\circ \xi)^{2Q /(Q-2)}\Big|\\ \leq& o(1)|\tilde{\xi}_i|^{\beta-1} .
\end{align*}
 It follows that
 $$
 |\tilde{\xi}_i|^{\beta-1} \leq  C(\var) u_i(\xi_i)^{(2 /(Q-2))(\beta-1-Q)} ,
 $$
 which implies that
 $$
|\xi_i| \leq  C(\var) m_i^{-(2 Q /(Q-2))(Q/(\beta-1))}=o(m_i^{-2 /(Q-2)}) .
 $$
 This contradicts to $|\tilde{\xi}|\to \infty$. Thus \eqref{5.16} holds.

 We are going to find some $\xi_0$ such that \eqref{5.14} fails.

For $1\leq j\leq n$, define the vector fields
 $$
 \overline{X}_j=\frac{\pa}{\pa x_j}-2 y_j \frac{\pa}{\pa t}\quad \text{ and }\quad  \overline{Y}_j=\frac{\pa}{\pa y_j}-2 x_j \frac{\pa}{\pa t}.
 $$
Multiplying \eqref{5.15} by $\overline{X}_j$, $\overline{Y}_j$, $T$  and integrate by parts, together with the Kazdan-Warner condition \eqref{KW2} we have
 $$
 \int \Big(\sum_{j=1}^n (x_j X_j+y_j Y_j)+2 t T\Big) R_i(\xi_i \circ \xi) u_i(\xi_i\circ \xi)^{2Q /(Q-2)}=0 .
 $$
 Since $q_0$ is an isolated simple blow up point and the only blow up point of $v_i$, we have for any $\var>0$,
 $$
 \Big|\int_{B_\var} \Big(\sum_{j=1}^n (x_j X_j+y_j Y_j)+2 t T\Big) R_i(\xi_i \circ \xi) u_i(\xi_i\circ \xi)^{2Q /(Q-2)} \Big| \leq  C(\var) u_i(\xi_i)^{2Q /(Q-2)} .
 $$
 It follows from  \cite[Lemma 5.10]{PR2003} and our hypotheses on $R_i$ that
 \begin{align*}
 &\Big| \int_{B_\var} \Big(\sum_{j=1}^n (x_j X_j+y_j Y_j)+2 t T\Big)  Q_i^{(\beta)}(\xi_i\circ \xi) u_i(\xi_i\circ \xi)^{2 Q /(Q-2)}\Big| \\
  \leq&  C(\var ) u_i(\xi_i)^{-2 Q /(Q-2)} +o_\var(1) \int_{B_\var}|\xi||\xi_i\circ \xi |^{\beta-1} u_i(\xi_i\circ \xi)^{2 Q /(Q-2)}\\&+o_\var(1) \int_{B_\var}|\xi|^2|\xi_i\circ \xi |^{\beta-2} u_i(\xi_i\circ \xi)^{2 Q /(Q-2)}\\
 \leq  & C(\var ) u_i(\xi_i)^{-2 Q /(Q-2)}  +o_\var(1) \int_{B_{\var }}(|\xi|^\beta+|\xi||\xi_i |^{\beta-1}) u_i(\xi_i\circ \xi)^{2 Q /(Q-2)} \\\leq &C(\var) u_i(\xi_i)^{-2 Q /(Q-2)}+o_{\var}(1) u_i(\xi_i)^{-2 \beta /(Q-2 )} ,
 \end{align*}
where we used \eqref{5.16} in the last inequality.

Multiplying the above by $u_i(\xi_i)^{2 \beta /(Q-2)}$, due to $\beta<n$ we obtain
\be\label{5.19}
 \lim _{i \to  \infty} u_i(\xi_i)^{2 \beta / (Q-2)}\Big|\int_{B_\var} \Big(\sum_{j=1}^n (x_j X_j+y_j Y_j)+2 t T\Big)  Q_i^{(\beta)}(\xi_i\circ \xi)  u_i(\xi_i\circ \xi)^{2Q /(Q-2)}\Big| =o_{\var}(1) .
 \ee
 Let $S_i \to \infty$ as $i \to \infty$. We assume that $r_i:=R_i u_i(\xi_i)^{-2 /(Q-2)} \to 0$.  By  \cite[Lemma 5.10]{PR2003}, we have
 \begin{align}
 & u_i(\xi_i)^{2 \beta /(Q-2)}\Big|\int_{r_i \leq |\xi| \leq  \var }\Big(\sum_{j=1}^n (x_j X_j+y_j Y_j)+2 t T\Big)  Q_i^{(\beta)}(\xi_i\circ \xi) u_i(\xi_i\circ \xi)^{2 Q /(Q-2)} \Big| \notag\\
  \leq &\lim _{i \to \infty} u_i(\xi_i)^{2 \beta /(Q-2)}\Big|\int_{r_i \leq|\xi| \leq \var}(|\xi|^\beta+|\xi||\xi_i|^{\beta-1}) u_i(\xi_i\circ \xi)^{2 Q /(Q-2 )}\Big| \to 0\label{5.20}
 \end{align}
 as $i \to \infty$. Combining \eqref{5.19} and \eqref{5.20}, we conclude that
 $$
 \lim _{i \to \infty} u_i(\xi_i)^{2 \beta /(Q-2)}\Big|\int_{B_{r_i}}\Big(\sum_{j=1}^n (x_j X_j+y_j Y_j)+2 t T\Big)  Q_i^{(\beta)}(\xi_i\circ \xi)u_i(\xi_i\circ \xi)^{2 Q /(Q-2)}\Big|=o_{\var}(1) .
 $$
 It follows from the change of variable $\bar{\xi}=(\bar{z},\bar{t})=u_i(\xi_i)^{2 / Q-2)} \xi$, applying \cite[Proposition 5.2]{PR2003} and then letting $\var\to 0$ that
\be\label{5.21}
 \Big| \int  \Big(\sum_{j=1}^n (\bar{x}_j X_j+\bar{y}_j Y_j)+2 \bar{t} T\Big) Q^{(\beta)}(\xi_0\circ\bar{\xi})\Lam^{Q}w_{0,\Lam}^{2Q/(Q-2)}\, \d \bar{z}\, \d\bar{t}  \Big|=o_{\var}(1),
\ee
	 where  $\xi_0=\lim _{i \to  \infty} u_{i}(\xi_i)^{2/(Q-2)}\xi_i$ and $\Lam=\lim _{i \to  \infty} \sqrt{\frac{R_i(\xi_i)}{2 Q(Q-2)}}$.

 On the other hand, from \eqref{5.17},
$$
\int X R_i(\xi_i\circ\xi) u_i(\xi_i\circ\xi)^{2 Q /(Q-2)}=0 .
$$
 Arguing as above, we will have
 \be\label{5.23}
 \int X Q^{(\beta)}(\xi_0\circ \xi))\Lam^{Q} w_{0,\Lam}^{2Q/(Q-2)} =0.
 \ee
 It follows from \eqref{5.21} and \eqref{5.23} that
 $$
 \int  Q^{(\beta)}(\xi_0\circ \xi)w_{0,\Lam}^{2Q/(Q-2)}  =\beta^{-1} \int  (\xi_0\circ \xi)\cdot \widehat{X} Q^{(\beta)}(\xi_0\circ \xi) w_{0,\Lam}^{2Q/(Q-2)}=0 ,
 $$
 where $\widehat{X}=(X_1,\ldots,X_n,Y_1,\ldots,Y_n,2T)$.
Therefore, \eqref{5.14} does not hold for $\hat{\xi}=\delta_{\Lam}(\xi_0)$. Proposition \ref{prop:anotherblowup} is established.
\end{proof}

\begin{cor}\label{cor:A.1}
 Suppose that $\{\bar{R}_i\} \in C^1(\S^{2n+1})$ with uniform $C^1$ modulo of continuity and satisfies for some $q_0 \in \S^{n+1}$, $\var_0>0$, $A_1>0$ independent of $i$ and $2 \leq \beta<Q$,
$$
\bar{R}_i  \in C^{[\beta]-1,1}(B(q_0,\var_0)),\quad  \bar{R}_i(q_0) \geq 1 / A_1, $$
and
$$
\bar{R}_i(\xi)  =\bar{R}_i(0)+Q_i^{(\beta)}(\xi)+P_i(\xi),   \quad |\xi| \leq \var_0,
$$
where $\xi$ is some pseudo-Hermitian normal coordinates system centered at $q_0$, $R_i(\xi)$ satisfies $$\sum_{s=0}^{[\beta]}|\nabla^s R_i(\xi)||\xi|^{-\beta+s} \to  0$$ uniformly for $i$ as $\xi \to  0$,  $Q_i^{(\beta)}(\xi)=\sum_{j=1}^n (a_j(i) |x_j|^{\beta-1} x_j+b_j(i) |y_j|^{\beta-1} y_j)+c_i |t|^{\frac{\beta}{2}-1}t$, $a_j(i) \to  a_j$, $b_j(i) \to  b_j$, $c_i\to c$ as $i \to  \infty$, $a_j, b_j, c \neq 0$,  $\forall\, 1 \leq j \leq n$.
Let $v_i$ be positive solutions of \eqref{maineq} with $\bar{R}=\bar{R}_i$.  Then if $q_0$ is an isolated simple blow up point of $v_i$, $v_i$ has to have at least another blow up point.	
\end{cor}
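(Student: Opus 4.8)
The plan is to check that the explicit polynomial
$Q^{(\beta)}(\xi)=\sum_{j=1}^{n}\bigl(a_j|x_j|^{\beta-1}x_j+b_j|y_j|^{\beta-1}y_j\bigr)+c\,|t|^{\frac{\beta}{2}-1}t$ with $a_j,b_j,c\neq0$ satisfies \emph{all} the hypotheses of Proposition \ref{prop:anotherblowup}, and then to quote that proposition verbatim; the blow-up machinery of \cite{PR2003} thus enters only through Proposition \ref{prop:anotherblowup}. The $\delta_{\lam}$-homogeneity $Q^{(\beta)}(\delta_{\lam}\xi)=\lam^{\beta}Q^{(\beta)}(\xi)$ is immediate, since $\delta_{\lam}$ scales each $x_j,y_j$ by $\lam$ and $t$ by $\lam^{2}$ while $|x_j|^{\beta-1}x_j$, $|y_j|^{\beta-1}y_j$, $|t|^{\frac{\beta}{2}-1}t$ are Euclidean-homogeneous of degrees $\beta,\beta,\frac{\beta}{2}$. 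The convergence $Q_i^{(\beta)}\to Q^{(\beta)}$ in $C^{1}$ follows from $a_j(i)\to a_j$, $b_j(i)\to b_j$, $c_i\to c$ together with the fact that each of these monomials is of class $C^{1}$; this is precisely the point where $\beta\ge2$ is used, as it is exactly what makes $t\mapsto|t|^{\frac{\beta}{2}-1}t$ differentiable with continuous derivative $\frac{\beta}{2}|t|^{\frac{\beta}{2}-1}$. The remaining regularity and flatness requirements on $\bar R_i$ and on the remainder $R_i(\xi)$, as well as $\bar R_i(q_0)\ge 1/A_1$, transcribe into the hypotheses of Proposition \ref{prop:anotherblowup} up to harmless relabelling of constants, noting that its proof is carried out for $\beta$ in the whole range $2\le\beta<Q$ (the error terms there, after multiplication by $u_i(\xi_i)^{2\beta/(Q-2)}$, are $O\bigl(u_i(\xi_i)^{2(\beta-Q)/(Q-2)}\bigr)$, which tends to $0$ precisely when $\beta<Q$). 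So it remains to verify the two structural conditions \eqref{5.13} and \eqref{5.14}.

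For \eqref{5.13} I would compute the (Euclidean) gradient componentwise: $\pa_{x_j}Q^{(\beta)}=\beta a_j|x_j|^{\beta-1}$, $\pa_{y_j}Q^{(\beta)}=\beta b_j|y_j|^{\beta-1}$, $\pa_{t}Q^{(\beta)}=\frac{\beta}{2}c\,|t|^{\frac{\beta}{2}-1}$, whence, since all coefficients are nonzero,
$|\nabla Q^{(\beta)}(\xi)|^{2}\ge \delta_0\bigl(\sum_{j=1}^{n}(|x_j|^{2(\beta-1)}+|y_j|^{2(\beta-1)})+|t|^{\beta-2}\bigr)$ with $\delta_0=\beta^{2}\min_j\{a_j^{2},b_j^{2},c^{2}/4\}>0$. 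Then convexity of $s\mapsto s^{\beta-1}$ on $[0,\infty)$ (valid because $\beta-1\ge1$) gives $\sum_{j}(|x_j|^{2(\beta-1)}+|y_j|^{2(\beta-1)})\ge C(n,\beta)\,|z|^{2(\beta-1)}$, and $|t|^{\beta-2}\ge|t|^{\beta-1}$ for $|t|\le1$; combining these with the elementary bound $|\xi|^{2(\beta-1)}=(|z|^{4}+t^{2})^{(\beta-1)/2}\lesssim |z|^{2(\beta-1)}+|t|^{\beta-1}$ yields $|\nabla Q^{(\beta)}(\xi)|\ge A_2|\xi|^{\beta-1}$ on $\{|\xi|\le\var_0\}$ for $\var_0$ small and $A_2=A_2(n,\beta,a_j,b_j,c)>0$, which is \eqref{5.13}.

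The only genuinely new input is \eqref{5.14}, and the observation that makes it cheap is that \emph{only the $t$-monomial of $Q^{(\beta)}$ depends on $t$}, so the $T$-component of the vector in \eqref{5.14} is sign-definite. Indeed $TQ^{(\beta)}=\pa_t\bigl(c|t|^{\frac{\beta}{2}-1}t\bigr)=\frac{\beta}{2}c\,|t|^{\frac{\beta}{2}-1}$, and since $T=\pa_t$ is left-invariant, $(TQ^{(\beta)})(\hat\xi\circ\xi)=\frac{\beta}{2}c\,\bigl|(\hat\xi\circ\xi)_t\bigr|^{\frac{\beta}{2}-1}$, where for each fixed $\hat\xi=(\hat z,\hat t)$ the function $\xi=(z,t)\mapsto(\hat\xi\circ\xi)_t=t+\hat t+2\operatorname{Im}(\hat z\bar z)$ is affine with $\pa_t\equiv1$, hence vanishes only on a hyperplane. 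Thus $\bigl|(\hat\xi\circ\xi)_t\bigr|^{\frac{\beta}{2}-1}\ge0$ is strictly positive off a null set and is dominated by $C(\hat\xi)(1+|\xi|^{\beta-2})$; since $w_{0,1}^{2Q/(Q-2)}$ (see \eqref{bubbles}) decays like $|\xi|^{-2Q}$ at infinity and $\beta<Q$, the integral $\int\bigl|(\hat\xi\circ\xi)_t\bigr|^{\frac{\beta}{2}-1}w_{0,1}^{2Q/(Q-2)}$ converges and is strictly positive, so
\[
\int \bigl(TQ^{(\beta)}\bigr)(\hat\xi\circ\xi)\,w_{0,1}^{2Q/(Q-2)}
=\frac{\beta c}{2}\int \bigl|(\hat\xi\circ\xi)_t\bigr|^{\frac{\beta}{2}-1}w_{0,1}^{2Q/(Q-2)}\neq 0
\qquad\text{for every }\hat\xi\in\Hn,
\]
because $c\neq0$. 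Hence the $(2n+2)$-component vector in \eqref{5.14} is nonzero for all $\hat\xi$, so \eqref{5.14} holds, and Proposition \ref{prop:anotherblowup} applies to give the conclusion. The step I expect to require the most care is \eqref{5.14}: one must fix the convention (Euclidean versus horizontal) under which \eqref{5.13}--\eqref{5.14} are stated and check the integrability of $\bigl|(\hat\xi\circ\xi)_t\bigr|^{\frac{\beta}{2}-1}w_{0,1}^{2Q/(Q-2)}$ with a constant uniform in $\hat\xi$; once that is in place, the sign-definiteness of $TQ^{(\beta)}$ closes the argument without any further interaction analysis.
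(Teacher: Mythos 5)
Your proof is correct and takes essentially the same route as the paper: both reduce the corollary to checking hypotheses \eqref{5.13} and \eqref{5.14} of Proposition \ref{prop:anotherblowup}, and both verify \eqref{5.14} by computing only the $T$-component, namely $T\bigl[Q^{(\beta)}(\hat\xi\circ\xi)\bigr]=\frac{\beta c}{2}\bigl|(\hat\xi\circ\xi)_t\bigr|^{\frac{\beta}{2}-1}$, which is sign-definite (because $c\neq0$ and the argument vanishes only on a hyperplane), making the integral against $w_{0,1}^{2Q/(Q-2)}$ nonzero for every $\hat\xi$. The paper simply declares \eqref{5.13} ``obvious'' and omits the verification of homogeneity and $C^1$-convergence that you spell out, so your write-up is a slightly more detailed rendering of the same argument.
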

\begin{proof}
Let $Q^{(\beta)}(\xi)=\sum_{j=1}^n a_j|x_j|^{\beta-1} x_j+\sum_{j=1}^n b_j|y_j|^{\beta-1} y_j+c|t|^{\frac{\beta}{2}-1}t$. We only have to check that \eqref{5.13} and \eqref{5.14} hold under our hypotheses. The first one  \eqref{5.13} is obvious.  It remains to prove \eqref{5.14}.

For any $\xi_0=(x_0^{(1)},\ldots,x_0^{(n)},y_0^{(1)},\ldots,y_0^{(n)},t_0) \in \Hn$, we have
\begin{align*}
\int T Q^{(\beta)}(\xi_0\circ \xi)  w_{0,1}^{2Q/(Q-2)}
= \frac{\beta c}{2}\int \Big|t+t_0+2\sum_{i=1}^{n}(x_iy_0^{(i)}-y_ix_0^{(i)})\Big|^{\frac{\beta}{2}-1}  w_{0,1}^{2Q/(Q-2)} \neq  0 .
\end{align*}
Thus \eqref{5.14} is established under our hypotheses. Corollary \ref{cor:A.1} follows  immediately.
\end{proof}
\begin{cor}\label{cor:A.2}
 Suppose that $\{\bar{R}_i\} \in C^1(\S^{n+1})$ with uniform $C^1$ modulo of continuity and satisfies for some $q_0 \in \S^{n+1}$, $\var_0>0$, $A_1>0$ independent of $i$ and $2 \leq \beta<Q$, that
$$
	\bar{R}_i \in C^{[\beta]-1,1}(B(q_0,\var_0)), \quad  \bar{R}_i(q_0) \geq 1 / A_1, $$
		and
$$		
	\bar{R}_i(\xi)  =\bar{R}_i(0)+Q_i^{(\beta)}(\xi)+P_i(\xi), \quad  |\xi| \leq \var_0,
$$
where $\xi$ is some pseudo-Hermitian normal coordinates system centered at $q_0$, $R_i(y)$ satisfies $$\sum_{s=0}^{[\beta]}|\nabla^s R_i(\xi)||\xi|^{-\beta+s} \to  0$$ uniformly for $i$ as $\xi \to  0$,  $Q_i^{(\beta)}(\xi)=\sum_{j=1}^n (a_j(i)|x_j|^\beta+b_j(i)|y_j|^\beta)+c(i)|t|^{\frac{\beta}{2}}$, $a_j(i) \to  a_j$, $b_j(i) \to  b_j$, $c(i) \to  c$ as $i \to  \infty$, $a_j,b_j,c \neq 0$, $ \forall\, 1 \leq j \leq n$, and $\sum_{j=1}^n (a_j+b_j)+\kappa c \neq 0$ with
$$\kappa=\frac{\int|x_1|^\beta w_{0,1}^{2Q/(Q-2)}}{\int |t|^{\frac{\beta}{2}}w_{0,1}^{2Q/(Q-2)}}.$$
	Let $v_i$ be positive solutions of \eqref{maineq} with $\bar{R}=\bar{R}_i$. Then if $q_0$ is an isolated simple blow up point of $v_i$, $v_i$ has to have at least another blow up point.
\end{cor}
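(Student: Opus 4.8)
The plan is to derive Corollary \ref{cor:A.2} directly from Proposition \ref{prop:anotherblowup} (in fact from the very same mechanism used in its proof), simply by verifying that the polynomial $Q^{(\beta)}(\xi)=\sum_{j=1}^n(a_j|x_j|^\beta+b_j|y_j|^\beta)+c|t|^{\beta/2}$ satisfies the two structural hypotheses \eqref{5.13} and \eqref{5.14}, with the proviso that now $\beta$ may range in $[2,Q)$ rather than $[2,n)$. The condition \eqref{5.13}, namely $A_2|\xi|^{\beta-1}\leq|\nabla Q^{(\beta)}(\xi)|$, is immediate from the explicit form: $\nabla Q^{(\beta)}$ has components proportional to $|x_j|^{\beta-1}\operatorname{sgn}(x_j)$, $|y_j|^{\beta-1}\operatorname{sgn}(y_j)$ and $|t|^{\beta/2-1}\operatorname{sgn}(t)$ together with the lower-order contributions of the vector fields $X_j,Y_j$ through the $\partial/\partial t$ terms, and since all the coefficients $a_j,b_j,c$ are nonzero one gets a two-sided homogeneous bound $c_1|\xi|^{\beta-1}\le|\nabla Q^{(\beta)}(\xi)|\le c_2|\xi|^{\beta-1}$; only the lower bound is needed.

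The substantive point is \eqref{5.14}: one must show that for every $\hat\xi\in\Hn$ the vector
\[
\left(\begin{aligned}
\textstyle\int \widetilde X Q^{(\beta)}(\hat\xi\circ\xi)\,w_{0,1}^{2Q/(Q-2)}\\
\textstyle\int Q^{(\beta)}(\hat\xi\circ\xi)\,w_{0,1}^{2Q/(Q-2)}
\end{aligned}\right)\neq 0,
\]
where $\widetilde X=(X_1,\dots,X_n,Y_1,\dots,Y_n,T)$. I would argue by contradiction: suppose for some $\hat\xi$ this whole vector vanishes. First, because $Q^{(\beta)}$ is $\delta_\lambda$-homogeneous of degree $\beta$, the Euler-type identity $\mathcal X Q^{(\beta)}=\beta Q^{(\beta)}$ — where $\mathcal X$ is the dilation generator of \eqref{vectorfield2} — combined with vanishing of the first $2n+1$ integrals $\int \widetilde X Q^{(\beta)}(\hat\xi\circ\xi)w_{0,1}^{2Q/(Q-2)}$ and the translation identity relating $\mathcal X$-derivatives at $\hat\xi\circ\xi$ to the $\widetilde X$-fields (exactly the manipulation yielding \eqref{5.21}, \eqref{5.23} and the line after them in the proof of Proposition \ref{prop:anotherblowup}) forces $\int Q^{(\beta)}(\hat\xi\circ\xi)w_{0,1}^{2Q/(Q-2)}=0$ automatically; so it suffices to rule out the possibility that \emph{all} $2n+2$ integrals vanish simultaneously. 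The key computation is the one already displayed at the end of the Corollary \ref{cor:A.1} proof: writing $\hat\xi=(x_0^{(1)},\dots,x_0^{(n)},y_0^{(1)},\dots,y_0^{(n)},t_0)$ and using the group law $\hat\xi\circ\xi$, one expands $Q^{(\beta)}(\hat\xi\circ\xi)$ and its $\widetilde X$-derivatives, integrates against the radial-in-$(z,t)$ weight $w_{0,1}^{2Q/(Q-2)}$, and exploits the parity and symmetry of $w_{0,1}$. At $\hat\xi=0$ one gets $\int Q^{(\beta)}(\xi)w_{0,1}^{2Q/(Q-2)}=\big(\sum_j(a_j+b_j)+\kappa c\big)\int|x_1|^\beta w_{0,1}^{2Q/(Q-2)}$, with $\kappa$ precisely the ratio defined in the statement, which is nonzero by hypothesis; for general $\hat\xi$ one shows the relevant determinant of the $(2n+2)\times(2n+2)$ system is a nonvanishing real-analytic function of $\hat\xi$ — or, more simply, that the vanishing of the gradient integrals already pins down $\hat\xi$ to lie in a lower-dimensional set on which the scalar integral is still nonzero by the $\kappa$-condition. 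This gives \eqref{5.14} for all $\hat\xi\in\Hn$.

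Once \eqref{5.13} and \eqref{5.14} are in hand, the conclusion is exactly that of Proposition \ref{prop:anotherblowup}: an isolated simple blow up point $q_0$ of $\{v_i\}$ cannot be the only blow up point, because the Pohozaev/Kazdan–Warner balancing (the identities \eqref{5.17}, \eqref{5.19}--\eqref{5.23} transplanted verbatim, together with the localization estimate \eqref{5.16}) would then force the left-hand side of \eqref{5.14} to vanish at $\hat\xi=\delta_\Lambda(\xi_0)$, a contradiction. The one genuine difference from Proposition \ref{prop:anotherblowup} is the enlarged range $\beta\in[2,Q)$: in that proposition $\beta<n$ was used to make the exponent $(2/(Q-2))(\beta-1-n)$ (and $2\beta/(Q-2)$ against the volume element) land on the favorable side when killing the remainder terms in \eqref{5.16} and \eqref{5.19}. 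I would check that with the explicit monomial $Q^{(\beta)}$ one can afford $\beta<Q$ instead, because the cross terms produced by the group law $\hat\xi\circ\xi$ and by $P_i$ are controlled by the stronger pointwise hypothesis $\sum_{s=0}^{[\beta]}|\nabla^sP_i(\xi)||\xi|^{-\beta+s}\to0$, which only needs $\beta<Q$ (the critical growth rate for the bubble integrals $\int|\xi|^\beta w_{0,1}^{2Q/(Q-2)}$ to converge).

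The main obstacle I anticipate is precisely this last bookkeeping: verifying that every error term arising from the noncommutativity of $\circ$ — the $2\sum_i(x_iy_0^{(i)}-y_ix_0^{(i)})$ shift in the $t$-slot, the mismatch between $\overline X_j,\overline Y_j$ and $X_j,Y_j$, and the $P_i$ remainder — decays strictly faster than $|\xi_i|^{\beta-1}$ (equivalently $u_i(\xi_i)^{-2(\beta-1)/(Q-2)}$) after multiplying by the appropriate power of $u_i(\xi_i)$, uniformly for $\beta$ up to but not including $Q$, so that \eqref{5.16} and the passage to the limit in \eqref{5.21}, \eqref{5.23} go through. This is routine but delicate; everything else is either the explicit algebra of the integrals $\int|x_1|^\beta w_{0,1}^{2Q/(Q-2)}$ versus $\int|t|^{\beta/2}w_{0,1}^{2Q/(Q-2)}$ encoded in $\kappa$, or a direct citation of the estimates in \cite{PR2003} already invoked in the proof of Proposition \ref{prop:anotherblowup}.
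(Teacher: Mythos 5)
Your high-level plan matches the paper exactly: reduce everything to verifying \eqref{5.13} and \eqref{5.14} for the explicit $Q^{(\beta)}(\xi)=\sum_j(a_j|x_j|^\beta+b_j|y_j|^\beta)+c|t|^{\beta/2}$ and then invoke Proposition \ref{prop:anotherblowup}. Your treatment of \eqref{5.13} is fine. However, your argument for \eqref{5.14} contains a genuine logical error, and hand-waves the step that is actually the content of the paper's proof.

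The error is the claim that the Euler identity $\mathcal{X}Q^{(\beta)}=\beta Q^{(\beta)}$, ``combined with vanishing of the first $2n+1$ integrals,'' forces $\int Q^{(\beta)}(\hat\xi\circ\xi)w_{0,1}^{2Q/(Q-2)}=0$. This is false, and one can see it already at $\hat\xi=0$: the parity of $w_{0,1}$ kills every $\int X_jQ^{(\beta)}\,w_{0,1}^{2Q/(Q-2)}$, $\int Y_jQ^{(\beta)}\,w_{0,1}^{2Q/(Q-2)}$, $\int TQ^{(\beta)}\,w_{0,1}^{2Q/(Q-2)}$, yet $\int Q^{(\beta)}\,w_{0,1}^{2Q/(Q-2)}=\big(\sum_j(a_j+b_j)+\kappa c\big)\int|x_1|^\beta w_{0,1}^{2Q/(Q-2)}\neq 0$ precisely by the $\kappa$-hypothesis. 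The source of the confusion is that the Euler identity applied at the translated point gives $\beta Q^{(\beta)}(\hat\xi\circ\xi)$ equal to a \emph{weighted} combination $\sum_j (x_j+x_0^{(j)})X_jQ^{(\beta)}+\cdots$, not the \emph{unweighted} integrals appearing in \eqref{5.14}. Inside the proof of Proposition \ref{prop:anotherblowup}, both the unweighted integrals \eqref{5.23} and the weighted ones \eqref{5.21} vanish, but only because the blow up hypothesis provides the Pohozaev balance \eqref{5.21}; when verifying \eqref{5.14} as a property of $Q^{(\beta)}$ alone you have no such input. Your reasoning conflates an a priori estimate with an algebraic identity, and, as noted, the resulting implication is directly contradicted by the case $\hat\xi=0$ which is the crux of the argument.

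Once that step is removed, what remains to prove is exactly what the paper proves: \emph{the unweighted gradient integrals $\int \widetilde X Q^{(\beta)}(\hat\xi\circ\xi)\,w_{0,1}^{2Q/(Q-2)}$ vanish simultaneously if and only if $\hat\xi=0$}, at which point the scalar integral is the $\kappa$-expression, which is nonzero by hypothesis. You gesture at this with a ``nonvanishing determinant'' or ``lower-dimensional set'' remark but give no argument; the paper instead writes out $X_jQ^{(\beta)}(\xi_0\circ\xi)$, $Y_jQ^{(\beta)}(\xi_0\circ\xi)$, $TQ^{(\beta)}(\xi_0\circ\xi)$ explicitly (keeping track of the $t$-slot shift $2\sum_i(x_iy_0^{(i)}-y_ix_0^{(i)})$) and verifies the ``iff $\xi_0=0$'' assertion from these expansions. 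That explicit computation is the substance of the proof, and it is the piece your proposal leaves open. Your remark about the range $\beta\in[2,Q)$ versus the ``$\beta<n$'' appearing in Proposition \ref{prop:anotherblowup} is a legitimate observation of an inconsistency in the stated ranges, but it is a bookkeeping point about the cited proposition, not something your verification of \eqref{5.13}--\eqref{5.14} would resolve.
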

\begin{proof}
 Let $Q^{(\beta)}(\xi)=\sum_{j=1}^n (a_j|x_j|^\beta+b_j|y_j|^\beta)+c|t|^{\frac{\beta}{2}}$. We only have to check that \eqref{5.13} and \eqref{5.14} hold under our hypotheses. The first one \eqref{5.13} is obvious.  It remains to prove \eqref{5.14}.

For any $\xi_0=(x_0^{(1)},\ldots,x_0^{(n)},y_0^{(1)},\ldots,y_0^{(n)},t_0) \in \Hn$, we have
\begin{align*}
 X_j Q^{(\beta)}(\xi_0\circ \xi)  =&
\beta a_j|x_j+x_0^{(j)}|^{\beta-2}(x_j+x_0^{(j)})\\&+\beta c\Big|t+t_0+2\sum_{i=1}^{n}(x_iy_0^{(i)}-y_ix_0^{(i)})\Big|^{\frac{\beta}{2}-2}\Big(t+t_0+2\sum_{i=1}^{n}(x_iy_0^{(i)}-y_ix_0^{(i)})\Big)y_0^{(j)}\\&+\beta cy_j\Big|t+t_0+2\sum_{i=1}^{n}(x_iy_0^{(i)}-y_ix_0^{(i)})\Big|^{\frac{\beta}{2}-2}\Big(t+t_0+2\sum_{i=1}^{n}(x_iy_0^{(i)}-y_ix_0^{(i)})\Big),\\Y_j Q^{(\beta)}(\xi_0\circ \xi)  =&
\beta b_j|y_j+y_0^{(j)}|^{\beta-2}(y_j+y_0^{(j)})\\&-\beta c\Big|t+t_0+2\sum_{i=1}^{n}(x_iy_0^{(i)}-y_ix_0^{(i)})\Big|^{\frac{\beta}{2}-2}\Big(t+t_0+2\sum_{i=1}^{n}(x_iy_0^{(i)}-y_ix_0^{(i)})\Big)x_0^{(j)}\\&-\beta cx_j\Big|t+t_0+2\sum_{i=1}^{n}(x_iy_0^{(i)}-y_ix_0^{(i)})\Big|^{\frac{\beta}{2}-2}\Big(t+t_0+2\sum_{i=1}^{n}(x_iy_0^{(i)}-y_ix_0^{(i)})\Big),
\end{align*}
and $$TQ^{(\beta)}(\xi_0\circ \xi)  =\frac{\beta c}{2}\Big|t+t_0+2\sum_{i=1}^{n}(x_iy_0^{(i)}-y_ix_0^{(i)})\Big|^{\frac{\beta}{2}-2}\Big(t+t_0+2\sum_{i=1}^{n}(x_iy_0^{(i)}-y_ix_0^{(i)})\Big).$$
It is straightforward to verify that
$$
\int (1+|\xi|^2)^{-n} X Q^{(\beta)}(\xi_0\circ \xi) =0 \quad \text{ for each }\,X\in \{X_1\ldots,X_n,Y_1,\ldots,Y_n,T\} \quad \text{ iff }\, \xi_0=0.
$$
Next we have
\begin{align*}
& \int Q^{(\beta)}(\xi)w_{0,1}^{2Q/(Q-2)}  \\
=& \int \Big(\sum_{j=1}^n (a_j|x_j|^\beta+b_j|y_j|^\beta+c|t|^{\frac{\beta}{2}}\Big)w_{0,1}^{2Q/(Q-2)}  \\
=&\Big(\sum_{j=1}^n (a_j+b_j)\Big) \int|x_1|^\beta w_{0,1}^{2Q/(Q-2)}+c\int |t|^{\frac{\beta}{2}}w_{0,1}^{2Q/(Q-2)}  \neq 0 .
\end{align*}
Thus \eqref{5.14} is established under our hypotheses.  Corollary \ref{cor:A.2} follows  immediately.
\end{proof}

\subsection*{Conflict of interest statement} On behalf of all authors, the corresponding author states that there is no conflict of interest.

\subsection*{Data availability statement} Data sharing not applicable to this article as no datasets were generated or analysed during the current study.

\medskip

\noindent Z. Tang

\noindent School of Mathematical Sciences, Laboratory of Mathematics and Complex Systems, MOE\\
Beijing Normal University, Beijing 100875, People's Republic of China\\
Email: \textsf{tangzw@bnu.edu.cn}

\medskip
\noindent H. Wang

\noindent School of Mathematical Sciences, Laboratory of Mathematics and Complex Systems, MOE\\
Beijing Normal University, Beijing 100875, People's Republic of China\\
Email: \textsf{hmw@mail.bnu.edu.cn}

\medskip

\noindent B. Zhang

\noindent School of Mathematical Sciences, Laboratory of Mathematics and Complex Systems, MOE\\
Beijing Normal University, Beijing 100875, People's Republic of China\\
Email: \textsf{bingweizhang@mail.bnu.edu.cn}

\end{document}